\newtheorem{theorem}{Theorem}
\newtheorem{lemma}[theorem]{Lemma}
\newtheorem{definition}[theorem]{Definition}
\numberwithin{theorem}{section}
\numberwithin{equation}{section}
\def\N{\mathbb{N}}
\def\R{\mathbb{R}}
\def\C{\mathbb{C}}
\def\H{\mathcal{H}}
\def\Z{\mathbb{Z}}
\def\HA{\mathcal{H}_{\partial A}}
\def\dx{\,dx}
\def\wto{\rightharpoonup}
\def\S{{\mathcal{S}(\Gamma)}}
\def\Ss{{\mathcal{S}^\ast(\Gamma)}}
\def\SGA{{\mathcal{S}^\ast(\Gamma,A)}}
\def\SGAs{{\mathcal{S}^\ast(\Gamma,A,\sigma)}}
\DeclareMathOperator{\spt}{spt}
\DeclareMathOperator{\dist}{dist}
\DeclareMathOperator*{\osc}{osc}
\renewcommand{\Tilde}{\widetilde}
\renewcommand{\d}{\partial}
\renewcommand{\epsilon}{\varepsilon}
\newcommand{\eps}{\varepsilon}
\renewcommand{\rho}{\varrho}
\def\edge{\hspace{0.15em}\mbox{\LARGE$\llcorner$}\hspace{0.05em}}
\def\XXint#1#2#3{{\setbox0=\hbox{$#1{#2#3}{\int}$}
                 \vcenter{\hbox{$#2#3$}}\kern-.5\wd0}}
\def\Xint#1{\mathchoice
              {\XXint\displaystyle\textstyle{#1}}
              {\XXint\textstyle\scriptstyle{#1}}
              {\XXint\scriptstyle\scriptscriptstyle{#1}}
              {\XXint\scriptscriptstyle\scriptscriptstyle{#1}}
              \!\int}
\def\mint{\Xint-}
\def\lk{[\![}
\def\rk{]\!]}
\def\eqn#1$$#2$${\begin{equation}\label#1#2\end{equation}}
\begin{document}

\renewcommand{\leftmargini}{0.7cm}
\title[The Evolution of $H$-surfaces with Plateau boundary condition]
 {The evolution of $H$-surfaces
 with\\  a Plateau boundary condition}
\date{June 26, 2012}

\author[F. Duzaar]{Frank Duzaar}
\address{Frank Duzaar\\Department Mathematik, Universit\"at
Erlangen--N\"urnberg\\ Cauerstrasse 11, 91058 Erlangen, Germany}
\email{duzaar@mi.uni-erlangen.de}

\author[C. Scheven]{Christoph Scheven}
\address{Christoph Scheven\\Department Mathematik, Universit\"at
Erlangen--N\"urnberg\\ Cauerstrasse 11, 91058 Erlangen, Germany}
\email{scheven@mi.uni-erlangen.de}


\maketitle

\begin{abstract}
In this paper we consider the heat flow associated to
the classical Plateau problem for surfaces of prescribed mean curvature. To  be precise, for a given Jordan curve $\Gamma\subset
\R^3$, a given prescribed mean curvature function $H\colon
\R^3\to\R$ and an initial datum $u_o\colon B\to \R^3$
satisfying the Plateau boundary condition, i.e. that
$u_o\big|_{\partial B}\colon \partial B\to \Gamma$
is a homeomorphism, 
we consider the geometric flow
$$
	\partial_tu -\Delta u
     = -2(H\circ u)D_1u\times D_2u 
	\quad\mbox{in $B\times (0,\infty)$,}
$$
$$
	\mbox{$u(\cdot ,0)=u_o$ on $B$,}\qquad
	\mbox{$u(\cdot,t)\big|_{\partial B}\colon \partial B\to \Gamma$
	is weakly monotone for all $t>0$.}
$$
We show that an isoperimetric condition on $H$ ensures the existence of a global weak solution. Moreover, we establish that these global solutions sub-converge as $t\to \infty$ to a conformal solution of the classical Plateau problem for surfaces of prescribed mean curvature.

\end{abstract}

\tableofcontents

\section{Introduction}
\label{sec:intro}

\subsection{The history of the problem}
The classical Plateau problem for $H$-surfaces consists in the construction of parametric surfaces $u\colon B\to\R^3$ with prescribed mean
curvature $H$  and with boundary $\Gamma$; here $\Gamma$ is a given closed, rectifiable  Jordan curve in $\R^3$. For parametric surfaces $u\in C^2 (B,\R^3)\cap C^0(\overline B,\R^3)$ defined on the unit disk $B$ in
$\R^2$ it has the following formulation:
 \begin{equation}\label{Plateau-probl}
    \left\{
    \begin{array}{c}
     \Delta u =2 (H\circ u) D_1 u\times D_2 u\quad\mbox{on }B,\\[5pt]
     \mbox{$u\big|_{\partial B}:\partial B\to\Gamma$ is a homeomorphism,}\\[5pt]
     |D_1u|^2 -|D_2u|^2 =0=D_1u \cdot
     D_2u\quad\mbox{on $B$.}
     \end{array}
     \right.
 \end{equation}
Here \eqref{Plateau-probl}$_1$ is called the $H$-surface-equation and \eqref{Plateau-probl}$_3$ are the conformality
relations. Non-constant $C^2$-solutions $u$ to \eqref{Plateau-probl}$_1$ and \eqref{Plateau-probl}$_3$ are usually called
$H$-surfaces in $\R^3$. The geometric significance of
\eqref{Plateau-probl}$_1$ and \eqref{Plateau-probl}$_3$ is that its
solutions are 2-dimensional immersed surfaces in $\R^3$
with mean curvature given by $H$. The Plateau boundary condition
\eqref{Plateau-probl}$_2$ is a free boundary condition with one degree
of freedom. Problem \eqref{Plateau-probl} has been treated by many authors, e.g. by 
Heinz \cite{Heinz:1954}, Hildebrandt \cite{Hildebrandt:1969, Hildebrandt:1970},
Gulliver \& Spruck \cite{GulliverSpruck:1971, GulliverSpruck:1972},
Steffen \cite{Steffen:1976, Steffen:1976-2} and Wente \cite{Wente:1969}. 
Several optimal
results have been obtained in the seventies and these results essentially settle the existence problem \eqref{Plateau-probl}
for disk type surfaces in $\R^3$. One prominent example is the result of Hildebrandt \cite{Hildebrandt:1969, Hildebrandt:1970}
which ensures the existence of an $H$-surface contained
in a ball $B_R$ of radius $R$ in $\R^3$ whenever $\Gamma$ is a closed, rectifiable Jordan curve contained in $B_R$
and the prescribed mean curvature function satisfies $|H|\le\frac1R$
on $B_R$.

In contrast to the Plateau problem for $H$-surfaces, 
much less is known for the associated flow to \eqref{Plateau-probl}.
This geometric flow can be formulated as follows:
\begin{equation}\label{time-dep-Plateau-probl}
    \left\{
    \begin{array}{c}
     \partial_tu -\Delta u
     = -2(H\circ u)D_1u\times D_2u 
	\quad\mbox{in $B\times (0,\infty)$,}\\[5pt]
     \mbox{$u(\cdot ,0)=u_o$ on $B$,}\\[5pt]
     \mbox{$u(\cdot,t)\big|_{\partial B}\colon \partial B\to \Gamma$
	is weakly monotone for all $t>0$.}     \end{array}
     \right.
 \end{equation}
For the precise definition we refer to \eqref{Plateau}.
In the special case $H\equiv 0$, i.e. the evolutionary
Plateau problem for minimal surfaces, this flow was considered by
Chang and Liu 
in \cite{ChangLiu:2003,ChangLiu:2003.2,ChangLiu:2004}.
Their main result ensures the existence of a global weak solution 
which sub-converges
asymptotically as $t\to\infty$ to a conformal solution
of the Plateau problem for minimal surfaces, i.e. a solution  of \eqref{Plateau-probl} with $H\equiv 0$. Moreover, the same
authors treated the case
$H\equiv {\rm const}$, see \cite{ChangLiu:2003.2}. In this case, existence of a global
weak solution with image contained in a ball of radius $R$, was
shown under the Hildebrandt type condition $|H|< \frac1R$. Finally, in \cite{Struwe:1988}
Struwe considered the $H$-surface flow subject to a free boundary condition of the type $u(\cdot ,t)\in S$ on $\partial B$
 and the orthogonality condition $\partial_ru(\cdot ,t)
\perp T_{u(\cdot ,t)}S$ on $\partial B$ for all $t>0$. In this context
$S$ is assumed to be a sufficiently regular surface in $\R^3$ which
is diffeomorphic to standard sphere $S^2$.

With respect to the associated flow for a Dirichlet boundary condition
on the lateral boundary,
several results ensure the existence of
global weak, respectively smooth classical solutions. In this
case the problem
can be formulated as follows:
\begin{equation}\label{time-dep-Dirichlet-probl}
    \left\{
    \begin{array}{c}
     \partial_tu -\Delta u
     = -2(H\circ u)D_1u\times D_2u 
	\quad\mbox{in $B\times (0,\infty)$,}\\[5pt]
     \mbox{$u(\cdot)=u_o$ on $B\times\{0\}\cup \partial B\times (0,\infty)$,}    \end{array}
     \right.
\end{equation}
for given initial and boundary values $u_o\in W^{1,2}(B,\R^3)$. 
In \cite{Rey:1991}, Rey showed that the Hildebrandt type condition 
$|u_o|< R$ on $B$ and  $|H|< \frac1R$ for a 
Lipschitz continuous prescribed mean curvature function $H:B_R\to\R$ is sufficient to
guarantee the existence of a smooth global solution of
\eqref{time-dep-Dirichlet-probl}. For an existence result for short
time existence of classical solutions without any  assumption on $H$
and $u_o$ we refer to Chen \& Levine \cite{ChenLevine:2002}. In this paper also the bubbling phenomenon
at a first singular time is analyzed. Such a bubbling was ruled out
by Rey in \cite{Rey:1991} for the proof of the long time existence
using the Hildebrandt condition. The previous papers rely on methods
developed by Struwe \cite{Struwe:1985} for the harmonic map heat flow. In recent papers Hong \& Hsu
\cite{HongHsu:2012} respectively Leone \& Misawa \& Verde \cite{LeoneMisawaVerde:2012} established the existence of a global weak solution  for the evolutionary flow to higher dimensional $H$-surfaces by different methods; in the first paper the authors were also able to
show that the solutions are of class $C^{1,\alpha}$, which is the best regularity one can expect for systems including 
the parabolic $n$-Laplacean as leading term. 
Again a Hildebrandt type condition serves to exclude
the occurrence of $H$-bubbles during the flow. We note that all mentioned papers
rely on the strong assumption of Lipschitz continuity for $H$ and
the Hildebrandt-type condition for the existence proof of global solutions. These strong assumptions were considerably weakened in
a previous paper \cite{BoegDuzSchev:2011},
 in the sense that an {\bf isoperimetric condition} for bounded and continuous prescribed mean curvature functions $H\colon\R^3\to\R$
is sufficient for the existence of global solutions to
\eqref{time-dep-Dirichlet-probl}. Such an isoperimetric condition
relates the weighted $H$-volume of a set $E\subset \R^3$ to its perimeter via
\begin{equation}\label{iso-simple}
	2\Big|\int_E H\, d\xi\Big|\le c\mathbf P(E)
\end{equation}
for any set $E\subset\R^3$ with finite perimeter $\mathbf P(E)\le s$.
The condition \eqref{iso-simple} is termed isoperimetric condition of
type $(c,s)$. 
In \cite{Steffen:1976, Steffen:1976-2}, Steffen showed that such a condition with $c<1$ is sufficient for the existence of solutions to
\eqref{Plateau-probl}, and moreover that all known classical
existence results can be deduced from such a condition. The paper
\cite{BoegDuzSchev:2011}  gives the full parabolic analogue of this result for
the flow \eqref{time-dep-Dirichlet-probl}, which yields global solutions under a large variety of conditions.
Moreover, the same isoperimetric condition allows to
analyze the asymptotic behavior  as $t\to\infty$, to be precise,
global solutions sub-converge as $t\to\infty$ to solutions of
the stationary Dirichlet  problem for the $H$-surface equation.
Under the Dirichlet boundary condition, these solutions 
of course can not be expected to be conformal
and therefore they admit no differential geometric meaning. For this
reason we are here interested 
in the flow \eqref{time-dep-Plateau-probl} under the geometrically
more natural Plateau boundary condition. 
We prove that the free boundary condition
\eqref{time-dep-Plateau-probl}$_3$ 
allows the surfaces $u(\cdot,t)$ to adjust themselves conformally as
$t\to\infty$, so that global solutions to \eqref{time-dep-Plateau-probl}
sub-converge to classical \textbf{conformal} solutions of the Plateau problem,
which actually parametrize immersed surfaces with prescribed mean
curvature.

\subsection{Formulation of the problem and results}
The aim of the present paper is to give a suitable meaning
to the heat flow associated to the classical  Plateau
problem \eqref{Plateau-probl}. In order to formulate this evolution
problem, we need to explain to a certain extent some notations from the classical theory. Let $\Gamma\subset\R^3$ be a Jordan curve such that  a
$C^3$-parametrization $\gamma\colon S^1\to\Gamma$ exists.
By $\,\widehat\gamma\colon\R\to\Gamma$, we denote the corresponding map on
the universal cover $\R$ of $S^1$, defined by 
$\widehat\gamma(\varphi)=\gamma(e^{i\varphi})$. Associated
with the Jordan curve $\Gamma$ we consider the following class
of mappings from the unit disk $B\subset \R^2$ into $\R^3$ defined by
\begin{equation*}
   \S:=\left\{u\in W^{1,2}(B,\R^3)\,\left|\,
   \begin{array}{c}
      u|_{\partial B}\colon \partial B\to\Gamma\mbox{ is a continuous, }\\[0.5ex]
    \mbox{weakly monotone parametrization of }\Gamma
   \end{array}
   \right.\right\}.     
\end{equation*}
The monotonicity condition on $u|_{\partial B}$ means precisely that $u\big|_{\partial B}$
is the uniform limit of orientation preserving homeomorphisms from
$\partial B$ onto $\Gamma$.
This class allows the action of the non-compact M\"obius group of conformal
diffeomorphisms of the disc into itself, i.e. with $u\in\S$ we have
$u\circ g\in \S$ whenever $g\in\mathcal{G}$, where $\mathcal G$ denotes the M\"obius group defined by 
\begin{equation*}
  \mathcal{G}=\Big\{g\colon w\mapsto e^{i\varphi}\frac{a+w}{1+\overline a
    w}\,:\, a\in\C,\ |a|<1,\ \varphi\in\R\Big\}.
\end{equation*}
In order to factor out the action of the M\"obius group it is standard
to impose a \textbf{three-point-condition}.
More precisely, we fix three
arbitrary distinct points $P_1,P_2,P_3\in\partial B$ -- for
convenience we may choose $P_k=e^{i \Theta_k}$ with $\Theta_k:=\frac{2\pi k}{3}$ for $k=1,2,3$ -- and
three distinct points $Q_1,Q_2,Q_3\in\Gamma$ and impose the condition
$u(P_k)=Q_k$ for $k=1,2,3$. The corresponding function space 
we denote
by
\begin{equation}\label{def:CGamma}
  \Ss:=\big\{u\in\S\,:\, u(P_k)=Q_k\mbox{ for $k=1,2,3$}\big\}.
\end{equation}
We note that $u\in W^{1,2}(B,\R^3)$ is contained in $\Ss$ if and only
if $u(e^{i\vartheta})=\widehat\gamma(\varphi(\vartheta))$ for all
$\vartheta\in\R$ and some function
$\varphi\colon\R\to\R$ that is contained in the space
\begin{equation*}
  \mathcal{T}^*(\Gamma):=\left\{\varphi\in C^0\cap
  W^{\frac12,2}(\R)\,\left|\,
  \begin{array}{c}
    \varphi\mbox{ is non-decreasing, }
  \varphi(\cdot+2\pi)=\varphi+2\pi\\[0.5ex]
  \mbox{and }\widehat\gamma(\varphi(\Theta_k))= Q_k\mbox{ for }\ k=1,2,3
\end{array}\right.\right\},
\end{equation*}
where here, $\Theta_k\in[0,2\pi)$ is characterized by
$e^{i\Theta_k}=P_k$ for $k=1,2,3$.
We can always achieve
$\mathcal{T}^*(\Gamma)\neq\varnothing$ by 
changing the orientation of the parametrization $\gamma\colon S^1\to\Gamma$
if necessary. 
The space of admissible testing functions for a given surface $u\in\Ss$ with $u(e^{i\vartheta})=\widehat\gamma(\varphi(\vartheta))$, is then given by
\begin{equation*}
  T_{u}\mathcal{S}^\ast:=\Big\{w\in L^\infty\cap W^{1,2}(B,\R^3)\,:\,
                      w(e^{i\vartheta})=\widehat\gamma'(\varphi)(\psi-\varphi)
                      \mbox{ for some }\psi\in\mathcal{T}^\ast(\Gamma)\Big\}\,.
\end{equation*}
We note that $T_{u}\mathcal{S}^\ast$ is a convex cone. The significance of this set becomes clear from 
Lemma~\ref{lem:rand-variationen} which ensures that  a given $w\in T_u\mathcal{S}^\ast$
is the variation vector field of an admissible variation 
of $u$; here admissible has to be understood in the sense
that the variation is contained in $\Ss$ along the variation. 
The class $\Ss$ also allows so-called {\bf inner variations}.
These variations are generated by vector fields $\eta$ belonging
to the class $\mathcal C^\ast (B)$ (cf.~(\ref{inner-variations})), the class  of all
$C^1$-vector fields $\eta$ on 
$\overline B$ which are tangential along $\partial B$ and vanish at the three points $P_1, P_2$ and $P_3$. 

Finally, for a given closed, convex 
obstacle $A\subset\R^3$ with $\Gamma\subset A^\circ$, we define
\begin{equation}
  \label{def:CGA}
  \SGA:=\big\{u\in\Ss : u(x)\in A\mbox{ for a.e. }x\in B\big\}.
\end{equation}
As already mentioned before our goal is to define a geometric flow
associated with the classical Plateau problem \eqref{Plateau-probl}
for surfaces with prescribed mean curvature function $H\colon A\to \R$
that is continuous and bounded in $A$. 
This geometric flow should allow the existence of global (weak) solutions which at least sub-converge
asymptotically as $t\to\infty$ to solutions of the stationary 
Plateau problem \eqref{Plateau-probl}. Our definition of this flow is as follows: For a given obstacle $A$,
a given Jordan curve $\Gamma$ contained in $A$ and an initial datum $u_o\in\SGA$ we are looking for a global
weak solution 
\begin{equation}\label{def-weak}
	u\in L^\infty (0,\infty; W^{1,2}(B,\R^3))
	\quad
	\mbox{with}
	\quad
	\partial_tu\in L^2 (0,\infty; L^{2}(B,\R^3))
\end{equation}
to the following {\bf evolutionary Plateau problem for $H$-surfaces}:
\begin{align}\label{Plateau}
  \left\{
  \begin{array}{cl}
    \partial_tu -\Delta u
     = -2(H\circ u)D_1u\times D_2u 
     &
     \begin{array}{l}
     \mbox{weakly in $B\times (0,\infty)$,}
     \end{array}
     \\[7pt]
     u(\cdot,0)=u_o
     &
     \begin{array}{l}
     \mbox{in $B$,}
     \end{array}
     \\[7pt]
     u(\cdot,t)\in\SGA
     &
     \begin{array}{l}
     \mbox{for a.e. $t\in(0,\infty)$,}
      \end{array}
     \\[7pt]
     \displaystyle
     \int_{B} [Du(\cdot ,t)\cdot Dw+\Delta u(\cdot ,t)\cdot w]
     \,dx \ge 0
     &
     \begin{array}{l}
     \mbox{for a.e. $t\in(0,\infty)$ and all}\\
     \mbox{$w\in T_{u(\cdot,t)} 
     \mathcal{S}^\ast$ }
     \end{array}
     \\[7pt]
     \displaystyle
     \int_{B} {\rm Re}\big(\frak h[u(\cdot ,t)]
     \overline \partial\eta\big)
     +
     \big(\partial_tu\cdot Du\big)(\cdot ,t)\eta\, dx
     =0
     &\begin{array}{l}
     \mbox{for a.e. $t\in(0,\infty)$ and all}\\
     \mbox{$\eta\in\mathcal{C}^\ast (B)$. }
     \end{array}
    \end{array}
\right.
\end{align}
In \eqref{Plateau}$_5$ we have identified $\R^2$ with $\mathbb C$ and
abbreviated $\overline\partial\eta :=\tfrac12 (D_1\eta +\mathbf i
D_2\eta)$. Further, for a map $w\in W^{1,2}(B,\R^3)$ we use the abbreviation
\begin{equation}\label{quadratic-diff}
    \frak h [w]
    :=
    |D_1w|^2-|D_2w|^2 -2\mathbf i D_1w\cdot D_2w.
\end{equation}
We point out that for sufficiently regular $u$,
by the Gauss-Green formula
the inequality
\eqref{Plateau}$_4$ is equivalent to
\begin{equation}\label{strong-bdry-cond}
 \int_{\partial B} \tfrac{\partial u}{\partial r}(x,t)
w(x,t)\,d\mathcal H^1x \ge 0
 \qquad\mbox{for all $w\in T_{u(\cdot,t)} \mathcal S^\ast$}.
\end{equation}
We therefore  interpret \eqref{Plateau}$_4$ as a weak formulation of
(\ref{strong-bdry-cond}). It is well defined in 
our situation because $\Delta u(\cdot ,t)\in L^1(B)$ 
for a.e. $t$ as a consequence of
\eqref{def-weak} and (\ref{Plateau})$_1$, while (\ref{strong-bdry-cond}) 
can not be used in the general case 
since the radial derivative $\tfrac{\partial
 u}{\partial r}$ might not be well defined on 
$\partial B$. With this respect \eqref{Plateau}$_4$ can be interpreted
as a weak form of the Neumann boundary condition
\eqref{strong-bdry-cond} and henceforth we shall denote 
\eqref{Plateau}$_4$ 
{\bf weak Neumann boundary condition}. The last property 
\eqref{Plateau}$_5$ can be viewed as a type of conformality condition.
For a stationary solution, i.e. a time independent solution,
\eqref{Plateau}$_5$ yields the conformality in $B$, that is we have $\frak h[u]\equiv 0$ in $B$ which is equivalent to \eqref{Plateau-probl}$_3$. For a weak solution of the evolutionary Plateau problem, starting with
an initial datum $u_o$, we can not expect the solution to be conformal
for every time slice $t>0$. However, the asymptotic behaviour 
as $t\to\infty$ should enforce the solution to become
conformal. This can actually be shown for a sequence
of time slices $t_j\to\infty$, since the constructed weak solutions
obey the property $\partial_tu\in L^2(B\times (0,\infty))$.
Therefore, weak solutions of \eqref{Plateau}
sub-converge as $t\to\infty$ asymptotically to
a solution of the classical Plateau problem~(\ref{Plateau-probl}). In this sense
(apart from the three-point-condition which is inherited in
\eqref{Plateau}$_3$) the flow from \eqref{Plateau} is a natural geometric flow
associated to the classical Plateau problem for surfaces of prescribed mean curvature.

We also note that (\ref{Plateau})$_1$ and (\ref{Plateau})$_4$ can
be combined to
\begin{equation}\label{var-inequality}
  \int_{B\times(0,\infty)} Du\cdot Dw
    +\partial_tu\cdot w+2(H\circ u)D_1u\times D_2u\cdot w\,dz \ge 0
\end{equation}
for all $w\in L^\infty(B\times(0,\infty),\R^3)\cap L^2(0,\infty;W^{1,2}(B,\R^3))$ with 
$w(\cdot,t)\in T_{u(\cdot,t)} \mathcal{S}^\ast$ for
a.e. $t\in(0,\infty)$. In order to keep the presentation more
intuitive we prefer to use the $H$-surface system and weak Neumann
type boundary condition separately, 
instead of the unified variational inequality
\eqref{var-inequality}. 

To explain the
main results of the present paper, we start by specifying the hypotheses.
For the obstacle $A\subseteq\R^3$ we suppose that
 \begin{equation}\label{Assum-A}
   \mbox{$A\subseteq\R^3$ is closed, convex, with $C^2$-boundary and bounded principal curvatures.}
\end{equation}
By $\mathcal{H}_{\partial A}(a)$ we denote the minimum of the
 principal curvatures of $\partial A$ in the point $a\in\partial A$,
 taken with respect to the inward pointing unit normal vector.
 Moreover, we assume that
 \begin{equation}
   \label{Assum-H}
   \mbox{$H\colon A\to\R$ is a bounded, continuous function}
 \end{equation}
 and satisfies
 \begin{equation}
   \label{Assum-H-dA}
   |H|\le \HA\qquad\mbox{on }\partial A.
 \end{equation}
 As before, we assume that 
 \begin{equation}
   \label{Assum-Jordan}
	\mbox{$\Gamma\subset A^\circ$ is a Jordan curve parametrized by
   }\gamma\in C^3(S^1,\Gamma).
 \end{equation}
 Furthermore, we suppose that $H$
 satisfies a {\bf spherical isoperimetric condition of type \boldmath $(c,s)$
   on $A$}, for parameters $0<s\le \infty$ and $0<
 c<1$. This means that for every
 spherical $2$-current $T$ (cf. Definition \ref{def-spherical}) with
$\spt T\subseteq A$ and ${\bf M}(T)\le s$ there holds
 \begin{equation}\label{sphrical-isop}
  2\big|\left\langle Q,H\Omega\right\rangle\big| =2\left| \int_A i_QH\Omega\right|\le c\,{\bf M}(T),
 \end{equation}
 where $Q$ denotes the unique integer multiplicity rectifiable
 $3$-current with $\partial Q=T$, ${\bf M}(Q)<\infty$ and $\spt
 Q\subseteq A$. Moreover, $i_Q$ denotes the integer valued multiplicity
 function of $Q$ and $\Omega$ the volume form on $\R^3$.
Finally, for the initial  values $u_o\in \SGA$, we
assume that they satisfy
\begin{equation}
  \label{Assum-uo}
  \int_B|Du_o|^2\dx\le s(1-c).
\end{equation}
Note that this is automatically satisfied in the case $s=\infty$.
Under this set of assumptions, we have the following
general existence result.

 \begin{theorem}\label{main}
 Assume that $A\subseteq\R^3$
 and $H\colon A\to\R$ satisfy the assumptions \eqref{Assum-A} --
 \eqref{sphrical-isop} and let $u_o\in \SGA$ be given with \eqref{Assum-uo}.  Then there exists a global weak solution
 $$
 u\in C^{0} \big([0,\infty); L^2(B,A)\big)\cap L^\infty \big((0,\infty); W^{1,2}(B,A)\big)
 $$
 with
 $\partial_tu\in L^2(B\times(0,\infty),\R^3)$ to  \eqref{Plateau}. Moreover, the initial datum is achieved as usual in the $L^2$-sense, that is
 $\lim_{t\downarrow 0}\| u(\cdot ,t)-u_o\|_{L^2(B,\R^3)}=0$.
 \end{theorem}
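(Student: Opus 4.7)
The plan is to construct the solution by a \emph{time discretization} (minimizing movements) scheme, following the approach of the authors' earlier paper \cite{BoegDuzSchev:2011} for the Dirichlet case, suitably adapted to the Plateau boundary condition and the three-point normalization. First I would fix a step size $h>0$, set $u_0^h:=u_o$, and iteratively define $u_k^h$ as a minimizer of the functional
\[
\F_k^h(v) := \int_B\!\left[\tfrac{1}{2h}|v-u_{k-1}^h|^2+\tfrac12|Dv|^2\right]\dx + V_H(v,u_{k-1}^h)
\]
over $v\in\SGA$, where $V_H(v,u_{k-1}^h)$ denotes the $H$-weighted algebraic volume of the integer multiplicity $3$-current $Q$ whose boundary is the spherical $2$-cycle obtained by joining the surfaces $v$ and $u_{k-1}^h$ along their common trace on $\Gamma$. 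By the isoperimetric hypothesis \eqref{sphrical-isop}, $|V_H|$ is controlled by $\tfrac c2$ times the combined areas of $v$ and $u_{k-1}^h$, hence by $\tfrac c2$ times the combined Dirichlet energies, which yields the coercivity
\[
\F_k^h(v)\ge \tfrac{1-c}{2}\!\int_B\!|Dv|^2\dx + \tfrac{1}{2h}\|v-u_{k-1}^h\|_{L^2}^2 - C(u_o),
\]
provided the mass threshold $s$ is respected along the iteration, which is built in via \eqref{Assum-uo}. Existence of a minimizer in $\SGA$ then follows from the direct method: the $W^{1/2,2}$-compactness of traces on $\partial B$ ensures that weak monotonicity and the three-point condition persist under weak $W^{1,2}$-convergence, and the closed obstacle $A$ is convex so membership in $\SGA$ is preserved.

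The second step is to derive the discrete Euler--Lagrange characterization. Since the admissible boundary variations form only the convex cone $T_{u_k^h}\mathcal S^*$ (monotonicity of the trace forces one-sided perturbations), outer variations produced by Lemma~\ref{lem:rand-variationen} yield the \emph{variational inequality}
\[
\int_B\!\Bigl[\tfrac{u_k^h-u_{k-1}^h}{h}\cdot w + Du_k^h\cdot Dw + 2(H\circ u_k^h)D_1u_k^h\times D_2u_k^h\cdot w\Bigr]\dx\ge 0
\]
for all $w\in T_{u_k^h}\mathcal S^*$, which is the discrete analogue of \eqref{var-inequality}. Inner variations $v_\eps(x)=u_k^h(x+\eps\eta(x))$ with $\eta\in\mathcal C^*(B)$ are two-sided (since $\mathcal C^*(B)$ is a linear space of tangential fields respecting the three-point condition), so they produce an equality, namely the discrete conformality relation matching \eqref{Plateau}$_5$ with the time difference quotient in place of $\partial_tu$. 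That the obstacle constraint $v\in A$ does not obstruct the scheme follows by testing with the nearest-point projection onto $A$, which decreases $\F_k^h$ whenever activated, using the curvature bound \eqref{Assum-H-dA} together with convexity of $A$.

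The comparison $\F_k^h(u_k^h)\le\F_k^h(u_{k-1}^h)$ telescopes to the a priori bound
\[
\sum_{k\ge 1}\tfrac{1}{h}\|u_k^h-u_{k-1}^h\|_{L^2}^2 + \sup_k\int_B|Du_k^h|^2\dx\le C,
\]
which transfers to uniform estimates for the piecewise affine and piecewise constant interpolants $u^h$ in the norms stated in the theorem. Standard weak compactness produces a limit $u\in L^\infty(0,\infty;W^{1,2})$ with $\partial_tu\in L^2(B\times(0,\infty))$; the Aubin--Lions lemma yields strong $L^2$-convergence of $u^h$ and hence of $H\circ u^h$. Passing to the limit in the variational inequality and in the conformality relation requires strong $W^{1,2}$-convergence on a.e.~time slice, which I would establish by testing the inequality with an admissible correction of $u-u^h$ and invoking the coercivity gap $1-c>0$ once more. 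Uniform convergence of boundary traces preserves weak monotonicity, the three-point condition, and thus membership in $\SGA$ for the limit; $L^2$-continuity at $t=0$ follows by revisiting the energy identity near the initial time.

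The main obstacle, as in the minimal-surface Plateau flow of Chang--Liu, is the \emph{construction and control of admissible test functions}: the monotonicity constraint and the three-point condition rule out ordinary perturbations, so both the outer variations (via trace reparametrizations of $\widehat\gamma$) and the inner variations (via tangential fields vanishing at $P_1,P_2,P_3$) must be carefully engineered, and the isoperimetric condition must then be invoked both to compensate the nonconvex $H$-volume in each discrete step and to transfer strong convergence through the nonlinear term $(H\circ u)D_1u\times D_2u$ in the limit.
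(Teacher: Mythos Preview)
Your overall strategy---time discretization followed by compactness---matches the paper's, and you correctly identify many ingredients: the discrete functional, the role of the isoperimetric condition for coercivity, one-sided outer variations versus two-sided inner variations, the telescoping energy bound, and the need for strong convergence to handle the critical nonlinearity. However, there is a genuine gap at the step where you claim strong $W^{1,2}$-convergence on a.e.\ time slice follows by ``testing the inequality with an admissible correction of $u-u^h$ and invoking the coercivity gap $1-c>0$.'' This is precisely the main difficulty, and the paper devotes the bulk of its technical work to it.

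The trouble is twofold. First, $u(\cdot,t)-u_h(\cdot,t)$ is not an element of the tangent cone $T_{u_h(\cdot,t)}\mathcal S^\ast$, so it cannot be plugged directly into the discrete variational inequality; any ``admissible correction'' must tamper with the boundary trace in a way that destroys a clean energy identity. Second, even where testing is possible, the critical term $2(H\circ u_h)D_1u_h\times D_2u_h\cdot w$ scales like $|Du_h|^2$ and carries no sign; the isoperimetric gap $1-c$ controls global energies but does not absorb this term. The paper instead proves an \emph{$\varepsilon$-regularity} statement: wherever the local Dirichlet energy of $u_h(\cdot,t)$ is below a fixed threshold, one has uniform $W^{2,2}$-bounds. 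This requires (i) a Calder\'on--Zygmund argument up to the Plateau boundary (Theorem~\ref{thm:Calderon-Zygmund}), based on small oscillation and comparison with harmonic maps minimizing in the class $\mathcal S^\ast_u(\Gamma)$, to first reach $W^{1,4}$; and then (ii) an angular difference-quotient argument with Struwe-type modified test functions (Theorem~\ref{thm:apriori-W22-bdry}) to upgrade $W^{1,4}$ to $W^{2,2}$. A concentration-compactness argument then yields strong $W^{1,q}$-convergence away from finitely many concentration points, a capacity argument removes those points (using Rivi\`ere's regularity result to make $\mathfrak h[u]\in L^2_{\rm loc}$), and Moser's replacement trick on the bad-time-slice sets $\Lambda_{i,a}$ is needed to identify the weak $L^2$-limit of $\Delta_t^{h_i}u_{h_i}$ with $\partial_t u$ along the \emph{original} subsequence rather than a $t$-dependent one.

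A secondary gap: the direct-method existence of discrete minimizers is not immediate, because the $H$-volume term is \emph{not} weakly lower semicontinuous in $W^{1,2}$---bubbles with nonzero enclosed $H$-volume may detach. The paper handles this via Steffen's replacement construction (Lemma~\ref{lem-bubbels}), which modifies a minimizing sequence so that the bubble volume is controlled by the bubble energy through the isoperimetric condition; your sketch omits this step.
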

 
With respect to the asymptotic behaviour as $t\to\infty$ we have the following
 
  \begin{theorem}\label{main2}
 Under the assumptions of Theorem \ref{main} there exist a map $u_*\in \SGA$ and a sequence $t_j\to\infty$
 such that $u(\cdot ,t_j)\rightharpoonup u_*$ weakly in $W^{1,2}(B,\R^3)$ and such that $u_*$ is a solution of the Plateau
 problem for surfaces of prescribed mean curvature
 \begin{equation}\label{Plateau-probl-infty}
    \left\{
    \begin{array}{c}
     \Delta u_* =2 (H\circ u_*) D_1 u_*\times D_2 u_*\quad\mbox{weakly in B,}\\[5pt]
     u_* \in \SGA,\\[5pt]
     |D_1u_*|^2 -|D_2u_*|^2 =0=D_1u_* \cdot
     D_2u_*\quad\mbox{in $B$.}
     \end{array}
     \right.
 \end{equation}
The solution satisfies
$u_*\in C^0(\overline B,\R^3)\cap C^{1,\alpha}_{\rm loc}(B,\R^3)
$, and if $H$ is H\"older continuous, then
$u_*\in C^{2,\alpha}(\overline B,\R^3)$ and $u_*$ is a
 classical solution of \eqref{Plateau-probl}.
 \end{theorem}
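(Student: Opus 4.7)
The plan is to exploit the global energy estimate $\partial_t u\in L^2(B\times(0,\infty))$ obtained in Theorem~\ref{main}. Its finiteness forces the existence of a sequence $t_j\to\infty$ along which $\|\partial_t u(\cdot,t_j)\|_{L^2(B)}\to 0$, and the uniform bound $u\in L^\infty(0,\infty;W^{1,2}(B))$ permits, after passing to a subsequence, the extraction of a weak $W^{1,2}$-limit $u(\cdot,t_j)\wto u_*$ together with strong $L^p$-convergence for every finite $p$ by Rellich--Kondrachov. The inclusion $u_*(B)\subseteq A$ follows from the convexity of $A$. For the boundary behaviour, a Courant--Lebesgue type argument combined with the uniform Dirichlet-energy bound yields equicontinuity of the traces $u(\cdot,t_j)|_{\partial B}$, hence uniform convergence along a subsequence; this preserves both the weak monotonicity of the parametrization and the three-point condition, so that $u_*\in\SGA$.

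To verify that $u_*$ solves the stationary Plateau system \eqref{Plateau-probl-infty}, rewrite \eqref{Plateau}$_1$ at $t=t_j$ as
\[
   -\Delta u(\cdot,t_j) + 2(H\circ u(\cdot,t_j))\,D_1u(\cdot,t_j)\times D_2u(\cdot,t_j) = -\partial_tu(\cdot,t_j),
\]
where the right-hand side tends to zero in $L^2(B)$. The linear Dirichlet term passes under weak $W^{1,2}$-convergence, but the quadratic cross product, the quantity $\frak h[u(\cdot,t_j)]$ entering the conformality identity \eqref{Plateau}$_5$, and the weak Neumann inequality \eqref{Plateau}$_4$ all require strong convergence $Du(\cdot,t_j)\to Du_*$ in $L^2(B)$ to be identified correctly. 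Note that once strong convergence is established, the $\partial_t u\cdot Du$ term in \eqref{Plateau}$_5$ is controlled by $\|\partial_t u(\cdot,t_j)\|_{L^2}\,\|Du(\cdot,t_j)\|_{L^2}\|\eta\|_{L^\infty}\to 0$ and gives the stationary conformality relation \eqref{Plateau-probl-infty}$_3$ in its standard weak form, which for conformal parametrizations translates to the pointwise conformality relations.

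Thus the crucial step, and the main obstacle, is to upgrade weak to strong $W^{1,2}$-convergence along $t_j$. Here the family $u_j:=u(\cdot,t_j)$ is in effect a Palais--Smale sequence for the stationary Plateau functional with error $\partial_tu(\cdot,t_j)\to 0$ in $L^2$. Testing with admissible variations in $T_{u_j}\mathcal{S}^\ast$ constructed from $u_j-u_*$ (in the spirit of \cite{BoegDuzSchev:2011,ChangLiu:2003}), one obtains $\int_B|Du_j|^2\dx\to\int_B|Du_*|^2\dx$ modulo a possible concentration of Dirichlet energy at finitely many interior or boundary points, that is, the formation of $H$-bubbles in the limit. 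Such bubbles are excluded by the spherical isoperimetric condition \eqref{sphrical-isop} with $c<1$ together with the initial energy bound \eqref{Assum-uo}, via the same bubbling analysis developed for the Dirichlet flow in \cite{BoegDuzSchev:2011}: any nontrivial $H$-bubble is a nonconstant conformal $H$-surface on $\R^2$ with image in $A$, and its area and weighted $H$-volume would then contradict the isoperimetric inequality with strict constant $c<1$. With strong convergence in hand, all terms in \eqref{Plateau} pass to the limit and yield \eqref{Plateau-probl-infty}.

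The regularity assertions are then standard consequences of the classical theory for conformal solutions of the Plateau problem with prescribed bounded continuous mean curvature: continuity up to $\partial B$ and interior $C^{1,\alpha}$-regularity of $u_*$ follow from the Heinz--Hildebrandt--Morrey estimates for conformal $H$-surfaces with monotone boundary values on a $C^3$-Jordan curve, and boundary Schauder theory applied to the quasilinear system \eqref{Plateau-probl-infty}$_1$, combined with $\gamma\in C^3$ and conformality, promotes $u_*$ to $C^{2,\alpha}(\overline B,\R^3)$ when $H$ is H\"older continuous, so that $u_*$ is a classical solution of the Plateau problem \eqref{Plateau-probl}.
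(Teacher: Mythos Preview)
Your overall scaffolding is correct and matches the paper: choose $t_j\to\infty$ with $\|\partial_tu(\cdot,t_j)\|_{L^2(B)}\to0$, extract a weak $W^{1,2}$-limit $u_*$, use Lemma~\ref{lem:compact traces} (Courant--Lebesgue) to get uniform convergence of traces and hence $u_*\in\SGA$, and then pass to the limit in the three relations \eqref{Plateau}$_1$, \eqref{Plateau}$_4$, \eqref{Plateau}$_5$. The regularity paragraph is also essentially right.

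The gap is in your ``crucial step''. You propose to obtain \emph{global} strong $W^{1,2}$-convergence by a Palais--Smale/energy-identity argument and then to \emph{exclude} $H$-bubbles via the isoperimetric condition. Two problems with this. First, the test function you want, built from $u_j-u_*$, is not obviously in $T_{u_j}\mathcal{S}^\ast$: the boundary traces of $u_j$ and $u_*$ are different monotone reparametrizations of $\Gamma$, so $u_j-u_*$ does not have the required form $\widehat\gamma'(\varphi_j)(\psi-\varphi_j)$ on $\partial B$, and manufacturing an admissible variation with the right properties is exactly the delicate construction carried out in Lemma~\ref{lem-Comp-Est} and the proof of Lemma~\ref{convergence-non-linearity}. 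Second, and more seriously, the isoperimetric condition \eqref{sphrical-isop} with $c<1$ does \emph{not} by itself rule out nontrivial $H$-spheres in $A$: it only gives $2|V_H|\le c\,\mathbf{M}(T)$, which is a bound, not an obstruction to existence. Your appeal to \cite{BoegDuzSchev:2011} is misplaced---that paper (and the present one) do not exclude bubbles; they work \emph{around} a possible finite concentration set.

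The paper's route is different and more robust: it never claims global strong $W^{1,2}$-convergence. Instead, Lemma~\ref{convergence-non-linearity} packages the whole limit passage. For the sequence $u_k:=u(\cdot,t_k)$ with $f_k:=-\partial_tu(\cdot,t_k)\to0$ in $L^2$, one uses the $\varepsilon$-regularity Theorems~\ref{thm:apriori-W22-int} and~\ref{boundary-W22} (fed by the Calder\'on--Zygmund estimate of Theorem~\ref{thm:Calderon-Zygmund}) to obtain uniform local $W^{2,2}$-bounds, hence local strong $W^{1,q}$-convergence for every $q$, on $\overline B\setminus\Sigma$ where $\Sigma$ is a finite set of concentration points (together with $P_1,P_2,P_3$). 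The limit relations are then established on $B\setminus\Sigma$; since $\Sigma$ has zero $W^{1,2}$-capacity, a capacity argument extends \eqref{Plateau-probl-infty}$_1$, the Neumann inequality, and the stationarity identity to all of $B$. For the latter one needs $\mathfrak h[u_*]\in L^2_{\mathrm{loc}}$, which comes from Theorem~\ref{mod-Riviere} combined with Theorem~\ref{thm:Calderon-Zygmund}. Finally, to go from $\partial\mathbf D(u_*;\eta)=0$ for $\eta\in\mathcal C^\ast(B)$ to pointwise conformality one must first remove the three-point constraint: this is done by composing with a M\"obius transformation $g_\tau$ that restores $\varphi_\tau(P_k)=P_k$, using conformal invariance of $\mathbf D$; only then does the classical argument yield \eqref{Plateau-probl-infty}$_3$. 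You skipped this last step as well.
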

 
\subsection{Technical aspects of the proofs}
In the present section, we briefly comment on the several different aspects
that are joined to the existence proof.

\subsubsection*{Variational formulation via Geometric Measure Theory}
The starting point 
of our considerations is the observation that the geometric
flow \eqref{Plateau} admits a variational structure. This means
that  $u\mapsto -\Delta u+2(H\circ u)D_1u\times D_2u$
can be interpreted as the Euler-Lagrange operator
of the energy functional $\mathbf E_H(v):= \mathbf D(u)+2\mathbf V_H(u,u_o)$ defined on the class $\SGA$. Here, $\mathbf V_H(u,u_o)$
measures the oriented volume (taken with multiplicities as in \eqref{sphrical-isop}) enclosed
by the surfaces $u$ and $u_o$ and weighted with respect to
the prescribed mean curvature function $H\colon A\to \R$.
The definition of the volume term can be made rigorous
by methods from Geometric Measure Theory, and at this stage we follow
ideas introduced by Steffen \cite{Steffen:1976, Steffen:1976-2}.
Minimizers of such energy functionals are in particular stationary
with respect to inner variations, i.e.
$\frac\partial{\partial s}\big|_{s=0}\mathbf E_H(u\circ\phi_s)=0$ whenever
$\phi_s$ is the flow generated by a vector field $\eta\in \mathcal
C^\ast (B)$. Since the volume term is invariant under inner
transformations, minimizers of $\mathbf{E}_H$ satisfy $\partial
\mathbf D(u;\eta)=\int_B{\rm Re}\big(\frak
h[u]\overline\partial\eta\big)\dx=0$, which leads to  conformal
solutions. The conformality is geometrically significant since it implies that the minimizers
parametrize an immersed surface with mean curvature given by the
prescribed function $H$. 
Finally,
variations which take into account the possibility to vary
minimizers along $\partial B$ tangential to $\Gamma$ give rise to
a weak Neumann type boundary condition as \eqref{Plateau}$_4$.
Therefore, \eqref{Plateau} can be interpreted as the gradient
flow associated with the classical Plateau problem \eqref{Plateau-probl}.
For the construction of solutions to this gradient flow, we use the following time
discretization approach. 

\subsubsection*{Time discretization -- Rothe's method}
This approach has been successfully carried out for the construction
of weak solutions for the harmonic map heat flow by Haga \& Hoshino \&
Kikuchi \cite{HagaHshinoKikuchi:2004} and Kikuchi \cite{Kikuchi:1991}
(see also Moser \cite{Moser:2009} for an application of the technique
to the bi-harmonic heat flow). 
 For a fixed step size $h>0$ we sub-divide $(0,\infty)$ into $((j-1)h, jh]$ for $j\in\N$. We fix a closed, convex subset
 $A\subseteq \R^3$ and a datum $u_o\in \SGA$. For $j=0$ we let $u_{o,h}:=u_o$. Then, for $j\in\N$
 we recursively define  time-discretized energy functionals according to
 $$
    {\bf F}_{j,h}(w):={\bf D}(w)+{\bf V}_H(w,u_o)+\tfrac{1}{2h}\int_B\big|w-u_{j-1,h}\big|^2\, dx.
 $$
We construct $u_{j,h}$ as
a minimizer of the functional ${\bf F}_{j,h}$ in a fixed sub-class of
$\SGA$, which may be defined for example by
a further energy restriction such as $\mathbf D(u)\le s$. 
At this stage, we impose a spherical isoperimetric condition on the prescribed
mean curvature function $H\colon A\to \R$
to ensure the existence of an ${\bf F}_{j,h}$-minimizer. 
Moreover, since the leading terms ${\bf D}(w)$ and ${\bf V}_H(w,u_o)$
of the energy functional are conformally invariant, we
impose the classical three-point-condition of the type $u(P_k)=Q_k$,
$k=1,2,3$ for three points $P_k\in\partial B$,  to factor out the action of
the M\"obius group in the leading terms of the functional. In this setting, we can ensure the existence of
minimizers in $\SGA$ to $\mathbf{F}_{j,h}$ by modifying the methods developed
in \cite{Steffen:1976} (see also \cite{DuzaarSteffen:1999,
  BoegDuzSchev:2011}). Having the sequence
of $\mathbf F_{j,h}$-minimizers $u_{j,h}$ at hand one defines an approximative solution to the Plateau $H$-flow from \eqref{Plateau}
by letting
$$
    \mbox{$u_h(x,t):= u_{j,h}(x)\quad$ for all $x\in B$ and $j\in\N$
      with $t\in ((j-1)h,jh]$.}
$$
The constructed minimizers $u_{j,h}$ are actually H\"older continuous in the interior of $B$ and  continuous up to the boundary
$\partial B$. This follows by using the ${\bf F}_{j,h}$-minimality along the lines of an old device of  Morrey  based on the harmonic replacement and comparison of energies. The lower order $L^2$-term, i.e. the term playing the role of the discrete time derivative,  is at
this stage harmless. This term has however a certain draw back. It is
responsible for the fact that the H\"older estimates can not be
achieved uniformly in $h$ when $h\downarrow 0$.

The obstacle condition $u_{j,h}(B)\subseteq A$ and the possible energy restriction of the form ${\bf D}(u_{j,h})\le s$ in principle only allow to derive certain variational inequalities for minimizers. However, if one imposes a condition relating the absolute value of the prescribed mean curvature function $H$ along the boundary $\partial A$
of the obstacle to the principle curvatures $\mathcal H_{\partial A}$
of $\partial A$, then by some sort of maximum principle the minimizers
$u_{j,h}$ fulfill the Euler-Lagrange system
associated with the functional ${\bf F}_{j,h}$. Formulated in terms of
the function $u_h$, this system reads as
\begin{equation}\label{Euler-h}
    \Delta_t^hu_h -\Delta u_h +2(H\circ u_h)D_1u_h\times D_2 u_h=0\quad \mbox{weakly on $B\times (0,\infty)$}
\end{equation}
if we abbreviate 
$$
    \Delta_t^h w(x,t):=\frac{w(x,t)-w(x,t-h)}{h}
$$
for the finite difference quotient in time. We mention that $u_h(\cdot
,t)\in\SGA$ for any $t\ge 0$, by construction.
Moreover, varying the minimizers $u_{j,h}$ tangentially to $\Gamma$
along $\partial B$ yields the weak Neumann type boundary condition for the map $u_h$:
\begin{equation}\label{Neumann-h}
	0
	\le 
	\int_{B\times \{t\}} 
	\big[Du_{h}\cdot D w+\Delta u_{h}\cdot  w\big]\,dx
\end{equation}
for any $w\in T_{u_{h}(\cdot ,t)} \mathcal S^\ast$ and $t>0$.
Finally, inner variations lead to some kind of perturbed conformality
condition, more precisely  
\begin{align}\label{conform-h}
	\int_{B\times \{t\}}\big[
	{\rm Re} \big( \frak h[u_{h}] \,\overline\partial\eta\big)
	+
	\Delta_t^hu_h\cdot Du_{h} \eta\big]\, dx=0
\end{align}
whenever $\eta\in\mathcal{C}^\ast(B)$ and $t>0$. The combination of 
\eqref{Euler-h}, \eqref{Neumann-h} and \eqref{conform-h} means 
that $u_h$ solves the time discretized Plateau flow for surfaces of
prescibed mean curvature, and  the main effort of the paper is 
to show that the constructed solutions $u_h$ actually converge to a
solution of \eqref{Plateau} as $h\downarrow 0$. 

\subsubsection*{An $\varepsilon$-regularity result}

Due to the non-linear character of the time discrete
$H$-flow system \eqref{Euler-h}, the (non-linear) Plateau-type boundary condition appearing
in \eqref{Neumann-h} and the perturbed conformality condition
\eqref{conform-h}, the analysis of the convergence 
is a non-trivial task and needs several technically involved tools. 
The major obstructions stem from three facts. 
Firstly, the non-linear $H$-term, i.e. $2(H\circ w) D_1w\times D_2w$,
is not continuous with respect to weak convergence in $W^{1,2}$. 
Secondly, the weak boundary condition (\ref{Neumann-h}) associated
with the 
Plateau problem contains a hidden non-linearity in the constraint
$w\in T_{u_h(\cdot,t)}\mathcal{S}^\ast$ and therefore is also not
compatible with weak convergence. 
Finally, the non-linear term $\mathfrak h [u_h]\overline \partial\eta$
also causes problems in the limit $h\downarrow 0$. For these reasons,
one would need at least uniform local $W^{2,2}$-estimates up to the
boundary in order to achieve local strong convergence in $W^{1,2}$.
 
However, the approximation scheme only yields uniform $L^\infty$--\,$W^{1,2}$-bounds for $u_h$ and $L^2$-bounds
for the discrete time derivative $\Delta_t^hu_h$. Therefore, one can only conclude that
a subsequence $u_{h_i}$ converges in $C^0$--\,$L^2$ and weakly* in
$L^\infty$--\,$W^{1,2}$ 
to a limit map $u\in L^\infty$--\,$W^{1,2}\cap C^{0,\frac12}$--\,$L^2$, and furthermore
that the weak limit admits a time derivative $\partial_tu\in L^2$
and that $\Delta_t^{h_i}u_{h_i}$ converges weakly to $\partial_tu$ in
$L^2$. These convergence properties
are not sufficient, though, to pass to the limit neither in the non-linear $H$-term
$H(u_h)D_1u_h\times D_2u_h$, nor in the boundary
condition~(\ref{Neumann-h}), nor 
in the non-linear term $\frak h [u_h]$. 
For the treatment of these terms, we employ ideas used by Moser for the construction of a biharmonic map
heat flow \cite{Moser:2009}. These methods have been successfully
adapted in \cite{BoegDuzSchev:2011},
 where a related $H$-surface flow with a Dirichlet boundary condition on the lateral boundary has been studied (see also \cite{BoegDuzSchev:2011.2} for an application to the heat flow for $n$-harmonic maps).

First of all one argues slice-wise, that is for a fixed time $t$. Then the sequence $u_{h_i}(\cdot ,t)$ is composed by different minimizers,
all of them in $\SGA$, and each of them satisfies \eqref{Euler-h}, \eqref{Neumann-h} and \eqref{conform-h} on the fixed time slice.
In particular, the maps $u_{h_i}$ satisfy the three-point-condition
and therefore are equicontinuous on $\partial B$. The idea now is to establish some sort of $\varepsilon$-regularity result. By this we mean an assertion of the form
\begin{equation}\label{eps-o}
	\sup_{i\in \N} \int_{B_\rho^+(x_o)} |Du_{h_i}|^2\, dx
	<\varepsilon
	\quad
	\Longrightarrow
	\quad
	\sup_{i\in \N} \| u_{h_i}\|_{W^{2,2}(B_{\rho/2}^+(x_o))}
	<\infty,
\end{equation}
where $\epsilon >0$ is a universal constant which can be determined in dependence on the data.
Here $B_\rho^+(x_o)$ denotes either an interior disk $B_\rho (x_o)
\subset B$ or a half-disk centered at a boundary point $x_o\in\partial B$.
In any case we only consider disks such that $B_\rho^+(x_o)\cap
\{ P_1,P_2,P_3\}=\varnothing$. The proof of statement \eqref{eps-o} is the core
of our construction of weak solutions and consists of two steps, which
we summarize next. 

\subsubsection*{A-priori $W^{2,2}$-estimates up to the Plateau boundary}
The first step of the proof of~(\ref{eps-o}) consists of proving
apriori estimates under additional regularity assumptions. 
We establish them for general solutions which satisfy $\Delta u=F$ in
$B$, together with a Plateau type boundary condition and the weak
Neumann type condition \eqref{Neumann-h}. Here, we need to consider right-hand sides of critical growth $|F|\le C (|Du|^2+ f)$ for some
$f\in L^2(B)$. This is the reason why we can establish
$W^{2,2}$-estimates in a first step only under the additional
assumption $|Du|\in L^4_{\rm loc}$, which implies $F\in L^2_{\rm loc}$. 
In the interior, the local $W^{2,2}$ estimate~(\ref{eps-o}) then follows via the
difference quotient technique and an application of the
Gagliardo-Nirenberg interpolation inequality in a standard
way. However, the boundary version of
this result is much more involved. Here we need local versions of
global $W^{2,2}$ estimates which have been
derived for minimal surfaces with a Plateau type boundary condition by
Struwe in \cite{Struwe:1988-2, ImbuschStruwe:1999} (see als
\cite{ChangLiu:2003, ChangLiu:2003.2, ChangLiu:2004}).
The local $W^{2,2}$-estimate
follows by a technically involved angular difference quotient
argument. For its implementation, additionally to $Du\in
L^4(B_\rho^+(x_o))$ we also need to assume that the 
oscillation of $u$ on $B_\rho^+(x_o)$ is small enough. This is needed
in order to ensure that the image of $u$ is contained in a tubular
neighborhood of $\Gamma$, so that the nearest-point retraction onto
$\Gamma$ is well-defined. In this situation, it is possible to adapt
the standard variations that are used in the difference quotient
argument in such a way that they are admissible under the Plateau
boundary constraint. The additional assumption of small oscillation
can be established by a Courant-Lebesgue type argument, once 
the local interior $W^{2,2}$-estimate is known. This is a consequence
of an argument by Hildebrandt \& Kaul \cite{HildebrandtKaul:1972} and
has been exploited before in the situation 
of a free boundary condition in \cite{Scheven:2006}.
Therefore it only remains to establish the local
$W^{1,4}$-estimate at the boundary in order to justify the application
of the above $W^{2,2}$-estimates to the time-discretized $H$-surface flow.

\subsubsection*{Calder\'on-Zygmund estimates up to the boundary for systems with critical
growth}
Here we use a Calder\'on-Zygmund type argument for solutions of systems of the type
$\Delta u=F$ which satisfy a Plateau-type boundary condition, where 
the right-hand side has critical growth as above. Our arguments are inspired by 
methods which have been developed for elliptic and parabolic
$p$-Laplacean type systems by Acerbi-Mingione \cite{Acerbi-Mingione:2007} (see also the paper by Caffarelli-Peral
\cite{CaffarelliPeral:1998}). 
In order to deal with the critical growth of the inhomogeneity, we
again need a small oscillation assumption for
the derivation of suitable comparison estimates. The small oscillation
 is guaranteed by the continuity of the minimizers $u_{j,h}$.
As local comparison problems, we consider the system $\Delta w=0$ on
$B_\rho^+(x_o)$, together with the boundary condition $w=u$ on $B\cap \partial B_\rho(x_o)$
and a Plateau type condition on $\partial B\cap B_\rho(x_o)$. For such solutions local $W^{2,2}$-estimates hold, which allow an improvement of integrability of the gradient of $u$ on its level sets.
This improvement yields a quantitative Calder\'on-Zygmund estimate
of the form
\begin{equation*}
    \int_{B_{\rho/4}^+(x_o)}|Du|^4\dx
    \le  
    \frac C{\rho^2} 
    \bigg(\int_{B_{\rho/2}^+(x_o)}|Du|^2\dx\bigg)^2
    +
    C \int_{B_{\rho/2}^+(x_o)}|f|^2\dx,
\end{equation*}
for some universal constant $C$,
provided $\osc_{B_\rho^+(x_o)}u$ is small enough and $\|Du\|_{L^2}$ is
bounded from above. For our applications however, we are only
interested in the qualitative regularity $u\in
W^{1,4}(B_{\rho/2}^+(x_o),\R^3)$, which enables us to apply the
a-priori $W^{2,2}$-estimates from above and thereby to establish the
$\varepsilon$-regularity result~(\ref{eps-o}). 

\subsubsection*{Concentration compactness arguments}
Next, we apply \eqref{eps-o} to the sequence $(u_{h_i})$ on a fixed
time slice $t>0$.
Since the smallness assumption on the left-hand side of~(\ref{eps-o})
is satisfied for all but finitely many points $x_o\in \overline B\setminus \{P_1,P_2,P_3\}$ for a
sufficiently small radius $\rho(x_o)>0$, we infer uniform $W^{2,2}$-estimates and
therefore strong $W^{1,q}$-convergence for any $q\ge 1$ away from finitely many
concentration points. Since anyway we have to deal with finitely many 
exceptional points, we can also exclude the points $P_1,P_2,P_3$ from
the three-point-condition from our considerations. 
The local strong convergence suffices to conclude that the non-linear terms 
in \eqref{Euler-h}, \eqref{Neumann-h} and \eqref{conform-h} locally
converge 
to the corresponding terms for the limit map $u$.
Assuming that $\Delta_t^hu_h\to -f$
weakly in $L^2$, we infer that $u(\cdot,t)$ solves 
\eqref{Euler-h}, \eqref{Neumann-h} and \eqref{conform-h} away from finitely
many singular points if we replace 
$u_h$ by $u$ and $\Delta_t^hu_h$ by $-f$ in all three formulae. 
The finite singular set
obviously is a set of vanishing $W^{1,2}$-capacity, and this enables
us to deduce that $u(\cdot ,t)$ is a weak solution to \eqref{Euler-h},
\eqref{Neumann-h} and \eqref{conform-h} on all of $B$. 
It is worth to note, that in the capacity argument for the perturbed 
conformality relation~(\ref{conform-h}) we have to utilize the
regularity result by Rivi\`ere \cite{Riviere:2007} for the $H$-surface
equation and then the Calder\'on-Zygmund
estimate mentioned above in order to have $\frak h[u_h]\in L^2_{\rm loc}$.
To conclude that $u$ actually is a weak solution of 
\eqref{Plateau} we need to have the identification $f=-\partial_tu$. 
This assertion can be achieved along the replacement argument by Moser \cite{Moser:2009}.

\subsubsection*{Asymptotics as $t\to\infty$: Convergence to a
  conformal solution}
The strategy for the proof of the asymptotic behavior is similar,
i.e. a concentration compactness argument combined with
 a capacity argument. The only major difference occurs
since we can choose the time slices $t_i\to\infty$ in such a way that
$\int_{B}|\partial_tu|^2(\cdot, t_i)\, dx\to 0$ as $i\to\infty$. 
Therefore, for the weak limit map $u_\ast:=\lim_{i\to\infty}u(\cdot ,t_i)$ the weak conformality
condition~(\ref{conform-h}) becomes
\begin{equation*}
        \partial\mathbf{D}(u_\ast;\eta)=
	\int_B
	{\rm Re} \big( \frak h[u_\ast] \,\overline\partial\eta\big)
	\, dx=0
\end{equation*}
for all $\eta\in\mathcal{C}^\ast(B)$.  
It is well known from the theory of $H$-surfaces that this identity implies the
conformality of the limit map $u_\ast$. Moreover, the regularity result by
Rivi\`ere \cite{Riviere:2007} combined with classical arguments 
yield that $u_\ast$ is regular up to the
boundary. As a result, the flow subconverges as $t\to\infty$ to a classical solution of
the Plateau problem for $H$-surfaces, i.e. to a map that parametrizes an
immersed surface with prescribed mean curvature and
boundary contour given by $\Gamma$.

\subsection{Applications}
\label{sec:Existence-appl}  
In this section we give some sufficient conditions ensuring the existence
of a weak solution to the heat flow for surfaces with prescribed mean curvature satisfying a Plateau boundary condition \eqref{Plateau}.
They follow from Theorem \ref{main} and known criteria guaranteeing  the validity of an
isoperimetric condition, cf. \cite{Steffen:1976,Steffen:1976-2,DuzaarSteffen:1996,DuzaarSteffen:1999}.

\begin{theorem}\label{thm:application}
Let $A$ be convex and the closure of a $C^2$-domain in
  $\R^3$ and let the principal curvatures of $\partial A$ be bounded. By
$\mathcal H_{\partial A}$ we denote the minimum of the principal curvatures of $\partial A$.
Further, we consider initial data $u_o\in \SGA$ and $H\in L^\infty (A)\cap C^0(A)$. Then each of the following conditions
\begin{equation}\label{H1}
    \sup_A|H|\le \sqrt{\frac{2\pi}{3\mathbf{D}(u_o)}}
\end{equation}

\begin{equation}\label{H2}
    A\subseteq B_R\quad\mbox{and}\quad\int_{\{\xi \in A: |H(\xi )|\ge \frac{3}{2R}\}} |H|^3\, dx <\tfrac{9\pi}{2}
\end{equation}

\begin{equation}\label{H3}
    \sup_A|H|< \tfrac{3}{2}\,\sqrt[3]{\frac{4\pi}{3\mathcal L^3 (A)}}
\end{equation}

\begin{equation}\label{H4}
   \mbox{for some $c< 1$ suppose that $\mathcal L^3\{ a\in A\colon |H(a)|\ge\tau\}\le c\tfrac{4\pi}{3} \tau^{-3}$}
   \mbox{for any $\tau >0$} 
\end{equation}
together with the curvature assumption
\begin{equation}\label{Rand}
    |H(a)|\le \mathcal H_{\partial A}(a)\quad\mbox{for $a\in\partial A$,}
\end{equation}
ensure the existence of a weak solution of \eqref{Plateau} with the properties described in Theorem~\ref{main}.
The same conditions guarantee the sub-convergence of $u(\cdot ,t)$ to a solution of the Plateau problem \eqref{Plateau-probl}.

\end{theorem}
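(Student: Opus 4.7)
The plan is to reduce each of the alternative hypotheses~\eqref{H1}--\eqref{H4} to an application of Theorem~\ref{main} (with the asymptotic assertion following from Theorem~\ref{main2}). Among the hypotheses of Theorem~\ref{main}, the assumptions~\eqref{Assum-A}, \eqref{Assum-H} and~\eqref{Assum-Jordan} are already built into the standing assumptions of Theorem~\ref{thm:application}, while~\eqref{Assum-H-dA} is exactly the curvature condition~\eqref{Rand}. Consequently, for each of~\eqref{H1}--\eqref{H4} it suffices to exhibit parameters $0<c<1$ and $0<s\le\infty$ for which the spherical isoperimetric condition~\eqref{sphrical-isop} of type $(c,s)$ on $A$ holds, and for which the initial-energy bound $\int_B|Du_o|^2\dx\le s(1-c)$ from~\eqref{Assum-uo} is guaranteed by the hypothesis under consideration.

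For~\eqref{H1}, I would combine the sharp Euclidean isoperimetric inequality $\mathbf{M}(Q)\le(6\sqrt\pi)^{-1}\mathbf{M}(T)^{3/2}$ for the integer multiplicity rectifiable $3$-current $Q$ with $\partial Q=T$ together with the trivial bound $2|\langle Q,H\Omega\rangle|\le 2\,\sup_A|H|\cdot\mathbf{M}(Q)$, to arrive at
\begin{equation*}
 2\big|\langle Q,H\Omega\rangle\big|
 \le \frac{\sup_A|H|}{3\sqrt\pi}\,\mathbf{M}(T)^{1/2}\cdot\mathbf{M}(T).
\end{equation*}
This yields an isoperimetric condition of type $(c,s)$ for every $s\le 9\pi c^2/(\sup_A|H|)^2$. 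The choice $c=\tfrac{2}{3}$ maximizes $c^2(1-c)$ at $\tfrac{4}{27}$ and gives $s(1-c)=4\pi/[3(\sup_A|H|)^2]$, so~\eqref{Assum-uo} reduces to $\int_B|Du_o|^2\dx=2\mathbf{D}(u_o)\le 4\pi/[3(\sup_A|H|)^2]$, which is precisely~\eqref{H1}.

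For~\eqref{H2}, \eqref{H3} and~\eqref{H4}, I would appeal to the refined isoperimetric criteria developed in~\cite{Steffen:1976, Steffen:1976-2, DuzaarSteffen:1996, DuzaarSteffen:1999}: each of these three hypotheses implies, through the sharp Federer--Fleming constant $6\sqrt\pi$ combined with the specific numerical thresholds appearing in the statements, a spherical isoperimetric inequality of type $(c,\infty)$ on $A$ for some $c<1$. With $s=\infty$, the initial-energy restriction~\eqref{Assum-uo} is vacuous, and Theorem~\ref{main} applies to every admissible datum $u_o\in\SGA$, while Theorem~\ref{main2} yields the sub-convergence assertion directly. For~\eqref{H4} this proceeds by direct application of the weak-$L^3$ criterion; \eqref{H3} is handled analogously via a layer-cake estimate on the distribution function of $|H|$; and~\eqref{H2} by splitting $H$ according to the Hildebrandt threshold $|H|<3/(2R)$ versus $|H|\ge 3/(2R)$ and treating the two pieces respectively by a Hildebrandt-type bound inside $A\subseteq B_R$ and by $L^3$-mass control.

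The principal technical point is the precise bookkeeping of the sharp numerical constants in~\eqref{H2}--\eqref{H4}, especially for~\eqref{H2}, where the interplay between the radial bound $3/(2R)$, the $L^3$-excess $9\pi/2$ and the Federer--Fleming constant must match exactly so that the resulting constant $c$ in the isoperimetric inequality stays strictly below~$1$. Since these computations are carried out in full detail in the references cited above, the only task is to invoke them; thereafter Theorems~\ref{main} and~\ref{main2} yield both assertions of Theorem~\ref{thm:application} simultaneously.
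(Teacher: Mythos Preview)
Your proposal is correct and follows exactly the route the paper takes: the paper gives no detailed proof of Theorem~\ref{thm:application} at all, but merely states that it follows from Theorem~\ref{main} together with the known criteria for isoperimetric conditions in \cite{Steffen:1976,Steffen:1976-2,DuzaarSteffen:1996,DuzaarSteffen:1999}. Your explicit verification of~\eqref{H1} via the sharp constant $(6\sqrt\pi)^{-1}$ and the optimization $c=\tfrac23$ is in fact more than the paper provides, and the numerics check out precisely.

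One small imprecision worth flagging: your remark that~\eqref{H3} ``is handled analogously via a layer-cake estimate on the distribution function of $|H|$'' suggests a reduction of~\eqref{H3} to~\eqref{H4}, but this does not work directly --- plugging the distribution function of a bounded $H$ into~\eqref{H4} would require a constant $c\ge\tfrac{27}{8}>1$. The factor $\tfrac32$ in~\eqref{H3} arises instead from the interpolation $\mathbf{M}(Q)\le \mathbf{M}(Q)^{1/3}\mathbf{M}(Q)^{2/3}\le (\mathcal L^3(A))^{1/3}(36\pi)^{-1/3}\mathbf{M}(T)$, which combines the a~priori bound $\mathbf{M}(Q)\le\mathcal L^3(A)$ with the isoperimetric inequality. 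Since you explicitly defer the bookkeeping for~\eqref{H2}--\eqref{H4} to the cited references (where these computations are indeed carried out), this does not constitute a gap in your argument, but you should be aware that the mechanism for~\eqref{H3} is not the one you sketch.
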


In the case $A\equiv \overline B_R(0)\subseteq \R^3$ the conditions \eqref{H3} and \eqref{Rand} simplify to
$$
    \sup_{B_R(0)}|H|<\tfrac32\tfrac{1}{R},\quad |H(a)|\le\tfrac{1}{R}\quad\mbox{for $a\in \partial B_R(0)$.}
$$
Moreover, in this case we have that \eqref{H2} is fulfilled. 
Consequently, both of the assumptions \eqref{H2} and \eqref{H3}
contain the preceding Hildebrandt type assumptions as special cases and ensure
the existence of a weak solution in the sense of Theorem \ref{main} to the parabolic $H$-flow system \eqref{Plateau}.
Finally, we note that \eqref{H1} can be improved by choosing
$u_o$ to be an area minimizing disk type surface spanned by the Jordan curve $\Gamma$. Then, in \eqref{H1} the Dirichlet energy of $u_o$
equals the minimal area $A_\Gamma$ spanned by $\Gamma$ and the
condition \eqref{H1} turns into
\begin{equation}\label{H1*}
    \sup_A|H|\le \sqrt{\frac{2\pi}{3A_\Gamma}},
\end{equation}
allowing large values of $H$ for Jordan curves with small minimal
area.

\section{Notation and Preliminaries}

In this section we collect the main notation and some results
needed in the proofs later.

\subsection{Notation}
Throughout this article, we write $B$ for the open unit disk in
$\R^2$. More generally, by $B_r(x_o)\subset\R^2$ we denote the open disk with
center $x_o\in\R^2$ and radius $r>0$. Moreover, we use the notation
$
  B_r^+(x_o):=B\cap B_r(x_o)
$
for the interior part of the disk $B_r(x_o)$, which will frequently be
used in particular in the case for a center $x_o\in\partial B$. Furthermore, we use
the abbreviations $S_r^+(x_o):=\partial B_r^+(x_o)\cap \overline B$ and
$I_r(x_o):=B_r^+(x_o)\cap \partial B$, so that
\begin{equation*}
  \partial B_r^+(x_o)= S_r^+(x_o)\cup I_r(x_o).
\end{equation*}
For the \textbf{Dirichlet energy} of a map $u\in W^{1,2}(B,\R^3)$, we
write
\begin{equation*}
  \mathbf{D}(u):=\tfrac12\int_B|Du|^2\dx
  \qquad\mbox{and}\qquad
  \mathbf{D}_G(u):=\tfrac12\int_G|Du|^2\dx
\end{equation*}
for any measurable subset $G\subset B$. 

\subsection{The chord-arc condition}
Any  Jordan curve
$\Gamma$ of class $C^1$ satisfies a $(\delta,M)$-\textbf{chord-arc condition}, i.e. there are constants $\delta>0$ and $M\ge1$ 
such that for each pair of distinct points $p,q\in\Gamma$ we have 
\begin{equation}
  \label{chord-arc}
  \min\big\{L(\Gamma_{p,q}),L(\Gamma_{p,q}^*)\big\}\le M|p-q|
  \qquad\mbox{provided }|p-q|\le\delta,
\end{equation}
where $\Gamma_{p,q},\Gamma_{p,q}^*$ denote the two sub-arcs of $\Gamma$ that
connect $p$ with $q$, and $L(\cdot)$ is their length.
 
\subsection{Admissible variations and variation vector fields}
There are two possible types of variations for a given
surface $u\in \Ss$. 
The first type -- called {\bf outer variations} or {\bf variations
of the dependent variables} -- are those ones performing a deformation of the surface in the ambient space $\R^3$. The initial
vector field of the variation should be a map $w\in T_u\mathcal{S}^\ast$. However, it is not clear at this stage that such a vector field
yields a one-sided variation $u_s\in \Ss$ for values $0\le s\ll 1$
with $u_o=u$. Since we are dealing with surfaces
contained in a closed, convex subset $A\subset\R^3$
we also need a version respecting the obstacle condition
$u_s(B)\subset A$ along the variation. The existence of these kind of variations
is granted by the following

\begin{lemma}\label{lem:rand-variationen}
Let $u\in\Ss$ and $w\in T_u\mathcal{S}^\ast$ be given.
Then there hold:
\begin{enumerate}
\item There exists a one-sided variation $[0,\eps)\ni s\mapsto 
		 u_s\in\Ss$ with
      $u_0=u$ and $\frac{\partial}{\partial s}u_s\big|_{s=0}=w$.
\item If $\Gamma\subset A^\circ$, there exists a one-sided
      variation $[0,\eps)\ni
      s\mapsto u_s\in\SGA$  with $u_o=u$ and
      $\frac{\partial}{\partial s}u_s\big|_{s=0}\in 
      (w+W^{1,2}_0(B,\R^3))\cap C^0(\overline B,\R^3)$.
\end{enumerate}
In both cases, the variations $u_s$ satisfy
$\frac\partial{\partial s}u_s\in L^\infty(B,\R^3)
\cap C^0(\partial B,\R^3)$ for all $s\in[0,\eps)$ and
moreover we have the following bounds:
\begin{equation}\label{uniform-bounds}
    \sup_{0\le s<\eps}
    \Big(
    \|u_s\|_{W^{1,2}(B)}
    +
    \|\tfrac\partial{\partial s}u_s\|_{L^\infty(B)}
    +
    \|\tfrac\partial{\partial s}u_s\|_{C^0(\partial B)}\Big)
    <
    \infty.
\end{equation}
\end{lemma}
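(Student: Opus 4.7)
The plan is to construct both variations explicitly by linearly interpolating the boundary parametrizations and correcting by a harmonic extension in the interior. A preliminary observation is that $\mathcal{T}^{\ast}(\Gamma)$ is convex: the pointwise interpolant $\varphi_s:=(1-s)\varphi+s\psi$ of two elements $\varphi,\psi\in\mathcal{T}^{\ast}(\Gamma)$ is again non-decreasing, satisfies the shift relation $\varphi_s(\cdot+2\pi)=\varphi_s(\cdot)+2\pi$, and respects the three-point condition, since elements of $\mathcal{T}^{\ast}(\Gamma)$ necessarily take the same values at $\Theta_1,\Theta_2,\Theta_3$ (this follows from a continuity-in-$s$ argument together with the fact that $\widehat\gamma^{-1}(Q_k)$ is discrete).

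For part~(i), I would set
\[
    u_s(x):=u(x)+s\,w(x)+r_s(x),
\]
where $r_s\colon B\to\R^3$ is the harmonic extension to $B$ of the boundary correction
\[
    r_s^{\partial}(e^{\mathbf{i}\vartheta})
    :=\widehat\gamma(\varphi_s(\vartheta))-\widehat\gamma(\varphi(\vartheta))-s\,\widehat\gamma'(\varphi(\vartheta))(\psi(\vartheta)-\varphi(\vartheta)).
\]
A Taylor expansion of the $C^3$-map $\widehat\gamma$ yields the pointwise bound $|r_s^\partial|\le Cs^2(\psi-\varphi)^2$ together with an analogous estimate for the $W^{1/2,2}$-seminorm, so $\|r_s^\partial\|_{L^\infty\cap W^{1/2,2}(\partial B)}=O(s^2)$. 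Standard elliptic estimates then give $\|r_s\|_{L^\infty\cap W^{1,2}(B)}=O(s^2)$ and similarly $\|\partial_s r_s\|_{L^\infty\cap W^{1,2}(B)}=O(s)$. One checks that $u_s|_{\partial B}=\widehat\gamma\circ\varphi_s$ is a continuous weakly monotone parametrization of $\Gamma$ satisfying the three-point condition, hence $u_s\in\mathcal{S}^{\ast}(\Gamma)$; in addition, $u_0=u$ and $\partial_s u_s|_{s=0}=w$ because $\partial_s r_s|_{s=0}=0$. The uniform bounds~\eqref{uniform-bounds} follow directly from the explicit formula.

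For part~(ii), starting from the $\tilde u_s$ of part~(i), I would modify it to land in the obstacle $A$. Fix an interior point $a_0\in A^\circ$ and a cutoff $\mu\in C^\infty(\overline B)$ with $\mu|_{\partial B}\equiv 0$, $0\le\mu\le 1$, and $\mu\equiv 1$ on a large compact subset of $B$. By the convexity and $C^2$-regularity of $A$, the nearest-point projection $\pi_A\colon\R^3\to A$ is $1$-Lipschitz and agrees with the identity on a tubular neighborhood of $\Gamma\subset A^\circ$. Define
\[
    u_s(x):=(1-s\mu(x))\,\pi_A(\tilde u_s(x))+s\mu(x)\,a_0.
\]
For $s$ sufficiently small this is a convex combination of elements of $A$ and hence lies in $A$; moreover $\mu|_{\partial B}\equiv 0$ together with $\tilde u_s|_{\partial B}\in\Gamma$ forces $u_s|_{\partial B}=\widehat\gamma\circ\varphi_s$, so $u_s\in\SGA$ with $u_0=u$.

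The hard part is verifying that the variation vector $\tilde w:=\partial_s u_s|_{s=0}$ lies in the intersection $(w+W^{1,2}_0(B,\R^3))\cap C^0(\overline B,\R^3)$. The boundary agreement $\tilde w|_{\partial B}=w|_{\partial B}$, yielding $\tilde w-w\in W^{1,2}_0$, is straightforward from $\mu|_{\partial B}\equiv 0$ and from the fact that $\pi_A$ is the identity near $\Gamma$. Continuity on $\overline B$ is more delicate, since $u$ is only of class $W^{1,2}$ in the interior and may take values on $\partial A$, where $\pi_A$ fails to be $C^1$. I would handle this by replacing the $u$-dependent pieces of the construction, up to a $W^{1,2}_0$-correction, by a fixed continuous reference map $U\in C^0(\overline B,A)$ with $U|_{\partial B}=u|_{\partial B}$ and $U(B)\subset A^\circ$ (constructed, for instance, by pushing the harmonic extension of $u|_{\partial B}$ slightly into $A^\circ$). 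The remaining $L^\infty$- and $C^0(\partial B)$-bounds in~\eqref{uniform-bounds} then follow from the explicit construction together with the boundedness of $A$, $w|_{\partial B}$, and of the derivatives of $\widehat\gamma$.
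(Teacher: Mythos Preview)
Your construction for part~(i) is essentially identical to the paper's: if you unravel the paper's formula $u_s=h_s+s(w-\widetilde w)-(h_0-u)$, where $h_s$ is the harmonic extension of $\widehat\gamma(\varphi_s)$ and $\widetilde w=\partial_s h_s|_{s=0}$, it collapses to exactly your $u+sw+r_s$ with $r_s=h_s-h_0-s\widetilde w$ the harmonic extension of the second-order Taylor remainder. So part~(i) is fine.

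Part~(ii) is where you diverge, and there is a real gap. Your variation $u_s=(1-s\mu)\,\pi_A(\tilde u_s)+s\mu\,a_0$ does land in $A$ and has the right boundary trace, but the initial velocity involves both $\mu(a_0-u)$ and the one-sided derivative $\partial_s^+\pi_A(\tilde u_s)|_{s=0}$. Neither piece is under control: the first is not continuous (since $u$ is merely $W^{1,2}$), and the second equals $w$ on $\{u\in A^\circ\}$ but becomes the tangential projection of $w$ at points where $u(x)\in\partial A$ and $w(x)$ points outward. This introduces a factor $\chi_{\{u\in\partial A\}}$ into $\widetilde w-w$, and there is no reason for this difference to lie in $W^{1,2}$, let alone $W^{1,2}_0$. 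Your proposed patch---swapping in a continuous reference map $U$ ``up to a $W^{1,2}_0$-correction''---does not resolve this, because any such replacement destroys $u_0=u$.

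The paper avoids all of this by a single clean device: instead of projecting onto $A$, it \emph{localizes the perturbation in the target}. Since $\Gamma\subset A^\circ$, one can pick $\zeta\in C^\infty(A,[0,1])$ with $\zeta\equiv1$ near $\Gamma$ and $\operatorname{spt}\zeta\subset A^\circ$, and set
\[
  u_s:=u+\zeta(u)\,(h_s-h_0).
\]
Because $\|h_s-h_0\|_{L^\infty}=O(s)$, choosing $\eps$ so small that this is less than $\operatorname{dist}(\operatorname{spt}\zeta,\partial A)$ forces $u_s(B)\subset A$ \emph{without} any projection; where $u$ is near $\partial A$ one has $\zeta(u)=0$ and nothing moves. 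The initial velocity is $\zeta(u)\,\widetilde w$, which is manifestly in $W^{1,2}\cap L^\infty$ and agrees with the harmonic extension $\widetilde w$ on $\partial B$ (where $\zeta(u)\equiv1$), hence lies in $w+W^{1,2}_0(B,\R^3)$. This is the idea you are missing: use the hypothesis $\Gamma\subset A^\circ$ to create room for the variation via a target-space cutoff, rather than fighting the non-smoothness of $\pi_A$ along $\partial A$.
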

\begin{proof}
By $\varphi,\psi\in\mathcal{T}^\ast(\Gamma)$ we denote functions
that are determined by the properties
\begin{equation*}
    u(e^{i\vartheta})=\widehat\gamma(\varphi),
    \quad
    \mbox{respectively by}
    \quad
    w(e^{i\vartheta})=\widehat\gamma'(\varphi)(\psi-\varphi). 
\end{equation*}
For $s\in [0,\eps)$, we define $h_s\in W^{1,2}(B,\R^3)\cap C^0(\overline B,\R^3)$ as the harmonic extension of the boundary
data on $\partial B$ given by $\widehat\gamma(\varphi+
s(\psi-\varphi))$. 
These boundary data are bounded in $W^{\frac12,2}_{\rm loc}(\R)$, uniformly in $s\in[0,\eps)$, and therefore, its harmonic extensions satisfy
\begin{equation}\label{uniformW12}
    \sup_{0\le s<\eps}\|h_s\|_{W^{1,2}(B)}<\infty.
\end{equation}
The derivative $\frac\partial{\partial s}h_s$ is the harmonic extension of the boundary values $\widehat\gamma'(\varphi+
s(\psi-\varphi))(\psi-\varphi)$,
which are uniformly bounded with respect to $s$ in $C^0\cap W^{\frac12 ,2}$. From the maximum principle we thereby infer
\begin{equation}
\label{max-princ}
      \sup_{0\le s<\eps}
      \big\|
      \tfrac\partial{\partial s}h_s
      \big\|_{C^0(\overline B)}
      <\infty.
\end{equation}
In particular, the function $\widetilde w:=
\tfrac{\partial}{\partial s}h_s\big|_{s=0}$ is the harmonic extension of the boundary values given by $\widehat\gamma'(\varphi)(\psi-\varphi)$ and therefore $\widetilde w\in (w+W^{1,2}_0(B,\R^3))
\cap C^0(\overline B,\R^3)$. Next, 
since $\varphi,\psi\in\mathcal{T}^\ast(\Gamma)$, which is a convex
set, and $s\in[0,\epsilon)$, we also have
$\varphi+s(\psi-\varphi)\in\mathcal{T}^\ast(\Gamma)$, which means
$h_s\in\Ss$. Now we distinguish between the two cases stated in the lemma. 

For the proof of (i), we define the variation $u_s$ by  
\begin{equation*}
      u_s:=h_s+s(w-\widetilde w)-(h_0-u).
\end{equation*} 
Since $h_0-u\in W^{1,2}_0(B,\R^3)$ and $w-\widetilde w\in W^{1,2}_0(B,\R^3)$, we conclude $u_s\in\Ss$ for all
$s\in[0,\eps)$, and a straightforward calculation gives
$\frac{\partial}{\partial s}u_s\big|_{s=0}=w$. The claimed
bounds~(\ref{uniform-bounds}) follow from~(\ref{uniformW12}),
(\ref{max-princ}) and $w,\widetilde w\in L^\infty\cap
W^{1,2}(B,\R^3)$ with $w\big|_{\partial B}=
\Tilde w\big|_{\partial B}\in C^0(\partial B,\R^3)$.

In the case of (ii), we choose a cut-off function $\zeta\in
C^\infty(A,[0,1])$ with $\zeta\equiv1$ on a neighborhood of
$\Gamma$ and $\mathrm{spt}\,\zeta\subset A^\circ$, which is possible by our assumption $\Gamma\subset
A^\circ$. Then we define $u_s$ by
\begin{equation*}
  u_s:= u+\zeta(u)\,(h_s-h_0).
\end{equation*}
Because of~(\ref{max-princ}), we can choose $\eps>0$ so 
small that $\|h_s-h_0\|_{L^\infty}<\dist(\spt\zeta,\partial A)$ for
all $s\in[0,\eps)$. Distinguishing between the cases
$u(x)\in\spt\zeta$ and $u(x)\in A\setminus\spt\zeta$, we deduce
$u_s(B)\subset A$ for any $s\in[0,\eps)$.
In order to compute the boundary values of
$\frac\partial{\partial s}\big|_{s=0}u_s$, we note that $u(\partial
B)\subset\Gamma$ and therefore $\zeta (u)\equiv 1$ on
$\partial B$. We conclude $\frac\partial{\partial s}
\big|_{s=0}u_s= \frac\partial{\partial s}\big|_{s=0}h_s$
on $\partial B$ in the sense of
traces and consequently, $\frac\partial{\partial s}\big|_{s=0}u_s=\widetilde w\in w+W^{1,2}_0(B,\R^3)$, as
claimed. Again, the assertion~(\ref{uniform-bounds}) follows
from~(\ref{uniformW12}) and~(\ref{max-princ}). \end{proof}

The second class of variations are the so-called {\bf inner variations} or {\bf variations of the independent variables} , 
which are re-para\-metrizations of the surfaces $u\colon B\to\R^3$
in the domain of definition. For the variation vector fields for
this kind of variations we define the classes
\begin{align}\label{inner-variations}
	\left\{
	\begin{array}{l}
	\mathcal{C}(B)
	:=\big\{
	 \eta\in C^1(\overline B,\R^3):\mbox{$\eta$
    is tangential to $\partial B$ along $\partial B$}
    \big\},\\[5pt]
  \mathcal{C}^\ast(B)
  :=\big\{\eta\in
  \mathcal{C}(B):\eta(P_k)=0\mbox{ for }k=1,2,3.\big\}.
  \end{array}
  \right.
\end{align}
For $\eta
\in \mathcal C^\ast (B)$  we consider the associated flow
$\phi_s$  with $\phi_0={\rm id}$. 
Our assumptions on $\eta$
ensure that $\phi_s(\overline B)\subset \overline B$ and
$\phi_s(P_k)=P_k$ for all $s\in
(-\eps,\eps)$ and $k\in\{1,2,3\}$. Moreover, since $\phi_s$ is an
orientation preserving diffeomorphism for sufficiently small $|s|$,
we know for $u\in\SGA$ that $u\circ\phi_s\big|_{\partial B}$ is a weakly monotone
parametrization of $\Gamma$ and therefore
$u_s:=u\circ\phi_s\in\SGA$. The first
variation
of the Dirichlet integral with respect to such inner variations
is given by
\begin{align}\label{inner-D}
	\partial \mathbf D(u;\eta):=\frac{d}{ds}\Big|_{s =0}{\bf D}(u\circ\varphi_s )
	=
	\int_B {\rm Re}\big( \frak h[u]\overline{\partial}\eta\big)
	\, dx.
\end{align}

The following well-known compactness result is crucial for the
existence of solutions to the Plateau problem. Its proof, which is
based on the Courant-Lebesgue-Lemma, can be found e.g. in \cite[Lemma I.4.3]{Struwe:1988-2}. 
\begin{lemma}\label{lem:compact traces}
	The injection $\Ss\hookrightarrow C^0(\partial B,\R^3)$
	is compact, that is
   bounded subsets of $\Ss$ (with respect to the
   $W^{1,2}$-norm) have equicontinuous traces on $\partial B$. 
\end{lemma}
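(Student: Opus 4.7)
The plan is to combine a Courant-Lebesgue radius-picking argument with the chord-arc property \eqref{chord-arc} of $\Gamma$ and the three-point-condition encoded in $\Ss$. Fix $K<\infty$ and a bounded family $\mathcal F\subset\Ss$ with $\sup_{u\in\mathcal F}\mathbf D(u)\le K$; the goal is to produce, for each $\epsilon>0$, a $\delta_\epsilon>0$ such that $\osc_{I_{\delta_\epsilon}(x_o)}u<\epsilon$ for every $u\in\mathcal F$ and every $x_o\in\partial B$.

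\textbf{Step 1 (Courant-Lebesgue).} For each $u\in\mathcal F$, $x_o\in\partial B$ and $\delta\in(0,1)$, a Fubini computation on the annulus $\{\delta<|x-x_o|<\sqrt\delta\}\cap B$ yields a radius $r=r(u,x_o,\delta)\in(\delta,\sqrt\delta)$ with
\[
   r\int_{S_r^+(x_o)}|Du|^2\,d\mathcal H^1\le\frac{2K}{\log(1/\delta)}.
\]
Cauchy-Schwarz then gives $\osc_{S_r^+(x_o)}u\le C\sqrt{K/\log(1/\delta)}$; in particular, denoting by $p_\pm\in\partial B$ the two endpoints of the boundary arc $I_r(x_o)$, one has $|u(p_+)-u(p_-)|\le C\sqrt{K/\log(1/\delta)}$, a quantity that can be driven below any threshold uniformly in $u\in\mathcal F$ by shrinking $\delta$.

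\textbf{Step 2 (three-point-condition).} As soon as $2\sqrt\delta<\min_{j\ne k}|P_j-P_k|$, the arc $I_r(x_o)$ contains at most one of the base points $P_1,P_2,P_3$. Since $u|_{\partial B}$ is weakly monotone onto $\Gamma$ with $u(P_k)=Q_k$, its restriction to $I_r(x_o)$ parametrizes a sub-arc of $\Gamma$ with endpoints $u(p_\pm)$ which contains at most one of $Q_1,Q_2,Q_3$. Consequently the complementary sub-arc of $\Gamma$ with the same endpoints contains at least two of the $Q_k$ and therefore has length bounded below by the positive constant $\ell_0:=\min_{j\ne k}L(\Gamma_{Q_j,Q_k})$.

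\textbf{Step 3 (chord-arc and conclusion).} Taking $\delta$ still smaller so that $C\sqrt{K/\log(1/\delta)}\le\min\{\delta_\Gamma,\ell_0/M\}$, the $(\delta_\Gamma,M)$-chord-arc estimate \eqref{chord-arc} applied to the pair $u(p_\pm)$ forces the shorter of the two sub-arcs of $\Gamma$ joining them to have length at most $MC\sqrt{K/\log(1/\delta)}<\ell_0$. Since Step 2 already identifies the complementary arc as having length $\ge\ell_0$, the short one must be $u(I_r(x_o))$ itself. Thus $\osc_{I_r(x_o)}u\le MC\sqrt{K/\log(1/\delta)}$, which tends to $0$ uniformly in $u\in\mathcal F$ and $x_o\in\partial B$ as $\delta\downarrow0$; since $I_\delta(x_o)\subset I_r(x_o)$, this is the desired equicontinuity, and Arzel\`a--Ascoli (combined with the trivial uniform $L^\infty$ bound $u(\partial B)\subset\Gamma$) yields the compactness of the injection.

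The one genuinely delicate step is Step 2: weak monotonicity of the boundary trace alone would permit $u$ to wrap a small boundary arc $I_r(x_o)$ around the long sub-arc of $\Gamma$, wiping out any oscillation bound; the three-point-condition is precisely what excludes this pathological wrapping, because the long arc is forced to reach at least two of the anchor points $Q_k$. All other ingredients (Courant-Lebesgue and chord-arc) are classical bookkeeping.
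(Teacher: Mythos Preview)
Your argument is correct and is precisely the classical Courant--Lebesgue argument that the paper has in mind: the paper does not give its own proof but refers to \cite[Lemma I.4.3]{Struwe:1988-2}, whose proof proceeds exactly via Steps~1--3 as you wrote them. One minor point of bookkeeping: in Step~2 your constant $\ell_0$ should really be the minimum over all sub-arcs of $\Gamma$ containing at least two of the $Q_k$ (equivalently $\min_{j\neq k}\min\{L(\Gamma_{Q_j,Q_k}),L(\Gamma_{Q_j,Q_k}^*)\}$), but this does not affect the argument.
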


\subsection{An elementary iteration lemma}
The following standard iteration result will be used in order to re-absorb certain terms.
\begin{lemma}\label{lem:Giaq}
    For $R>0$,
    let $f\colon[r,R]\to[0,\infty)$ be a bounded function with 

    \begin{equation*}
      f(s)\le\vartheta f(t)+\frac A{(t-s)^\alpha}+B
      \qquad\mbox{for all $r\le s<t\le R$},
    \end{equation*}
    for constants $A,B\ge0$, $\alpha>0$
    and $\vartheta\in(0,1)$. Then we have
    \begin{equation*}
      f(r)\le c(\alpha,\vartheta)
     \bigg[\frac A{(R-r)^\alpha}
      +B\bigg].
    \end{equation*}
\end{lemma}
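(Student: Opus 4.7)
The plan is to run a standard geometric iteration along a cleverly chosen increasing sequence. First I would fix a parameter $\tau\in(\vartheta^{1/\alpha},1)$, so that the ratio $q:=\vartheta\,\tau^{-\alpha}$ lies in $(0,1)$; this choice depends only on $\alpha$ and $\vartheta$. Then I would define an increasing sequence $t_0:=r$ and
$$
 t_{k+1}:=t_k+(1-\tau)\tau^k(R-r),
$$
so that $t_k=r+(1-\tau^k)(R-r)\nearrow R$ as $k\to\infty$ and in particular $t_k\in[r,R]$ for every $k$. The key feature of this choice is the explicit identity $t_{k+1}-t_k=(1-\tau)\tau^k(R-r)$, which gives good control on the negative powers appearing in the hypothesis.

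Next, I would apply the assumed inequality with $(s,t)=(t_k,t_{k+1})$ and iterate. By induction this yields, for every $n\in\N$,
$$
 f(r)=f(t_0)\le\vartheta^{n}f(t_n)+\sum_{k=0}^{n-1}\vartheta^k\left[\frac{A}{(1-\tau)^\alpha\tau^{\alpha k}(R-r)^\alpha}+B\right].
$$
Here the boundedness hypothesis on $f$ enters in an essential way: since $\vartheta<1$ and $\sup_{[r,R]}f<\infty$, the first term on the right tends to zero as $n\to\infty$. The remaining two series are geometric, with common ratios $q=\vartheta\tau^{-\alpha}<1$ and $\vartheta<1$ respectively. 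Summing them, one obtains
$$
 f(r)\le\frac{A}{(1-\tau)^\alpha(1-q)(R-r)^\alpha}+\frac{B}{1-\vartheta},
$$
which is the asserted estimate with $c(\alpha,\vartheta):=\max\bigl\{(1-\tau)^{-\alpha}(1-q)^{-1},(1-\vartheta)^{-1}\bigr\}$.

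The proof is essentially routine; the only conceptual point is choosing $\tau$ large enough to make $\vartheta\tau^{-\alpha}<1$ (so that the $A$-series converges) while still $\tau<1$ (so that $t_k\nearrow R$), and this is precisely what the condition $\tau\in(\vartheta^{1/\alpha},1)$ guarantees. No further technical obstacle is expected; the boundedness of $f$ is used only to kill the residual term $\vartheta^n f(t_n)$ in the limit, and could actually be dispensed with if one assumed $f$ to be merely locally bounded on $[r,R)$.
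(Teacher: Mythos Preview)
Your proof is correct and is precisely the classical geometric iteration argument (as in Giaquinta's book, from which the label \texttt{lem:Giaq} presumably derives). The paper does not actually supply a proof of this lemma; it merely states it as a ``standard iteration result,'' so there is nothing to compare against --- your argument is the expected one.

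One small remark: your closing parenthetical, that boundedness of $f$ ``could actually be dispensed with if one assumed $f$ to be merely locally bounded on $[r,R)$,'' is not quite right as stated. Since $t_n\nearrow R$, local boundedness on $[r,R)$ alone does not prevent $f(t_n)$ from blowing up, and then $\vartheta^n f(t_n)$ need not tend to zero. The boundedness hypothesis on the closed interval is genuinely used. This does not affect the main argument, which is fine.
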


\subsection{An interpolation inequality}

The following Gagliardo-Nirenberg interpolation inequality plays a
central role in the proof of our regularity results and thereby for the construction of  global weak solutions to our parabolic free boundary  problem of Plateau type. 

\begin{lemma}[\cite{Nirenberg:1966}]\label{lem-gag}
Let $B_\rho(x_o)\subset\R^n$ with $0<\varrho\le1$ and
$B_\rho^+(x_o):=B_\rho(x_o)\cap B$. For any parameters
$1\le \sigma,q,r<\infty$ and $\vartheta\in(0,1)$ such that
$ - \frac{n}{\sigma}
\le \vartheta (1 - \frac{n}{q} ) - ( 1- \vartheta ) \frac{n}{r}$, there is a constant
$C=C(n,q,r)$ such that for any $v \in W^{1,q}(B_\rho^+(x_o))$ there holds:
\begin{equation*} 
	\mint_{B_\rho^+(x_o)} \Big|\frac{v}{\rho}\Big|^\sigma \dx
	\le
	C~\bigg(
	\mint_{B_\rho^+(x_o)}
	\Big|\frac{v}{\rho}\Big|^{q} + |Dv|^q\dx
	\bigg)^{\frac{\vartheta \sigma}{q}}
	\bigg(\mint_{B_\varrho^+(x_o)} \Big|\frac{v}{\rho}\Big|^r\dx\bigg)
	^{\frac{(1-\vartheta)\sigma}{r}} .
\end{equation*}
\end{lemma}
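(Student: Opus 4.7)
The plan is to reduce to a scale-invariant interpolation inequality on a fixed reference domain and then invoke the classical Gagliardo-Nirenberg estimate.

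First, by substituting $\tilde v(y):=v(x_o+\rho y)$ I would reduce the statement to the case $\rho=1$. Since averages are scale-invariant and the term $v/\rho$ becomes $\tilde v$, the powers of $\rho$ distribute correctly and the scaling condition on the exponents is preserved. So from now on the task is to prove
\begin{equation*}
    \mint_{\Omega}|v|^\sigma\dx
    \le C\,\bigg(\mint_{\Omega}|v|^q+|Dv|^q\dx\bigg)^{\vartheta\sigma/q}
    \bigg(\mint_{\Omega}|v|^r\dx\bigg)^{(1-\vartheta)\sigma/r},
\end{equation*}
where $\Omega$ is either a unit disk or the intersection of two unit disks (the half-disk/lens case that arises when $x_o\in\partial B$).

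Next I would pass from $\Omega$ to a full ball by extension. In the interior case this is trivial. In the boundary case, for $x_o\in\partial B$, I would straighten the circular arc $\partial B\cap B_1(x_o)$ by a bi-Lipschitz diffeomorphism with constants independent of the geometry (the curvature of $\partial B$ is bounded, and after the rescaling to radius one the arc has uniformly bounded curvature). Pulling $v$ back gives a function on a domain with a flat piece of boundary to which even reflection across that flat piece can be applied. The reflected function lies in $W^{1,q}$ of a fixed full disk with norm comparable to that of $v$ on $\Omega$, and its $L^\sigma$, $L^r$ norms are likewise comparable.

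The remaining step is the classical Gagliardo-Nirenberg inequality on a fixed disk $D\subset\R^n$. I would first handle the scale-critical case $-n/\sigma=\vartheta(1-n/q)-(1-\vartheta)n/r$. If $q<n$, Sobolev's embedding $W^{1,q}(D)\hookrightarrow L^{q^\ast}(D)$ with $q^\ast=nq/(n-q)$ combined with H\"older's inequality
\begin{equation*}
    \|v\|_{L^\sigma(D)}\le \|v\|_{L^{q^\ast}(D)}^{\vartheta}\,\|v\|_{L^r(D)}^{1-\vartheta}
\end{equation*}
(valid precisely because $1/\sigma=\vartheta/q^\ast+(1-\vartheta)/r$, which is the scale-critical identity) yields the assertion. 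The cases $q=n$ and $q>n$ use the embeddings $W^{1,q}\hookrightarrow L^s$ for arbitrary $s<\infty$ respectively $W^{1,q}\hookrightarrow L^\infty$ in place of Sobolev. Finally, the sub-critical case with strict inequality in the scaling condition is obtained from the critical case by a further interpolation via H\"older's inequality between $L^{\sigma_\ast}$ (the critical target) and $L^r$; the assumption that $\vartheta\in(0,1)$ guarantees that this interpolation has admissible exponents.

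The main obstacle is the boundary case, because one must ensure that the extension preserves the $L^q$-norm of $Dv$ and that the constants in the estimate do not depend on the position $x_o$ or on $\rho$. This is handled by the straightening/reflection argument described above, which gives uniform constants because the underlying domain $B$ has smooth boundary of bounded curvature.
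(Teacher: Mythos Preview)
The paper does not prove this lemma at all: it is stated with a citation to Nirenberg's original paper and used as a black box. Your outline is therefore not competing with any argument in the paper; it is simply a sketch of the standard proof, and it is essentially correct.

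Two small points are worth noting. First, after the rescaling $\tilde v(y)=v(x_o+\rho y)$ the domain is $B_1(0)\cap B_{1/\rho}(-x_o/\rho)$, i.e.\ the intersection of the unit disk with a disk of radius $1/\rho\ge 1$, not with another unit disk. This does not harm your argument, since the boundary arc coming from $\partial B$ then has curvature $\rho\le 1$ and can be straightened with uniform bi-Lipschitz constants, exactly as you say. Second, for the constant to be independent of $x_o$ and $\rho$ one needs the rescaled domains to be uniformly Lipschitz with volume bounded below; this is guaranteed under the implicit assumption $x_o\in\overline B$ (which is how the lemma is actually used in the paper), since then $|B_\rho^+(x_o)|\ge c\rho^n$ and the family of rescaled domains is uniformly convex. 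With these clarifications your reduction to a fixed domain plus Sobolev embedding and H\"older interpolation is the standard route to Gagliardo--Nirenberg and works as written.
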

For a map $u\in W^{2,2}(B_\rho^+ (x_o),\R^N)$, we may apply this to 
$v=|Du|\in W^{1,2}(B_\rho^+(x_o))$, with the parameters 
$\sigma =4$, $n=2$, $q=r=2$, $\vartheta=\frac12$. This yields, with a universal constant $C$, the following
interpolation estimate:
\begin{equation}\label{appl-gag-nir}
    \int_{B_\rho^+(x_o)}|Du|^4\, dx
    \le
    C \int_{B_\rho^+(x_o)}|D^2u|^2 + \Big|\frac{Du}{\rho}\Big|^2\, dx\,
    \int_{B_\rho^+(x_o)}|Du|^2\dx\,.
\end{equation}
The following lemma is due to Morrey \cite[Lemma 5.4.1]{Morrey:1966}.
\begin{lemma}\label{lem:Morrey}
  Assume that $v\in W^{1,2}_0(\Omega)$ for a domain $\Omega\subset\R^2$ and that $w\in L^1(\Omega)$ satisfies the
Morrey growth condition
\begin{equation*}
    \int_{B_r(y)\cap\Omega}|w|\dx\le C_or^{2\alpha}
\end{equation*}
for all radii $r>0$ and center $y\in\Omega$, with constants $C_o>0$ and $\alpha>0$. Then there holds
$v^2w\in L^1(\Omega)$ with
\begin{equation*}
    \int_{B_r(y)\cap\Omega}|v^2w|\dx
    \le
    C_1C_o|\Omega|^{\alpha/2} r^{\alpha}\int_\Omega|Dv|^2\dx
\end{equation*}
 for all $r>0$, $y\in \Omega$ and a universal constant $C_1=C_1(\alpha)>0$. 
\end{lemma}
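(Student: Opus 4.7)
The plan is to bound $v$ pointwise by a Riesz potential of $|Dv|$ and then exploit the Morrey decay of $w$ via a layer--cake estimate. Extending $v$ by zero to all of $\R^2$ (admissible since $v\in W^{1,2}_0(\Omega)$), integration of $v$ along rays followed by averaging over directions yields the classical two-dimensional representation
\begin{equation*}
|v(x)|\le\frac{1}{2\pi}\int_{\R^2}\frac{|Dv(y)|}{|x-y|}\dy
\qquad\mbox{for a.e.\ }x\in\R^2.
\end{equation*}

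Next I would apply the Cauchy-Schwarz inequality with the splitting $|x-y|^{-1}=|x-y|^{-\alpha/2}\cdot|x-y|^{-(2-\alpha)/2}$ (it suffices to consider $0<\alpha<2$, as larger $\alpha$ forces $w\equiv 0$ a.e.\ by Lebesgue differentiation), yielding
\begin{equation*}
|v(x)|^2\le C\int_\Omega\frac{|Dv(y)|^2}{|x-y|^\alpha}\dy\cdot\int_\Omega\frac{\dy}{|x-y|^{2-\alpha}}\,.
\end{equation*}
The second factor is controlled by rearrangement: replacing $\Omega$ by the disk of the same area centered at $x$ and passing to polar coordinates gives a bound $C(\alpha)|\Omega|^{\alpha/2}$ which is independent of $x$. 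Substituting into $\int_{B_r(y)\cap\Omega}v^2|w|\dx$ and applying Fubini then reduces the problem to the uniform estimate
\begin{equation*}
\sup_{z\in\R^2}\int_{B_r(y)}\frac{|w(x)|}{|x-z|^\alpha}\dx\le 2\,C_o\, r^\alpha.
\end{equation*}

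This last bound I would verify via the layer--cake formula, writing the integral as $\int_0^\infty |w|\bigl(B_r(y)\cap B_{s^{-1/\alpha}}(z)\bigr)\ds$ and splitting at the threshold $s=r^{-\alpha}$ at which the competing ball $B_{s^{-1/\alpha}}(z)$ reaches radius $r$: for $s\le r^{-\alpha}$ one invokes the global Morrey bound $|w|(B_r(y))\le C_o r^{2\alpha}$, while for $s>r^{-\alpha}$ one uses $|w|(B_{s^{-1/\alpha}}(z))\le C_o s^{-2}$. Both regimes integrate to give a contribution of order $C_o r^\alpha$, and the hypothesis $\alpha>0$ is precisely what guarantees convergence at both ends of the split.

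The main technical point is to choose the Cauchy-Schwarz splitting exponent to match the Morrey exponent $\alpha$ exactly; this alignment is what causes the factors $|\Omega|^{\alpha/2}$ and $r^\alpha$ to arise simultaneously in the final estimate. Combining the three estimates yields the claim with a constant $C_1=C_1(\alpha)$.
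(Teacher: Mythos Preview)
The paper does not supply a proof of this lemma; it merely records the statement and attributes it to Morrey \cite[Lemma~5.4.1]{Morrey:1966}. Your argument is correct and follows the classical route: bound $|v|$ pointwise by the Riesz potential $I_1(|Dv|)$, apply Cauchy--Schwarz with the exponent split matched to $\alpha$ so that rearrangement of the free factor produces the $|\Omega|^{\alpha/2}$, and control the remaining integral $\int_{B_r(y)}|w(x)|\,|x-z|^{-\alpha}\dx$ by the layer-cake decomposition you describe. One minor point: after Fubini the variable $z$ ranges only over $\Omega$ (the support of $Dv$), so the Morrey hypothesis for centers in $\Omega$ applies directly to the balls $B_{s^{-1/\alpha}}(z)$ and you do not actually need the stronger $\sup_{z\in\R^2}$ version of the estimate.
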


\subsection{A generalization of Rivi\`ere's result}
The following result, which is a slight improvement of
Rivi\`ere's fundamental paper \cite{Riviere:2007}, can be retrieved from
\cite{MuellSchikorra:2010}.

\begin{theorem}\label{mod-Riviere}
Let $\Omega\in L^2(B,\frak{so}(m)\otimes\R^2)$ and $f\in L^s(B,\R^m)$ with $s>1$ be given. Then, any weak solution $u\in W^{1,2}(B,\R^m)$ of
\begin{equation*}
	-\Delta u=\Omega\cdot Du +f\quad\mbox{on $B$}
\end{equation*}
is H\"older continuous in $B$ for some H\"older exponent $\alpha \in (0,1)$. Moreover, if $u$ admits a continuous boundary trace $u\big|_{\partial B}$, then $u$ is also continuous up to the boundary, that is
$u\in C^{0,\alpha}(B,\R^m)\cap C^0(\overline B,\R^m)$. 
\end{theorem}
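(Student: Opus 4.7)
The plan is to prove the theorem in two parts: the interior H\"older continuity via Rivi\`ere's conservation-law method adapted to $L^s$-inhomogeneities (the content of the M\"uller-Schikorra extension), and then the continuity up to $\partial B$ via a boundary localization and reflection argument.

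\emph{Interior step.} Localize around an interior point $x_0\in B$ on a disk $B_\rho(x_0)$ chosen so that $\|\Omega\|_{L^2(B_\rho(x_0))}$ is small (always possible by absolute continuity of the integral). Rivi\`ere's gauge construction then produces $P\in W^{1,2}\cap L^\infty(B_\rho(x_0),SO(m))$ and $\eta\in W^{1,2}(B_\rho(x_0),\frak{so}(m))$ satisfying $P^{-1}DP + P^{-1}\Omega P=\nabla^\perp\eta$. Multiplying the equation on the left by $P^{-1}$ and rearranging produces the divergence-form identity
\begin{equation*}
  \dive\bigl(P^{-1}Du\bigr) = -\nabla^\perp\eta\cdot Du + P^{-1}f.
\end{equation*}
The first term on the right lies in the local Hardy space $\mathcal{H}^1_{\loc}$ by the Coifman-Lions-Meyer-Semmes compensated compactness theorem, and the second lies in $L^s$. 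Pairing with BMO test functions via Fefferman-Stein duality (combined with a local Hodge decomposition of $P^{-1}Du$) yields a Morrey-type decay $\int_{B_r(y)}|Du|^2\dx\le Cr^{2\alpha}$ for some $\alpha\in(0,1)$ on small sub-disks. Morrey's embedding then delivers $u\in C^{0,\alpha}_{\loc}(B,\R^m)$.

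\emph{Boundary step.} Assume now that the trace $g:=u|_{\partial B}$ is continuous, and fix $x_0\in\partial B$. After subtracting the constant $g(x_0)$ from $u$ (which leaves the PDE invariant, since constants are annihilated by both $\Delta$ and $\Omega\cdot D$), we may assume $g(x_0)=0$. Given $\eps>0$, choose $\rho>0$ so small that $|g|\le\eps$ on $\partial B\cap B_{2\rho}(x_0)$ and $\|\Omega\|_{L^2(B_{2\rho}^+(x_0))}$ is small. Let $\chi\in C^\infty(\partial B)$ be a cutoff with $\chi\equiv1$ on $\partial B\cap B_\rho(x_0)$ and $\spt\chi\subset\partial B\cap B_{2\rho}(x_0)$, and let $h$ be the harmonic extension of $\chi g$ to $B$; by classical Dirichlet theory $h\in C^0(\overline B)\cap W^{1,2}(B)$ with $\|h\|_{L^\infty(B)}\le C\eps$. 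Then $v:=u-h$ has vanishing trace on $I_\rho(x_0)=\partial B\cap B_\rho(x_0)$ and satisfies an equation of the same form with new inhomogeneity $\Omega\cdot Dh+f\in L^s_{\loc}$ near $x_0$. After a smooth flattening of $\partial B$ near $x_0$ into a straight segment $I$, odd-reflect $v$ across $I$, reflecting the components of $\Omega\in\frak{so}(m)\otimes\R^2$ componentwise (tangential component of the $\R^2$-factor even, normal component odd, so as to preserve antisymmetry in the $m$-indices) and reflecting the inhomogeneity oddly. The reflected $\tilde v$ is a weak solution of a Rivi\`ere-type equation on a full disk around $x_0$, with $\tilde\Omega\in L^2$ antisymmetric and $\tilde f\in L^s$. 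Applying the interior H\"older estimate to $\tilde v$ gives continuity of $\tilde v$ at $x_0$, whence $u=v+h$ is continuous at $x_0$ with the correct value $g(x_0)$.

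\emph{Main obstacle.} The delicate point is the reflection construction: one must verify that the odd extension of $v$ yields a genuine weak solution of the Rivi\`ere equation across the segment $I$, which rests on $v|_I=0$ in the trace sense (so that no interface term appears upon testing) and on the fact that the reflection of $\Omega$ described above preserves antisymmetry in $m$ and $L^2$-integrability. Once this bookkeeping is in order, the boundary assertion reduces to a single application of the interior theorem to $\tilde v$, and the global $C^0(\overline B)$-statement follows by varying $x_0\in\partial B$.
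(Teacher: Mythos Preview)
Your interior step is essentially the M\"uller--Schikorra argument that the paper cites, so that part agrees with the paper's approach.

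The boundary step, however, differs from the paper and contains a genuine gap. The paper does not argue by reflection at all; it simply invokes the classical observation of Hildebrandt and Kaul \cite[Lemma~3]{HildebrandtKaul:1972} that any map $u\in W^{1,2}(B,\R^m)$ which is continuous in the open disk and has a continuous trace on $\partial B$ is automatically in $C^0(\overline B)$. This is a soft two-dimensional Sobolev fact, proved via the Courant--Lebesgue lemma, and uses no PDE structure beyond the interior continuity already established in the first step. So once interior H\"older regularity is known, the boundary assertion is essentially free.

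Your reflection route, by contrast, fails at the claim ``new inhomogeneity $\Omega\cdot Dh+f\in L^s_{\loc}$ near $x_0$.'' The harmonic extension $h$ of $\chi g$ lies only in $W^{1,2}(B)$: since $g=u|_{\partial B}\in C^0\cap W^{1/2,2}(\partial B)$ and nothing better is assumed, one gets $Dh\in L^2$ up to the boundary but not $Dh\in L^p$ for any $p>2$. Hence $\Omega\cdot Dh$ is merely in $L^1$ near $x_0$ and cannot be absorbed into an $L^s$ inhomogeneity with $s>1$, so the interior theorem does not apply to the reflected $\tilde v$. Nor can this term be put back into the antisymmetric part without destroying the zero-trace property of $v$ that the odd reflection needs. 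Reflection arguments of this type do work when the boundary datum is itself more regular (this is the setting in which M\"uller--Schikorra prove their boundary results), but for a merely continuous trace the right tool is the elementary Hildebrandt--Kaul lemma rather than the Rivi\`ere machinery.
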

This result is important for our purposes since as noted by Rivi\`ere \cite{Riviere:2007}, the right-hand side of the
$H$-surface equation~(\ref{Plateau-probl})$_1$ can be written in the form
$\Omega\cdot Du$. The difference of the above statement to the one in \cite{Riviere:2007} stems from the fact that
an $L^s$-pertubation with $s>1$ of the critical right-hand side $\Omega
\cdot Du\in L^1$ is considered. This generalization is
necessary for our purposes. 
In our setting $f$ plays the role of the time derivative $\partial_t
u$ which by our construction will be an $L^2$-map on almost every time
slice $B\times\{t\}$.
The statement concerning the boundary 
regularity goes indeed back to \cite[Lemma 3]{HildebrandtKaul:1972}.
Once the interior regularity is established the assumption
of a continuous boundary trace can be used to conclude the regularity up to the boundary by a simple lemma concerning Sobolev maps.

\section{The $H$-volume functional}\label{sec:H-volume}
Here, we briefly recall the definition of the $H$-volume functional
and some of its properties. For a more detailed treatment of the
topic, we refer to \cite{Steffen:1976} or \cite{DuzaarSteffen:1999}.
The definition of the $H$-volume functional that we present here
relies on the theory of currents. The
standard references  are \cite{Federer} and
\cite{Simon}.

\subsection{Definitions}
We write $\mathcal{D}^k(\R^3)$, $k\in\{0,1,2,3\}$, for the space of 
smooth $k$-forms with compact support in $\R^3$. 
A distribution
$T\colon\mathcal{D}^k(\R^3)\to\R$ is called 
\textbf{$k$-current} on $\R^3$. The \textbf{mass} of $T$
is defined by
\begin{equation*}
  \mathbf{M}(T):=\sup\left\{ T(\omega): \omega\in \mathcal
    D^k(\R^3),\; \|\omega\|_\infty\le 1\right\}.
\end{equation*}
The boundary of a $k$-current $T$ is the $(k-1)$-current $\partial T$ given by 
$\partial T(\alpha):=T(d\alpha)$ for $\alpha\in \mathcal D^{k-1}(\R^3)$. A
current $T$ is called \textbf{closed} if $\partial T\equiv0$.
For the definition of the $H$-volume functional, the following
subclass of currents will be crucial. 
\begin{definition}\upshape
  A $k$-current $T$ on $\R^3$ is called an \textbf{integer multiplicity
  rectifiable $k$-current} if it can be represented as
  \begin{equation*}
    T(\omega):=\int_{M}\langle\omega(x),\xi(x)\rangle\,\theta(x)\,d\mathcal{H}^k(x)\qquad
    \mbox{ for all }\omega\in \mathcal{D}^k(\R^3),
  \end{equation*}
  where $\mathcal{H}^k$ denotes the $k$-dimensional
  Hausdorff measure, $M\subset\R^3$ is an
  $\mathcal{H}^k$-measurable, countably $k$-rectifiable subset,
  $\theta\colon M\to\N$ is a locally $\mathcal{H}^k$-integrable
  function and $\xi\colon M\to\bigwedge_k\R^3$ is an
  $\mathcal{H}^k$-measurable function of the form
  $\xi(x)=\tau_1(x)\wedge\ldots\wedge\tau_k(x)$, where 
  $\tau_1(x),\ldots,\tau_k(x)$
  form an orthonormal basis of the approximate tangent space $T_xM$ for
  $\mathcal{H}^k$-a.e. $x\in M$.\hfill $\Box$
\end{definition}

The preceding definition follows the terminology of
Simon \cite{Simon}. In the
language of Federer \cite{Federer}, 
the currents defined above are called locally
rectifiable $k$-currents. Examples of integer multiplicity rectifiable $2$-currents are induced
by any map $u\in W^{1,2}(B,\R^3)$ via integration of $2$-forms 
over the surface $u$ as follows.
\begin{equation*}
    J_u(\omega ):=\int_Bu^\#\omega =\int_B\langle \omega\circ u, D_1u\wedge D_2u\rangle\, dx \qquad \forall\, \omega\in \mathcal D^2(\R^3).
\end{equation*}
The fact that $J_u$ is an integer multiplicity rectifiable $2$-current in $\R^3$ can be checked by a Lusin-type approximation argument as in 
\cite[Sect. 6.6.3]{EvansGariepy}. Moreover, the 
current $J_u$ has finite mass since
$$
 {\bf M}(J_u)
 :=
 \sup\left\{ J_u(\omega): \omega\in \mathcal D^2(\R^3),\,
 \|\omega\|_\infty\le 1\right\}
 \le 
 \int_B\big|D_1u\wedge D_2u\big|\, dx
 \le 
 {\bf D}(u).
$$
If $v\in W^{1,2}(B,\R^3)$ is a parametric surface with associated
$2$-current $J_v$ then $(J_u-J_v)(\omega)$ is determined by integration of $u^\#\omega-v^\#\omega$ over the set $G:=\{ x\in B: u(x)\not= v(x)\}$, and therefore
we have
\begin{equation}\label{mass-Ju-Jv}
    {\bf M}(J_u-J_v)\le {\bf D}_G(u)+{\bf D}_G(v).
\end{equation}
The main idea for the definition
of the oriented $H$-volume ${\bf V}_H(u,v)$ enclosed
by two  surfaces $u,v\in \SGA$ is to interpret the $2$-current
$J_u-J_v$ as the boundary of an integer
multiplicity rectifiable $3$-current $Q$ of finite mass in $\R^3$,
i.e. to write $J_u-J_v=\partial Q$. Such $3$-currents can
be interpreted as a set with integer multiplicities and finite
(absolute) volume, more precisely, they can be written as 
\begin{equation*}
    Q (\gamma )=\int_{\R^3} i_Q\gamma\qquad\mbox{ for all }\gamma\in\mathcal D^3(\R^3)
\end{equation*}
with an integer valued multiplicity function $i_Q\in
L^1(\R^3,\Z)$. Since in the present situation, the boundary $\partial
Q$ has finite mass, the multiplicity function $i_Q$ turns out to be a 
${\rm BV}$-function on $\R^3$.
The oriented $H$-volume enclosed by $u$ and $v$ can then be
defined by 
\begin{equation*}
    {\bf V}_H(u,v):=\int_{\R^3}i_QH\Omega\, ,
\end{equation*}
where $\Omega$ denotes the standard volume form on $\R^3$.
We interpret this term as the volume of the 
set $\mathrm{spt}\, i_Q$, whose boundary is parametrized by the 
mappings $u$ and $v$, where the multiplicities and the orientation are 
taken into account.
In order to make this idea precise, we need to ensure the
existence and the uniqueness of the $3$-current $Q$ with $\partial
Q=J_u-J_v$ from above. We first note that the $2$-currents $J_u-J_v$
considered here 
are spherical in the sense of 

\begin{definition} \label{def-spherical}\upshape
A $2$-current $T$ with support in $A$ is called {\bf spherical} iff it can be represented by a map $f\in W^{1,2}(S^2,A)$ in the form $T=f_\#\lk S^2\rk$, i.e.
\begin{equation}\label{spherical-current}    
    T(\omega )
    =
    \int_{S^2}f^\#\omega\quad \mbox{for all $\omega\in\mathcal
    D^2(\R^3)$.}
\end{equation}
\hfill$\Box$
\end{definition}

From \cite[Lemma 3.3]{DuzaarSteffen:1999} we recall the following
fact.

\begin{lemma}\label{lem-spherical}
For any $u,v\in \SGA$ the current $J_u-J_v$ is a spherical $2$-current in $A$.
\end{lemma}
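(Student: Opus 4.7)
The plan is to construct a map $f \in W^{1,2}(S^2, A)$ such that $f_\#\lk S^2\rk = J_u - J_v$ as $2$-currents in $\R^3$. Choose lifts $\varphi, \psi \in \mathcal{T}^\ast(\Gamma)$ with $u(e^{i\vartheta}) = \widehat\gamma(\varphi(\vartheta))$ and $v(e^{i\vartheta}) = \widehat\gamma(\psi(\vartheta))$, and decompose the sphere as $S^2 = D_+ \cup N \cup D_-$, where $D_\pm$ are spherical caps and $N \cong [0,1]\times S^1$ is a thin equatorial neck separating them.

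I would then define $f$ piecewise as follows. Fix an orientation-preserving diffeomorphism $\Phi_+\colon \overline B \to D_+$ and an orientation-reversing diffeomorphism $\Phi_-\colon \overline B \to D_-$, chosen compatibly with a smooth parametrization of the neck, and put $f := u\circ\Phi_+^{-1}$ on $D_+$ and $f := v\circ\Phi_-^{-1}$ on $D_-$. On $N$, set $f(s,\vartheta) := \widehat\gamma(\chi(s,\vartheta))$ with $\chi := \widetilde\chi + \vartheta$, where $\widetilde\chi$ is the harmonic extension on $[0,1]\times S^1$ with Dirichlet data $\varphi - \mathrm{id}$ at $s=0$ and $\psi - \mathrm{id}$ at $s=1$. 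The identity subtraction is necessary because, by the defining property $\varphi(\cdot+2\pi) = \varphi+2\pi$ of $\mathcal{T}^\ast(\Gamma)$, the functions $\varphi-\mathrm{id},\,\psi-\mathrm{id}$ are genuinely $2\pi$-periodic and hence lie in $W^{\frac12,2}(S^1)$; this yields $\widetilde\chi \in W^{1,2}([0,1]\times S^1)$, and $\widehat\gamma\circ\chi$ descends to the quotient $[0,1]\times S^1$ thanks to the $2\pi$-periodicity of $\widehat\gamma$.

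Next I would verify that $f\in W^{1,2}(S^2,A)$. The restrictions to $D_\pm$ inherit $W^{1,2}$-regularity from $u$ and $v$; on $N$, the $C^3$-regularity of $\widehat\gamma$ together with the chain rule gives $f \in W^{1,2}(N,\R^3)$. By construction, the traces of the three pieces agree on the gluing circles $\partial D_\pm$, so $f \in W^{1,2}(S^2,\R^3)$. Since $u(B),v(B)\subset A$ and $\Gamma \subset A$, the image $f(S^2)$ is contained in $A$.

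Finally, by additivity of the pushforward,
\begin{equation*}
  f_\#\lk S^2\rk = f_\#\lk D_+\rk + f_\#\lk N\rk + f_\#\lk D_-\rk.
\end{equation*}
The choice of orientations of $\Phi_\pm$ gives $f_\#\lk D_+\rk = J_u$ and $f_\#\lk D_-\rk = -J_v$, while the crucial observation is $f_\#\lk N\rk = 0$: since $f|_N = \widehat\gamma\circ\chi$ takes values in the one-dimensional curve $\Gamma$, the chain rule yields $D_i f = \widehat\gamma'(\chi)\,D_i\chi$ for $i=1,2$, both partial derivatives being scalar multiples of $\widehat\gamma'(\chi)$, hence $D_1f\wedge D_2f = 0$ a.e.\ and $f^\#\omega \equiv 0$ for every $\omega\in\mathcal{D}^2(\R^3)$. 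Combining the three contributions gives $f_\#\lk S^2\rk = J_u-J_v$, proving the claim. I expect the main technical obstacle to be the bookkeeping of the orientations of $\Phi_\pm$ and the careful handling of the $2\pi$-shift periodicity of the lifts, both of which are technical rather than conceptual.
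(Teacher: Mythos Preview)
Your proposal is correct and follows essentially the same construction as the reference \cite[Lemma 3.3]{DuzaarSteffen:1999} that the paper cites for this result: glue $u$ and $v$ (with opposite orientations) onto two caps and interpolate across a neck via $\widehat\gamma$ composed with a harmonic extension of the periodic lifts $\varphi-\mathrm{id}$, $\psi-\mathrm{id}$, so that the neck contribution to the current vanishes because its image lies in the one-dimensional curve $\Gamma$. The paper itself does not reproduce the argument but employs the identical annulus-interpolation device in its proof of Lemma~\ref{prop-h-vol}\,(ii), so your write-up aligns with both the cited source and the paper's own internal usage.
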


Since $T:=J_u-J_v$ can be written in the form
\eqref{spherical-current}, it is in particular closed because
\begin{equation*}
  \partial T(\omega)
  =
  \int_{S^2}f^\#d\omega=\int_{S^2}d(f^\#\omega)=0
    \quad \mbox{ for all $\omega\in\mathcal D^1(\R^3)$.}
\end{equation*}
Therefore, for all $u,v\in\SGA$, the current 
$T=J_u-J_v$ is a closed, integer multiplicity
rectifiable $2$-current of finite mass with $\spt T\subseteq A$. By
the deformation theorem, we conclude the existence of 
an integer multiplicity rectifiable $3$-current $Q$ of finite mass with $\partial Q =T$ (see \cite[Thm. 29.1]{Simon} or \cite[4.2.9]{Federer}). Furthermore, the constancy theorem implies that $Q$ is unique up to integer multiples of $\lk\R^3\rk$, which makes $Q$ the unique current of finite mass with $\partial Q=T$.
In order to prove $\spt Q\subseteq A$, we consider the nearest-point-retraction $\pi\colon\R^3\to A$ onto the convex set $A$. From
$\partial \pi_\#Q=\pi_\#\partial Q=T$ and 
${\bf M}(\pi_\# Q)\le ({\rm Lip}\,\pi)^3
{\bf M}(Q)\le {\bf M}(Q)$, we infer in view of the uniqueness
established above that $\pi_\#Q =Q$. This means that $\spt
Q\subseteq A$, as claimed. The above reasoning leads us to 
\begin{lemma}\label{lem:deform}
  Let $A\subseteq\R^3$ be a closed convex set. Then for every spherical
  $2$-current $T$ on $\R^3$ with $\spt T\subseteq A$, there exists a
  \textbf{unique} integer multiplicity rectifiable $3$-current $Q$
  with the properties $\mathbf{M}(Q)<\infty$, $\partial Q=T$ and $\spt Q\subseteq A$.
\end{lemma}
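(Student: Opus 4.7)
The plan is to verify the three requirements in turn -- existence of a 3-current $Q$ of finite mass with $\partial Q = T$, its uniqueness, and the localization $\spt Q \subseteq A$ -- relying on three standard tools from geometric measure theory: the isoperimetric/deformation theorem for integral currents, the constancy theorem on $\R^3$, and the push-forward under the nearest-point retraction onto the convex set $A$.

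First, I would record the properties of the hypothesis. Write $T = f_\#\lk S^2\rk$ for some $f\in W^{1,2}(S^2,A)$, as given by Definition~\ref{def-spherical}. By a standard smoothing argument (Lusin-type approximation of $f$ by Lipschitz maps, as in the discussion following~(\ref{spherical-current})), $T$ is an integer multiplicity rectifiable $2$-current with finite mass bounded by $\mathbf{D}(f)$. Closedness of $T$ is a one-line computation using Stokes' theorem on $S^2$:
\begin{equation*}
    \partial T(\omega) = T(d\omega) = \int_{S^2} f^\# d\omega = \int_{S^2} d(f^\#\omega) = 0
    \qquad \text{for all } \omega \in \mathcal{D}^1(\R^3).
\end{equation*}

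Next, I would invoke the isoperimetric inequality for integral currents (for instance \cite[Thm.~30.1]{Simon} or \cite[4.2.10]{Federer}): since $T$ is a closed integer multiplicity rectifiable $2$-current of finite mass in $\R^3$, there exists an integer multiplicity rectifiable $3$-current $\widetilde Q$ with $\partial \widetilde Q = T$ and $\mathbf{M}(\widetilde Q) \leq C\,\mathbf{M}(T)^{3/2} < \infty$. This gives the existence half. For uniqueness of such a current among those of finite mass, I would argue as follows: if $Q_1, Q_2$ are two integer multiplicity rectifiable $3$-currents of finite mass with $\partial Q_i = T$, then $\partial(Q_1 - Q_2) = 0$ on $\R^3$, so the constancy theorem (\cite[Thm.~26.27]{Simon}) forces $Q_1 - Q_2 = k\lk\R^3\rk$ for some $k \in \Z$; since both have finite mass and $\mathbf{M}(\lk\R^3\rk) = \infty$, necessarily $k = 0$.

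Finally, to obtain $\spt Q \subseteq A$, I would use the nearest-point retraction $\pi\colon \R^3 \to A$, which is well-defined and $1$-Lipschitz because $A$ is closed and convex. Let $Q := \pi_\#\widetilde Q$. Then $\mathbf{M}(Q) \leq (\mathrm{Lip}\,\pi)^3\,\mathbf{M}(\widetilde Q) \leq \mathbf{M}(\widetilde Q) < \infty$, and since $\spt T \subseteq A$ implies $\pi_\# T = T$, we get
\begin{equation*}
    \partial Q = \partial\pi_\#\widetilde Q = \pi_\#\partial\widetilde Q = \pi_\# T = T.
\end{equation*}
By construction $\spt Q \subseteq \pi(\R^3) = A$, and by the uniqueness proven above this $Q$ is the unique integer multiplicity rectifiable $3$-current of finite mass with $\partial Q = T$. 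The main (really the only) subtlety is checking that the approximation step giving integer multiplicity rectifiability of $T$ applies to maps $f$ of class $W^{1,2}$ rather than Lipschitz; this is where I would cite the more detailed argument of \cite[Sect.~3]{DuzaarSteffen:1999}.
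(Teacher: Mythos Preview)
Your proof is correct and follows essentially the same route as the paper: closedness of $T$ via Stokes, existence of a finite-mass integral filling (the paper cites the deformation theorem \cite[Thm.~29.1]{Simon} rather than the isoperimetric inequality, but these are equivalent here), uniqueness via the constancy theorem plus finite mass, and then the nearest-point retraction $\pi$ onto the convex set $A$ to secure $\spt Q\subseteq A$. The only cosmetic difference is that the paper first obtains the unique $Q$ and then shows $\pi_\#Q=Q$, whereas you define $Q:=\pi_\#\widetilde Q$ and invoke uniqueness afterward; both arguments are equivalent.
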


This result allows us to define the oriented $H$-volume enclosed by
two maps $u,v\in\SGA$. 

\begin{definition} \label{def-H-volume}
\upshape
For $u,v\in\SGA$, we write $J_u-J_v$ for the associated 
spherical $2$-current and $I_{u,v}$ for 
the unique integer multiplicity rectifiable
$3$-current with boundary $\partial I_{u,v}=J_u-J_v$, finite mass ${\bf M}(I_{u,v})<\infty$ and $\spt I_{u,v}\subseteq A$.
Then the $H$-{\bf volume}
enclosed by $u$ and $v$ is defined by
\begin{equation*}
     {\bf V}_H(u,v):= I_{u,v}(H\Omega)=\int_{A}i_{u,v}H\Omega\, .
\end{equation*}
Here, $i_{u,v}$ denotes the multiplicity function of $I_{u,v}$, and
$\Omega$ the standard volume form of $\R^3$. 

\hfill $\Box$
\end{definition}

\subsection{Some important properties of the $H$-volume}
Throughout this work, we assume that $H$ satisfies 
a spherical isoperimetric condition of type  $(c,s)$ on $A$ as defined
in \eqref{sphrical-isop}. This condition can be re-written in terms of
the $H$-volume as follows: Consider any $u,v\in\SGA$ 
with ${\bf D}(u)+{\bf D}(v)\le s$, so that in particular
$\mathbf{M}(J_u-J_v)\le s$. Then  
the $H$-volume enclosed by $u$ and $v$ is bounded by
 \begin{equation}\label{isop-Ju-Jv}
     2\big| {\bf V}_H(u,v)\big|
     \le
     c\,{\bf M}(J_u-J_v)\le c\big({\bf D}_G(u)+{\bf D}_G(v)\big),
 \end{equation}
where $G=\{ x\in B: u(x)\not= v(x)\}$. For the second inequality we
refer to \eqref{mass-Ju-Jv}.
Next, we state the following well-known invariance of the volume
functional, cf. \cite[(2.12)]{DuzaarSteffen:1999}.
\begin{lemma}\label{lem-invariant-volume}
  The $H$-volume is invariant under orientation preserving
  $C^1$-diffeo\-mor\-phisms $\varphi,\psi\colon
  \overline B\to\overline B$ in
  the sense that for all $u,v\in\SGA$, there holds
\begin{equation*}
    \mathbf{V}_H(u\circ \varphi,v\circ\psi)=\mathbf{V}_H(u,v).
\end{equation*}
\end{lemma}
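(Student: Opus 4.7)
The plan is to reduce the invariance of the $H$-volume to the invariance of the underlying spherical $2$-current $J_u-J_v$ under the composition with orientation-preserving diffeomorphisms, and then to invoke the uniqueness part of Lemma~\ref{lem:deform}.

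The first step will be to verify the pointwise identity $J_{u\circ\varphi}=J_u$ as $2$-currents on $\R^3$ for any orientation-preserving $C^1$-diffeomorphism $\varphi\colon\overline{B}\to\overline{B}$. For smooth $u$, this is a direct consequence of the chain rule combined with the transformation behavior of $2$-forms: the identity
$$
  D_1(u\circ\varphi)\wedge D_2(u\circ\varphi)=(\det D\varphi)\,(D_1u\wedge D_2u)\circ\varphi
$$
together with $\det D\varphi>0$ and the substitution $y=\varphi(x)$, which is applicable because $\varphi(B)=B$, yields
$$
  J_{u\circ\varphi}(\omega)=\int_B\langle\omega\circ u\circ\varphi,\,D_1(u\circ\varphi)\wedge D_2(u\circ\varphi)\rangle\dx=\int_B\langle\omega\circ u,\,D_1u\wedge D_2u\rangle\dy=J_u(\omega)
$$
for all $\omega\in\mathcal{D}^2(\R^3)$. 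I then extend this to $u\in W^{1,2}(B,\R^3)$ by approximating $u$ by smooth maps in the $W^{1,2}$-norm and exploiting that $w\mapsto J_w(\omega)$ is continuous on $W^{1,2}(B,\R^3)$ for each fixed $\omega$, since the integrand is bilinear in $Dw$ and bounded by $\|\omega\|_\infty$.

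Applying the same identity to $v\circ\psi$, I obtain the current equality $J_{u\circ\varphi}-J_{v\circ\psi}=J_u-J_v$. Since $u\circ\varphi,v\circ\psi\in\SGA$, Lemma~\ref{lem-spherical} guarantees that this is a spherical $2$-current with support in $A$, so by the uniqueness statement of Lemma~\ref{lem:deform} the associated integer multiplicity rectifiable $3$-currents of finite mass supported in $A$ must coincide, i.e.\ $I_{u\circ\varphi,v\circ\psi}=I_{u,v}$. Evaluating both sides on the form $H\Omega$ then yields $\mathbf{V}_H(u\circ\varphi,v\circ\psi)=\mathbf{V}_H(u,v)$ as desired.

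The only real subtlety I expect is the passage from smooth $u$ to $u\in W^{1,2}(B,\R^3)$, but this is handled by standard density and continuity arguments together with the $C^1$-regularity of $\varphi$ (which gives a uniform bound on $D\varphi$ and hence a uniform $W^{1,2}$-control on $u\circ\varphi$ along the approximation). All other steps are purely formal consequences of the definition of the $H$-volume and the uniqueness principle in Lemma~\ref{lem:deform}.
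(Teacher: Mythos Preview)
Your argument is correct and is precisely the standard route: the identity $J_{u\circ\varphi}=J_u$ for orientation-preserving $C^1$-diffeomorphisms follows from the change-of-variables formula, and then uniqueness in Lemma~\ref{lem:deform} forces $I_{u\circ\varphi,v\circ\psi}=I_{u,v}$, whence the $H$-volumes agree. The paper does not give its own proof of this lemma but simply cites \cite[(2.12)]{DuzaarSteffen:1999}; your argument is exactly the one underlying that reference.

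One small imprecision: you write ``Since $u\circ\varphi,v\circ\psi\in\SGA$,'' but this need not hold, because an arbitrary orientation-preserving diffeomorphism $\varphi$ will not fix the points $P_1,P_2,P_3$, so the three-point condition built into $\Ss$ (and hence $\SGA$) may fail for $u\circ\varphi$. This does not harm your proof, however: you have already established the equality of currents $J_{u\circ\varphi}-J_{v\circ\psi}=J_u-J_v$, and the right-hand side is spherical with support in $A$ by Lemma~\ref{lem-spherical} applied to the original $u,v\in\SGA$. Hence the sphericity you need follows directly from the current identity, and the appeal to Lemma~\ref{lem-spherical} for the composed maps is unnecessary. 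With this adjustment the argument is clean.
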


The next lemma states that  the $H$-volume functional admits all the properties to derive the variational
(in-)equality (first variation formula) later on. We have
\begin{lemma}\label{prop-h-vol}
Let $u,v\in \SGA$ so that the 
$H$-volume ${\bf V}_H(u,v)$ is defined (cf. Definition \ref{def-H-volume}). Then there hold:

\begin{enumerate}
\item Assume that $\tilde u\in \SGA$ is given.
Then ${\bf V}_H(\tilde u, v)$ and ${\bf V}_H(\tilde u,u)$ (that are
also well-defined by Lemma \ref{lem:deform}) satisfy
$$
    {\bf V}_H(\tilde u, u)+{\bf V}_H( u, v)={\bf V}_H(\tilde u, v)\, ,
$$
and
$$
	\big| {\bf V}_H(\tilde u, u)\big|
	\le 
	\| H\|_{L^\infty} \| u-\tilde u\|_{L^\infty} 
	\big[ {\bf D}_G(u)+{\bf D}_G(\tilde u)\big],
$$
where $G=\{ x\in B: u(x)\not= \tilde u(x)\}$.

\item  Consider a one-sided variation $u_\tau\in \SGA$,
$\tau\in[0,\eps)$, for which  the bound
\begin{equation}\label{uniform-bounds2}
    \sup_{0\le \tau <\eps}
    \Big(
    \|u_\tau\|_{W^{1,2}(B)}
    +
    \|\tfrac\partial{\partial\tau}u_\tau\|_{L^\infty(B)}
    +
    \|\tfrac\partial{\partial\tau}u_\tau\|_{C^0(\partial B)}
    \Big)
    <
    \infty
\end{equation}
holds true.
Then ${\bf V}_H(u_\tau,u)$ and ${\bf V}_H(u_\tau,v)$ are defined for
$\tau\in[0,\eps)$ and with the 
abbreviation $U(\tau,x):=u_\tau(x)$, the following homotopy formula holds:
\begin{align}\label{homotopy}\nonumber
	{\bf V}_H(u_\tau,v)-{\bf V}_H(u,v)
	&
	= {\bf V}_H(u_\tau,u) \\
  	&
	=
  	\int_B\int_0^\tau (H\circ U)\left\langle \Omega\circ U,
   U_s \wedge U_{x_1}\wedge U_{x_2}\right\rangle\, ds\, dx.
\end{align}
\end{enumerate}
\end{lemma}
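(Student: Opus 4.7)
My plan has three ingredients: the uniqueness of filling $3$-currents granted by Lemma~\ref{lem:deform}, an explicit pushforward representation of $I_{\cdot,\cdot}$ via a homotopy, and the observation that any variation through $\SGA$ keeps the lateral boundary on the one-dimensional curve $\Gamma$.

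For the additivity in~(i), I would note that the sum $I_{\tilde u,u}+I_{u,v}$ is an integer multiplicity rectifiable $3$-current with finite mass $\le\mathbf{M}(I_{\tilde u,u})+\mathbf{M}(I_{u,v})$, support in $A$, and boundary $J_{\tilde u}-J_v$; the uniqueness in Lemma~\ref{lem:deform} then forces $I_{\tilde u,u}+I_{u,v}=I_{\tilde u,v}$, and evaluating on $H\Omega$ gives the claimed identity. The bound $|\mathbf{V}_H(\tilde u,u)|\le\|H\|_\infty\|u-\tilde u\|_\infty[\mathbf{D}_G(u)+\mathbf{D}_G(\tilde u)]$ is reserved for the end.

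For the homotopy formula~(ii), I would introduce $U(s,x):=u_s(x)$ on $[0,\tau]\times B$ and study the pushforward $T:=U_{\#}\lk[0,\tau]\times B\rk$. The crucial point is that $u_s\in\SGA$ for each $s$, so $U([0,\tau]\times\partial B)\subseteq\Gamma$; the derivatives $U_s$ and $U_\vartheta$ are then both tangent to the one-dimensional curve $\Gamma$ on the lateral boundary, which forces $U_s\wedge U_\vartheta$ to vanish and kills the Stokes boundary contribution $U_{\#}\lk[0,\tau]\times\partial B\rk$. The uniform bounds~\eqref{uniform-bounds2} guarantee $\mathbf{M}(T)<\infty$ and $\spt T\subseteq A$, so $\partial T=J_{u_\tau}-J_u$ together with the uniqueness in Lemma~\ref{lem:deform} gives $T=I_{u_\tau,u}$. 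Expanding $T(H\Omega)$ as a pullback integral yields
\begin{equation*}
    \mathbf{V}_H(u_\tau,u)=T(H\Omega)=\int_0^\tau\!\!\int_B(H\circ U)\bigl\langle\Omega\circ U,\,U_s\wedge U_{x_1}\wedge U_{x_2}\bigr\rangle\,dx\,ds,
\end{equation*}
which is exactly the formula claimed in~(ii); the first equality of~(ii) then follows directly from the additivity already proved. The mass estimate in~(i) is obtained by repeating the same pushforward argument with the affine homotopy $\tilde U(s,x):=(1-s)u(x)+s\tilde u(x)$, which lies in $A$ by convexity. Here $\tilde U_s=\tilde u-u$ vanishes off $G$, so the pushforward integral is supported on $[0,1]\times G$, and the elementary inequality $\int_0^1|\tilde U_{x_1}\wedge\tilde U_{x_2}|\,ds\le\tfrac12(|Du|^2+|D\tilde u|^2)$ yields the desired bound.

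The main obstacle is that, in case~(i), the affine homotopy does not respect the Plateau condition on $\partial B$, so the Stokes boundary current $\tilde U_{\#}\lk[0,1]\times\partial B\rk$ does not automatically vanish when $u|_{\partial B}$ and $\tilde u|_{\partial B}$ are different parametrizations of $\Gamma$. To circumvent this I would first reduce to the case $u|_{\partial B}=\tilde u|_{\partial B}$ pointwise, by choosing an orientation-preserving $C^1$-diffeomorphism $\psi\colon\overline B\to\overline B$ realigning the two weakly monotone parametrizations (via approximation by homeomorphisms as in the definition of $\Ss$) and replacing $u$ by $u\circ\psi$. The invariance of $\mathbf{V}_H$ under reparametrization (Lemma~\ref{lem-invariant-volume}), combined with $\mathbf{V}_H(u,u)=0$ (a further instance of the uniqueness applied to the zero $3$-current as the only finite-mass filling of zero boundary), gives $\mathbf{V}_H(\tilde u,u)=\mathbf{V}_H(\tilde u,u\circ\psi)$. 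After this reduction, $\tilde U_s=0$ on $\partial B$, the Stokes boundary current drops out, and the pushforward argument from~(ii) applies verbatim to identify the integral above with $\mathbf{V}_H(\tilde u,u)$.
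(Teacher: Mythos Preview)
Your argument for the additivity in~(i) is correct and standard. For the homotopy formula~(ii), your approach is valid and in fact more direct than the paper's: the paper does not argue that $U_s\wedge U_\vartheta=0$ on $[0,\tau]\times\partial B$ because the image lies in the one-dimensional set $\Gamma$. Instead it replaces each $u_s$ by a modification $\tilde u_s$ that coincides with a rescaled $u_s$ on $B_{1-\delta}$ and maps the annulus $B\setminus B_{1-\delta}$ into $\Gamma$ via $\widehat\gamma(h_s(\rho,e^{i\vartheta})+\vartheta)$, with $h_s$ harmonic and chosen so that $\tilde u_s|_{\partial B}=\widehat\gamma(\vartheta)$ is independent of~$s$. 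This buys two things at once: the lateral Stokes term vanishes because $\partial_s\tilde U\equiv0$ on $\partial B$, and $\tilde U$ is $C^1$ on $[0,\tau]\times\partial B$, which is what justifies Stokes by approximation. Your shortcut reaches the same conclusion, but be aware that $u_s|_{\partial B}$ is only in $C^0\cap W^{1/2,2}$, so $U_\vartheta$ on the lateral boundary is not an honest function; the paper's modification is precisely the device that supplies the missing regularity.

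Your reduction for the mass estimate in~(i), on the other hand, does not go through. Weakly monotone maps in $\mathcal{T}^\ast(\Gamma)$ can have plateaus, and if $\varphi$ is constant on an arc where $\tilde\varphi$ is strictly increasing, there is no homeomorphism $\psi$ of $\partial B$ (let alone a $C^1$-diffeomorphism of $\overline B$) solving $\varphi\circ\psi=\tilde\varphi$. Even in cases where such a $\psi$ exists, the bound you would obtain from the affine homotopy between $\tilde u$ and $u\circ\psi$ controls $\|u\circ\psi-\tilde u\|_{L^\infty}$ and the Dirichlet energies on $\{u\circ\psi\neq\tilde u\}$, not the quantities $\|u-\tilde u\|_{L^\infty}$ and $\mathbf D_G(u)+\mathbf D_G(\tilde u)$ appearing in the statement. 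The paper itself does not prove this estimate and refers to \cite[Lemma~3.6(i)]{DuzaarSteffen:1999}; the correct route there does not proceed by reparametrization of the domain but by a collar-type modification in the target (in the spirit of the annulus construction you saw in~(ii)), which leaves $J_u$, $J_{\tilde u}$ and hence $I_{\tilde u,u}$ unchanged while removing the lateral boundary contribution.
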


\begin{proof}
For the proof of (i), we refer to \cite[Lemma
3.6\,(i)]{DuzaarSteffen:1999}. We turn our attention to the proof
of (ii). We define a $3$-current by 
\begin{equation*}
    Q_U(\phi)
    :=
    \int_B\int_0^\tau\langle\phi\circ U,U_s\wedge U_x\wedge
    U_y\rangle \,ds\,dx\,dy
    =\int_{[0,\tau]\times B}U^\#\phi
\end{equation*}
for every $\phi\in\mathcal{D}^3(\R^3)$. The idea of the proof is to
apply a construction 
similar to the one from \cite[Lemma 3.3\,(i)]{DuzaarSteffen:1999} to each of the functions $u_s:=U(s,\cdot)$ for any $s\in[0,\tau]$. To this end, we note that since $u_s\in\SGA\subseteq \Ss$, we can find $\varphi_s\in\mathcal{T}^*(\Gamma)$ with
\begin{equation*}
  u_s(e^{i\vartheta})=\widehat\gamma(\varphi_s(\vartheta)) \qquad\mbox{for
    each }\vartheta\in [0,2\pi].
\end{equation*}
Because $\widehat\gamma\colon \R\to\Gamma$ is a  local $C^1$-diffeomorphism, the
assumption~(\ref{uniform-bounds2}) implies
\begin{equation}
\label{uniform-bounds-phi}
  \sup_{0\le s\le\tau}
  \Big(
  \|\varphi_s\|_{W^{1/2,2}(0,2\pi)}
  +
  \|\tfrac\partial{\partial s}\varphi_s\|_{C^0([0,2\pi])}
  \Big)<\infty.
\end{equation}
Now we choose an arbitrary $\delta>0$ and define
$h_s\colon [1-\delta,1]\times S^1\to\R$ as the unique harmonic function with
boundary values given by 
\begin{equation*}
  h_s(1-\delta,e^{i\vartheta})=\varphi_s(\vartheta)-\vartheta
  \qquad
  \mbox{and}
  \qquad
  h_s(1,e^{i\vartheta})=0.
\end{equation*}
We note that these boundary traces are well-defined since
$\varphi_s(\cdot+2\pi)=\varphi_s+2\pi$ for every $\varphi_s\in\mathcal{T}^\ast(\Gamma)$. 
As a consequence of~(\ref{uniform-bounds-phi}), this function satisfies
\begin{equation}\label{uniform-bound-h-1}
  \sup_{0\le s\le\tau}\|h_s\|_{W^{1,2}}
  \le
  c \sup_{0\le s\le\tau}
  \big(
  \|\varphi_s\|_{W^{1/2,2}(0,2\pi)}
  +
  \|{\rm id}\|_{W^{1/2,2}(0,2\pi)}
  \big)<\infty.
\end{equation}
Moreover, the derivative $\frac\partial{\partial s}h_s$ is again a
harmonic function, with the boundary values given by
$\frac\partial{\partial s}\varphi_s$ on $\{1-\delta\}\times \partial B$ and
by  zero on $\{1\}\times \partial B$. The maximum principle
and~(\ref{uniform-bounds-phi}) therefore imply
\begin{equation}\label{uniform-bound-h-2}
    \sup_{0\le s\le\tau}
    \|\tfrac\partial{\partial s}h_s\|_{L^\infty}
  	 \le 
    \sup_{0\le s\le\tau}
    \|\tfrac\partial{\partial s}\varphi_s\|_{C^0([0,2\pi])}
    <\infty.
\end{equation}
Now we are in a position to define the functions $\tilde u_s:B\to\R^3$
by
\begin{equation*}
  \tilde u_s(\rho e^{i\vartheta})
  :=
  \begin{cases}
    u_s\big(\tfrac\rho{1-\delta}e^{i\vartheta}\big)
    &\mbox{for }0\le\rho<1-\delta,\\[5pt]
    \widehat\gamma\big(h_s(\rho,e^{i\vartheta})+\vartheta\big)
    &\mbox{for }1-\delta\le\rho\le1.
  \end{cases}
\end{equation*}
We note that the definition of $h_s$ ensures that $\tilde u_s\in W^{1,2}(B,\R^3)$ for each $s\in[0,\tau]$. 
Since $\widehat\gamma$ is a local $C^1$-diffeomorphism, the
bounds~(\ref{uniform-bounds2}), (\ref{uniform-bound-h-1}) and (\ref{uniform-bound-h-2}) imply
\begin{equation}\label{uniform-bounds-tilde}
   \sup_{0\le s <\tau}
   \Big(
   \|\tilde u_s\|_{W^{1,2}(B)}
   +
   \|\tfrac\partial{\partial s}\tilde u_s\|_{L^\infty(B)}
   \Big)
   <\infty.
\end{equation}
Moreover, $\tilde u_s(e^{i\vartheta})=\widehat\gamma(\vartheta)$ for each
$s\in[0,\tau]$ and $\vartheta\in\R$, so that $\tilde u_s|_{\partial B}$ is of class $C^1$. Finally, since $\tilde u_s$ is constructed
as a re-parametrization of the original variation $u_s$ and the
$C^1$ diffeomorphism $\widehat \gamma$ we also have $\tilde u_s(B)
\subseteq A$ for $s\in [0,\tau]$.
We abbreviate $\Tilde U(s,x):=\tilde u_s(x)$ and observe that 
\begin{equation*}
    \Tilde U\big([0,\tau]\times(B{\setminus}B_{1-\delta})\big)
    \subset\Gamma.
\end{equation*}
Since $\Tilde U\big|_{[0,\tau]\times B_{1-\delta}}$ is defined as a rescaled version of $U$ and $\Gamma$ is a one-dimensional curve, the above construction does not change the corresponding currents, more precisely we have
\begin{equation*}
     Q_{\Tilde U}=Q_U,
     \qquad 
     J_{\tilde u_o}=J_u
     \qquad
     \mbox{and}
     \qquad
     J_{\tilde u_\tau}=J_{u_\tau}.
\end{equation*}
We claim that $\partial Q_U=J_{u_\tau}-J_u$. To this end, we choose
$\omega\in\mathcal{D}^2(\R^3)$ and calculate, using Stokes' theorem:
\begin{align*}
    \partial Q_U(\omega)
     &
     =Q_U(d\omega)
     =Q_{\Tilde U}(d\omega)
     =\int_{[0,\tau]\times B}d(\Tilde U^\#\omega)\\
     &
     =\int_B\Tilde u_\tau^\#\omega - \int_B\Tilde u_o^\#\omega
      +\int_{[0,\tau]\times \partial B}\Tilde U^\#\omega.
\end{align*}
Here, the application of Stokes' theorem can be justified by an
approximation argument since we
have~(\ref{uniform-bounds-tilde}) and $\Tilde U$ is of class $C^1$
on $[0,\tau]\times \partial B$ . Next, we observe that the last
integral vanishes because $\frac\partial{\partial s}\Tilde U$
vanishes on $[0,\tau]\times \partial B$. We thereby deduce
\begin{align*}
    \partial Q_U(\omega)
     =J_{u_\tau}(\omega)-J_u(\omega)
     \qquad
     \mbox{for all }\omega\in\mathcal{D}^2(\R^3).
\end{align*}
The definition of the $H$-volume now yields the claim~(\ref{homotopy}).  
\end{proof}

\section{The time discrete variational formulation}
To set up the approximation scheme by time discretization  we shall use $H$-energy functionals with a suitable lower order perturbation
term of the form
\begin{equation}\label{discrete-H-energy}
    {\bf F}(u)
    := 
    {\bf D}(u) +2 {\bf V}_H(u,u_o)+ \tfrac{1}{2h}\int_B |u-z|^2\, dx
    \equiv 
    \mathbf F^{(h)}_{u_o,z}(u)
\end{equation}
defined for $u\in \SGA$, where $u_o\in\SGA$ is a given fixed reference surface;  see Definition~\ref{def-H-volume} for the notion
of the volume functional. Here, $h>0$ and $z\in \SGA$
are given. The $H$-volume term measures 
 the oriented volume enclosed by $u$ and the given fixed reference surface $u_o$ weighted with respect to $H$. In order not to overburden
the presentation of the results and proofs we prefer not to
indicate the dependence of the functional on the data $u_o$, $z$ and
$h>0$. We start with the following assertion concerning the first variation formulae.


\begin{lemma}\label{first-variation}
\strut

(i) Let $u_\tau\in\SGA$, $\tau\in [0,\varepsilon)$ by a one-sided variation of $u\in \SGA$ 
with initial vector field $\varphi\in L^\infty(B,\R^3)\cap
W^{1,2}(B,\R^3)$ and assume that it satisfies the bounds
\eqref{uniform-bounds2}. Then we have
\begin{align}\label{first-var-formula}
    \lim_{\tau\downarrow 0}\frac{{\bf F}(u_\tau )-{\bf F}(u )}{\tau}
        &
    =
    \int_B \Big[\frac{u-z}{h}\cdot\varphi+Du\cdot D\varphi 
    +
     2(H\circ u)D_1u\times D_2u \cdot \varphi \Big]\, dx.
\end{align}

(ii) If $u\in \SGA$ and $\varphi_\tau$ is the flow generated by a vector field $\eta\in \mathcal C^\ast (B)$, then
\begin{align}\label{first-inner-var-formula}
    \frac{d}{d\tau}&\Big|_{\tau =0}{\bf F}(u\circ\varphi_\tau )
    =
    \int_B
	 {\rm Re}\big(\frak h[u]\overline\partial\eta\big)\, dx
	 +
	 \tfrac1{h}\int_B (u-z)\cdot Du\,\eta\, dx.
\end{align}
\end{lemma}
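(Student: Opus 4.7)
The plan is to split the functional $\mathbf{F}$ into its three constituents---the Dirichlet term $\mathbf{D}(u)$, the weighted $H$-volume $2\mathbf{V}_H(u,u_o)$, and the $L^2$-penalty $\tfrac1{2h}\int_B|u-z|^2\dx$---and to compute the first variation of each piece separately.

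For part (i), the contribution of the Dirichlet term is standard: the uniform bound on $\|u_\tau\|_{W^{1,2}}$ in~\eqref{uniform-bounds2} together with $\tfrac{\partial}{\partial\tau}u_\tau|_{\tau=0}=\varphi$ yields
$$
 \lim_{\tau\downarrow 0}\tfrac{1}{\tau}\big[\mathbf{D}(u_\tau)-\mathbf{D}(u)\big]=\int_B Du\cdot D\varphi\,dx.
$$
The penalty term is handled by the algebraic identity $|u_\tau-z|^2-|u-z|^2=(u_\tau-u)\cdot(u_\tau+u-2z)$, dividing by $\tau$ and passing to the limit via dominated convergence (using $\|\varphi\|_{L^\infty}<\infty$), giving $\tfrac{1}{h}\int_B(u-z)\cdot\varphi\,dx$. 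The main work is the $H$-volume contribution: by Lemma~\ref{prop-h-vol}(i) we first write $\mathbf{V}_H(u_\tau,u_o)-\mathbf{V}_H(u,u_o)=\mathbf{V}_H(u_\tau,u)$, and then the homotopy formula~\eqref{homotopy} from Lemma~\ref{prop-h-vol}(ii) (whose hypothesis is precisely~\eqref{uniform-bounds2}) gives
$$
 \tfrac{1}{\tau}\mathbf{V}_H(u_\tau,u)=\tfrac{1}{\tau}\int_B\int_0^\tau(H\circ U)\langle\Omega\circ U,U_s\wedge U_{x_1}\wedge U_{x_2}\rangle\,ds\,dx,
$$
with $U(s,x)=u_s(x)$. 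Letting $\tau\downarrow0$, the inner time-average tends to the value at $s=0$; using the uniform $L^\infty$-bound on $U_s$ and boundedness of $H$ on $A$, together with the uniform $L^2$-bound on the gradients, Lebesgue's dominated convergence theorem yields
$$
 \lim_{\tau\downarrow 0}\tfrac{1}{\tau}\mathbf{V}_H(u_\tau,u)=\int_B(H\circ u)\,(D_1u\times D_2u)\cdot\varphi\,dx.
$$
Summing the three contributions produces~\eqref{first-var-formula}.

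For part (ii), the decomposition is even more favourable. Since $\varphi_\tau$ is an orientation-preserving $C^1$-diffeomorphism of $\overline B$ for small $|\tau|$, Lemma~\ref{lem-invariant-volume} gives the exact invariance $\mathbf{V}_H(u\circ\varphi_\tau,u_o)=\mathbf{V}_H(u,u_o)$, so the $H$-volume makes no contribution to the derivative in $\tau$ at all. The Dirichlet part is covered by~\eqref{inner-D}, yielding $\int_B\mathrm{Re}(\mathfrak{h}[u]\,\overline\partial\eta)\,dx$. For the penalty, since $\varphi_0=\mathrm{id}$ and $\tfrac{d}{d\tau}|_{\tau=0}(u\circ\varphi_\tau)=Du\cdot\eta$, differentiating under the integral gives $\tfrac{1}{h}\int_B(u-z)\cdot Du\,\eta\,dx$. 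Adding the two nonzero pieces yields~\eqref{first-inner-var-formula}.

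The only non-routine step is the passage to the limit in the homotopy representation of the $H$-volume; this is precisely where the assumed uniform bounds~\eqref{uniform-bounds2} on $\|u_\tau\|_{W^{1,2}}$ and on $\|\tfrac{\partial}{\partial\tau}u_\tau\|_{L^\infty}$ are essential, as they provide the integrable majorant required to apply dominated convergence to the triple-wedge integrand.
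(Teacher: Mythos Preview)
Your proof is correct and follows essentially the same approach as the paper: splitting $\mathbf{F}$ into its three pieces, invoking the homotopy formula~\eqref{homotopy} for the $H$-volume in (i), and using the invariance Lemma~\ref{lem-invariant-volume} together with~\eqref{inner-D} for (ii). The paper's own proof is extremely terse (it merely points to~\eqref{homotopy}, Lemma~\ref{lem-invariant-volume} and~\eqref{inner-D}), so your added detail on the dominated convergence step is a welcome elaboration rather than a different route.
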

\begin{proof}
The assertion (i) follows from a straightforward calculation, using
the homotopy formula~(\ref{homotopy}). For the claim (ii), 
in view of Lemma \ref{lem-invariant-volume} and \eqref{inner-D}
we only have to compute
\begin{equation*}
	\frac{d}{d\tau}\Big|_{\tau=0}
	\tfrac1{2h}\int_B |u\circ\varphi_\tau-z|^2\, dx
	=
	\tfrac1{h}\int_B (u-z)\cdot Du\eta\, dx.\qedhere
\end{equation*}
\end{proof}
The integral  $\delta {\bf F}(u;\varphi )$ in \eqref{first-var-formula} is called the {\bf first variation} of the functional
${\bf F}$ in direction $\varphi$ and the integral $\partial \mathbf{F}(u;\eta)$ from (ii) the {\bf first variation
of independent variables} (inner first variation) of $\mathbf F$
at $u$ in the direction $\eta$. The preceding lemma leads to the following

\begin{lemma}\label{lem:EulerMeasure}
Let  $A\subseteq\R^3$ be the closure of a convex $C^2$-domain in $\R^3$ and assume
that the principal curvatures of $\partial A$ are bounded with 
\begin{equation}\label{krumm-bed}
    |H(a)|\le \mathcal H_{\partial A}(a)
    \quad
    \mbox{for $a\in\partial A$,}
\end{equation}
where $\mathcal H_{\partial A}(a)$ denotes 
the minimum of the principle curvatures of $\partial A$ at the
point $a$ with respect to the inner unit normal $\nu (a)$.
Assume that $u\in\SGA$ minimizes $\mathbf{F}$ in the class
$\SGA$. Then it satisfies the variational inequality 
\begin{equation}\label{time-discrete-evolution-inequality}
    \int_B\Big[ \frac{u-z}{h}\cdot\varphi+Du\cdot D\varphi +2(H\circ
    u) D_1u\times D_2u\cdot\varphi\Big]\, dx
    \ge 0
\end{equation}
for all $\varphi\in T_{u}\mathcal S^\ast$, and
moreover, the stationarity condition 
\begin{equation}\label{stationary-discrete}
    \int_B
	 {\rm Re}\big(\frak h[u]
	 \overline\partial\eta\big)\, dx
	 +
	 \tfrac1{h}\int_B (u-z)\cdot Du\eta\, dx=0
\end{equation}
holds true for every  $\eta\in \mathcal C^\ast (B)$. 
\end{lemma}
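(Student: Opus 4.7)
The plan is to split the argument into two parts, one for each conclusion. For the stationarity condition \eqref{stationary-discrete} I would first note that, given $\eta \in \mathcal{C}^\ast(B)$, the flow $\varphi_\tau$ generated by $\eta$ is a family of orientation-preserving $C^1$-diffeomorphisms of $\overline{B}$ mapping $\partial B$ onto itself and fixing each anchor point $P_k$, so $u \circ \varphi_\tau \in \SGA$ for all $\tau$ in a full two-sided neighborhood of $0$. Minimality therefore forces $\frac{d}{d\tau}\big|_{\tau=0} \mathbf{F}(u \circ \varphi_\tau) = 0$, and \eqref{stationary-discrete} will follow immediately from the inner first variation formula in Lemma~\ref{first-variation}(ii).

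For the variational inequality \eqref{time-discrete-evolution-inequality} I would fix $\varphi \in T_u \mathcal{S}^\ast$ and apply Lemma~\ref{lem:rand-variationen}(ii) to obtain a one-sided variation $u_\tau \in \SGA$, $\tau \in [0,\eps)$, whose initial vector field $\widetilde w$ lies in $(\varphi + W^{1,2}_0(B,\R^3)) \cap C^0(\overline B,\R^3)$ and satisfies the uniform bounds \eqref{uniform-bounds2}. Combining minimality of $u$ with Lemma~\ref{first-variation}(i) then yields $\delta \mathbf{F}(u; \widetilde w) \ge 0$. Writing $\widetilde w = \varphi + \psi$ with $\psi := \widetilde w - \varphi \in W^{1,2}_0(B,\R^3) \cap L^\infty(B,\R^3)$ and exploiting the linearity of $\delta\mathbf{F}(u;\cdot)$, the task will reduce to the interior Euler--Lagrange identity
\begin{equation*}
  \delta\mathbf{F}(u;\psi) = 0 \qquad \text{for every } \psi \in W^{1,2}_0(B,\R^3) \cap L^\infty(B,\R^3),
\end{equation*}
which is the step where the boundary curvature hypothesis \eqref{krumm-bed} must be brought in.

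The hard part will be precisely this interior identity. Since $u + s\psi$ need not take values in the obstacle $A$, it is not directly admissible, and my plan is to replace it by the projected competitor $u_s^A := \pi_A \circ (u + s\psi)$, where $\pi_A \colon \R^3 \to A$ denotes the nearest-point retraction onto the convex set $A$. Being $1$-Lipschitz and equal to the identity on $A$, and using that $\psi$ vanishes on $\partial B$ while $u(\partial B) = \Gamma \subset A^\circ$, this construction produces $u_s^A \in \SGA$ for all small $|s|$, so that two-sided minimality gives $\mathbf{F}(u_s^A) \ge \mathbf{F}(u)$ for $s$ of either sign. The remaining issue is to differentiate $\mathbf{F}(u_s^A)$ in $s$ at $0$: outside the coincidence set $\{x \in B : u(x) \in \partial A\}$ the projection is inactive and contributes precisely $\delta\mathbf{F}(u;\psi)$, whereas on the coincidence set the Dirichlet and quadratic terms develop a boundary correction controlled by the second fundamental form of $\partial A$, hence by $\HA$, while the $H$-volume term -- computed via the homotopy formula in Lemma~\ref{prop-h-vol}(ii) -- develops a correction controlled by $|H|$ along $\partial A$. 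The hypothesis \eqref{krumm-bed} is exactly the sign balance required to make these two corrections combine with the correct sign, so that the one-sided inequalities $\pm\delta\mathbf{F}(u;\psi) \ge 0$ both hold and thus force $\delta\mathbf{F}(u;\psi) = 0$. The convex quadratic penalty $\frac{1}{2h}\int_B |u-z|^2\,dx$ is only of lower order and causes no additional difficulty. This maximum-principle-type argument has been implemented in the closely related $H$-surface settings of \cite{Hildebrandt:1969,DuzaarSteffen:1999,BoegDuzSchev:2011}, and the technical core of the proof will consist in transferring it to the time-discretized functional $\mathbf{F}$.
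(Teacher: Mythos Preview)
Your proposal is correct and follows essentially the same route as the paper: the stationarity condition is obtained exactly as you describe from the inner first variation formula, and the variational inequality is reduced via Lemma~\ref{lem:rand-variationen}(ii) and Lemma~\ref{first-variation}(i) to the interior Euler--Lagrange identity $\delta\mathbf{F}(u;\psi)=0$ for $\psi\in W^{1,2}_0\cap L^\infty$. The only difference is cosmetic: the paper does not reprove this interior identity but simply invokes \cite[Lemma~4.3, Lemma~4.4(iii)]{BoegDuzSchev:2011}, whereas you sketch the underlying maximum-principle argument (projection onto $A$ and the curvature comparison \eqref{krumm-bed}) that is carried out in those references.
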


\begin{proof}
The case of variation vector fields with zero boundary values
is covered in our
earlier work~\cite{BoegDuzSchev:2011}, cf. Lemma 4.3, Lemma 4.4\,(iii). More
precisely, under the condition~(\ref{krumm-bed}), we derived the Euler-Lagrange equation
\begin{equation}\label{time-discrete-evolution}
    \int_B\Big[ \frac{u-z}{h}\cdot\varphi+Du\cdot D\varphi +2(H\circ u) D_1u\times D_2u\cdot\varphi\Big]\, dx
    = 0
\end{equation}
for every $\mathbf F$-minimizer $u\in\SGA$ of $\mathbf{F}$ and any 
$\varphi\in L^\infty (B,\R^3)\cap W^{1,2}_0(B,\R^3)$. Therefore, it remains to prove
the corresponding inequality for vector fields
$w\in T_{u} \mathcal{S}^\ast$. To this end, we
employ Lemma~\ref{lem:rand-variationen}\,(ii) to construct 
a one-sided variation $u_\tau\in\SGA$, $\tau\in [0,\eps)$ with the
properties~(\ref{uniform-bounds2}), $u_0=u$ and
$\widetilde w:=\frac\partial{\partial \tau}\big|_{\tau=0}u_\tau\in
w+W^{1,2}_0(B,\R^3)$. Since the maps $u_\tau$ are admissible as
comparison maps for $u$, we infer from~(\ref{first-var-formula}) that
\begin{equation*}
  0\le 
    \lim_{\tau\downarrow 0}\frac{{\bf F}(u_\tau )-{\bf F}(u )}{\tau}
    =
    \int_B \Big[\frac{u-z}{h}\cdot \widetilde w+Du\cdot D\widetilde w
    +
     2(H\circ u)D_1u\times D_2u \cdot \widetilde w\Big]\, dx.
\end{equation*}
Moreover, $\varphi:=w-\widetilde w\in L^\infty\cap W^{1,2}_0(B,\R^3)$ is
admissible in~(\ref{time-discrete-evolution}). Joining this with the
above inequality, we infer~(\ref{time-discrete-evolution-inequality})
for $\varphi=w$, which completes the proof of (\ref{time-discrete-evolution-inequality}). 

For the second assertion~(\ref{stationary-discrete}), we consider the flow
$\phi_\tau$ of the vector field $\eta\in \mathcal C^\ast (B)$ with
$\phi_0={\rm id}$. Then,  the maps $u\circ\phi_\tau\in\SGA$
are admissible competitors for $u$ (see the derivation of \eqref{inner-D}), and  
(\ref{stationary-discrete}) follows from the inner
variation formula~(\ref{first-inner-var-formula}).
\end{proof}

\section{Existence of minimizers to the time-discrete problem}

The next lemma will be crucial for the construction of minimizers to
the time-discrete volume functional by the direct method of the
calculus of variations. It was proven in \cite[Lemma
4.1]{DuzaarSteffen:1999}, 
see also \cite[Lemma 4.1]{DuzaarGrotowski:2000} for a version in
higher dimensions. 
The main idea is to control the volume of possible
bubbles occurring in a minimizing sequence $u_i$ 
by replacing it by a new sequence $\tilde u_i$.
This new sequence agrees with the limit map $u$ 
outside of a small set $G$ on which bubbles may evolve, while on
this set, the energy of the $\tilde u_i$ is controlled by the bubble energy
(cf. (vi) below). The term $|V_H(\tilde u_i,u)|$ -- which can be
interpreted as the volume of the bubbles --  can  be bounded in terms of the Dirichlet energy by use of
the isoperimetric condition. This
 enables us to establish a lower semicontinuity property of the
time-discrete volume functional and thereby 
to prove the existence of $\mathbf F$-minimizers.

\begin{lemma}\label{lem-bubbels}
Assume that $u_i\wto u$ weakly in $W^{1,2}(B,\R^m)$ and
$u_i\big|_{\partial B}\to u\big|_{\partial B}$ uniformly on $\partial B$.
Then for every $\varepsilon >0$ there exist $R>0$, a measurable set
$G\subseteq B$, and maps $\tilde u_i\in W^{1,2}(B,\R^m)$, such that
after extraction of a subsequence there holds:
\begin{enumerate}
\item[{\rm (i)}] $\tilde u_i=u$ on $B\setminus G$ with $\mathcal L^2(G)<\varepsilon$;
\item[{\rm (ii)}] $\tilde u_i\big|_{\partial B}=u\big|_{\partial B}$ on $\partial B$;
\item[{\rm (iii)}] $\tilde u_i(x)=u_i(x)$ if $|u_i(x)|\ge R$ or $| u_i(x)-u(x)|\ge 1$;
\item[{\rm (iv)}] $\lim_{i\to\infty} \| \tilde u_i-u_i\|_{L^\infty(B,\R^m)}=0$;
\item[{\rm (v)}] $\tilde u_i\wto u$ weakly in $W^{1,2}(B,\R^m)$ in the limit $i\to\infty$;
\item[{\rm (vi)}] $\limsup_{i\to\infty} [{\bf D}_G(\tilde u_i)+{\bf D}_G(u)]\le\varepsilon +\liminf_{i\to\infty} [{\bf D}(u_i)-{\bf D}(u)]$;
\item[{\rm (vii)}] If the $u_i$ take values in a closed convex subset $A\subseteq\R^3$, then the $\tilde u_i$ can also be chosen to have values in $A$.
\end{enumerate}
\end{lemma}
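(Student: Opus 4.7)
The plan is a bubble-extraction construction in the spirit of Luckhaus' radial slicing argument. First I would upgrade the given convergences: Rellich's compact embedding $W^{1,2}(B)\hookrightarrow L^2(B)$ together with extraction yields $u_i\to u$ strongly in $L^2(B)$ and pointwise a.e., and combined with the uniform convergence on $\partial B$, Egorov's theorem produces a relatively closed set $K\subset\overline B$ with $\partial B\subset K$, $\mathcal L^2(B\setminus K)<\varepsilon/4$, and both $u_i\to u$ uniformly on $K$ and $u$ uniformly bounded on $K$ (shrinking $K$ slightly if necessary, using $u\in L^p$ for all $p<\infty$). Cover $B\setminus K$ by finitely many open disks $B_{r_k}(x_k)\Subset B$ of total measure below $\varepsilon/2$ and set $G:=\bigcup_k B_{r_k}(x_k)$. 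For $R$ larger than the uniform bound for $u$ on $K$ and for $i$ large enough, the pointwise bad set $\{|u_i|\ge R\}\cup\{|u_i-u|\ge 1\}$ is then contained in an inner core $\bigcup_k B_{r_k/4}(x_k)$.

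Inside each annulus $B_{r_k}(x_k)\setminus B_{r_k/2}(x_k)$ I apply Fubini on radii to select, for every $i$, a slicing radius $\rho_{k,i}\in(3r_k/4,r_k)$ satisfying
\begin{equation*}
\rho_{k,i}\int_{\partial B_{\rho_{k,i}}(x_k)}\bigl(|Du_i|^2+|Du|^2+r_k^{-2}|u_i-u|^2\bigr)\,d\mathcal H^1 \le \frac{4}{r_k}\int_{B_{r_k}(x_k)\setminus B_{r_k/2}(x_k)}\bigl(|Du_i|^2+|Du|^2+r_k^{-2}|u_i-u|^2\bigr)\,dx,
\end{equation*}
and a parallel Fubini choice on $B_{r_k/2}(x_k)\setminus B_{r_k/4}(x_k)$ fixes an inner slicing radius $r_k'\in(r_k/4,r_k/2)$. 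On both selected circles the trace of $u_i-u$ is small in $L^\infty$ by the one-dimensional Morrey embedding combined with strong $L^2$-convergence. Now define $\tilde u_i := u_i$ on $B_{r_k'}(x_k)$ (which contains the pointwise bad set, so that (iii) holds), $\tilde u_i := u$ on $B\setminus\bigcup_k B_{\rho_{k,i}}(x_k)$, and on the transition annulus $B_{\rho_{k,i}}(x_k)\setminus B_{r_k'}(x_k)$ take the radial affine interpolation between the trace of $u_i$ on $\partial B_{r_k'}(x_k)$ and the trace of $u$ on $\partial B_{\rho_{k,i}}(x_k)$. Properties (i), (ii) and (vii) follow immediately from this recipe, the last because convex combinations preserve the convex set $A$.

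The delicate assertions are (iv) and (vi). For (iv), the $L^\infty$-smallness of $u_i-u$ on the two bounding circles combined with a maximum-principle estimate for the affine interpolant yields $\|\tilde u_i-u_i\|_{L^\infty(B)}\to 0$. For (vi), the Fubini selection directly bounds the transition energy by $C\sum_k\int_{B_{r_k}(x_k)\setminus B_{r_k'}(x_k)}\bigl(|Du_i|^2+|Du|^2+r_k^{-2}|u_i-u|^2\bigr)\,dx$, which can be forced below $\varepsilon/2$ by refining the cover so that each $r_k$ is small and using $u_i\to u$ in $L^2(B)$; combined with the weak $W^{1,2}$-lower semicontinuity $\liminf_i\mathbf D_{B\setminus G}(u_i)\ge\mathbf D_{B\setminus G}(u)$, which rewrites as $\liminf_i[\mathbf D(u_i)-\mathbf D(u)]\ge\liminf_i[\mathbf D_G(u_i)-\mathbf D_G(u)]$, this yields (vi). Property (v) then follows from $\tilde u_i = u$ off $G$, the uniform $W^{1,2}$-bound provided by the energy estimate above, and uniqueness of weak limits. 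The main obstacle is the simultaneous control required for (iv) and (vi): the slicing radii $\rho_{k,i}$, $r_k'$ must be chosen so that \emph{both} the transition energy and the $L^\infty$-oscillation of $u_i-u$ on the slice circle are small, and it is precisely the averaging Fubini over an entire annulus (rather than at a single radius) that secures this balance.
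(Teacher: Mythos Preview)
Your construction has a genuine gap at property~(iv). The Luckhaus-type ``radial affine interpolation between the trace of $u_i$ on $\partial B_{r_k'}(x_k)$ and the trace of $u$ on $\partial B_{\rho_{k,i}}(x_k)$'' determines $\tilde u_i(r,\theta)$ at an interior point of the transition annulus only through the values on the two bounding circles, not through $u_i(r,\theta)$ itself. The maximum principle you invoke places $\tilde u_i(r,\theta)$ on the segment joining $u_i(r_k',\theta)$ and $u(\rho_{k,i},\theta)$; to conclude $|\tilde u_i(r,\theta)-u_i(r,\theta)|\to 0$ you would still need $|u_i(r,\theta)-u_i(r_k',\theta)|$ small, i.e.\ small \emph{radial} oscillation of $u_i$ on the annulus. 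Since the annulus sits in your good set $K$, this is equivalent (up to $o(1)$) to small radial oscillation of $u$ itself, and $u\in W^{1,2}(B,\R^m)$ in two dimensions need not be continuous, so this oscillation is not controlled and certainly does not tend to zero with $i$. A related problem appears in your (vi): the transition energy bound carries a term $C\sum_k\int_{\text{annuli}}|Du_i|^2\dx$ with a structural constant $C>1$, and this quantity is neither made small by shrinking $r_k$ (energy may concentrate) nor by sending $i\to\infty$; your claim that it ``can be forced below $\varepsilon/2$'' is therefore unjustified.

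Both issues disappear if you replace the trace-interpolant by the \emph{pointwise} convex combination $\tilde u_i=\eta\,u_i+(1-\eta)\,u$ with a cut-off $\eta$ equal to $1$ on the inner cores and $0$ off $G$. Then $\tilde u_i-u_i=(1-\eta)(u-u_i)$ is supported where $\eta<1$, hence inside $K$, and (iv) follows immediately from the uniform convergence $u_i\to u$ on $K$. For (vi), convexity of $|\cdot|^2$ gives $|D\tilde u_i|^2\le \eta|Du_i|^2+(1-\eta)|Du|^2$ plus cross terms of the form $|Du_i|\,|D\eta|\,|u_i-u|$ and $|D\eta|^2|u_i-u|^2$; since $|u_i-u|\to 0$ uniformly on the support of $D\eta$, these cross terms are $o(1)$, so $\mathbf D_G(\tilde u_i)\le \mathbf D_G(u_i)+\mathbf D_G(u)+o(1)$, and now your lower-semicontinuity step on $B\setminus G$ together with $\mathbf D_G(u)<\varepsilon/3$ (achievable by absolute continuity of $\int|Du|^2$) closes the argument. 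This pointwise convex-combination construction is precisely the approach taken in the paper (following Duzaar--Steffen), and it is what makes (vii) a one-line observation as well.
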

The proof of (i) to (vi) was carried out in \cite[Lemma
4.1]{DuzaarSteffen:1999}. The assertion (vii) follows immediately
from the construction in
\cite{DuzaarSteffen:1999}, since the maps $\tilde u_i$ are defined as
convex combinations of $u_i$ and $u$, whose images are contained in
the convex set $A$. 

Lemma \eqref{lem-bubbels} enables us to prove the existence of $\mathbf F$-minimizers in the class 
\begin{equation*}
  \SGAs:=\big\{w\in\SGA\,:\,\mathbf D(w)\le\sigma\mathbf D(u_o)\big\}, 
\end{equation*}
where we choose $\sigma:=\frac{1+c}{1-c}$ if $s<\infty$ and
$\sigma=\infty$ otherwise. This choice of $\sigma$ is made in such a
way that~(\ref{Assum-uo}) implies $(1+\sigma)\mathbf{D}(u_o)\le s$. 

\begin{lemma}\label{lem:F-min}
Suppose $A\subseteq\R^3$ is a closed convex set and the function
$H\colon A\to\R$ is bounded and continuous and satisfies a spherical
isoperimetric condition of type $(c,s)$ on $A$ with $0<s\le\infty$ and
$0<c<1$. Moreover let
$u_o\in \SGA$ be a fixed reference surface with~(\ref{Assum-uo})
and $z$ a fixed surface in $\SGAs$ for $\sigma$ defined as above.
Then for every $h>0$, the variational
problem
\begin{equation*}
	\mathbf{F}(w)
	:=
	\mathbf{D}(w)+2\mathbf{V}_H(u,u_o)+\tfrac1{2h}\int_B |u-z|^2\, dx
	\to\min\quad \mbox{in $\SGAs$}
\end{equation*}
has a solution.

\end{lemma}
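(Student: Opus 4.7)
\medskip

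The plan is to apply the direct method of the calculus of variations, with the only non-routine ingredient being the lower semicontinuity of the volume term, for which the bubbling lemma~\ref{lem-bubbels} is tailor-made. First I would verify that $\mathbf F$ is bounded below on $\SGAs$. For any $w\in\SGAs$ one has $\mathbf D(w)+\mathbf D(u_o)\le(1+\sigma)\mathbf D(u_o)\le s$ thanks to~(\ref{Assum-uo}) and the choice of $\sigma$, so the isoperimetric estimate~(\ref{isop-Ju-Jv}) applied to $w$ and $u_o$ yields $2|\mathbf V_H(w,u_o)|\le c\bigl(\mathbf D(w)+\mathbf D(u_o)\bigr)$. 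Consequently $\mathbf F(w)\ge (1-c)\mathbf D(w)-c\mathbf D(u_o)+\tfrac1{2h}\|w-z\|_{L^2}^2$, which is bounded below on $\SGAs$ and allows one to pick a minimizing sequence $(u_i)\subset\SGAs$.

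Second, from $\mathbf D(u_i)\le\sigma\mathbf D(u_o)$ one extracts a subsequence with $u_i\rightharpoonup u$ weakly in $W^{1,2}(B,\R^3)$ and $u_i\to u$ a.e.\ on $B$; by Lemma~\ref{lem:compact traces} the traces $u_i|_{\partial B}\to u|_{\partial B}$ converge uniformly, which preserves the three-point condition and the weak monotonicity (uniform limits of weakly monotone parametrizations are weakly monotone). The obstacle condition $u(x)\in A$ a.e.\ follows from $u_i(B)\subset A$ and pointwise convergence, and $\mathbf D(u)\le\liminf\mathbf D(u_i)\le\sigma\mathbf D(u_o)$ by weak lower semicontinuity, so $u\in\SGAs$. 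Weak lower semicontinuity of $\mathbf D$ and of $w\mapsto\tfrac1{2h}\|w-z\|_{L^2}^2$ is standard.

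The main obstacle is that the $H$-volume term is \emph{not} continuous under weak $W^{1,2}$ convergence because of possible concentration of energy (bubbling). To circumvent this I would, given $\varepsilon>0$, apply Lemma~\ref{lem-bubbels} to produce modified maps $\tilde u_i\in W^{1,2}(B,A)$ satisfying (i)--(vii). Properties (ii) and (vii) together with the fact $u\in\SGA$ imply $\tilde u_i\in\SGA$, so the $H$-volumes $\mathbf V_H(\tilde u_i,u)$ and $\mathbf V_H(u_i,\tilde u_i)$ are well defined. By the additivity in Lemma~\ref{prop-h-vol}(i),
\begin{equation*}
    \mathbf V_H(u_i,u_o)
    =\mathbf V_H(u_i,\tilde u_i)+\mathbf V_H(\tilde u_i,u)+\mathbf V_H(u,u_o).
\end{equation*}
The first term tends to zero by the Lipschitz-type estimate in Lemma~\ref{prop-h-vol}(i), since $\|u_i-\tilde u_i\|_{L^\infty}\to0$ by (iv) and Dirichlet energies stay bounded. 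For the middle term, (i)--(vi) guarantee $\tilde u_i=u$ on $B\setminus G$, so $\mathbf M(J_{\tilde u_i}-J_u)\le\mathbf D_G(\tilde u_i)+\mathbf D_G(u)$ which by~(\ref{Assum-uo}) can be bounded above by $s$ (after choosing $\varepsilon$ small). The isoperimetric condition therefore gives
\begin{equation*}
    2|\mathbf V_H(\tilde u_i,u)|\le c\bigl(\mathbf D_G(\tilde u_i)+\mathbf D_G(u)\bigr),
\end{equation*}
and by (vi) the right-hand side is bounded in the $\limsup$ by $c\bigl(\varepsilon+\liminf_i[\mathbf D(u_i)-\mathbf D(u)]\bigr)$.

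Combining these ingredients,
\begin{equation*}
    \liminf_{i\to\infty}\bigl[\mathbf F(u_i)-\mathbf F(u)\bigr]
    \ge(1-c)\liminf_{i\to\infty}\bigl[\mathbf D(u_i)-\mathbf D(u)\bigr]-c\varepsilon
    \ge-c\varepsilon,
\end{equation*}
where the final inequality uses the weak lower semicontinuity of $\mathbf D$. Since $\varepsilon>0$ is arbitrary, $\mathbf F(u)\le\liminf_i\mathbf F(u_i)=\inf_{\SGAs}\mathbf F$, so $u$ is the desired minimizer. The hard part is unambiguously the control of the volume defect through the bubbling lemma, combined with the strict isoperimetric constant $c<1$ which absorbs the Dirichlet deficit on the right-hand side; every other ingredient is a direct-method routine.
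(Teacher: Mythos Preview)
Your proof is correct and follows essentially the same approach as the paper: lower bound via the isoperimetric inequality, extraction of a weakly convergent minimizing sequence with uniformly convergent traces (Lemma~\ref{lem:compact traces}), and control of the volume defect through the bubbling Lemma~\ref{lem-bubbels} together with the additivity and Lipschitz estimate of Lemma~\ref{prop-h-vol}(i). The paper organizes the final inequality slightly differently (writing $\mathbf F(u_i)\ge\mathbf F(u)-3\varepsilon$ directly) and fixes $\varepsilon<\tfrac12\mathbf D(u)$ to guarantee the mass bound $\mathbf M(J_{\tilde u_i}-J_u)\le s$, but the substance is identical.
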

\begin{proof}
We first observe that for any $w\in\SGAs$ by the choice of $\sigma$ there holds
\begin{equation*}
	\mathbf{M}(J_w-J_{u_o})
	\le 
	\mathbf{D}(w)+\mathbf{D}(u_o)
	\le
	(\sigma +1)\mathbf{D}(u_o)
	\le
	s.
\end{equation*}  
Hence, the spherical isoperimetric condition of type $(c,s)$ gives 
\begin{align*}
	2|\mathbf{V}_H(w,u_o)|
	\le
	c\,{\bf M}(J_w-J_{u_o})\le c\big({\bf D}(w)+{\bf D}(u_o)\big).
\end{align*}
This implies in particular that the functional is bounded from below
on $\SGAs$ by 
\begin{equation}\label{Bound-below}
	\mathbf F(w)
	\ge (1-c)\mathbf D (w)-c\mathbf D(u_o)
	\ge -c\mathbf D(u_o).
\end{equation}
We now consider an $\mathbf F$-minimizing sequence $(u_i)$ of maps in $\SGAs$, that is
\begin{equation*}
	\lim_{i\to\infty} \mathbf{F}(u_i)=\inf\big\{\mathbf{F}(w):w\in\SGAs\big\}.
\end{equation*}
Applying~(\ref{Bound-below}) to $w=u_i$, we infer
\begin{equation*}
  \sup_{i\in\N} \mathbf D(u_i)
  \le
  \tfrac1{1-c}\big[ \sup_{i\in\N} \mathbf F(u_i) +c\mathbf D(u_o)\big]<\infty.
\end{equation*}
Lemma~\ref{lem:compact traces} thus implies that 
the boundary traces $u_i\big|_{\partial B}$ are equicontinuous.
Passing to a subsequence
and taking Rellich's theorem into account we may therefore assume
that the maps $u_i$ converge weakly in $W^{1,2}(B,\R^3)$,
strongly in $L^2(B,\R^3)$, and almost
everywhere on $B$ to a surface $u\in W^{1,2}(B,A)$ with
$\mathbf D(u)\le \sigma \mathbf D(u_o)$.
Moreover, we have that $u_i\big|_{\partial B}\to u\big|_{\partial B}$
holds uniformly on $\partial B$.
Due to the uniform convergence on $\partial B$
and the fact that the sequences $u_i$ satisfy the three point condition $u_i(P_k)=Q_k$, also the limit surface
fulfills the three point condition $u(P_k)=Q_k$ for
$k=1,2,3$, and therefore we have $u\in \SGAs$.
We now apply Lemma \ref{lem-bubbels} with a given $0<\varepsilon<\frac12\mathbf{D}(u)$ to obtain, after passage to
another subsequence,  surfaces $\tilde u_i\in \SGA$.
Since $u,u_i$ and $u_o$ are in the class $\SGA$, 
Lemma~\ref{prop-h-vol}\,(i) 
and Lemma~\ref{lem-bubbels}\,(iv) and (vi) yield that 
\begin{equation}\label{volume-vanish}
   \big|\mathbf V_H(\tilde u_i,u_i)\big|
   \le \|H\|_{L^\infty}\|\tilde
   u_i-u_i\|_{L^\infty}\big[\mathbf{D}_G(\tilde u_i)+\mathbf{D}_G(u_i)\big]\to 0\quad\mbox{as $i\to\infty$.}
\end{equation}
Now, we infer from
Lemma \ref{lem-bubbels} (vi) and \eqref{mass-Ju-Jv} that 
for $i$ large enough there holds
\begin{align*}
	\mathbf M (J_{\tilde u_i}-J_u)
	&\le
	\mathbf D_G(\tilde u_i) +\mathbf D_G(u)\\
	&\le
	2\varepsilon +\mathbf D(u_i)-\mathbf D(u)
	<
	\sigma \mathbf D(u_o)\le s.
\end{align*}
Therefore, by the spherical isoperimetric condition
with $c<1$  we have
\begin{equation*}
	2\big| \mathbf V_H(\tilde u_i,u)\big|
	\le
	2\varepsilon +\mathbf D(u_i)-\mathbf D(u).
\end{equation*}
Moreover we have
\begin{equation*}
	\mathbf V_H(u_i,u_o)
        -
        \mathbf{V}_H(u,u_o)
	=
	\mathbf V_H(\tilde u_i,u)
	-
	\mathbf V_H(\tilde u_i,u_i).
\end{equation*}
This allows us to conclude -- with the help of the strong
convergence $u_i\to u$ in $L^2(B,\R^3)$ and (\ref{volume-vanish}) -- that there holds
\begin{align*}
	\mathbf F(u_i)
	&=
	\mathbf D(u_i)+2\mathbf V_H( u_i,u_o)
	+
	\tfrac1{2h}\int_B |u_i-z|^2\, dx\\
	&=
	\mathbf F(u) -\mathbf D(u) +\mathbf D(u_i)
	+
	2\mathbf V_H(\tilde u_i,u)-2\mathbf V_H(\tilde u_i,u_i)\\
	&\qquad\qquad
	+\tfrac1{2h}\int_B |u_i-z|^2\, dx
	-
	\tfrac1{2h}\int_B |u-z|^2\, dx\\
	&\ge
	\mathbf F(u)-3\varepsilon
\end{align*}
for sufficiently large $i\in\N$. 
Since $\varepsilon >0$ was arbitrary we conclude that
$u\in \SGAs$ minimizes the variational functional $\mathbf F$.
\end{proof}

The following regularity result for minimizers was established in \cite[Theorem
6.1]{BoegDuzSchev:2011}. We note that it is also a special case of the much
more involved result by Rivi\`ere (see Lemma~\ref{mod-Riviere}) for solutions to the Euler-Lagrange equation~(\ref{time-discrete-evolution}).
\begin{lemma}
Suppose $A$, $H$, $u_o$ and $z$ satisfy the hypotheses of
Lemma \ref{lem:F-min} with parameters $\sigma, s$ and $c$.
Then, every minimizer of the functional $\mathbf{F}$ in the class
$\SGAs$ is H\"older continuous in $B$ and continuous up to the boundary, that is $u\in C^0(\overline B,\R^3)$.
\end{lemma}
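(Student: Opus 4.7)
My plan is to bring the minimizer into the framework of Theorem~\ref{mod-Riviere} and then invoke that result directly.

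First I would observe that although Lemma~\ref{lem:EulerMeasure} only asserts the variational inequality~(\ref{time-discrete-evolution-inequality}) for tangential boundary variations, its proof actually produces the Euler--Lagrange \emph{equation}
\begin{equation*}
  \int_B \Big[\Delta u\cdot\varphi - 2(H\circ u)\,D_1u\times D_2u\cdot\varphi
              - \tfrac{u-z}{h}\cdot\varphi\Big]\,dx = 0
\end{equation*}
for every $\varphi\in L^\infty\cap W^{1,2}_0(B,\R^3)$, by admitting variations with compact support. Hence $u$ solves
$-\Delta u = -2(H\circ u)\,D_1u\times D_2u - \tfrac{u-z}{h}$ weakly on $B$.

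Next I would rewrite the non-linear term in antisymmetric Rivi\`ere form. Using the identity $(D_1u\times D_2u)^i = \frac12\,\varepsilon^{i}_{\ jk}\bigl(D_1u^{\,j}D_2u^{\,k}-D_2u^{\,j}D_1u^{\,k}\bigr)$ and the total antisymmetry of $\varepsilon$, one obtains a matrix $\Omega=(\Omega^i_{\ j})\in L^2(B,\mathfrak{so}(3)\otimes\R^2)$ with
\begin{equation*}
  \Omega^i_{\ j} = (H\circ u)\,\varepsilon^{i}_{\ jk}\bigl(D_2u^{\,k}\,dx^1-D_1u^{\,k}\,dx^2\bigr),
\end{equation*}
antisymmetric in $(i,j)$ and bounded in $L^2$ by $\|H\|_{L^\infty}\|Du\|_{L^2}$, such that $-2(H\circ u)\,D_1u\times D_2u = \Omega\cdot Du$. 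Setting $f:= -\tfrac{u-z}{h}\in L^2(B,\R^3)$, the Euler--Lagrange equation takes exactly the form $-\Delta u = \Omega\cdot Du + f$ with $\Omega\in L^2(B,\mathfrak{so}(3)\otimes\R^2)$ and $f\in L^s(B,\R^3)$ for $s=2>1$, so Theorem~\ref{mod-Riviere} applies and yields $u\in C^{0,\alpha}(B,\R^3)$ for some $\alpha\in(0,1)$.

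Finally, to upgrade interior regularity to continuity up to $\partial B$, I would appeal to the second part of Theorem~\ref{mod-Riviere}: since $u\in\SGAs\subset\Ss$, the trace $u|_{\partial B}$ is by definition a continuous (weakly monotone) parametrization $\widehat\gamma\circ\varphi$ of $\Gamma$, with $\varphi\in\mathcal{T}^\ast(\Gamma)\subset C^0(\R)$ and $\widehat\gamma\in C^3$. Hence the boundary trace is continuous, and Theorem~\ref{mod-Riviere} immediately gives $u\in C^0(\overline B,\R^3)$. The main non-routine point is the rewriting of $(H\circ u)\,D_1u\times D_2u$ as $\Omega\cdot Du$ with $\Omega$ valued in $\mathfrak{so}(3)$; everything else is an application of existing tools from the excerpt.
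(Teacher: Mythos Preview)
Your route through Theorem~\ref{mod-Riviere} is precisely the alternative the paper itself mentions in the sentence following the lemma. The paper's own proof, however, is simply a citation to \cite[Theorem~6.1]{BoegDuzSchev:2011}, where regularity is obtained directly from the \emph{minimizing property} via Morrey's harmonic replacement technique: on small disks one compares the minimizer with the harmonic extension of its boundary trace and extracts a Dirichlet growth estimate. That argument uses only the minimality of $u$, never the Euler--Lagrange system, and therefore applies without modification to minimizers in the constrained class $\SGAs$. Your approach, by contrast, applies to any weak solution of the equation --- a more general statement --- but only once the equation itself is available.

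This is where your argument has a small gap. You invoke the Euler--Lagrange identity from the proof of Lemma~\ref{lem:EulerMeasure}, but that lemma concerns minimizers in $\SGA$, whereas the present statement is about minimizers in $\SGAs$, where the additional side constraint $\mathbf{D}(u)\le\sigma\mathbf{D}(u_o)$ could in principle be active. If it were, two-sided interior variations $u+t\varphi$ would not be admissible competitors and you would obtain only a variational inequality, not the equation that Rivi\`ere's theorem requires. In the paper's actual application this causes no trouble --- Lemma~\ref{Mass-max-principle} shows the constraint is strict for the iterative minimizers $u_{j,h}$, and Theorem~\ref{lem:Euler-Lagrange} then records the full Euler--Lagrange system for them --- but for the lemma as stated you should either insert that step or note explicitly that it is needed. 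Once the strict inequality is in hand, your Rivi\`ere-based argument goes through as written.
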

\section{A-priori estimates}
 In this section, we derive local a-priori estimates for solutions of
 problems satisfying a Plateau boundary condition -- i.e. for solutions
 to inequalities of the
 type~(\ref{time-discrete-evolution-inequality}) -- under the additional assumption $u\in
 W^{1,4}(B_R^+(x_o),\R^3)$. The validity of the $W^{1,4}$ assumption will be justified later. 
 Moreover, we restrict ourselves to the case
 $B_R^+(x_o)\cap\{P_1,P_2,P_3\}=\varnothing$. This will be sufficient
 for our purposes since the set $\{P_1,P_2,P_3\}$ has vanishing
 capacity.  More precisely, we
 consider maps $u\in\Ss\cap W^{1,4}(B_R^+(x_o),\R^3)$ with
 $\Delta u=F$ on $B_R^+(x_o)\subset B$ for some $x_o\in\partial B$ and
 $F\in L^1(B_R^+(x_o),\R^3)$. Additionally, we assume that $u$
 satisfies the natural boundary condition associated with the Plateau
 problem on $I_R(x_o)\subset\partial B_R^+(x_o)$, i.e. we assume that
 \begin{equation}\label{local-plateau}
   \int_{B_R^+(x_o)}Du\cdot Dw\dx+ \int_{B_R^+(x_o)}F\cdot w\dx\ge0
 \end{equation}
holds true for all $w\in T_u\mathcal{S}^\ast$ 
with $w=0$ on $S_R^+(x_o)$ in the sense of traces.
We recall that the condition $w\in
T_u\mathcal{S}^\ast$ is equivalent to the boundary representation
$w(e^{i\vartheta})=\widehat\gamma'(\varphi)(\psi-\varphi)$ for some
$\psi\in\mathcal T^\ast(\Gamma)$. Here, $\varphi\in\mathcal T^\ast(\Gamma)$ is defined
by $u(e^{i\vartheta})=\widehat\gamma(\varphi(\vartheta))$ for all
$\vartheta\in\R$. On the inhomogeneity $F$, we impose the growth condition
\begin{equation}
  \label{F-growth}
  |F|\le C_1\big(|Du|^2+|f|\big)\qquad\mbox{in }B_R^+(x_o),
\end{equation}
where  $C_1>0$ and $f\in L^2(B_R^+(x_o),\R^3)$ are given. Moreover,
we assume
\begin{equation}
  \label{time-deriv-bound}
  \int_{B_R^+(x_o)}|f|^2\dx\le K^2
\end{equation}
for some constant $K>0$.
We start  with the interior a-priori estimate. 

\begin{theorem}\label{thm:apriori-W22-int}
On $B_R(x_o)\Subset B$ consider a solution $u\in
W^{1,4}(B_R(x_o),\R^3)$ of $\Delta u=F$, where $F$
satisfies (\ref{F-growth}) for $C_1>0$ and $f\in L^2(B_R(x_o),\R^3)$. There exist $\eps_o=\eps_o(C_1)>0$ and $C=C(C_1)$ such that the smallness condition
\begin{equation}\label{small-energy-1}
    \int_{B_R(x_o)}|Du|^2\dx\le\eps_o^2
\end{equation}
implies $u\in W^{2,2}(B_{R/2}(x_o),\R^3)$ with
\begin{equation*}
    \int_{B_{R/2}(x_o)}|D^2u|^2\dx
    \le
    \frac C{R^2}\int_{B_R(x_o)}|Du|^2\dx
    +C\int_{B_R(x_o)}|f|^2\dx.
  \end{equation*}
\end{theorem}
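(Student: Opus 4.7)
My strategy is a standard Nirenberg difference-quotient argument combined with the Gagliardo--Nirenberg interpolation inequality~\eqref{appl-gag-nir} and the iteration lemma~\ref{lem:Giaq}. The hypothesis $u\in W^{1,4}(B_R(x_o),\R^3)$ guarantees $|Du|^2\in L^2$, hence $F\in L^2$ via~\eqref{F-growth}; the smallness condition~\eqref{small-energy-1} is what enables the absorption of the critical $|Du|^4$ contribution produced by the growth of $F$. Since $F\in L^2(B_R(x_o),\R^3)$, classical interior regularity for $\Delta u=F$ also gives $u\in W^{2,2}_{\loc}(B_R(x_o),\R^3)$ a priori, so every quantity appearing in the sequel is finite.

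\textbf{First step: Caccioppoli-type second order estimate.} For radii $R/2\le r<\rho<R$ I choose a cutoff $\eta\in C_c^\infty(B_\rho(x_o))$ with $\eta\equiv1$ on $B_r(x_o)$ and $|D\eta|\le 2/(\rho-r)$, test $\Delta u=F$ against the Nirenberg test function $\varphi_h:=D_k^{-h}(\eta^2 D_k^h u)$ for a coordinate direction $k\in\{1,2\}$, integrate by parts using the standard swap rule for difference quotients, and let $h\downarrow 0$. A routine Cauchy--Schwarz and Young estimate together with~\eqref{F-growth} produces
\begin{equation*}
\int_{B_r(x_o)}|D^2u|^2\dx
\le
\frac{C}{(\rho-r)^2}\int_{B_\rho(x_o)}|Du|^2\dx
+
CC_1^2\int_{B_\rho(x_o)}|Du|^4\dx
+
CC_1^2\int_{B_R(x_o)}|f|^2\dx.
\end{equation*}

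\textbf{Second step: Gagliardo--Nirenberg absorption and iteration.} The critical term $\int_{B_\rho(x_o)}|Du|^4\dx$ is handled by applying~\eqref{appl-gag-nir} on $B_\rho(x_o)$ (legitimate since $u\in W^{2,2}(B_\rho(x_o))$ for $\rho<R$) and invoking~\eqref{small-energy-1} together with $\rho\ge R/2$ to estimate the scalar factor $\int_{B_\rho(x_o)}|Du|^2\dx\le \eps_o^2$, which yields
\begin{equation*}
\int_{B_\rho(x_o)}|Du|^4\dx
\le
C\eps_o^2\int_{B_\rho(x_o)}|D^2u|^2\dx
+
\frac{C\eps_o^2}{R^2}\int_{B_R(x_o)}|Du|^2\dx.
\end{equation*}
Substituting this and writing $\Phi(r):=\int_{B_r(x_o)}|D^2u|^2\dx$, and noting $(\rho-r)^{-2}\ge 4/R^2$ so that the $R^{-2}$-term may be absorbed into the $(\rho-r)^{-2}$-term, we arrive at
\begin{equation*}
\Phi(r)
\le
CC_1^2\eps_o^2\,\Phi(\rho)
+
\frac{C}{(\rho-r)^2}\int_{B_R(x_o)}|Du|^2\dx
+
CC_1^2\int_{B_R(x_o)}|f|^2\dx
\end{equation*}
for every $R/2\le r<\rho<R$. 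I then choose $\eps_o=\eps_o(C_1)$ so small that $CC_1^2\eps_o^2\le\tfrac12$ and apply Lemma~\ref{lem:Giaq} (with $\alpha=2$) to the bounded function $\Phi$ on each interval $[R/2,R')$ for $R'<R$; letting $R'\nearrow R$ produces the claimed a-priori bound.

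\textbf{Expected difficulty.} The interior case is entirely routine; the only points that require care are the verification $F\in L^2$ that legitimizes the Nirenberg manipulations (this is precisely where the $W^{1,4}$-hypothesis enters) and the quantitative absorption that fixes the dependence $\eps_o=\eps_o(C_1)$. The real work in this part of the paper will be the boundary analogue of the present estimate, where the Plateau condition~\eqref{local-plateau} forces one to work with angular difference quotients and to use the nearest-point retraction onto~$\Gamma$; for the interior statement above, no such machinery is needed.
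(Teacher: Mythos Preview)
Your proof is correct and follows essentially the same approach as the paper's: both use the standard difference-quotient Caccioppoli estimate, bound the critical $|Du|^4$ term via the Gagliardo--Nirenberg inequality~\eqref{appl-gag-nir}, exploit the smallness assumption~\eqref{small-energy-1} to make the absorption constant less than $\tfrac12$, and conclude with the iteration Lemma~\ref{lem:Giaq}. Your additional care in noting that $F\in L^2$ (via the $W^{1,4}$ hypothesis) justifies $u\in W^{2,2}_{\loc}$ a~priori, and in passing to the limit $R'\nearrow R$ to ensure boundedness of $\Phi$, is a welcome explicitness that the paper leaves implicit.
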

We note that in \cite[Lemma 7.3]{BoegDuzSchev:2011}, this result was
established for more regular right-hand sides with $f\in W^{1,2}(B_R(x_o),\R^3)$. Here, we shall weaken this property to the natural assumption $f\in L^2(B_R(x_o),\R^3)$.

\begin{proof}
A standard application of the difference quotient
technique yields the following estimate for any
radii $s,t$ with $\frac R2\le s<t\le R$:
\begin{equation*}
    \int_{B_s(x_o)}|D^2u|^2\dx
     \le 
     \frac C{(t-s)^2}\int_{B_t(x_o)}|Du|^2\dx
     +
     C\int_{B_t(x_o)}|F|^2\dx.
\end{equation*}
Using the growth condition (\ref{F-growth}), the regularity
assumption $u\in W^{1,4}(B_R(x_o),\R^3)$ and the Gagliardo-Nirenberg
interpolation inequality (\ref{appl-gag-nir}), we can further
estimate
\begin{align*}
    \int_{B_t(x_o)}
    &|F|^2\dx
    \le 
    C \int_{B_t(x_o)}|Du|^4+|f|^2\dx\\
    &
    \le 
    C \int_{B_t(x_o)}|D^2u|^2 + \Big|\frac{Du}{R/2}\Big|^2\, dx\,
    \int_{B_R(x_o)}|Du|^2\dx+ C\int_{B_R(x_o)}|f|^2\dx,
\end{align*}
for a constant $C=C(C_1)$.
Combining the preceding two estimates, using the smallness
assumption (\ref{small-energy-1}) and also $t-s\le R/2$,
we arrive at
\begin{align*}
    \int_{B_s(x_o)}|D^2u|^2\dx
    &\le 
    C\eps_o^2\int_{B_t(x_o)}|D^2u|^2\dx\\
    &\phantom{\le}
    +\frac C{(t-s)^2}\int_{B_R(x_o)}|Du|^2\dx
    +
    C\int_{B_R(x_o)}|f|^2\dx.
\end{align*}
Choosing $\eps_o\in(0,1)$ small enough in dependence on
$C=C(C_1)$, we can therefore derive the
claim by an application of the Iteration Lemma \ref{lem:Giaq}. 
\end{proof}

For the boundary analogue of Theorem \ref{thm:apriori-W22-int},
we additionally have to assume small oscillation of $u$.
This is needed for the following extension result which we
shall employ for the construction of admissible testing functions. 

\begin{lemma}\label{lem:extension}
There is a radius $\rho_o=\rho_o( \Gamma)\in(0,1)$ 
such that the following holds. Consider maps $u_k\in
W^{1,2}(B_{r}^+(x_o),\R^3)$ for $k\in\mathcal I $ with
an index set $\mathcal{I}$ and $x_o\in\partial B$, such that
for some $p_o\in\Gamma$ we have 
\begin{equation}\label{kleines-Bild}
    u_k(B_{r}^+(x_o))\subset B_{\rho_o}(p_o)\subset\R^3
    \quad
    \mbox{for all $ k\in\mathcal I$.}
\end{equation}
Then there are maps $\Phi_k\in W^{1,2}(B_{r}^+(x_o),\R)$ with 
\begin{equation*}
    \widehat\gamma\circ\Phi_k=u_k
    \quad\mbox{on $I_{r}(x_o)$ for $k\in\mathcal I $,}
\end{equation*}
and which satisfy a.e. on $B_{r}^+(x_o)$ the estimates
\begin{align*}
  |\Phi_k-\Phi_\ell|&\le C|u_k-u_\ell|,\\
  |D\Phi_k|&\le C|Du_k|,\\
  |D\Phi_k-D\Phi_\ell|&\le C|Du_k-Du_\ell|+|Du_k|\,|u_k-u_\ell|
\end{align*}
for all $k,\ell\in\mathcal I$, where $C$ denotes a universal constant that
depends only on $\Gamma$. 
\end{lemma}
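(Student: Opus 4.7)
The idea is to realize each $\Phi_k$ as the composition of $u_k$ with a single auxiliary $C^2$ map $\Psi:B_{\rho_o}(p_o)\to\R$ that inverts $\widehat\gamma$ on the nearby arc of $\Gamma$. First, by the compactness of $\Gamma$, its $C^3$-regularity, and the chord-arc condition~\eqref{chord-arc}, I would fix a universal radius $\rho_o=\rho_o(\Gamma)\in(0,1)$ small enough that for every $p\in\Gamma$ the ball $B_{2\rho_o}(p)$ lies in a tubular neighbourhood of $\Gamma$ in which the nearest-point retraction $\pi$ onto $\Gamma$ is of class $C^2$, and $\Gamma\cap B_{2\rho_o}(p)$ is a single connected arc whose $\widehat\gamma$-preimage is an interval $I_{p}\subset\R$ of length less than $2\pi$ on which $\widehat\gamma$ is a $C^3$-diffeomorphism onto its image.

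Given such a $\rho_o$ and the common ball $B_{\rho_o}(p_o)$ from~\eqref{kleines-Bild}, I would fix once and for all a point $\phi_o\in\widehat\gamma^{-1}(p_o)$ and let $\widehat\gamma^{-1}_{\phi_o}:\Gamma\cap B_{2\rho_o}(p_o)\to I_{p_o}$ denote the local $C^3$-inverse of $\widehat\gamma$ sending $p_o$ to $\phi_o$. Then I set
\begin{equation*}
    \Psi:=\widehat\gamma^{-1}_{\phi_o}\circ\pi:B_{\rho_o}(p_o)\to\R,
    \qquad
    \Phi_k:=\Psi\circ u_k\in W^{1,2}(B_r^+(x_o),\R),
\end{equation*}
for each $k\in\mathcal I$. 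On $I_r(x_o)$ we have $u_k(x)\in\Gamma$, so $\pi(u_k(x))=u_k(x)$ and consequently $\widehat\gamma(\Phi_k(x))=\widehat\gamma(\widehat\gamma^{-1}_{\phi_o}(u_k(x)))=u_k(x)$, as required. Using the \emph{same} branch $\widehat\gamma^{-1}_{\phi_o}$ for every index $k$ is essential, because it makes the three pointwise estimates reduce to pointwise estimates for the single map $\Psi$ composed with different arguments in the common ball $B_{\rho_o}(p_o)$.

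Since $\Psi$ is $C^2$ on $B_{\rho_o}(p_o)$ with $C^2$-norm bounded in terms of $\Gamma$ alone, there is a constant $C=C(\Gamma)$ such that $|D\Psi|\le C$, $|D\Psi(a)-D\Psi(b)|\le C|a-b|$, and $|\Psi(a)-\Psi(b)|\le C|a-b|$ on $B_{\rho_o}(p_o)$. The first two estimates then follow at once from $D\Phi_k=D\Psi(u_k)\,Du_k$ and the chain rule. For the third, I write the telescoping identity
\begin{equation*}
    D\Phi_k-D\Phi_\ell=D\Psi(u_k)\bigl(Du_k-Du_\ell\bigr)+\bigl(D\Psi(u_k)-D\Psi(u_\ell)\bigr)Du_k
\end{equation*}
(swapping $k$ and $\ell$ at the end of the first term so that the remaining $Du$ is $Du_k$) and bound each factor using the $C^2$-bounds for $\Psi$, obtaining the claimed estimate
$|D\Phi_k-D\Phi_\ell|\le C|Du_k-Du_\ell|+C|Du_k|\,|u_k-u_\ell|$.

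The only delicate point is the uniform choice of $\rho_o$: I would verify that the tubular neighbourhood width and the "single arc" property can be taken uniform along $\Gamma$ by combining compactness of $\Gamma$ with the chord-arc estimate~\eqref{chord-arc}, which ensures that any two points of $\Gamma$ at Euclidean distance $\le 2\rho_o\le\delta$ are joined by an arc of length $\le 2M\rho_o$, preventing $\Gamma\cap B_{2\rho_o}(p)$ from splitting into disconnected components for sufficiently small $\rho_o$. Everything else is a routine chain-rule computation.
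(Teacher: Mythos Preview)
Your proof is correct and essentially identical to the paper's: both set $\Phi_k=\widehat\gamma^{-1}\circ\pi\circ u_k$, with $\pi$ the $C^2$ nearest-point retraction onto $\Gamma$ and $\widehat\gamma^{-1}$ a fixed local inverse on the arc $\Gamma\cap B_{\rho_o}(p_o)$, and then read off all three estimates from the chain rule and the $C^2$-bounds on $\Psi=\widehat\gamma^{-1}\circ\pi$. One small slip: your displayed telescoping identity is not an identity as written; the decomposition that leaves $Du_k$ in the second summand is $D\Phi_k-D\Phi_\ell=D\Psi(u_\ell)(Du_k-Du_\ell)+\bigl(D\Psi(u_k)-D\Psi(u_\ell)\bigr)Du_k$.
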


\begin{proof}
Since $\Gamma\subset\R^3$ is a Jordan-curve of class $C^3$, there is
a tubular neighborhood $U\subset\R^3$ of $\Gamma$ such that the
nearest-point retraction $\pi\colon U\to\Gamma$ is well-defined
and of class $C^2$ with $\|\pi\|_{C^2}\le C(\Gamma)$. For a $\rho_o>0$ sufficiently small, the assumption (\ref{kleines-Bild}) implies in particular $u_k(B_{r}^+(x_o))\subset U$ for $k\in\mathcal I $. Consequently, we may define
\begin{equation*}
    \widetilde\Phi_k:=\gamma^{-1}\circ \pi\circ u_k\colon B_{r}^+(x_o)\to S^1
    \quad\mbox{for $k\in\mathcal I $.}
\end{equation*}
Since $\gamma^{-1}\circ\pi$ is Lipschitz continuous
with Lipschitz constant $L$ depending only on $\Gamma$,
we may once more diminish $\rho_o>0$ in
dependence on $\Gamma$ such that $\gamma^{-1}\circ\pi(B_{\rho_o}(p_o))$
is contained in a half sphere $S^+:=\{e^{i\vartheta}\in
S^1\,:\,\vartheta_o<\vartheta<\vartheta_o+\pi\}$ for some
$\vartheta_o\in\R$.  On this half sphere, the function
$\arg\colon S^+\ni e^{i\vartheta}\mapsto 
\vartheta\in (\vartheta_o,\vartheta_o+\pi)$
is a $C^2$-diffeomorphism, which implies that
$\widehat\gamma^{-1}=\arg\circ\gamma^{-1}$ is of class $C^2$ with
$\|\widehat\gamma^{-1}\|_{C^2}\le C(\Gamma)$ on 
$\pi(B_{\rho_o}(p_o))$. Now, for  $k\in\mathcal I $ we define
\begin{equation*}
    \Phi_k
    :=
    \arg\,\circ\,\widetilde \Phi_k
    =
    \widehat\gamma^{-1}\circ\pi\circ
    u_k\colon B_{r}^+(x_o)\to\R.
\end{equation*}
Since $\widehat\gamma^{-1}\circ\pi$ is Lipschitz
continuous on $B_{\rho_o}(p_o)$, we deduce the first two of the
asserted estimates.
For the last one, we calculate $D\Phi_k=(T{\circ}u_k)Du_k$, where $T=D(\widehat\gamma^{-1}\circ\pi)$ is of class $C^1$ 
with $\|T\|_{C^1}\le C(\Gamma)$. This implies the remaining claim
by straightforward calculations. 
\end{proof}

Now we are in a position to prove the a-priori estimates up to the
boundary. 

\begin{theorem}\label{thm:apriori-W22-bdry}
There is a constant $\eps_1=\eps_1(C_1,\Gamma)\in(0,1)$ 
for which the following two criteria for $W^{2,2}$-regularity hold true: Whenever $u\in\Ss$ satisfies (\ref{local-plateau}) on a half-disk around $x_o\in\partial B$ with $B_R^+(x_o)\cap
\{P_1,P_2,P_3\}=\varnothing$ and $R<\frac12$, then we have:
\begin{enumerate}
\item[(i)] 
If $F=0$ and the solution satisfies
$\osc_{B_R^+(x_o)}u\le \eps_1$
as well as the Morrey space estimate
\begin{equation}\label{Morrey-assumption}
      \int_{B_r^+(y)} |Du|^2\dx
      \le 
      C_M\Big(\frac rR\Big)^{2\alpha}
      \quad
      \mbox{for all $y\in B_{R/2}^+(x_o)$ and $0<r<\tfrac R2$}
\end{equation}
for some $\alpha\in(0,1)$ and $C_M\ge1$, then we have 
$u\in W^{2,2}(B_{R/4}^+(x_o),\R^3)$ and with a constant
$C=C(\Gamma,\alpha)$ furthermore the quantitative estimate
\begin{equation*}
	\int_{B_{R/4}^+(x_o)}|D^2u|^2\dx
	\le 
    C C_M^{1/\alpha}\frac1{R^2}\int_{B_{R/2}^+(x_o)}|Du|^2\dx.
\end{equation*}
\item[(ii)] 
If $F$ satisfies (\ref{F-growth}) and $u$ satisfies
$u\in W^{1,4}(B_{R/2}^+(x_o),\R^3)$ and
the smallness conditions
\begin{equation}\label{small-Diri}
   \int_{B_R^+(x_0)}|Du|^2\dx<\eps_1^2
	\quad
	\mbox{and}
	\qquad
   \osc_{B_R^+(x_o)}u\le \eps_1
\end{equation}
we have $u\in W^{2,2}(B_{R/4}^+(x_o),\R^3)$ together 
with the quantitative estimate
\begin{equation*}
   \int_{B_{R/4}^+(x_o)}|D^2u|^2\dx
   \le 
   \frac C{R^2}\int_{B_{R/2}^+(x_o)}|Du|^2\dx
   +
   C\int_{B_{R/2}^+(x_o)}|f|^2\dx
\end{equation*}
with a universal constant $C=C(C_1,\Gamma)$.
\end{enumerate}
\end{theorem}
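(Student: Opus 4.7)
The plan is to establish the estimate via tangential (angular) difference quotients along $\partial B$, combined with the equation $\Delta u=F$ to recover the missing normal second derivatives. The small oscillation assumption is indispensable: it ensures $u(B_R^+(x_o))\subset B_{\rho_o}(p_o)$ for some $p_o\in\Gamma$, so that Lemma~\ref{lem:extension} applies and produces the scalar lifting $\Phi\in W^{1,2}(B_R^+(x_o),\R)$ of $u|_{I_R(x_o)}$. Such a lifting is what makes the construction of admissible boundary variations in $T_u\Ss$ possible; without it, rotated copies of $u$ in general do not sit on $\Gamma$ with the correct monotonicity.

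Fix a cut-off $\eta\in C_c^\infty(B_{R/2}(x_o))$ with $\eta\equiv 1$ on $B_{R/4}(x_o)$, $|D\eta|\le C/R$. For $|h|$ small, let $R_h$ denote rotation about the origin by angle $h$, and write $\tau_h v(x):=v(R_hx)-v(x)$ for the angular difference quotient, which preserves $\partial B$. Applying Lemma~\ref{lem:extension} to the family $\{u\circ R_h\}_h$ I obtain liftings $\Phi_h$ with $\widehat\gamma\circ\Phi_h=u\circ R_h$ on $I_{R/2}(x_o)$ and the stated Lipschitz comparisons. I then set
\[
 \psi:=\Phi_0+\eta^2\bigl(\tfrac12(\Phi_h+\Phi_{-h})-\Phi_0\bigr)
\]
on $\partial B$; monotonicity of the boundary parametrization and the three-point condition (recall $B_R^+(x_o)\cap\{P_1,P_2,P_3\}=\varnothing$) guarantee $\psi\in\mathcal T^\ast(\Gamma)$ for $|h|$ small, so that $w(e^{i\vartheta}):=\widehat\gamma'(\Phi_0)(\psi-\Phi_0)$, extended to $B_{R/2}^+(x_o)$ via its harmonic extension off the boundary and glued to $0$ on $S_{R/2}^+(x_o)$, defines a legitimate element of $T_u\Ss$ supported in $B_{R/2}^+(x_o)$.

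Inserting this $w$ (and the symmetric one for $-h$) into the inequality \eqref{local-plateau} and summing, the left-hand side reproduces the squared tangential second difference $\int\eta^2 |D\tau_h u|^2/h^2\,dx$ up to cut-off error, while the error generated by the passage from $u\circ R_{\pm h}$ to $w$ is quadratic in $(u\circ R_h-u)$ thanks to the comparison estimates of Lemma~\ref{lem:extension}, i.e.\ of the form
\[
 \int_{B_{R/2}^+(x_o)}\eta|D\eta||Du|\tfrac{|\tau_hu|}{|h|}+\eta^2|Du|^2\tfrac{|\tau_hu|^2}{h^2}\,dx+\int F\cdot w\,dx.
\]
Sending $h\to 0$ gives $\partial_\vartheta Du\in L^2(B_{R/2}^+(x_o),\R^3)$. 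Rewriting $\Delta$ in polar coordinates about the origin (possible since $x_o\in\partial B$ and $B_R^+(x_o)$ is a sector-type region), I solve the equation $\Delta u=F$ for $\partial_{rr}u$ in terms of $F$ and the tangential derivative $\partial_{\vartheta\vartheta}u$; this upgrades the tangential bound to a genuine $W^{2,2}$-bound on $B_{R/4}^+(x_o)$ of the form
\[
 \int_{B_{R/4}^+(x_o)}|D^2u|^2\,dx
 \le \frac{C}{R^2}\int_{B_{R/2}^+(x_o)}|Du|^2\,dx+C\int_{B_{R/2}^+(x_o)}|F|^2\,dx+\mathrm{(bdry\ error)}.
\]

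For case (ii) I bound $\int|F|^2\le C(\int|Du|^4+\int|f|^2)$ via \eqref{F-growth} and then apply the Gagliardo--Nirenberg inequality \eqref{appl-gag-nir} to get $\int|Du|^4\le C\bigl(\int|D^2u|^2+R^{-2}\int|Du|^2\bigr)\int|Du|^2$; the smallness $\int|Du|^2<\eps_1^2$ and an iteration on a chain of radii via Lemma~\ref{lem:Giaq} absorb the Hessian term into the left. The boundary error $\int\eta^2|Du|^2(\tau_hu/h)^2\,dx$ is controlled by the same Gagliardo--Nirenberg device once we notice $\tau_hu/h\to \partial_\vartheta u$ strongly in $L^2$. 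In case (i), where $F\equiv 0$, the inhomogeneous integral disappears and the only remaining term is the boundary error; here the Morrey hypothesis \eqref{Morrey-assumption} is precisely tailored for Lemma~\ref{lem:Morrey}, used with weight $w=|Du|^2$ of Morrey exponent $2\alpha$ and test function $v=\eta\,\tau_hu/h$, yielding a factor $C C_MR^\alpha$ which, by iterating down to $B_{R/4}^+(x_o)$, becomes absorbable (this is where the universal power $C_M^{1/\alpha}$ appears). The main obstacle, as anticipated, is purely the \emph{admissibility} of the test function on the Plateau boundary: once Lemma~\ref{lem:extension} lets us realize symmetrized difference quotients as legitimate variations in $T_u\Ss$, all remaining steps are by now standard absorption arguments of Gagliardo--Nirenberg / Morrey type.
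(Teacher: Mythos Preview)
Your overall strategy matches the paper's: angular difference quotients along $\partial B$, Lemma~\ref{lem:extension} to produce the scalar lifting, recovery of $\partial_r^2u$ via $\Delta u=F$ in polar coordinates, and the split Gagliardo--Nirenberg (case~(ii)) versus Lemma~\ref{lem:Morrey} on a fine covering (case~(i)). The genuine gap is in the construction of the test function.

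You take $w$ to be the \emph{harmonic extension} of the boundary datum $\widehat\gamma'(\Phi_0)(\psi-\Phi_0)$, glued to zero on $S_{R/2}^+(x_o)$. A harmonic $w$ has no explicit relation to the angular differences of $u$ in the interior, so there is no mechanism by which $\int Du\cdot Dw$ reproduces $-\int\eta^2|D\tau_hu|^2/h^2$ up to lower order; the claim that it does is unjustified. What the paper does (following Struwe) is keep the \emph{explicit} interior function $\tilde w:=\eta^2(u_h-u)$ and correct only its boundary trace by subtracting the second-order Taylor remainder
\[
 g_+:=\int_{\Phi}^{\Phi_h}\!\!\int_{\Phi}^{t}\widehat\gamma''(s)\,ds\,dt,
\]
so that $w:=\tilde w-\eta^2 g_+$ lies in $T_u\mathcal{S}^\ast$. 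Crucially $|g_+|\le C|u_h-u|^2=O(h^2)$ and $|Dg_+|\le C|u_h-u|\big(|Du_h-Du|+|Du|\,|u_h-u|\big)$, one order below $\tilde w=O(h)$. One tests separately with this $w$ and with the analogous function for $-h$ built from $\eta_{-h}^2(u_{-h}-u)$; adding the two inequalities and dividing by $h^2$ gives, via discrete integration by parts, $-\int Du\cdot D\partial_{-h}(\eta^2\partial_hu)=\int\eta^2|D\partial_hu|^2+\text{(cutoff)}$, while the $g_\pm$ contributions produce exactly the quadratic error integrals you list. Note also an inconsistency in your scheme: your $\psi$ is already symmetric in $\pm h$, so ``the symmetric one for $-h$'' is the same object and summing is vacuous. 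If you intended $\psi_\pm=(1-\eta^2)\Phi_0+\eta^2\Phi_{\pm h}$ separately, that is precisely the paper's route---but then the interior part must be $\eta^2(u_{\pm h}-u)$ plus the Taylor correction, not a harmonic extension. The admissibility on $\partial B$ and the difference-quotient structure in the interior have to be resolved \emph{simultaneously} by the $g_\pm$ device; a harmonic extension fixes the first but destroys the second.
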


\begin{proof}
The first part of the proof is identical for both cases.
We will later choose $\eps_1\in(0,\rho_o)$ with the radius
$\rho_o(\Gamma)>0$ from Lemma~\ref{lem:extension}. This implies in
view of our assumption $\osc_{B_R^+(x_o)} u\le\eps_1$, that we have 
\begin{equation}\label{small-ball}
    u(B_{R}^+(x_o))\subset B_{\rho_o}(u(x_o)).
\end{equation}
For $h\in(-\frac R{8},\frac R{8})$, we introduce the notation
\begin{equation*}
    u_h(re^{i\vartheta})
    :=
    u(re^{i(\vartheta+h)})
    \quad
    \mbox{whenever $re^{i\vartheta}\in B_{3R/8}^+(x_o)$.}
\end{equation*}
We note that $re^{i\vartheta}\in B_{3R/8}^+(x_o)$ implies
$re^{i(\vartheta+h)}\in B_{R}^+(x_o)$, so that the
preceding definition makes sense. Further,
the inclusion (\ref{small-ball}) yields
\begin{equation*}
    u(B_{3R/8}^+(x_o)) \subset B_{\rho_o}(u(x_o))
    \quad
    \mbox{and}
    \quad
    u_{\pm h}(B_{3R/8}^+(x_o)) \subset B_{\rho_o}(u(x_o)).
\end{equation*}
Therefore, with Lemma~\ref{lem:extension} we find maps $\Phi,\Phi_h,\Phi_{-h}\in W^{1,2}(B_{3R/8}^+(x_o))$ such that
\begin{equation}\label{Phi-bounds}
    \left\{
    \begin{array}{l}
    |\Phi-\Phi_{\pm h}|\le C|u-u_{\pm h}|,\\[0.7ex]
    |D\Phi|\le C|Du|,\\[0.7ex]
    |D\Phi-D\Phi_{\pm h}|\le C|Du-Du_{\pm h}|+C|Du|\,|u-u_{\pm h}|
	\end{array}
   \right.
\end{equation}
hold true a.e. on $B_{3R/8}^+(x_o)$, where $C=C(\Gamma)$, and moreover
\begin{equation*}
     \widehat\gamma\circ \Phi=u,\quad \widehat\gamma\circ \Phi_{\pm
       h}=u_{\pm h}
     \quad\mbox{on }I_{3R/8}(x_o).  
\end{equation*}
Since $u\in\Ss$, we can thus find a map $\varphi\in
\mathcal{T}^\ast(\Gamma)$ with
\begin{equation*}
    \varphi(\vartheta)=\Phi(e^{i\vartheta})
    \quad
    \mbox{whenever $e^{i\vartheta}\in I_{3R/8}(x_o)$.} 
\end{equation*}
Similarly, we define
\begin{equation*}
    \varphi_{\pm h}(\vartheta):=\Phi_{\pm h}(e^{i\vartheta})
    \quad
    \mbox{whenever $e^{i\vartheta}\in I_{3R/8}(x_o)$.} 
\end{equation*}
Next, for $v\colon B_{3R/8}^+(x_o)\to\R^k$ we define the {\bf angular difference quotient} by
\begin{equation*}
    \d_hv:=\tfrac1h(v_h-v).
\end{equation*}
Now we let $s,t$ be arbitrary radii with
$\frac R4\le s<t\le \frac{3R}8$
and choose a cut-off function $\eta\in C^\infty_0(B_t(x_o),[0,1])$ with $\eta\equiv1$ on $B_s(x_o)$ and
$|D\eta|\le\frac 2{t-s}$. Then we would like to test the
inequality~(\ref{local-plateau}) with the testing function
\begin{equation*}
    \d_{-h}\big(\eta^2\d_hu\big)
     = 
     \tfrac1{h^2}\big[\eta^2(u_h-u)+\eta^2_{-h}(u_{-h}-u)\big].     
\end{equation*}
With the abbreviation $\Tilde w:=\eta^2(u_h-u)$, this identity becomes
\begin{equation*}
    \d_{-h}\big(\eta^2\d_hu\big)=\tfrac1{h^2}\big[\Tilde w-\Tilde
    w_{-h}\big]=\tfrac1h\partial_{-h}\Tilde w.
\end{equation*}
Unfortunately,  $\Tilde w$ and $-\Tilde w_{-h}$ are not
admissible in (\ref{local-plateau}) since they might not attain the
right boundary values. Therefore, following Struwe \cite{Struwe:1988-2} we modify the function $\Tilde w$ to
\begin{equation*}
    w
    :=\Tilde w -\eta^2
    \int_\Phi^{\Phi_h}\int_\Phi^t\widehat\gamma''(s)\,ds\,dt
    =:
    \Tilde w- \eta^2g_+
    \quad\mbox{on $B_{3R/8}^+(x_o)$}
\end{equation*}
and extend $w$ by zero outside of $B_{3R/8}^+(x_o)$.
From~(\ref{Phi-bounds})$_1$ we infer 
\begin{equation}\label{g+BoundI}
    |g_+|\le C|\Phi-\Phi_h|^2\le C|u-u_h|^2,
\end{equation}
for a constant $C=C(\Gamma)$.
In order to estimate $Dg_+$, we calculate
\begin{align*}
  Dg_+
  &=D\big[\widehat\gamma(\Phi_h)-\widehat\gamma(\Phi)-\widehat\gamma'(\Phi)(\Phi_h-\Phi)\big]\\
  &=\big(\widehat\gamma'(\Phi_h)-\widehat\gamma'(\Phi)-\widehat\gamma''(\Phi)(\Phi_h-\Phi)\big)D\Phi
    +\big(\widehat\gamma'(\Phi_h)-\widehat\gamma'(\Phi)\big)\big(D\Phi_h-D\Phi\big).
\end{align*}
Since $\widehat\gamma$ is of class $C^3$ we
use the bounds~(\ref{Phi-bounds}) to obtain
\begin{align}\label{g+BoundII}
    |Dg_+|
    &\le C|\Phi-\Phi_h|^2|D\Phi|+C|\Phi-\Phi_h||D\Phi_h-D\Phi|\\\nonumber
    &\le C|u-u_h|^2|Du|+C|u-u_h||Du_h-Du|
\end{align}
a.e. on $B_{3R/8}^+(x_o)$, where $C=C(\Gamma)$. 
Next, we calculate the boundary values of $w$. 
By the choice of $\varphi$ and $\varphi_h$, we have for any
$e^{i\vartheta}\in I_{3R/8}(x_o)$ 
\begin{align*}
    w(e^{i\vartheta})
    &=
    \Tilde w(e^{i\vartheta})-\eta^2(e^{i\vartheta})
    \int_{\varphi(\vartheta)}^{\varphi_h(\vartheta)}
     \int_{\varphi(\vartheta)}^t\widehat\gamma''(s)\,ds\,dt\\
    &=
    \eta^2(e^{i\vartheta})\bigg[
      \widehat\gamma(\varphi_h(\vartheta))
      -\widehat\gamma(\varphi(\vartheta))
      -\int_{\varphi(\vartheta)}^{\varphi_h(\vartheta)}
     \int_{\varphi(\vartheta)}^t\widehat\gamma''(s)\,ds\,dt
      \bigg]\\
    &=
    \eta^2(e^{i\vartheta})
    \widehat\gamma'(\varphi(\vartheta))(\varphi_h(\vartheta)-
    \varphi	(\vartheta)).
\end{align*}
On the other hand, for $e^{i\vartheta}\not\in I_{3R/8}(x_o)$, the choice of $\eta$ implies
$w(e^{i\vartheta})=0$. Defining
\begin{equation*}
	\psi(\vartheta)
   :=
   \eta^2(e^{i\vartheta})\varphi_h(\vartheta)+(1-\eta^2(e^{i\vartheta}))
   \varphi(\vartheta),
\end{equation*}
we thereby deduce
\begin{equation*}
     w(e^{i\vartheta})
     =
     \widehat\gamma'(\varphi(\vartheta))(\psi(\vartheta)-\varphi(\vartheta))
     \quad\mbox{for all }\vartheta\in\R.
\end{equation*}
We observe that $\psi\in C^0\cap W^{1/2,2}(\R)$ is weakly monotone
and satisfies the periodicity condition $\psi(\cdot+2\pi)=\psi+2\pi$ because it is a convex combination of two functions with these properties.
Moreover, it satisfies the three-point condition $\widehat\gamma(\psi(\Theta_k))=Q_k$ since $\eta$ vanishes in $P_k=e^{i\Theta_k}$ because
of the assumption $B_R^+(x_o)\cap \{P_1,P_2,P_3\}=\varnothing$.
We conclude $\psi\in\mathcal{T}^\ast(\Gamma)$, which in turn
implies that $w=\widetilde w-\eta^2g_+\in T_u\mathcal{S}^\ast$ 
is an admissible testing function in (\ref{local-plateau}). From this we infer
\begin{align}\label{testI}
    -\int_{B_R^+(x_o)}&Du\cdot D \Tilde w\dx\\\nonumber
    &\le
    \int_{B_R^+(x_o)}F\cdot \Tilde w\dx 
     +\int_{B_R^+(x_o)}|Du|\,|D(\eta^2g_+)|+\eta^2|F|\,|g_+|\dx.
\end{align}
With $-\Tilde w_{-h}=\eta^2_{-h}(u_{-h}-u)$ we can repeat the preceding arguments with $h, \eta^2$ replaced by $-h,\eta_{-h}^2$. We obtain a function $g_-:B_{3R/8}^+(x_o)\to\R^3$
that satisfies the estimates
\begin{align}\label{g-BoundI}
  |g_-|
  &\le C|\Phi-\Phi_{-h}|^2\le C|u-u_{-h}|^2,\\\label{g-BoundII}
  |Dg_-|
  &\le C|u-u_{-h}|^2|Du|+C|u-u_{-h}||Du_{-h}-Du|,
\end{align}
with a constant $C=C(\Gamma)$, and for which 
$-\Tilde w_{-h}-\eta^2_{-h}g_-$ is admissible in the variational
inequality~(\ref{local-plateau}). This leads to
\begin{align*}
    \int_{B_R^+(x_o)}&Du\cdot D\Tilde w_{-h}\dx\\
    &
    \le
    -\int_{B_R^+(x_o)}F\cdot \Tilde w_{-h}\dx 
    +
    \int_{B_R^+(x_o)}|Du|\,|D(\eta^2_{-h}g_-)|+\eta^2_{-h}|F|\,|g_-|\dx.
\end{align*}
Adding the preceding inequality to~(\ref{testI}) and dividing by $h^2$, we deduce
\begin{align*}
	-\tfrac1h\int_{B_R^+(x_o)}&Du\cdot D\partial_{-h}\Tilde w\dx\\
	&
	\le
	\tfrac1h
	\int_{B_R^+(x_o)} F\cdot \partial_{-h}\Tilde w\dx
	+\tfrac1{h^2}\int_{B_R^+(x_o)}|F|\big(\eta^2|g_+|+\eta^2_{-h}|g_-|\big)\dx\\
	&\phantom{m}+\tfrac1{h^2}\int_{B_R^+(x_o)}|Du|\,\big(|D(\eta^2g_+)|
	+|D\big(\eta^2_{-h}g_-)|\big)\dx.
\end{align*}
Taking into account the definition $\frac1h \Tilde w =\eta^2\d_hu$, 
we can re-write the preceding inequality in the form
\begin{align}\label{W22-Bound-I}\nonumber
	-\int_{B_R^+(x_o)}&Du\cdot D\partial_{-h}(\eta^2\d_hu)\dx\\\nonumber
	&\le
	\int_{B_R^+(x_o)} |F|\,|\partial_{-h}(\eta^2\d_hu)|\dx
	+\tfrac1{h^2}\int_{B_R^+(x_o)}|F|\big(\eta^2|g_+|+\eta^2_{-h}|g_-|\big)\dx\\\nonumber
	&\phantom{m}+\tfrac1{h^2}\int_{B_R^+(x_o)}|Du|\,\big(\eta^2|Dg_+|
	+\eta^2_{-h}|Dg_-|\big)\dx\\
	&
	\phantom{m}+\tfrac2{h^2}\int_{B_R^+(x_o)}|Du|\,\big(\eta|D\eta||g_+|
	+\eta_{-h}|D\eta_{-h}||g_-|\big)\dx\\\nonumber
	&=I+II+III+IV,
\end{align}
with the obvious meaning of $I$ - $IV$. In the sequel we estimate 
these terms separately.  We start with the
{\bf estimate of $II$.} Here we use~(\ref{g+BoundI})
and~(\ref{g-BoundI}) and the fact that $0\le\eta\le 1$
to obtain
\begin{align*}
	II
	&\le
	C\int_{B_R^+(x_o)}|F|\big(\eta|\partial_hu|^2
	+\eta_{-h}|\partial_{-h}u|^2\big)\dx.
\end{align*}
Next we deduce the {\bf estimate for $IV$.} Here we use again~(\ref{g+BoundI})
and~(\ref{g-BoundI}) to obtain
\begin{align*}
	IV
	&\le
	C\int_{B_R^+(x_o)}|Du|\big(\eta |D\eta ||\partial_hu|^2
	+\eta_{-h}|D\eta_{-h}||\partial_{-h}u|^2\big)\dx\\
	&\le
	C\int_{B_R^+(x_o)}\tfrac{|Du|}{t-s} 
	\big(\eta |\partial_hu|^2
	+\eta_{-h}|\partial_{-h}u|^2\big)\dx.
	\end{align*}
The {\bf Estimate of $III$} is achieved as
follows. Using~(\ref{g+BoundII}), (\ref{g-BoundII}) and Young's inequality we find
\begin{align*}
	III
	&\le
	C\int_{B_R^+(x_o)}|Du|\big(\eta^2 |\partial_hu| |D\partial_hu|
	+\eta_{-h}^2|\partial_{-h}u||D\partial_{-h}u| \big)\dx\\
	&\phantom{\le}
	+C \int_{B_R^+(x_o)}|Du|^2\big(\eta |\partial_hu|^2
	+\eta_{-h}|\partial_{-h}u|^2 \big)\dx\\
	&\le
	\mu\int_{B_R^+(x_o)}\eta^2|D\partial_hu|^2+\eta_{-h}^2
	|D\partial_{-h}u|^2\dx\\
	&\phantom{\le}
	+
	C_\mu
	\int_{B_R^+(x_o)}|Du|^2\big(\eta |\partial_hu|^2
	+\eta_{-h}|\partial_{-h}u|^2 \big)\dx.
\end{align*}
Adding the estimates for $II$, $III$ and $IV$ and using
$2\tfrac{|Du|}{t-s}\le|Du|^2+\frac1{(t-s)^2}$ we deduce
\begin{align*}
	II+III+IV
	&\le
	\mu\int_{B_R^+(x_o)}\eta^2|D\partial_hu|^2+\eta_{-h}^2
	|D\partial_{-h}u|^2\dx\\
	&\phantom{\le}
	+
	C_\mu
	\int_{B_R^+(x_o)}\big(|Du|^2+\tfrac{1}{(t-s)^2}+|F|\big)\big(\eta |\partial_hu|^2
	+\eta_{-h}|\partial_{-h}u|^2 \big)\dx.
\end{align*}
Next, we consider the term $I$. Here, by Young's inequality
and a standard estimate for difference quotients we find
\begin{align*}
	I
	&
	=\int_{B_R^+(x_o)} |F|\,|\partial_{-h}(\eta^2\d_hu)|\dx\\
	&
	\le
	\mu\int_{B_R^+(x_o)}|D(\eta^2\d_hu)|^2\dx
	+C_\mu\int_{B_{t+h}^+(x_o)}|F|^2\dx.
\end{align*}
In the last line we used the fact that
$\spt (\eta^2\partial_hu)\subset B_t^+(x_o)$ and $e^{-ih}B_t^+(x_o)
\subset B_{t+h}^+(x_o)$.
For the  estimate of the left-hand side of~(\ref{W22-Bound-I}) from below we compute
\begin{align*}
  -\int_{B_R^+(x_o)}Du\cdot D\d_{-h} (\eta^2\partial_h u)\dx
  &=
  \int_{B_R^+(x_o)}D\d_hu\cdot D(\eta^2\d_hu)\dx\\
  &=\int_{B_R^+(x_o)}\eta^2|D\d_hu|^2
   +2\eta D\d_hu\cdot \d_hu\otimes D\eta\dx\\
   &\ge
   \tfrac12\int_{B_R^+(x_o)}\eta^2|D\d_hu|^2\dx
   -
   C\int_{B_R^+(x_o)}|\d_hu|^2|D\eta|^2\dx\\
   &\ge
   \tfrac12\int_{B_R^+(x_o)}\eta^2|D\d_hu|^2\dx
   -
   C\int_{B_t^+(x_o)}\tfrac{|\d_hu|^2}{(t-s)^2}\dx.
\end{align*}
Joining the preceding estimates we arrive at
\begin{align}\label{W22-Bound-II}\nonumber
   \tfrac12\int_{B_R^+(x_o)}&\eta^2|D\d_hu|^2\dx\\
   &
   \le2\mu\int_{B_R^+(x_o)}\eta^2|D\partial_hu|^2+\eta_{-h}^2
	|D\partial_{-h}u|^2\dx\\\nonumber
	&\phantom{\le}
	+
	C_\mu
	\int_{B_R^+(x_o)}\big(|Du|^2+\tfrac{1}{(t-s)^2}+|F|\big)\big(\eta |\partial_hu|^2
	+\eta_{-h}|\partial_{-h}u|^2 \big)\dx\\\nonumber
	&\phantom{\le}
	+C_\mu\int_{B_{t+h}^+(x_o)}|F|^2\dx
        +C\int_{B_t^+(x_o)}\tfrac{|\d_hu|^2}{(t-s)^2}\dx
        .
\end{align}
From the transformation $x=ye^{-ih}$ we infer the identity
\begin{equation*}
  \int_{B_R^+(x_o)}\eta^2|D\partial_hu|^2\dx=\int_{B_R^+(x_o)}\eta_{-h}^2
	|D\partial_{-h}u|^2\,dy.
\end{equation*}
Therefore, we can re-absorb the first integral from the right-hand
side of~(\ref{W22-Bound-II}) into the left after choosing $\mu\in(0,1)$
sufficiently small. 
Keeping in mind the properties of the cut-off function $\eta$, we deduce
\begin{align}\label{W22-Bound-III}\nonumber
    \int_{B_s^+(x_o)}|D\d_hu|^2\dx\nonumber
    &\le C\int_{B_R^+(x_o)}
    \big(|Du|^2
    +|F|\big)\,(\eta|\d_hu|^2
    +\eta_{-h}|\d_{-h}u|^2)\dx\\
    &\phantom{\le}+C\int_{B_{t+h}^+(x_o)}|F|^2\dx
    +C\int_{B_t^+(x_o)}\tfrac{|\d_hu|^2}{(t-s)^2}\dx
\end{align}
with a constant $C=C(\Gamma)$. For the bound of the right-hand side we distinguish between the two cases (i) and (ii).
We begin with the

{\bf Proof of (i).} In this case, we follow the strategy
from \cite[p. 73]{Struwe:1988-2} and first extend the map $u$
by reflection $u(x):=u(x/|x|^2)$ onto the full disk $B_{R/2}(x_o)$
and then cover $B_t(x_o)$ with disks $B_\rho(x_i)$ of radius
$\rho\in(0,\frac R{16})$. This can be done in such a way that every point
$x\in B_t(x_o)$ is contained in at most $N$ of the disks
$B_{2\rho}(x_i)$ with $N$ independent from $\rho$. We choose a standard cut-off function $\zeta\in C^\infty_0(B_2(0))$
with $\zeta\equiv1$ on $B_1(0)$ and let
$\zeta_i(x):=\zeta(\frac{x-x_i}\rho)$. Then we estimate
\begin{align*}
    \int_{B_R^+(x_o)}&
     |Du|^2(\eta|\d_hu|^2+\eta_{-h}|\d_{-h}u|^2)\dx\\
    &=\int_{B_t^+(x_o)}
     \eta(|Du|^2+|Du_h|^2)|\d_hu|^2\dx\\
    &
    \le
    \sum_i \int_{B_{2\rho}(x_i)\cap B_t(x_o)}
    (|Du|^2+|Du_h|^2)|\d_hu|^2\zeta_i^2\dx.
\end{align*}
Because of the Morrey type assumption~(\ref{Morrey-assumption}),
each of the latter integrals can be estimated by 
Lemma~\ref{lem:Morrey} with $\Omega=B_{2\rho}(x_i)\cap B_t(x_o)$,
$w=|Du|^2+|Du_h|^2$, $v_i=|\partial_hu|\zeta_i$ and $C_o=C_MR^{-2\alpha}$. This leads us to 
\begin{align*}
    \int_{B_R^+(x_o)}
    &
    |Du|^2(\eta|\d_hu|^2+\eta_{-h}|\d_{-h}u|^2)\dx \\
    &
    \le
    C C_M \Big(\frac\rho R\Big)^{2\alpha} \sum_i
    \int_{B_{2\rho}(x_i)\cap B_t(x_o)}|Dv_i|^2\\
    &
    \le
    C C_M N\Big(\frac\rho R\Big)^{2\alpha}
    \int_{B_t^+(x_o)} |D\d_hu|^2
     +\tfrac1{\rho^2}|\d_hu|^2\dx.
\end{align*}
At this stage we fix $\rho\in(0,\frac R{16})$ in the
form $\rho^{2\alpha}:=R^{2\alpha}/(2C C_MN)$. Plugging
the resulting estimate into~(\ref{W22-Bound-III}) and
keeping in mind $F=0$ and $C_M\ge1$, we arrive at
\begin{align*}
    \int_{B_s^+(x_o)}|D\d_hu|^2\dx
    &\le 
    \tfrac 12\int_{B_t^+(x_o)}|D\d_hu|^2\dx
    +C\tfrac {C_M^{1/\alpha}}{(t-s)^2}
    \int_{B_{R/2}^+(x_o)}|\d_hu|^2\dx.
\end{align*}
Here, we can re-absorb the first integral on the right-hand side
by means of Lemma~\ref{lem:Giaq}. Letting $h\downarrow0$, we thereby
deduce
\begin{equation}\label{W22-Bound(i)}
    \int_{B_{R/4}^+(x_o)}|D_\vartheta Du|^2\dx
    \le 
    C C_M^{1/\alpha}\tfrac 1{R^2}\int_{B_{R/2}^+(x_o)}|Du|^2\dx,
  \end{equation}
where $D_\vartheta$ denotes the angular derivative. It remains to
estimate the second radial derivative $D_r^2u$. For this we rewrite the Laplacian in polar coordinates as
$\Delta=D^2_ru+\frac1rD_ru+\frac1{r^2}D_\vartheta^2 u$. 
Since $u$ is harmonic on $B_R^+(x_o)$, this implies
$|D_r^2u|^2\le C|D_\vartheta Du|^2+C|Du|^2$ and the claim~(i)
follows from~(\ref{W22-Bound(i)}) with a constant
$C=C(\Gamma ,\alpha)$. Now we proceed to the

{\bf Proof of (ii).} Under the assumption $u\in
W^{1,4}(B_R^+(x_o),\R^3)$, the estimate~(\ref{W22-Bound-III})
readily implies
\begin{align*}
    \int_{B_s^+(x_o)} |\d_hDu|^2\dx
    \le
    C\int_{B_{t+h}^+(x_o)}|f|^2 
    +|Du|^4+\tfrac1{(t-s)^2}|Du|^2\dx
\end{align*}
by the bound~(\ref{F-growth}) on $F$, where $C=C(C_1,\Gamma)$. Since
the right-hand side is bounded independently from $h$, we infer
after letting $h\to0$  
\begin{equation}\label{angular-Deri}
    \int_{B_s^+(x_o)}|D_\vartheta Du|^2\dx
    \le 
    C\int_{B_t^+(x_o)}|f|^2 +|Du|^4+\tfrac1{(t-s)^2}|Du|^2\dx.
\end{equation}
Similarly as in the proof of (i), we use 
$\Delta=D^2_ru+\frac1rD_ru+\frac1{r^2}D_\vartheta^2 u$, the
equation $\Delta u=F$ on $B_R^+(x_o)$ in the sense of
distributions and also the assumption $R<\frac 12$ which implies $\frac1r<2$ on $B_R^+(x_o)$. Combining this with~(\ref{angular-Deri}), we deduce
\begin{align*}
    \int_{B_s^+(x_o)}|D_r^2u|^2\dx
    &\le C\int_{B_s^+(x_o)}|F|^2+|D_ru|^2+|D^2_\vartheta u|^2\dx\\
    &\le C\int_{B_t^+(x_o)}|f|^2 +|Du|^4+\tfrac1{(t-s)^2}|Du|^2\dx,
\end{align*}
with $C=C(C_1,\Gamma)$, where here, the last estimate follows from assumption (\ref{F-growth})
and (\ref{angular-Deri}). Combining this
with~(\ref{angular-Deri}) and applying the interpolation
inequality~(\ref{appl-gag-nir}), we arrive at
\begin{align*}
    &\int_{B_s^+(x_o)}|D^2u|^2\dx
    \le C\int_{B_t^+(x_o)}|f|^2
    +|Du|^4+\tfrac1{(t-s)^2}|Du|^2\dx\\
    &\qquad\le C\eps_1^2 \int_{B_t^+(x_o)}|D^2u|^2 +
    \tfrac1{R^2}|Du|^2\, dx
        +C\int_{B_t^+(x_o)}|f|^2 +\tfrac1{(t-s)^2}|Du|^2\dx\,.
\end{align*}
In the last step, we also used the assumption~(\ref{small-Diri}). If
we decrease once more the value of $\eps_1$ in such a way that
$C\eps_1^2\le\tfrac12$, we arrive at
\begin{align*}
    \int_{B_s^+(x_o)}&|D^2u|^2 \dx\\
    &\le 
    \tfrac12 \int_{B_t^+(x_o)}|D^2u|^2\,dx
    +\tfrac C{(t-s)^2}\int_{B_{R/2}^+(x_o)}|Du|^2\dx
    +C\int_{B_{R/2}^+(x_o)}|f|^2 \dx.
\end{align*}
Since this inequality holds for all $s,t$ with
$\frac R4\le s<t\le\tfrac{3R}8<\frac R2$, the Iteration
Lemma~\ref{lem:Giaq} now implies the claim (ii).
\end{proof}

\section{Calder\'on-Zygmund estimates for solutions}
In this section we remove the $W^{1,4}$-hypothesis from the last Chapter which was needed to
establish the local $W^{2,2}$-estimates in Theorems
\ref{thm:apriori-W22-int} and \ref{thm:apriori-W22-bdry} (ii). 

\subsection{Results for comparison problems}
In this section, we provide some results for harmonic maps with a
partial Plateau boundary condition. These maps  will serve as comparison maps later. More precisely, we consider minimizers of the Dirichlet energy in the class
\begin{equation*}
    \mathcal{S}_u^\ast(\Gamma):=\big\{w\in\Ss\,: 
    w=u\mbox{ on }B		\setminus B_R^+(x_o)\}
\end{equation*}
for some $x_o\in\partial B$. Minimizers $v\in\Ss$ of
this problem are harmonic on $B_R^+(x_o)$ and satisfy a 
weak Neumann type boundary condition on $I_R(x_o)$. More precisely, we
have
\begin{lemma}\label{lem:harmonic_plateau}
  Every minimizer $v$ of $\,\mathbf{D}$ in the class
  $\mathcal{S}_u^\ast(\Gamma)$ satisfies 
  \begin{equation}
   \label{Harmonic_Plateau}
   \int_{B_R^+(x_o)}Dv\cdot Dw\dx \ge0
\end{equation}
for all $w\in T_v\mathcal{S}^\ast$ 
with $w=0$ on $S_R^+(x_o)$ in the sense of traces.

\begin{proof}
The proof is a slight modification of
Lemma~\ref{lem:rand-variationen}\,(i). We assume that the boundary
values of $v$ are given by $\varphi\in\mathcal{T}^\ast(\Gamma)$ in
the sense $v(e^{i\vartheta})=\widehat\gamma(\varphi(\vartheta))$. For a
given $w\in T_v\mathcal{S}^\ast$ 
with $w=0$ on $S_R^+(x_o)$ in the sense of traces,
we can find a $\psi\in\mathcal{T}^\ast(\Gamma)$ with
$w(e^{i\vartheta})=\widehat\gamma'(\varphi)(\psi-\phi)$ for all
$\vartheta\in\R$ and $\psi(\vartheta)=\varphi(\vartheta)$
whenever $e^{i\vartheta}\not\in I_R(x_o)$. We define $h_s:B\to\R^3$,
$0\le s\ll 1$ as the unique minimizer of the Dirichlet energy with $h_s=u$ on $B\setminus B_R^+(x_o)$ and
$h_s(e^{i\vartheta})=\widehat\gamma(\varphi+s(\psi-\varphi))$ for all
$e^{i\vartheta}\in I_R(x_o)$. As in the proof of
Lemma~\ref{lem:rand-variationen}\,(i), we then define
\begin{equation*}
    v_s:= h_s+s(w-\tfrac{\partial h_s}{\partial s}\big|_{s=0})-(h_0-u)
\end{equation*}
and check that the maps $v_s\in\mathcal{S}_u^\ast(\Gamma)$ are admissible as competitors for $v$ with
$\tfrac{\partial}{\partial s}\big|_{s=0}v_s=w$. 
We conclude the claim \eqref{Harmonic_Plateau}
by $ 0\le \frac{d}{ds}\Big|_{s=0}\mathbf{D}(v_s)=\int_{B_R^+(x_o)} Dv\cdot Dw\dx$.\qedhere
\end{proof}
  
\end{lemma}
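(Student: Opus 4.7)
The plan is to adapt the construction from Lemma~\ref{lem:rand-variationen}(i), but localized to the half-disk $B_R^+(x_o)$, and then read off the inequality from the minimality of $v$. Since $v \in \mathcal{S}^\ast(\Gamma)$ and $w \in T_v\mathcal{S}^\ast$, write $v(e^{i\vartheta}) = \widehat\gamma(\varphi(\vartheta))$ and $w(e^{i\vartheta}) = \widehat\gamma'(\varphi)(\psi - \varphi)$ for some $\varphi, \psi \in \mathcal{T}^\ast(\Gamma)$. Compatibility with the constraint $v_s = u$ on $B \setminus B_R^+(x_o)$ forces $\psi = \varphi$ on $\partial B \setminus I_R(x_o)$, while the hypothesis $w = 0$ on $S_R^+(x_o)$ is exactly what is needed to localize the variation to $\overline{B_R^+(x_o)}$.

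By the convexity of $\mathcal{T}^\ast(\Gamma)$, the interpolant $\varphi_s := \varphi + s(\psi - \varphi)$ still lies in $\mathcal{T}^\ast(\Gamma)$ for every $s \in [0,1]$. I would then build $h_s$ piecewise: set $h_s := v$ on $B \setminus B_R^+(x_o)$, and on $B_R^+(x_o)$ take $h_s$ to be the harmonic extension with boundary values $\widehat\gamma(\varphi_s)$ on $I_R(x_o)$ and $v$ on $S_R^+(x_o)$. Since $\varphi_s = \varphi$ outside $I_R(x_o)$, the pieces match on $\partial B \setminus I_R(x_o)$ and $h_s \in \mathcal{S}_u^\ast(\Gamma)$. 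Let $\widetilde w$ denote the harmonic extension of $w|_{\partial B_R^+(x_o)}$ inside $B_R^+(x_o)$, extended by zero to all of $B$; this is well-defined precisely because $w = 0$ on $S_R^+(x_o)$. Both $w - \widetilde w$ and $h_0 - v$ then belong to $W^{1,2}_0(B_R^+(x_o),\R^3)$ when extended by zero.

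Set $v_s := h_s + s(w - \widetilde w) - (h_0 - v)$. By construction $v_s \in \mathcal{S}_u^\ast(\Gamma)$ for every sufficiently small $s \ge 0$, with $v_0 = v$ and $\partial_s v_s|_{s=0} = \widetilde w + (w - \widetilde w) = w$. The $\mathbf{D}$-minimality of $v$ in $\mathcal{S}_u^\ast(\Gamma)$ thus yields $0 \le \frac{d}{ds}\big|_{s=0} \mathbf{D}(v_s)$, and a direct computation reduces the right-hand side to $\int_{B_R^+(x_o)} Dv \cdot Dw \, dx$, which is precisely~(\ref{Harmonic_Plateau}).

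The one genuinely delicate point is verifying that each $v_s$ truly belongs to $\mathcal{S}_u^\ast(\Gamma)$: the trace of $v_s$ on $\partial B$ must remain a weakly monotone parametrization of $\Gamma$ with the three-point condition preserved (both handled by $\varphi_s \in \mathcal{T}^\ast(\Gamma)$), and the equality $v_s = u$ on $B \setminus B_R^+(x_o)$ must persist for all small $s$. Without the hypothesis $w = 0$ on $S_R^+(x_o)$, the variation could not be localized to the half-disk without introducing a discontinuity across the curved interior boundary, so this assumption is essential for the construction to succeed.
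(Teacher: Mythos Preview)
Your proof is correct and follows essentially the same construction as the paper: a localized version of Lemma~\ref{lem:rand-variationen}(i), building the one-sided variation $v_s$ from a harmonic interpolant $h_s$ plus a correction in $W^{1,2}_0(B_R^+(x_o),\R^3)$, then reading off the inequality from first variation. Your formula $v_s = h_s + s(w - \widetilde w) - (h_0 - v)$ in fact corrects what appears to be a typo in the paper (which writes $h_0 - u$; with that choice one gets $v_0 = u$, not $v_0 = v$, on $B_R^+(x_o)$).
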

Next, we give an existence result concerning $\mathbf D$-minimizers
in the class $\mathcal{S}_u^\ast(\Gamma)$.
\begin{lemma}\label{lem:exist-comparison}
   For every map $u\in\Ss$ and every disk $B_R^+(x_o)$ with center
   $x_o\in\partial B$, there is a minimizer $v\in\Ss$ of the Dirichlet
   energy $\mathbf{D}$ in the class $\mathcal{S}_u^\ast(\Gamma)$.
 \end{lemma}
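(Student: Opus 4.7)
The plan is to apply the direct method of the calculus of variations. First I would fix a minimizing sequence $(v_n)\subset \mathcal{S}_u^\ast(\Gamma)$, i.e.\ $\mathbf{D}(v_n)\to\inf\{\mathbf{D}(w):w\in\mathcal{S}_u^\ast(\Gamma)\}$. The class is nonempty because $u$ itself belongs to it, so this infimum is finite and $\sup_n\mathbf{D}(v_n)<\infty$. Since $v_n=u$ on the set $B\setminus B_R^+(x_o)$ of positive measure, a Poincar\'e-type argument (or the observation that the trace of $v_n$ lies on the bounded curve $\Gamma$) upgrades the gradient bound to a uniform $W^{1,2}$-bound. After passing to a subsequence, $v_n\rightharpoonup v$ weakly in $W^{1,2}(B,\R^3)$ and strongly in $L^2$, and the constraint $v_n=u$ on $B\setminus B_R^+(x_o)$ is preserved in the weak limit, so $v=u$ a.e.\ outside $B_R^+(x_o)$.

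The delicate point is to show $v\in\Ss$, i.e.\ that the trace $v|_{\partial B}$ is a continuous, weakly monotone parametrization of $\Gamma$ satisfying the three-point condition. For this I would invoke Lemma~\ref{lem:compact traces}, which guarantees that the traces $v_n|_{\partial B}$ are equicontinuous, so after passing to a further subsequence they converge uniformly on $\partial B$ to $v|_{\partial B}$. Writing $v_n(e^{i\vartheta})=\widehat\gamma(\varphi_n(\vartheta))$ with $\varphi_n\in\mathcal{T}^\ast(\Gamma)$, the uniform convergence of $v_n|_{\partial B}$ together with the fact that $\widehat\gamma$ is a local $C^1$-diffeomorphism allows us to extract a uniform limit $\varphi_n\to\varphi$ (after normalizing by a fixed $2\pi$-shift). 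The properties of being non-decreasing, satisfying the periodicity $\varphi(\cdot+2\pi)=\varphi+2\pi$, and the three-point condition $\widehat\gamma(\varphi(\Theta_k))=Q_k$ are all stable under uniform convergence, so $\varphi\in\mathcal{T}^\ast(\Gamma)$ and consequently $v\in\Ss$. Combined with $v=u$ on $B\setminus B_R^+(x_o)$, this yields $v\in\mathcal{S}_u^\ast(\Gamma)$.

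Finally, the weak lower semicontinuity of the Dirichlet energy with respect to weak $W^{1,2}$-convergence gives
\begin{equation*}
    \mathbf{D}(v)\le\liminf_{n\to\infty}\mathbf{D}(v_n)=\inf_{w\in\mathcal{S}_u^\ast(\Gamma)}\mathbf{D}(w),
\end{equation*}
so $v$ is the desired minimizer. The only conceptually nontrivial step is the verification that the weakly monotone parametrization property of the boundary trace passes to the limit, which is precisely the content of Lemma~\ref{lem:compact traces} and the closedness of $\mathcal{T}^\ast(\Gamma)$ under uniform convergence.
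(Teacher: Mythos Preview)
Your argument is correct and follows essentially the same direct-method approach as the paper: take a minimizing sequence, extract a weak $W^{1,2}$-limit, use Lemma~\ref{lem:compact traces} to get uniform convergence of the boundary traces and hence membership of the limit in $\mathcal{S}_u^\ast(\Gamma)$, and conclude by lower semicontinuity of $\mathbf{D}$. Your treatment of the boundary step via the convergence $\varphi_n\to\varphi$ in $\mathcal{T}^\ast(\Gamma)$ is in fact more detailed than the paper's, which simply asserts that uniform trace convergence yields $v\in\mathcal{S}_u^\ast(\Gamma)$.
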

 
\begin{proof}
We choose a minimizing sequence $v_k\in \mathcal{S}^\ast_u(\Gamma)$
for $\mathbf{D}$. Since the boundary traces of the $v_k$ are
contained in the compact set $\Gamma$, the $W^{1,2}$-norms of $v_k$
are uniformly bounded. Therefore, we can assume $v_k\wto v$ in
$W^{1,2}(B,\R^3)$ and almost everywhere, as
$k\to\infty$. Furthermore, Lemma~\ref{lem:compact traces} implies
uniform convergence  $v_k|_{\partial B}\to v|_{\partial B}$ of the
boundary traces. From this we deduce that the limit map again satisfies $v\in\mathcal{S}^\ast_u(\Gamma)$. The lower
semicontinuity of the Dirichlet energy $\mathbf{D}$ with
respect to weak $W^{1,2}$-convergence then yields
the claim.
\end{proof}

The main result of this section are the following $W^{2,2}$-estimates
for solutions of the comparison problem. 

\begin{lemma}\label{lem:W22-comparison}
For  a map $u\in\Ss$, a center $x_o\in\partial B$ and a radius
$R\in(0,\frac12)$ with $B_R^+(x_o)\cap\{P_1,P_2,P_3\}=\varnothing$, we
consider a minimizer $v$ of the 
Dirichlet energy in the class $\mathcal{S}^\ast_u(\Gamma)$ with
$\int_{B_R^+(x_o)}|Du|^2\dx\le E_o$ for some constant $E_o\ge1$. There
is a constant $\eps_2=\eps_2(\delta,M,\{Q_i\})>0$ such that the
smallness condition
\begin{equation*}
    \osc_{B_R^+(x_o)}v\le \eps_2
\end{equation*}
implies $v\in W^{2,2}(B_{R/4}^+(x_o),\R^3)$, and for some
constant $C=C(\Gamma,E_o)$, we have the quantitative estimate
\begin{equation*}
    \int_{B_{R/4}^+(x_o)}|D^2v|^2\dx
    \le 
    \frac C{R^2}\int_{B_{R/2}^+(x_o)}|Dv|^2\dx.
\end{equation*}
\end{lemma}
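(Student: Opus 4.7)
\bigskip

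\noindent\textbf{Proof plan.} The strategy is to verify the three hypotheses of Theorem~\ref{thm:apriori-W22-bdry}(i) and then invoke that theorem directly. We must establish: (a) the Plateau-type boundary inequality \eqref{local-plateau} on $B_R^+(x_o)$ with $F\equiv0$; (b) the smallness of oscillation $\osc_{B_R^+(x_o)}v\le\eps_1$; and (c) the Morrey growth \eqref{Morrey-assumption} for $|Dv|^2$ with exponents $\alpha$ and $C_M$ that may depend on $\Gamma$ and $E_o$. Condition (a) is provided at once by Lemma~\ref{lem:harmonic_plateau}, since $v$ is an $\mathbf{D}$-minimizer in $\mathcal{S}_u^\ast(\Gamma)$ and hence is harmonic in $B_R^+(x_o)$ and satisfies the weak Neumann-Plateau condition on $I_R(x_o)$; in particular \eqref{local-plateau} holds with $F\equiv0$. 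Condition (b) is secured by choosing $\eps_2\in(0,\eps_1)$ and furthermore $\eps_2\le\rho_o(\Gamma)$ from Lemma~\ref{lem:extension}.

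The core of the proof is condition (c), i.e. establishing a Campanato-type decay
\[
\int_{B_r^+(y)}|Dv|^2\dx\le C_M\Big(\frac{r}{R}\Big)^{2\alpha}
\qquad\mbox{for all }y\in B_{R/2}^+(x_o),\ 0<r<R/2.
\]
For points $y$ whose disk $B_r(y)$ is fully contained in the interior, harmonicity of $v$ on $B_R^+(x_o)$ yields the classical decay $\int_{B_r(y)}|Dv|^2\dx\le(r/\rho)^2\int_{B_\rho(y)}|Dv|^2\dx$ by the mean value property, so the Morrey bound holds trivially. Near the lateral boundary the argument is more delicate, and is where the chord-arc condition $(\delta,M)$ for $\Gamma$ and the dependence $\eps_2=\eps_2(\delta,M,\{Q_i\})$ enter. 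Using the smallness assumption $\osc_{B_R^+(x_o)}v\le\eps_2\le\rho_o$, Lemma~\ref{lem:extension} provides a scalar lift $\Phi\in W^{1,2}(B_R^+(x_o))$ satisfying $\widehat\gamma\circ\Phi=v$ on $I_R(x_o)$ with $|D\Phi|\le C|Dv|$. For $y\in I_R(x_o)\cup B_{R/2}^+(x_o)$ and $r\in(0,R/2)$ we construct a competitor $\widetilde v\in\mathcal{S}_u^\ast(\Gamma)$ that agrees with $v$ outside $B_r^+(y)$ and, inside $B_r^+(y)$, is a harmonic replacement respecting the Plateau constraint: on $I_r(y)$ we set $\widetilde v=\widehat\gamma\circ H$, where $H$ is the harmonic extension of the boundary trace $\Phi|_{\partial B_r^+(y)\cap B}$ suitably prescribed on $S_r^+(y)$. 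The chord-arc condition guarantees that $H$ still parametrizes $v$ monotonically along $I_R(x_o)$, so $\widetilde v\in\mathcal{S}_u^\ast(\Gamma)$ is admissible; the minimality $\mathbf{D}(v)\le\mathbf{D}(\widetilde v)$ combined with standard bounds for the harmonic replacement (Morrey's hole-filling trick) then yields, for some $\theta=\theta(\delta,M,\{Q_i\})<1$,
\[
\int_{B_{r/2}^+(y)}|Dv|^2\dx\le\theta\int_{B_r^+(y)}|Dv|^2\dx.
\]
Iterating this inequality in the familiar way produces the decay with some $\alpha=\alpha(\delta,M)\in(0,1)$ and $C_M=C_M(\Gamma,E_o)\ge1$, i.e. condition (c).

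With all three hypotheses verified, Theorem~\ref{thm:apriori-W22-bdry}(i) applies and yields $v\in W^{2,2}(B_{R/4}^+(x_o),\R^3)$ together with
\[
\int_{B_{R/4}^+(x_o)}|D^2v|^2\dx
\le C\,C_M^{1/\alpha}\,\frac{1}{R^2}\int_{B_{R/2}^+(x_o)}|Dv|^2\dx,
\]
which is the claim with $C=C(\Gamma,E_o)$ after absorbing the factor $C_M^{1/\alpha}$ into the constant. The main obstacle is the construction of the admissible harmonic competitor $\widetilde v$ near the boundary: one must ensure simultaneously that the boundary trace of $\widetilde v$ lies on $\Gamma$, parametrizes $\Gamma$ weakly monotonically, and has Dirichlet energy controlled by that of $v$ in a way that permits hole-filling. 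This is precisely the step where the chord-arc condition and the smallness of oscillation combine, and it determines the dependence of $\eps_2$ on $(\delta,M,\{Q_i\})$.
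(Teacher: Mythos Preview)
Your reduction to Theorem~\ref{thm:apriori-W22-bdry}(i) is correct and matches the paper, and conditions~(a) and~(b) are handled fine. The gap is in your verification of the Morrey decay~(c). Your competitor $\widetilde v$ is not well specified: the extension $\Phi$ from Lemma~\ref{lem:extension} satisfies $\widehat\gamma\circ\Phi=v$ only on $I_R(x_o)$, not on $S_r^+(y)$, so ``$\widehat\gamma\circ H$'' cannot match $v$ on $S_r^+(y)$ and you do not say what $\widetilde v$ is in the interior of $B_r^+(y)$. More seriously, hole-filling requires a competitor whose energy on $B_r^+(y)$ is controlled by the \emph{annulus} energy $\int_{B_r^+(y)\setminus B_{r/2}^+(y)}|Dv|^2\dx$; neither a Plateau-harmonic replacement (which would simply recover $v$ itself, since $v$ already minimizes) nor the map you describe comes with such a bound. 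The chord-arc condition does not furnish monotonicity of a harmonic extension; that is not how it is used.

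The paper obtains the Morrey decay by the classical Morrey/Courant--Lebesgue differential inequality rather than hole-filling. For $y\in I_{R/2}(x_o)$ set $\Phi(r)=\mathbf{D}_{B_r^+(y)}(v)$. For a.e.\ $r$ the boundary trace on $S_r^+(y)$ is continuous and $|v(x_r)-v(y_r)|^2\le 2\pi r\Phi'(r)$ for the endpoints $x_r,y_r\in\partial B$. The chord-arc condition~\eqref{chord-arc} together with the choice $\eps_2\le\min\{\eps_1,\delta,\rho_1/M\}$ (where any ball of radius $\rho_1$ contains at most one $Q_i$) forces the short sub-arc $\Gamma_r$ joining $v(x_r),v(y_r)$ to coincide with $v(I_r(y))$ and satisfy $L(\Gamma_r)^2\le 2\pi M^2 r\Phi'(r)$. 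The competitor is then the harmonic map on $B_r^+(y)$ with boundary values $v$ on $S_r^+(y)$ and the \emph{arc-length reparametrization} of $\Gamma_r$ on $I_r(y)$; a cone comparison bounds its energy by $Cr\Phi'(r)$. Minimality of $v$ gives $\Phi(r)\le m\,r\Phi'(r)$, which integrates to $\Phi(\rho)\le C(\rho/R)^{2\alpha}\Phi(R/2)$ with $\alpha=\frac{1}{2m}$. Interior points are handled by the mean-value inequality for the harmonic map $v$ and a standard interpolation with the boundary estimate. This yields~\eqref{Morrey-assumption} with $C_M\le C\int_{B_R^+(x_o)}|Dv|^2\dx\le CE_o$, and Theorem~\ref{thm:apriori-W22-bdry}(i) then gives the claim.
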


\begin{proof}
Since the minimizer $v$ satisfies the Euler-Lagrange
equations~(\ref{Harmonic_Plateau}), the claim follows from
Theorem~\ref{thm:apriori-W22-bdry}\,(i) as soon as we have
established the Morrey-type bound~(\ref{Morrey-assumption}). 
To this end, we choose a radius $\rho_1=\rho_1(\Gamma)>0$ so small that every ball
$B^3_{\rho_1}(p)\subset\R^3$ contains at most one of the points
$Q_1,Q_2,Q_3$. Then we define
\begin{equation}\label{eps-choice}
    \eps_2:=\min\big\{\eps_1,\delta, \tfrac{\rho_1}M\big\}
\end{equation}
with the constant $\eps_1$ from Theorem~\ref{thm:apriori-W22-bdry}\,(i)
and the parameters $\delta>0$ and $M\ge1$ from the chord-arc
condition \eqref{chord-arc} of $\Gamma$. For a fixed $y\in I_{R/2}(x_o)
\subset\partial B$ we consider the function
\begin{equation*}
    \big[0, \tfrac R2\big]\ni r\mapsto \Phi(r)
    :=
    {\bf D}_{B_r^+(y)}(v)=\tfrac12 \int_0^r\int_{S_\rho^+(y)}
    \Big[\Big| \frac{\partial v}{\partial\rho}\Big|^2
    +\Big|\frac{\partial v}{\partial\omega}\Big|^2\Big]\, d\H^1\omega\, d\rho,
\end{equation*}
where $S_\rho^+(y)= \partial B_\rho(y)\cap\overline B$ and
$\H^1$ denotes the $1$-dimensional Hausdorff measure on $\R^2$.
Since the function $\Phi$ is absolutely continuous, we know that for
almost every $r\in [0,\frac R2]$ there holds
\begin{equation}\label{Psi}
    \Psi(r)
    :=
    \frac r2 
    \int_{S_r^+(y)}\Big|\frac{\partial v}{\partial\omega}
    \Big|^2d\H^1\omega \le r\Phi'(r).
\end{equation}
From now on we consider only such $r$ for which \eqref{Psi} holds,
so that the minimizer $v$ is continuous on $S_r^+(y)$ by the
Sobolev embedding theorem. Writing
$\{x_r,y_r\}:=S_r^+(y)\cap\partial B$, we can thereby estimate
\begin{equation}\label{osc-u-r}
    |v(x_r)-v(y_r)|^2
    \le 
    \bigg(\int_{S_r^+(x_o)}
    \Big|\frac{\partial v}{\partial\omega}\Big|
    \,d\H^1\omega\bigg)^2\le 
    2\pi\Psi(r)\le 2\pi r\Phi'(r).   
\end{equation}
Since $\osc_{B_R^+(x_o)} v\le\eps_2\le\delta$, the chord-arc condition
\eqref{chord-arc} implies the existence of a sub-arc
$\Gamma_r\subset\Gamma$ connecting the points $v(x_r)$ and $v(y_r)$ with 
\begin{equation}\label{length-bdry}
    L(\Gamma_r)
    \le 
    M|v(x_r)-v(y_r)|\le M\eps_2\le\rho_1,
\end{equation}
where we used the choice of $\eps_2$ in the last step. 
From the choice of $\rho_1$ we thereby infer that this sub-arc
$\Gamma_r$ contains at most one of the points $Q_1,Q_2,Q_3$, which
implies $\Gamma_r=v(I_r(y))$. Indeed, if this was not the case, the
sub-arc $\Gamma_r=v(\partial B\setminus I_r(y))$ would contain all
three of the points $Q_1,Q_2,Q_3$, which is a contradiction.
Combining~(\ref{length-bdry}) and~(\ref{osc-u-r}), we
thereby deduce
\begin{equation}\label{length-bdry2}
    L^2(v(I_r(y))
    \le M^2|v(x_r)-v(y_r)|^2\le 2\pi M^2 r\Phi'(r).
\end{equation}
Our next goal is to estimate ${\bf D}_{B_r^+(y)}(v)$ by
constructing a suitable comparison map. To this end, we define
$c_r\colon I_r(y)\to\Gamma_r$ as the orientation preserving parametrization of the sub-arc
$\Gamma_r=v(I_r(y))$ proportionally to arc length.  From this
choice of the parametrization, we infer
\begin{equation}\label{energy-bdry}
    r\int_{I_r(y)}|c_r'|^2\,d\H^1
    = 
    \frac {r\,L^2(c_r)}{L(I_r(y))}
    \le L^2(v(I_r(y)))
    \le 2\pi M^2r\Phi'(r),
\end{equation}
where we employed \eqref{length-bdry2} in the last step.  Next we
define Dirichlet boundary values on $\partial B_r^+(y)=
S_r^+(y)\cup I_r(y)$ by
\begin{equation*}
    g(x)
    :=
    \begin{cases}
      v(x)&\mbox{for }x\in S_r^+(y),\\
      c_r(x)&\mbox{for }x\in I_r(y),
    \end{cases}
\end{equation*}
and define $w\in W^{1,2}(B_r^+(y),\R^3)$ as the minimizer of the Dirichlet
energy with boundary data $g$. By comparing $w$ 
with a suitable cone with the same boundary values $g$, we infer 
\begin{align}\label{energy-w}
    {\bf D}_{B_r^+(y)}(w) 
    &\le 
    Cr\int_{\partial B_r^+(y)}
    \Big|\frac{\partial g}{\partial\omega}\Big|^2\,
    d\H^1\omega
    \\\nonumber 
    &= 
    Cr\int_{S_r^+(y)}
    \Big|\frac{\partial v}{\partial\omega}\Big|^2\, d\H^1\omega
    +Cr\int_{I_r(y)}|c_r'|^2\,d\H^1\omega 
    \\\nonumber 
    &\le 
    C\Psi(r)+
    CM^2 r\Phi'(r)\le CM^2r\Phi'(r),
\end{align}
where we used the definition of $\Psi$ and the estimates
\eqref{energy-bdry} and \eqref{Psi} in the last two steps. 
On the other hand, extending $w$ by $v$ outside of $B_r^+(y)$, we
get an admissible comparison map for $v$, so that the minimizing
property of $v$ implies
\begin{equation*}
    \Phi(r)
    =
    {\bf D}_{B_r^+(y)}(v)\le {\bf D}_{B_r^+(y)}(w)\le mr\Phi'(r)
\end{equation*}
with a constant $m=m(\Gamma)$, or equivalently
\begin{align*}
    \tfrac{d}{d\tau }\big(\tau^{-\frac1{m}}\Phi (\tau )\big)
    =
    \tfrac{1}{m}\tau ^{-\frac{1}{m}-1}\big[m\tau \Phi'(\tau
    )-\Phi (\tau )\big] \ge 0.
\end{align*}
Integrating over $[\rho ,\frac R2]$ and abbreviating
$\alpha:=\frac1{2m}$, we arrive at
\begin{equation}\label{Morrey-bdry}
    \rho^{-2\alpha}\int_{B_\rho^+(y)}|Dv|^2\dx 
    \le
    \big(\tfrac R2\big)^{-2\alpha}\Phi(\tfrac R2)
    \le CR^{-2\alpha}\int_{B_R^+(x_o)}|Dv|^2\dx.
\end{equation}
This is the desired Morrey estimate for points $y\in
I_{R/2}(x_o)\subset\partial B$ and radii $\rho\in (0,\frac{ R}2]$. Now we consider an arbitrary point $y\in B_{R/2}^+(x_o)$ and arbitrary radii $\rho\in(0,\frac R4]$. 
We abbreviate $R_y:=1-|y|\in(0,\frac R2]$ and $y':=\frac y{|y|}$ and
distinguish between the cases $0<\rho< R_y$ and $\rho\ge R_y$. In the
latter case, there holds $B_\rho^+(y)\subset B_{2\rho}^+(y')$ and
therefore, we deduce from~(\ref{Morrey-bdry}) 
\begin{equation}\label{Morrey-near-bdry}
    \rho^{-2\alpha}\int_{B_\rho^+(y)}|Dv|^2\dx 
    \le
    \rho^{-2\alpha}\int_{B_{2\rho}^+(y')}|Dv|^2\dx 
    \le
    CR^{-2\alpha}\int_{B_R^+(x_o)}|Dv|^2\dx.
\end{equation}
We turn our attention to the remaining case $\rho< R_y$. First, for
$\rho\le\frac12R_y$ we use the mean value property of
harmonic maps with the result 
\begin{align*}
    \int_{B_\rho(y)}|Dv|^2\dx
     &
     \le 
     C\rho^2\|Dv\|_{L^\infty(B_{R_y/2}(y))}^2
     \le 
     C\Big(\frac\rho {R_y}\Big)^2\int_{B_{R_y}(y)}|Dv|^2\dx.
\end{align*}
For $\rho\in(\frac12 R_y,R_y)$, the same estimate holds
trivially. Combining this with~(\ref{Morrey-near-bdry}) for
$\rho=R_y$, we deduce 
\begin{align*}
    \int_{B_\rho(y)}|Dv|^2\dx
     \le 
     C\Big(\frac\rho {R_y}\Big)^2\Big(\frac
     {R_y}R\Big)^{2\alpha}\int_{B_R^+(x_o)}|Dv|^2\dx
     \le
     C\Big(\frac\rho {R}\Big)^{2\alpha}
     \int_{B_R^+(x_o)}|Dv|^2\dx,
\end{align*}
where we used $\rho< R_y$ in the last step. Summarizing, for
every $y\in B_{R/2}^+(x_o)$ and every $\rho\in(0,\frac R4]$ we infer
the bound 
\begin{equation*}
    \rho^{-2\alpha}\int_{B_\rho^+(y)}|Dv|^2\dx
    \le
    CR^{-2\alpha}\int_{B_R^+(x_o)}|Dv|^2\dx,
\end{equation*}
and for $\rho\in(\frac R4,\frac R2]$, it holds trivially. This
completes the proof of the desired Morrey
estimate~(\ref{Morrey-assumption}) with
$C_M=C\int_{B_R^+(x_o)}|Dv|^2\dx\le CE_o$ and therefore,
Theorem~\ref{thm:apriori-W22-bdry}\;(i) yields the claimed $W^{2,2}$-estimate.
\end{proof}

\subsection{$W^{1,4}$-regularity for solutions}
We start with a comparison estimate for two solutions
of~(\ref{local-plateau}).
\begin{lemma}\label{lem-Comp-Est}
We consider a disk $B_r^+(x_o)$ with $x_o\in\partial B$  and 
$F_1,F_2\in L^1(B_r^+(x_o),\R^3)$.
Assume that $u_k\in\Ss$, $k=1,2$, are solutions to 
\begin{equation}\label{local-plateau-II}
   \int_{B_r^+(x_o)}Du_k\cdot Dw\dx+ 
   \int_{B_r^+(x_o)}F_k\cdot w\dx		
   \ge
   0
\end{equation}
for all $w\in T_{u_k}\mathcal{S}^\ast$ 
with $w=0$ on $S_r^+(x_o)$ in the sense of traces.
Moreover, we assume $u_1=u_2$ on $S_r^+(x_o)$ in the trace sense
and
\begin{equation}\label{small-image}
    u_k(B_r^+(x_o))\subset B_{\rho_o}(p_o),\quad \mbox{$k=1,2$, for some $\,p_o\in\Gamma$,}
\end{equation}
where $\rho_o=\rho_o(\Gamma)$ is the radius determined in
Lemma~\ref{lem:extension}.
Then, with a universal constant $C=C(\Gamma)$ the
following comparison estimate  holds true:
\begin{align*}
   \int_{B_r^+(x_o)}&|Du_1-Du_2|^2\dx\\
   &
   \le 
   C\|u_1-u_2\|_{L^\infty(B_r^+(x_o))}
   \int_{B_r^+(x_o)}|F_1|+|F_2|+|Du_1|^2+|Du_2|^2\dx.
\end{align*}
\end{lemma}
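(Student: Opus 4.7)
The strategy is to test the two variational inequalities \eqref{local-plateau-II} against suitably modified versions of the difference $u_2-u_1$, which is the natural candidate but does not itself lie in either Plateau tangent cone $T_{u_k}\mathcal S^\ast$. I would first invoke Lemma~\ref{lem:extension}, which by \eqref{small-image} yields functions $\Phi_1,\Phi_2\in W^{1,2}(B_r^+(x_o))$ with $\widehat\gamma\circ\Phi_k=u_k$ on $I_r(x_o)$ and satisfying the pointwise bounds $|\Phi_1-\Phi_2|\le C|u_1-u_2|$, $|D\Phi_k|\le C|Du_k|$ and $|D\Phi_1-D\Phi_2|\le C|Du_1-Du_2|+C|u_1-u_2||Du_1|$. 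Imitating the construction of the proof of Theorem~\ref{thm:apriori-W22-bdry}, I would introduce the second order Taylor remainders
\begin{equation*}
  g_+:=\int_{\Phi_1}^{\Phi_2}\!\!\int_{\Phi_1}^t\widehat\gamma''(s)\,ds\,dt,\qquad
  g_-:=\int_{\Phi_2}^{\Phi_1}\!\!\int_{\Phi_2}^t\widehat\gamma''(s)\,ds\,dt
\end{equation*}
on $B_r^+(x_o)$. Using $\widehat\gamma\in C^3$ together with the estimates for $\Phi_k$, one obtains, exactly as in \eqref{g+BoundI}--\eqref{g-BoundII}, the bounds $|g_\pm|\le C|u_1-u_2|^2$ and $|Dg_\pm|\le C|u_1-u_2|^2|Du_k|+C|u_1-u_2|\,|Du_1-Du_2|$ with $C=C(\Gamma)$.

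The core admissibility claim is that $w_+:=(u_2-u_1)-g_+$, extended by zero to $B\setminus B_r^+(x_o)$, belongs to $T_{u_1}\mathcal S^\ast$ and vanishes on $S_r^+(x_o)$. Since $u_1=u_2$ on $S_r^+(x_o)$ implies $\Phi_1=\Phi_2$ there, both summands of $w_+$ vanish on $S_r^+(x_o)$, so the zero extension lies in $W^{1,2}\cap L^\infty(B,\R^3)$. On $I_r(x_o)$ the telescoping identity gives $w_+=\widehat\gamma'(\Phi_1)(\Phi_2-\Phi_1)=\widehat\gamma'(\varphi_1)(\varphi_2-\varphi_1)$, which is of the form $\widehat\gamma'(\varphi_1)(\psi-\varphi_1)$ for
\begin{equation*}
  \psi:=\begin{cases}\varphi_2 & \mbox{on }I_r(x_o),\\ \varphi_1 & \mbox{on }\partial B\setminus I_r(x_o),\end{cases}
\end{equation*}
where $\varphi_k\in\mathcal T^\ast(\Gamma)$ parametrizes $u_k$ on $\partial B$. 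Continuity of $\psi$ at the two endpoints of $I_r(x_o)$ (which lie on $S_r^+(x_o)\cap\partial B$) holds because $u_1=u_2$ there forces $\varphi_1=\varphi_2$ at those points; weak monotonicity is then immediate, and the $W^{1/2,2}$-regularity, the periodicity and the three-point condition are inherited from $\varphi_1,\varphi_2$. Hence $\psi\in\mathcal T^\ast(\Gamma)$ and $w_+\in T_{u_1}\mathcal S^\ast$. By symmetry, $w_-:=(u_1-u_2)-g_-\in T_{u_2}\mathcal S^\ast$ with $w_-=0$ on $S_r^+(x_o)$.

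Testing \eqref{local-plateau-II} for $k=1$ against $w_+$ and for $k=2$ against $w_-$ and adding, the leading terms combine into $-\int|Du_1-Du_2|^2\dx$, yielding
\begin{align*}
  \int_{B_r^+(x_o)}|Du_1-Du_2|^2\dx
  &\le\int(F_1-F_2)\cdot(u_2-u_1)\dx-\int(F_1\cdot g_++F_2\cdot g_-)\dx\\
  &\quad-\int\bigl(Du_1\cdot Dg_++Du_2\cdot Dg_-\bigr)\dx.
\end{align*}
Using $|g_\pm|\le C|u_1-u_2|^2\le C\|u_1-u_2\|_{L^\infty}\cdot|u_1-u_2|$, the first two contributions are bounded by $C\|u_1-u_2\|_{L^\infty}\int(|F_1|+|F_2|)\dx$. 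For the gradient term, inserting the pointwise bound on $|Dg_\pm|$ and splitting $|Du_k|\,|Du_1-Du_2|$ by Young's inequality yields
\begin{equation*}
  \int(|Du_1|\,|Dg_+|+|Du_2|\,|Dg_-|)\dx\le C\|u_1-u_2\|_{L^\infty}\!\!\int\bigl(|Du_1|^2+|Du_2|^2+|Du_1-Du_2|^2\bigr)\dx.
\end{equation*}

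The only remaining issue is reabsorption of the $\|u_1-u_2\|_{L^\infty}\int|Du_1-Du_2|^2\dx$ term; this is the main (modest) obstacle. By diminishing the universal radius $\rho_o=\rho_o(\Gamma)$ of Lemma~\ref{lem:extension} if necessary so that $2C\rho_o\le\tfrac12$, the bound \eqref{small-image} gives $\|u_1-u_2\|_{L^\infty}\le 2\rho_o$ and the term can be moved to the left, completing the proof. In the remaining regime $\|u_1-u_2\|_{L^\infty}>\tfrac1{4C}$ the asserted inequality is trivial, since $\int|Du_1-Du_2|^2\dx\le 2\int(|Du_1|^2+|Du_2|^2)\dx\le 8C\|u_1-u_2\|_{L^\infty}\int(|Du_1|^2+|Du_2|^2)\dx$. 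The resulting constant depends only on $\Gamma$, as claimed.
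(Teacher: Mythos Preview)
Your argument is correct and follows the same strategy as the paper's proof: construct the Taylor-corrected test functions $w_\pm=(u_{2,1}-u_{1,2})-g_\pm$, verify they lie in the tangent cones, test and add. Your verification that the piecewise $\psi$ lies in $\mathcal T^\ast(\Gamma)$ is in fact more carefully spelled out than in the paper.

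The one genuine difference is in how you estimate $|Dg_\pm|$. You imitate \eqref{g+BoundII} and obtain a bound containing a term $|u_1-u_2|\,|Du_1-Du_2|$, which after multiplication by $|Du_k|$ and Young's inequality forces the reabsorption step at the end. The paper instead rearranges the derivative of $g_k=\widehat\gamma(\Phi_2)-\widehat\gamma(\Phi_1)-\widehat\gamma'(\Phi_1)(\Phi_2-\Phi_1)$ as
\[
  Dg_k=\big(\widehat\gamma'(\Phi_2)-\widehat\gamma'(\Phi_1)\big)D\Phi_2-\widehat\gamma''(\Phi_1)(\Phi_2-\Phi_1)D\Phi_1,
\]
which yields directly $|Dg_k|\le C(|D\Phi_1|+|D\Phi_2|)|\Phi_1-\Phi_2|\le C(|Du_1|+|Du_2|)|u_1-u_2|$. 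With this sharper bound the term $|Du_1-Du_2|$ never appears on the right-hand side, and the conclusion follows without any reabsorption or case distinction on $\|u_1-u_2\|_{L^\infty}$. Both routes are valid; the paper's is cleaner and gives the constant depending only on $\Gamma$ without having to adjust $\rho_o$.
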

\begin{proof}
Lemma~\ref{lem:extension} guarantees the existence of maps
$\Phi_1,\Phi_2\in W^{1,2}(B_r^+(x_o),\R)$ with
\begin{equation}
    \label{Phi-bounds2}
    \left\{
    \begin{array}{l}
    |\Phi_1-\Phi_2|\le C|u_1-u_2|,\\[0.7ex]
    |D\Phi_1|+|D\Phi_2|\le C(|Du_1|+|Du_2|)\\[0.7ex]
	 \end{array}
    \right.
\end{equation}
a.e. on $B_r^+(x_o)$ with a constant $C=C(\Gamma)$, 
and 
\begin{equation*}
     \widehat\gamma\circ \Phi_k=u_k,
     \quad\mbox{on $I_r(x_o)$ for $k=1,2$.}  
\end{equation*}
Since $u_k\in\Ss$, we can find maps $\varphi_k\in
\mathcal{T}^\ast(\Gamma)$ with
\begin{equation*}
    \varphi_k(\vartheta)
    =
    \Phi_k(e^{i\vartheta})\quad\mbox{provided
    $e^{i\vartheta}\in I_r(x_o)$. }
\end{equation*}
We define a testing function by
\begin{equation*}
    w_1
    := 
    u_2-u_1
    -
    \int_{\Phi_1}^{\Phi_2}\int_{\Phi_1}^t\widehat\gamma''(s)\,ds\,dt
    =:
    u_2-u_1- g_1.
\end{equation*}
In order to check that this function is admissible in the inequality
for $u_1$, we calculate the
boundary values of $w$ in  points $e^{i\vartheta}\in I_r(x_o)$ by
\begin{align*}
    w_1(e^{i\vartheta})
    &=
    \widehat\gamma(\varphi_2(\vartheta))
    -
    \widehat\gamma(\varphi_1(\vartheta))
    -
    \int_{\varphi_1(\vartheta)}^{\varphi_2(\vartheta)}
    \int_{\varphi_1(\vartheta)}^t\widehat\gamma''(s)\,ds\,dt\\
    &=
    \widehat\gamma'(\varphi_1(\vartheta))
    (\varphi_2(\vartheta)-\varphi_1(\vartheta)).
\end{align*}
Since $\varphi_2\in\mathcal{T}^\ast(\Gamma)$, this implies that
$w_1\in T_{u_1}\mathcal{S}^\ast$. 
Moreover, from~(\ref{Phi-bounds2}) and $u_1=u_2$ on $S_r^+(x_o)$ 
we infer that also $\Phi_1=\Phi_2$ on $S_r^+(x_o)$, in
the sense of traces.
By definition, we thus have $w_1=0$ on $S_r^+(x_o)$ in the trace
sense. 
Therefore, $w_1=u_2-u_1-g_1$ is an admissible testing function
in the inequality~(\ref{local-plateau-II}) for $u_1$. This provides
us with the estimate
\begin{equation}\label{plateau-u1}
   \int_{B_r^+(x_o)}Du_1\cdot D(u_1-u_2)\dx
   \le
   \int_{B_r^+(x_o)}F_1\cdot (u_2-u_1-g_1)-Du_1\cdot Dg_1\dx.
\end{equation}
Similarly as above, one checks that
\begin{equation*}
    w_2
    := 
    u_1-u_2
    -
    \int_{\Phi_2}^{\Phi_1}\int_{\Phi_2}^t\hat\gamma''(s)\,ds\,dt
    =:
    u_1-u_2- g_2
\end{equation*}
is an admissible testing function in the
inequality~(\ref{local-plateau-II}) for $u_2$. This implies 
\begin{equation}\label{plateau-u2}
   -\int_{B_r^+(x_o)}Du_2\cdot D(u_1-u_2)\dx
   \le 
   \int_{B_r^+(x_o)}F_2\cdot (u_1-u_2-g_2)-Du_2\cdot Dg_2\dx.
\end{equation}
Adding the inequalities~(\ref{plateau-u1}) and~(\ref{plateau-u2}), we
arrive at
\begin{align}\label{CompI}
   &\int_{B_r^+(x_o)} |Du_1-Du_2|^2\dx\\\nonumber
   &\qquad\le \int_{B_r^+(x_o)}(F_1-F_2)\cdot (u_2-u_1) 
     -\sum_{k=1}^2(F_k\cdot g_k+Du_k\cdot Dg_k)\dx.
\end{align}
Next, we observe that the
definition of $g_k$ and the bounds~(\ref{Phi-bounds2}) imply 
for $k=1,2$ almost everywhere on $B_r^+(x_o)$ that
\begin{align*}
   |g_k|
   &\le C|\Phi_1-\Phi_2|^2\le C|u_1-u_2|^2,\\
   |Dg_k|
   &\le C(|D\Phi_1|+|D\Phi_2|)|\Phi_1-\Phi_2|
   \le C\big(|Du_1|+|Du_2|\big)
   |u_1-u_2|,
\end{align*}
holds true. Here $C=C(\Gamma)$. In particular
we have $|u_1-u_2|\le 2\rho_o=C(\Gamma)$. Using the
preceding bounds in~(\ref{CompI}), we obtain
 \begin{align*}
   \int_{B_r^+(x_o)}& |Du_1-Du_2|^2\dx\\
   &\le
   C\|u_1-u_2\|_{L^\infty(B_r^+(x_o))}
   \int_{B_r^+(x_o)}|F_1|+|F_2|+|Du_1|^2+|Du_2|^2\dx,
\end{align*}
and this establishes the claimed comparison estimate.
\end{proof}

We use the preceding comparison estimate in the following theorem for
the derivation of Calder\'on-Zygmund type estimates for the
gradient. For the proof, we use techniques going back to
Caffarelli and Peral \cite{CaffarelliPeral:1998}. Actually,
our proof is inspired by arguments of Acerbi and
Mingione \cite{Acerbi-Mingione:2007,Mingione:2007}. In the following
theorem, we are dealing both with the boundary case $x_o\in\partial B$
and the interior case $x_o\in B$.

\begin{theorem}\label{thm:Calderon-Zygmund}
Assume that $u\in\Ss$ satisfies~(\ref{local-plateau}) on
a half-disk $B_R^+(x_o)$ with $x_o\in \overline B$, $R\in(0,1)$ and
$B_R^+(x_o)\cap\{P_1,P_2,P_3\}=\varnothing$, under the
assumption~(\ref{F-growth}). We assume that $\mathbf{D}(u)\le E_o$ for some constant $E_o\ge1$. 
Then there is a constant $\eps_3=\eps_3(C_1,\Gamma,E_o)>0$ such that the smallness condition
\begin{equation*}
    \osc_{B_R^+(x_o)}u\le \eps_3
\end{equation*}
implies $u\in W^{1,4}(B_{R/4}^+(x_o),\R^3)$, with the
corresponding quantitative estimate 
\begin{equation*}
    \int_{B_{R/4}^+(x_o)}|Du|^4\dx
    \le  
    \frac C{R^2} 
    \bigg(\int_{B_{R/2}^+(x_o)}|Du|^2\dx\bigg)^2
    +
    C \int_{B_{R/2}^+	(x_o)}|f|^2\dx
\end{equation*}
for  a universal constant $C=C(C_1,\Gamma,E_o)$. 
\end{theorem}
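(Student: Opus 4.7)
The strategy follows the Caffarelli-Peral / Acerbi-Mingione line of argument: compare $u$ locally with its harmonic Plateau replacement, exploit the higher integrability of the replacement coming from Lemma~\ref{lem:W22-comparison}, and run a stopping-time argument on super-level sets of $|Du|^2$. For an arbitrary sub-disk $B_r^+(y)\subset B_{R/2}^+(x_o)$, Lemma~\ref{lem:exist-comparison} supplies a minimizer $v$ of the Dirichlet energy in $\mathcal{S}_u^\ast(\Gamma)$ relative to $B_r^+(y)$, so that $v\equiv u$ outside $B_r^+(y)$ and $v$ satisfies the homogeneous version~\eqref{Harmonic_Plateau} of~\eqref{local-plateau-II}.

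The first technical step is to verify that $v$ inherits the smallness of oscillation from $u$ and satisfies the image-smallness condition~\eqref{small-image}. After choosing $\eps_3 \le \tfrac12 \min\{\rho_o,\eps_2\}$, this follows from the chord-arc type argument already carried out in the proof of Lemma~\ref{lem:W22-comparison}, using $v=u$ on $S_r^+(y)$ and the $\mathbf{D}$-minimality of $v$. Lemma~\ref{lem:W22-comparison} then yields
\begin{equation*}
    \int_{B_{r/4}^+(y)} |D^2 v|^2 \dx
    \le \frac{C}{r^2}\int_{B_{r/2}^+(y)}|Dv|^2\dx
    \le \frac{C}{r^2}\int_{B_r^+(y)}|Du|^2\dx,
\end{equation*}
with $C=C(\Gamma,E_o)$. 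In dimension two the Sobolev embedding $W^{2,2}\hookrightarrow W^{1,q}$ for every $q<\infty$ converts this into a higher-integrability bound for $Dv$, whose appropriately rescaled form controls $\|Dv\|_{L^q(B_{r/8}^+(y))}$ by $r^{-1}\|Du\|_{L^2(B_r^+(y))}$.

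On the other hand, Lemma~\ref{lem-Comp-Est} applied to the pair $(u,v)$ with $F_1=F$ and $F_2=0$, combined with~\eqref{F-growth} and $\|u-v\|_{L^\infty}\le 2\eps_3$, furnishes the key comparison estimate
\begin{equation*}
    \int_{B_r^+(y)}|Du-Dv|^2\dx
    \le C\eps_3\int_{B_r^+(y)}\bigl(|Du|^2+|Dv|^2+|f|\bigr)\dx.
\end{equation*}
Splitting $|Du|^2\le 2|Dv|^2+2|Du-Dv|^2$ and combining with the preceding higher-integrability bound for $Dv$ produces a decay estimate for the local averages of $|Du|^2$ on dyadic sub-disks. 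At this point the standard Caffarelli-Peral stopping-time argument on the super-level sets $\{|Du|^2>\lambda\}\cap B_{R/4}^+(x_o)$, based on a Vitali covering and followed by integration over $\lambda$, yields $|Du|^2\in L^2$ with the announced quantitative estimate, where the factor $R^{-2}(\int|Du|^2)^2$ arises from the rescaled $L^\infty$-type bound for $Dv$ and the $\int|f|^2$-term is carried along through the level-set decomposition.

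The main obstacle is the critical growth $|F|\le C_1|Du|^2+C_1|f|$: the term $|Du|^2$ on the right-hand side lies precisely in the integrability class we are trying to improve, so a naive iteration is circular. The small-oscillation hypothesis $\osc_{B_R^+(x_o)} u \le \eps_3$ is what breaks this loop, producing the small prefactor $\eps_3$ in front of the critical term in the comparison estimate and allowing its absorption into the left-hand side of the iteration inequality, at the cost of $\eps_3$ depending on $C_1,\Gamma$ and $E_o$. A secondary subtlety is the propagation of the oscillation and image-smallness bounds from $u$ to $v$, which is needed in order to simultaneously invoke Lemmas~\ref{lem:W22-comparison} and~\ref{lem-Comp-Est}.
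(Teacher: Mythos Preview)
Your proposal is correct and follows essentially the same Acerbi--Mingione/Caffarelli--Peral scheme as the paper: construct the harmonic Plateau replacement via Lemma~\ref{lem:exist-comparison}, derive higher integrability for $Dv$ from Lemma~\ref{lem:W22-comparison}, use Lemma~\ref{lem-Comp-Est} together with the small-oscillation hypothesis to get a comparison estimate with a small prefactor, and then run the stopping-time/Vitali covering argument on super-level sets followed by integration in~$\lambda$. The only minor elaborations in the paper beyond your sketch are the explicit separation of interior balls (where the comparison map is simply harmonic with fixed boundary values) from boundary balls, and the justification that the replacement $v$ itself has image in $B_{\eps_3}(p_o)$---the paper invokes the convex hull property of Dirichlet minimizers rather than a chord-arc argument, but either works.
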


\begin{proof}
We shall later fix the constant $\eps_3>0$ such that 
$\eps_3\le\min\{\eps_2,\rho_o\}$ with the constant
$\eps_2$ from Lemma~\ref{lem:W22-comparison} and the radius
$\rho_o$ from Lemma~\ref{lem:extension}. In particular,
this implies 
\begin{equation}\label{small-image}
    u(B_R^+(x_o))\subset B_{\eps_3}^3(p_0)\subset B_{\rho_o}^3(p_0)
\end{equation}
for $p_o=u(x_o)\in\R^3$.

\textbf{Step 1: Covering of super-level sets.}
For every $r\in(0,\frac R2)$ and $\lambda>0$ we define super-level sets
\begin{equation*}
    E(r,\lambda)
    :=\left\{x\in B_r^+(x_o):\mbox{$x$ is a Lebesgue point
      of $Du$ with }|Du(x)|>\lambda\right\}.
\end{equation*}
We let
\begin{equation}\label{lambda0-choice}
    \lambda_o^2:= \mint_{B_{R/2}^+(x_o)}|Du|^2+ |f|\dx
\end{equation}
and fix two radii $s,t\in [\frac R4,\frac R2]$ with $s<t$. Then
we define
\begin{equation}\label{lambda1-choice}
    \lambda_1:=\frac{35 R}{t-s}\,\lambda_o\,.
\end{equation}
For a fixed $\lambda\ge\lambda_1$, we consider a point $x_1\in
E(s,\lambda)$. For any radius $r$ with $\tfrac1{70}(t-s)<r<t-s$, we can estimate by the definition of $\lambda_o$
\begin{equation*}
    \mint_{B_r^+(x_1)}|Du|^2+|f|\dx
    \le 
    \Big(\frac R{2r}\Big)^2 \lambda_o^2
    < \Big(\frac{35R}{t-s}\Big)^2 \lambda_o^2
    =\lambda_1^2\le\lambda^2.
\end{equation*}
On the other hand, since $x_1\in E(s,\lambda)$, the definition of
$E(s,\lambda)$ implies
\begin{equation*}
    \lim_{r\downarrow 0}\,\mint_{B_r^+(x_1)}|Du|^2+|f|\dx
    \ge |Du(x_1)|^2>\lambda^2.
\end{equation*}
The preceding two estimates and the absolute continuity of
the integral enable us to define $r_1\in
(0,\tfrac1{70}(t-s))$ (depending on $x_1$)
as the maximal radius with the property
\begin{equation}\label{Int=lambda}
    \mint_{B_{r_1}^+(x_1)}|Du|^2+|f|\dx=\lambda^2.
\end{equation}
The maximality of the radius implies in particular
\begin{equation}\label{Int<lambda}
    \mint_{B_r^+(x_1)}|Du|^2+|f|\dx
    < 
    \lambda^2\qquad\mbox{for all $r_1<r\le 70r_1$.}
\end{equation}
Proceeding in this way with every $x\in E(s,\lambda)$, we obtain a family of disks covering $E(s,\lambda)$, each of which
satisfies~(\ref{Int=lambda}) and~(\ref{Int<lambda}). By Vitali's
covering theorem, we may extract countably many, pairwise disjoint
disks $B_{k}^+:=B_{r_k}^+(x_k)$ with centers $x_k\in E(s,\lambda)$
and $0<r_k<\tfrac1{70}(t-s)$ for $k\in\N$, and with
\begin{equation*}
    E(s,\lambda)\subset \bigcup_{k\in\N} 5B_k^+\subset B_t^+(x_o). 
\end{equation*}
Here and in what follows, we
  use the notation $\sigma B_r^+(x)=B_{\sigma r}^+(x)$ for any
  $\sigma>0$. By the choice  of $B_k^+$, the
  formulae~(\ref{Int=lambda}) and~(\ref{Int<lambda}) are valid for
  each of the $B_k^+$, which means in particular that for each
  $k\in\N$ there holds
  \begin{equation}
    \label{Int_k=lambda}
    \mint_{B_k^+}|Du|^2+|f|\dx=\lambda^2
  \end{equation}
  and at the same time,
\begin{equation}\label{Int_k<lambda}
    \mint_{\sigma B_k^+}|Du|^2+|f|\dx<
    \lambda^2\qquad\mbox{for }\sigma\in\{5,10,70\}.
  \end{equation}

\textbf{Step 2: Comparison estimates.}
For each $k\in\N$, we distinguish whether we are in the \textbf{interior situation}
$10B_k\Subset B$ or in the \textbf{boundary situation}
$10\overline B_k\cap\partial B\neq\varnothing$. We first consider the
interior situation, in which $10B_k^+=10 B_k$. In this case, 
we choose the comparison map as the harmonic function with $w_k\in
u+W^{1,2}_0(10 B_k,\R^3)$. Since $w_k$ is harmonic and its boundary
values are contained in $B_{\eps_3}(p_o)$ by~(\ref{small-image}), the maximum principle implies 
$w_k(10 B_k)\subset B_{\eps_3}(p_o)$. Testing the equations $\Delta u=F$ and $\Delta w_k=0$ on
$10B_k$ with $w_k-u\in W^{1,2}_0(10 B_k,\R^3)$, we therefore infer the
\textbf{comparison estimate}
\begin{align}\label{Comp-interior}
  \mint_{10 B_k}&|Dw_k-Du|^2\dx\\
  &
  \le 
  2{\eps_3}\mint_{10 B_k}|F|\dx
  \le 
  2{\eps_3}\,
  C_1\,\mint_{10 B_k}|Du|^2+|f|\dx
  \le 
  C(C_1){\eps_3}\lambda^2,\nonumber
\end{align}
where we used assumption~(\ref{F-growth}) and~(\ref{Int_k<lambda}) for
the two last estimates. 
Furthermore, since $w_k$ is harmonic and therefore energy minimizing,
we have for every $q\in[1,\infty)$
\begin{align}\label{Hi-Int-interior}
  \bigg(\mint_{5B_k}|Dw_k|^{q}\dx\bigg)^{\frac 2q}
  &
  \le
  \sup_{5B_k} |Dw_k|^2
  \le
  C\mint_{10 B_k}|Dw_k|^2\dx \\
  &
  \le 
  C\mint_{10 B_k}|Du|^2\dx
  \le 
  C\lambda^2,\nonumber
\end{align}
where we used~(\ref{Int_k<lambda}) in the last step. 
Next, we turn our attention to the boundary case, in which there
exists a point $y_k\in 10 \overline B_k\cap\partial B$. Writing
$\widetilde B_k^+:=B_{r_k}^+(y_k)$, we have
\begin{equation*}
  5B_k^+\subset 15\Tilde B_k^+\subset 60\Tilde B_k^+\subset 70 B_k^+.
\end{equation*}
As comparison map on $60\Tilde B_k^+$ we choose a minimizer $w_k\in
W^{1,2}(B,\R^3)$ of the Dirichlet energy in the class
\begin{equation*}
   \big\{w\in\Ss\,:\, w=u\mbox{ on }B\setminus 60\Tilde B_k^+ \big\}.
\end{equation*}
This minimizer $w_k$ exists by Lemma~\ref{lem:exist-comparison} and
by Lemma~\ref{lem:harmonic_plateau}, it satisfies the differential
inequality~(\ref{local-plateau-II}) on $60\Tilde B_k^+$ with $F=0$.
Moreover, its image is contained in the ball $B_{\eps_3}(p_o)$ by the
convex hull property of the Dirichlet energy. We thus infer from
the Comparison Lemma~\ref{lem-Comp-Est} that 
\begin{align}\label{Comp-bdry}
  \mint_{5 B_k^+}|Dw_k-Du|^2\dx
  &
  \le C\mint_{60 \Tilde B_k^+}|Dw_k-Du|^2\dx\\\nonumber
  &
  \le C(\Gamma){\eps_3}\,
  \mint_{60 \Tilde B_k^+}|F|+|Du|^2+|Dw_k|^2\dx\\\nonumber
  &
  \le C(C_1,\Gamma){\eps_3}\,\mint_{60 \Tilde B_k^+}|f|+|Du|^2\dx
  \le C{\eps_3}\lambda^2,  
\end{align}
where in the last line, we used first the minimizing property of
$w_k$ and then the bound~(\ref{Int_k<lambda}) with $\sigma=70$
together with the inclusion $60 \Tilde B_k^+\subset 70
B_k^+$. Moreover, from Lemma~\ref{lem:W22-comparison}, applied on
$60\Tilde B_k^+$, we infer the
following bound for every $q\in[1,\infty)$.
\begin{align}\label{Hi-Int-bdry}
  \bigg(\mint_{5B_k^+}&|Dw_k|^q\dx\bigg)^{\frac 2q}
  \le
  C_q\bigg(\mint_{15\Tilde B_k^+}|Dw_k|^q\dx\bigg)^{\frac 2q}\\\nonumber
  &
  \le C_q\,\mint_{15\Tilde B_k^+}r_k^2|D^2w_k|^2+|Dw_k|^2\dx\\\nonumber
  &
  \le C_q(E_o,\Gamma)\mint_{30 \Tilde B_k^+}|Dw_k|^2\dx
  +
  C_q\mint_{15\Tilde B_k^+}|Dw_k|^2\dx\\\nonumber
  &
  \le C_q(E_o,\Gamma)\,\mint_{60\Tilde B_k^+}|Du|^2\dx
  \le C_q(E_o,\Gamma)\lambda^2,
\end{align}
where the last bound is a consequence of~(\ref{Int_k<lambda}) with
$\sigma=70$, since $60 \Tilde B_k^+\subset 70 B_k^+$.

\textbf{Step 3: Energy estimates on super-level sets.}
The property~(\ref{Int_k=lambda}) of the sets $B_k^+$ implies
\begin{equation}
  \label{ball-size-I}
  \big|B_k^+\big|= \frac1{\lambda^2}\int_{B_k^+}|Du|^2\dx+\frac1{\lambda^2}\int_{B_k^+}|f|\dx.
\end{equation}

In the first integral on the right-hand side, we decompose
the domain of integration into $B_k^+\cap\{|Du|>\lambda/2\}$ and
$B_k^+\cap\{|Du|\le\lambda/2\}$, with the result
\begin{equation*}
  \frac1{\lambda^2}\int_{B_k^+}|Du|^2\dx
  \le \frac1{\lambda^2}\int_{B_k^+\cap\{|Du|>\lambda/2\}}|Du|^2\dx
      +\tfrac14\big|B_k^+\big|.
\end{equation*}
Similarly, by distinguishing the cases $|f|>\lambda^2/4$ and
$|f|\le\lambda^2/4$, we deduce
\begin{equation*}
  \frac1{\lambda^2}\int_{B_k^+}|f|\dx
  \le \frac1{\lambda^2}\int_{B_k^+\cap\{|f|>\lambda^2/4\}}|f|\dx
      +\tfrac14\big|B_k^+\big|.
\end{equation*}
Plugging the preceding two estimates into~(\ref{ball-size-I}) and
re-absorbing the resulting term $\tfrac12|B_k^+|$ into the left-hand
side, we arrive at
\begin{equation}
  \label{ball-size-II}
  \big|B_k^+\big|
   \le\frac2{\lambda^2}\int_{B_k^+\cap\{|Du|>\lambda/2\}}|Du|^2\dx
      +\frac2{\lambda^2}\int_{B_k^+\cap\{|f|>\lambda^2/4\}}|f|\dx
\end{equation}
for every $k\in\N$. Since the sets
$5B_k^+$ cover the super-level set $E(s,\lambda)\supset E(s,L\lambda)$
for every parameter $L\ge1$, there holds
\begin{equation*}
  \int_{E(s,L\lambda)}|Du|^2\dx
  \le \sum_{k\in\N}\,\int_{5B_k^+\cap E(s,L\lambda)}|Du|^2\dx.
\end{equation*}
Each of the terms in the above sum can be estimated as follows:
\begin{align*}
  &\int_{5B_k^+\cap E(s,L\lambda)}|Du|^2\dx\\
  &\phantom{M}
  \le 
  2\int_{5B_k^+}|Du-Dw_k|^2\dx
  +
  \frac 2{(L\lambda)^{4/3}}\int_{5B_k^+\cap
   E(s,L\lambda)}|Dw_k|^2|Du|^{4/3}\dx\\
  &\phantom{M}
  \le 
  2\int_{5B_k^+}|Du-Dw_k|^2\dx
  +\frac C{(L\lambda)^4}
  \int_{5B_k^+}|Dw_k|^6\dx+\tfrac12\int_{5B_k^+\cap
   E(s,L\lambda)}|Du|^2\dx,
\end{align*}
where we used Young's inequality in the last step. Here, we re-absorb the last
integral into the left-hand side and estimate the other two integrals
in the preceding line by~(\ref{Comp-interior})
and~(\ref{Hi-Int-interior}) if we are in the interior situation,
respectively by~(\ref{Comp-bdry}) and~(\ref{Hi-Int-bdry}) in the
boundary situation. This leads us to
\begin{align*}
  \int_{5B_k^+\cap E(s,L\lambda)}|Du|^2\dx
  \le C\big({\eps_3}+L^{-4}\big)\lambda^2\big|B_k^+\big|,
\end{align*}
with $C=C(C_1,\Gamma ,E_o)$.
Summing over $k\in\N$ and then applying (\ref{ball-size-II}), we arrive at
\begin{align}\label{energy-level-set}
  \int_{E(s,L\lambda)}&|Du|^2\dx\nonumber\\\nonumber
  &\le
  C\big({\eps_3}{+}L^{-4}\big)\lambda^2\sum_{k\in\N}\big|B_k^+\big|\\
  &\nonumber\le C\big({\eps_3}{+}L^{-4}\big)
     \sum_{k\in\N}\bigg[\int_{B_k^+\cap\{|Du|>\lambda/2\}}|Du|^2\dx
      +\int_{B_k^+\cap\{|f|>\lambda^2/4\}}|f|\dx\bigg]\\
  &\le C\big({\eps_3}{+}L^{-4}\big)
     \bigg[\int_{B_t^+(x_o)\cap\{|Du|>\lambda/2\}}|Du|^2\dx
      +\int_{B_t^+(x_o)\cap\{|f|>\lambda^2/4\}}|f|\dx\bigg].
\end{align}
In the last step we used the fact that the sets $B_k^+$ are pairwise
disjoint and contained in $B_t^+(x_o)$. We recall that this estimate holds true for all $\lambda\ge\lambda_1$. 

\textbf{Step 4: The final estimate.}
We define truncations
\begin{equation*}
  |Du|_\ell:=\min\{|Du|,\ell\}\quad\mbox{for every $\ell\in\N$.}
\end{equation*}
Fubini's theorem yields for every $\ell\in\N$ that there holds:
\begin{align*}
  \int_{B_s^+(x_o)}|Du|_\ell^2\,|Du|^2\dx 
  &=2\int_{B_s^+(x_o)}\int_0^{|Du|_\ell}\lambda\,d\lambda\, |Du|^2\dx\\
  &=
  2\int_0^\ell \lambda \int_{B_s^+(x_o)\cap \{|Du|_\ell>\lambda\}}|Du|^2\dx\,d\lambda.
\end{align*}
Clearly, for $\lambda\le\ell$, the condition $|Du|_\ell>\lambda$ is
equivalent to $|Du|>\lambda$. We use this to calculate by a change of variables
\begin{align*}
  &\int_{B_s^+(x_o)}|Du|_\ell^2\,|Du|^2\dx\\
  &\qquad=2L^2\int_0^{\ell/L}\lambda
  \int_{E(s,L\lambda)}|Du|^2\dx\,d\lambda\\
  &\qquad\le 2L^2\int_{\lambda_1}^{\ell/L}\lambda
  \int_{E(s,L\lambda)}|Du|^2\dx\,d\lambda
  +L^2\lambda_1^2\int_{B_{R/2}^+(x_o)}|Du|^2\dx=:II+III.
\end{align*}
It remains to estimate the term $II$. 
For this aim we recall the estimate~(\ref{energy-level-set}), which holds for any
$\lambda\ge\lambda_1$. This leads us to
\begin{align*}
  II
  &\le 
  C\big({\eps_3} L^2+L^{-2}\big)\int_{\lambda_1}^{2\ell}
  \lambda\int_{B_t^+(x_o)\cap\{|Du|>\lambda/2\}}|Du|^2\dx\,d\lambda\\
  &\phantom{\le}
  +C\big({\eps_3} L^2+L^{-2}\big)\int_{\lambda_1}^\infty
  \lambda\int_{B_t^+(x_o)\cap\{|f|>\lambda^2/4\}}|f|\dx\,d\lambda\\
  &
  =:C\big({\eps_3} L^2+L^{-2}\big)\big(II_1+II_2\big),
\end{align*}
with the obvious labeling of $II_1$ and $II_2$.
For the estimation of the first term, we calculate by a change of
variables and Fubini's theorem
\begin{align*}
  II_1
  &\le C\int_0^{\ell}\lambda\int_{B_t^+(x_o)\cap\{|Du|_\ell>\lambda\}}|Du|^2\dx\,d\lambda\\
  &=C\int_{B_t^+(x_o)}\int_0^{|Du|_\ell}\lambda \,d\lambda |Du|^2\dx
  = C\int_{B_t^+(x_o)}|Du|_\ell^2\,|Du|^2\dx.
\end{align*}
Similarly, now by the change of variables 
$\mu=\lambda^2/4$ we estimate:
\begin{align*}
  II_2
  &\le C\int_0^\infty\int_{B_t^+(x_o)\cap\{|f|>\mu\}}|f|\dx\,d\mu\\
  &=C\int_{B_t^+(x_o)}\int_0^{|f|} \,d\mu\, |f|\dx
  = C\int_{B_t^+(x_o)}|f|^2\dx.
\end{align*}
Collecting the estimates, we arrive at
\begin{align*}
  \int_{B_s^+(x_o)}&|Du|_\ell^2\,|Du|^2\dx\\
  &\le 
  C\big({\eps_3}
  L^2+L^{-2}\big)\bigg(\int_{B_t^+(x_o)}|Du|_\ell^2\,|Du|^2\dx
  +\int_{B_t^+(x_o)}|f|^2\dx\bigg)\\
  &\qquad\qquad+L^2\lambda_1^2\int_{B_{R/2}^+(x_o)}|Du|^2\dx,
\end{align*}
where here, $C=C(C_1,\Gamma,E_o)$.
Now we choose first the parameter $L\ge1$ so large that
$CL^{-2}\le\frac14$ and then ${\eps_3}\in(0,1)$ so small that
$C{\eps_3} L^2\le\frac14$. This fixes the parameters $L$ and $\eps_3$ in
dependence on $C_1$, $\Gamma$ and $E_o$. Using the
above choice of parameters and the choice of $\lambda_1$ in~(\ref{lambda1-choice}), the preceding inequality becomes
\begin{align*}
  \int_{B_s^+(x_o)}|Du|_\ell^2\,|Du|^2\dx
  &\le \tfrac12 \int_{B_t^+(x_o)}|Du|_\ell^2\,|Du|^2\dx
  +\tfrac12\int_{B_{R/2}^+(x_o)}|f|^2\dx\\
  &\qquad+C\frac{R^2}{(t-s)^2}\,\lambda_o^2\int_{B_{R/2}^+(x_o)}|Du|^2\dx,
\end{align*}
whenever $\frac R4\le s<t\le\frac R2$. Therefore, the Iteration
Lemma~\ref{lem:Giaq} is applicable and yields
\begin{equation*}
  \int_{B_{R/4}^+(x_o)}|Du|_\ell^2\,|Du|^2\dx
  \le C\int_{B_{R/2}^+(x_o)}|f|^2\dx
  +C\lambda_o^2\int_{B_{R/2}^+(x_o)}|Du|^2\dx
\end{equation*}
for each $\ell\in\N$. Letting $\ell\to\infty$, we deduce by Fatou's
lemma, keeping in mind the definition of $\lambda_o$ in~(\ref{lambda0-choice}),
\begin{align*}
  \int_{B_{R/4}^+(x_o)}&|Du|^4\dx\\
  &\le 
  C\int_{B_{R/2}^+(x_o)}|f|^2\dx
  +
  C\mint_{B_{R/2}^+(x_o)}|Du|^2+|f|\dx
   \int_{B_{R/2}^+(x_o)}|Du|^2\dx.
\end{align*}
This implies the claim by Young's and Jensen's inequalities
with a constant $C$ having the dependencies indicated in the
formulation of the lemma. 
\end{proof}

\section{Uniform $W^{2,2}$-estimates}

We begin with the interior $W^{2,2}$-estimates, which will be crucial
for the boundary estimates since they will imply continuity of the
solutions up to the boundary. 
A similar result was proven in \cite[Lemma 7.3]{BoegDuzSchev:2011} 
for right-hand sides with $f\in W^{1,2}(B_R(x_o),\R^3)$. Here, we
weaken this assumption to $f\in L^2(B_R(x_o),\R^3)$. 

\begin{lemma}\label{lem-interior-W22}
On $B_R(x_o)\Subset B$, consider a weak solution
$u\in W^{1,2}\cap C^0(B_R(x_o),\R^3)$ of $\Delta u=F$, 
where $F$ satisfies (\ref{F-growth}) for some constant $C_1>0$
and $f\in L^2(B_R(x_o),\R^3)$. There
exists $\eps_o=\eps_o(C_1)>0$ and $C=C(C_1)$ such that
the smallness condition
\begin{equation}\label{small-energy}
    \int_{B_R(x_o)}|Du|^2\dx\le\eps_o^2
\end{equation}
implies $u\in W^{2,2}(B_{R/2}(x_o),\R^3)$ with the quantitative estimate
\begin{equation}\label{W22-interior}
    \int_{B_{R/2}(x_o)}|D^2u|^2\dx
    \le
    \frac C{R^2}\int_{B_R(x_o)}|Du|^2\dx
    +C\int_{B_R(x_o)}|f|^2\dx.
\end{equation}
\end{lemma}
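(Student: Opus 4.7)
The plan is to combine the Calder\'on-Zygmund estimate of Theorem \ref{thm:Calderon-Zygmund} with the a priori $W^{2,2}$-estimate of Theorem \ref{thm:apriori-W22-int}. The latter already furnishes precisely the claimed quantitative bound (\ref{W22-interior}) with constants depending only on $C_1$, provided one has the a priori integrability $u\in W^{1,4}$ locally on $B_R(x_o)$. Since this is the only ingredient missing from Theorem \ref{thm:apriori-W22-int} relative to our statement, the argument reduces to upgrading the local integrability of $Du$ from $L^2$ to $L^4$, with the continuity of $u$ serving as the replacement for the global oscillation smallness used in Theorem \ref{thm:Calderon-Zygmund}.

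First I would establish $u\in W^{1,4}_{\rm loc}(B_R(x_o),\R^3)$. Setting $E_o:=1$ and letting $\eps_3=\eps_3(C_1,\Gamma,E_o)$ be the smallness constant from Theorem \ref{thm:Calderon-Zygmund}, the continuity of $u$ guarantees, for each $y\in B_R(x_o)$, a radius $r_y\in(0,\dist(y,\partial B_R(x_o)))$ with $\osc_{B_{r_y}(y)}u\le\eps_3$. On each such disk $B_{r_y}(y)$ the \emph{interior branch} of the proof of Theorem \ref{thm:Calderon-Zygmund} applies and yields $u\in W^{1,4}(B_{r_y/4}(y),\R^3)$. Here ``interior branch'' refers to the situation $10B_k\Subset B$ in Step~2 of that proof, where the comparison map is the harmonic extension of $u\big|_{\partial(10B_k)}$ and the argument uses only this harmonic comparison together with the local equation $\Delta u=F$; consequently the hypothesis $u\in\Ss$ from the statement of Theorem \ref{thm:Calderon-Zygmund} is not invoked. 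Covering any closed sub-disk $\overline{B_{R'}(x_o)}\Subset B_R(x_o)$ by finitely many such disks then yields $u\in W^{1,4}(B_{R'}(x_o),\R^3)$ for every $R'<R$.

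Once this qualitative $W^{1,4}$-integrability is secured, I would apply Theorem \ref{thm:apriori-W22-int} on each disk $B_{R'}(x_o)$ with $R'\in(\tfrac R2,R)$, obtaining $u\in W^{2,2}(B_{R'/2}(x_o),\R^3)$ with
\begin{equation*}
    \int_{B_{R'/2}(x_o)} |D^2u|^2\dx
    \le
    \frac{C}{R'^2}\int_{B_{R'}(x_o)}|Du|^2\dx
    +
    C\int_{B_{R'}(x_o)}|f|^2\dx,
\end{equation*}
with $C=C(C_1)$, and then let $R'\uparrow R$, using monotone convergence on the left (since $B_{R'/2}(x_o)\uparrow B_{R/2}(x_o)$) and continuity of the integrals on the right, to arrive at (\ref{W22-interior}). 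The principal obstacle in the plan is the verification in the first step that the interior case of the Calder\'on-Zygmund argument genuinely dispenses with the hypothesis $u\in\Ss$: an inspection of Steps 2--4 of the proof of Theorem \ref{thm:Calderon-Zygmund} confirms this, since in the interior situation only the harmonic comparison map with its Dirichlet boundary values, the local PDE $\Delta u=F$, and the covering/iteration mechanics are used, and no reference is made to the Plateau boundary condition or to the chord-arc geometry of $\Gamma$.
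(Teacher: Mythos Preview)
Your proposal is correct and follows essentially the same route as the paper: use continuity of $u$ to obtain small oscillation on small sub-disks, apply the interior version of the Calder\'on-Zygmund estimate (Theorem~\ref{thm:Calderon-Zygmund}) to upgrade to $W^{1,4}_{\rm loc}$, and then invoke Theorem~\ref{thm:apriori-W22-int} on $B_{R'}(x_o)$ for $R'<R$ and let $R'\uparrow R$. Your additional remark that the interior branch of Theorem~\ref{thm:Calderon-Zygmund} does not rely on the hypothesis $u\in\Ss$ is a valid and worthwhile observation that the paper leaves implicit.
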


\begin{proof}
We choose the constant $\eps_o>0$ as in
Theorem \ref{thm:apriori-W22-int}. In view of this theorem, it only remains to establish $u\in W^{1,4}_{\rm loc}(B_R(x_o),\R^3)$.
To this end, for any
$y\in B_R(x_o)$, we first exploit the continuity of $u$ in order to choose a radius $\rho>0$ small enough to have that
$\osc_{B_\rho(y)}u\le\eps_3$ for the constant $\eps_3$ determined in
Lemma~\ref{thm:Calderon-Zygmund}. From this lemma, we then infer
$u\in W^{1,4}(B_{\rho/4}(y),\R^3)$. Since the point $y\in B_R(x_o)$ was arbitrary, this implies $u\in W^{1,4}_{\rm loc}(B_R(x_o),\R^3)$. Therefore, we may apply the
  a-priori estimates from Theorem~\ref{thm:apriori-W22-int} for any radius
  $\widetilde R<R$ and let $\widetilde R\uparrow R$ in order to arrive
  at the claimed estimate. 
\end{proof}

The first important implication of the preceding lemma is the
following result that will guarantee small oscillation of the
solutions, which we assumed in the preceding sections. A similar
result has been used in \cite[Lemma 3.1]{Scheven:2006} in the context
of a free boundary condition. We point out that similar arguments
yield continuity of $u$ up to the boundary if the boundary values are continuous,
cf. Hildebrandt \& Kaul \cite{HildebrandtKaul:1972}, but the modulus
of continuity would depend on the absolute continuity of the Dirichlet
energy and would therefore not be suitable for our purposes. 

\begin{lemma}\label{lem:small-osc}
Assume that $u\in W^{1,2}(B_R^+(x_o),\R^3)
\cap C^0(B_R^+(x_o),\R^3)$ weakly solves $\Delta u=F$
on $B_R^+(x_o)$, where $x_o\in\partial B$ and
$R\in(0,1)$, and suppose that 
$F$ satisfies (\ref{F-growth}) and
(\ref{time-deriv-bound}) for some constants $C_1,K>0$.
Moreover, we assume that $u$ maps $I_{R/2}(x_o)$ into a subset 
$G\subset\R^3$. Then there exists $\eps_o=\eps_o(C_1)>0$
and $C=C(C_1)$ such that the smallness condition
\begin{equation*}
    \eps^2:=\int_{B_R^+(x_o)}|Du|^2\dx\le\eps_o^2
\end{equation*}
implies
\begin{equation}\label{small-dist}
    \dist(u(y),G)
    \le
    C(\eps+RK)\quad\mbox{for all }y\in B_{R/2}^+(x_o).
\end{equation}
If additionally, the boundary trace $u|_{I_R(x_o)}$ is continuous
with modulus of continuity $\omega\colon[0,\infty)\to[0,\infty)$,
then there holds
\begin{equation*}
    \osc_{B_{R/2}^+(x_o)}\,u\le C(\eps+RK)+2\omega(R).
\end{equation*}

\end{lemma}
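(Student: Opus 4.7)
The strategy is to combine a Courant--Lebesgue argument with a sub-solution comparison exploiting the equation $\Delta u=F$. First I would apply Fubini in polar coordinates centered at $x_o$:
\[
\int_{R/4}^{R/2}\int_{S_r^+(x_o)}|Du|^2\,d\mathcal{H}^1\,dr\le\int_{B_R^+(x_o)}|Du|^2\,dx\le\eps^2,
\]
producing a radius $r\in(R/4,R/2)$ for which $u|_{S_r^+(x_o)}\in W^{1,2}(S_r^+(x_o),\R^3)$ and $\int_{S_r^+(x_o)}|Du|^2\,d\mathcal{H}^1\le 4\eps^2/R$; Cauchy--Schwarz then gives $\osc_{S_r^+(x_o)}u\le C\eps$. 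Since $r<R/2$, the endpoints $p_\pm\in S_r^+(x_o)\cap\partial B$ of this arc lie in $I_{R/2}(x_o)$, so $u(p_\pm)\in G$, and $\dist(u(z),G)\le C\eps$ for every $z\in\partial B_r^+(x_o)$ (trivially so on $I_r(x_o)\subset I_{R/2}(x_o)$).

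Next, for $y\in B_{R/2}^+(x_o)\cap B_r^+(x_o)$ I would set $g_0:=u(p_+)\in G$ and consider $\phi:=|u-g_0|^2$. The equation together with the growth condition $|F|\le C_1(|Du|^2+|f|)$ gives
\[
\Delta\phi\ge 2|Du|^2\bigl(1-C_1|u-g_0|\bigr)-2C_1|u-g_0|\,|f|,
\]
and an a priori smallness of $|u-g_0|$ --- obtained iteratively from the interior $W^{2,2}$-estimate (Lemma~\ref{lem-interior-W22}) applied on small interior balls together with the smallness of $\eps_o$ --- makes $\phi$ into a sub-solution of $\Delta\phi\ge-CM|f|$ with $M:=\sup_{B_r^+(x_o)}|u-g_0|$. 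The two-dimensional Green's-function estimate $\|\Delta^{-1}h\|_{L^\infty(B_r^+(x_o))}\le Cr\|h\|_{L^2}$ then produces
\[
M^2\le\sup_{\partial B_r^+(x_o)}|u-g_0|^2+CRMK,
\]
and with the boundary supremum on $S_r^+(x_o)$ bounded by $C\eps^2$ --- the control on $I_r(x_o)$ being obtained by locally varying the base point $g_0$ along the trace and re-absorbing the discrepancies via the Courant--Lebesgue bound on neighboring radii --- solving the resulting quadratic inequality yields $M\le C(\eps+RK)$. Together with $\dist(u(y),G)\le|u(y)-g_0|$ this proves the first assertion on $B_r^+(x_o)$; the thin shell $B_{R/2}^+(x_o)\setminus B_r^+(x_o)$ is handled analogously, applying the Courant--Lebesgue around an alternative base point chosen near $y$ so that the resulting arc still terminates in $I_{R/2}(x_o)$.

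For the oscillation assertion, continuity of the trace with modulus $\omega$ implies $\diam u(I_{R/2}(x_o))\le\omega(R)$. Applying the first assertion with $G:=u(I_{R/2}(x_o))$ yields, for each $y_i\in B_{R/2}^+(x_o)$, a point $g_i\in u(I_{R/2}(x_o))$ with $|u(y_i)-g_i|\le C(\eps+RK)$; the triangle inequality combined with $|g_1-g_2|\le\omega(R)$ gives the stated bound. The main obstacle will be controlling the boundary supremum of $|u-g_0|$ on $I_r(x_o)$ in the sub-solution step: the non-convexity of $G$ precludes a direct max-principle estimate with a single fixed base point, and the resolution hinges on the local choice of $g_0$ along the trace combined with the Courant--Lebesgue information on adjacent radii.
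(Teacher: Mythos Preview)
Your approach diverges from the paper's, and the subsolution/maximum-principle step has a genuine gap.  The function $\phi=|u-g_0|^2$ must be controlled on the \emph{entire} boundary of $B_r^+(x_o)$, which is $S_r^+(x_o)\cup I_r(x_o)$.  On $I_r(x_o)$ you only know $u\in G$, so for a \emph{fixed} base point $g_0$ the value of $|u-g_0|$ is bounded at best by $\diam(G)$, which the lemma does not restrict.  ``Locally varying $g_0$ along the trace'' is not compatible with a maximum-principle argument: the maximum principle is a global statement about a single function on the domain, and replacing $\phi$ by $\dist(u,G)^2$ does not help since $\dist(\cdot,G)$ is neither $C^2$ nor does it admit a clean Bochner-type identity when $G$ is an arbitrary (possibly non-convex, possibly one-dimensional) subset of $\R^3$.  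The same issue makes the appeal to ``Courant--Lebesgue on neighboring radii'' ineffective: you get one good circular arc, not a family, and in any case the problematic boundary piece $I_r(x_o)$ is unaffected.

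There is a second, related circularity: to obtain the subsolution inequality $\Delta\phi\ge -CM|f|$ you need $C_1|u-g_0|<1$ on $B_r^+(x_o)$, i.e.\ an a priori $L^\infty$-smallness of $u-g_0$.  But this is essentially the conclusion you are trying to establish; smallness of $\eps_o$ only controls $\int|Du|^2$, not $\sup|u-g_0|$, and Lemma~\ref{lem-interior-W22} on small interior balls gives local H\"older control, not a global oscillation bound.  The paper avoids all of this by a different mechanism: for each $y\in B_{R/2}^+(x_o)$ it applies the interior $W^{2,2}$-estimate on the ball $B_{2r}(y)$ with $r=\tfrac14\dist(y,\partial B)$ to get $|u(x)-u(y)|\le C(\eps+RK)$ for $x\in B_r(y)$, and then connects $u(x)$ to the boundary values by integrating $|\partial u/\partial r|$ along radial segments from the arc $B_r(y)\cap\partial B_{|y|}$ out to $\partial B$, averaging over that arc and using Cauchy--Schwarz.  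This produces $\dist(u(y),G)$ directly, with no fixed base point and no maximum principle, and the boundary piece $I_r(x_o)$ never enters.
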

\begin{proof}
We fix an arbitrary $y\in B_{R/2}^+(x_o)$ and let
$r:=\frac14\dist(y,\partial B)<\frac R4$. We choose
the constant $\eps_o=\eps_o(C_1)>0$ as in Lemma  
\ref{lem-interior-W22}, which enables us to apply this lemma on 
$B_{2r}(y)\Subset B_R^+(x_o)$. We infer $u\in W^{2,2}(B_r(y),\R^3)\hookrightarrow C^{0,\alpha}(B_r(y),\R^3)$ for an arbitrary $\alpha\in(0,1)$, with the corresponding estimate
\begin{align*}
    r^{2\alpha}[u]_{C^{0,\alpha}(B_r(y))}^2
    &
    \le
    C(\alpha)\int_{B_r(y)}r^2|D^2u|^2+|Du|^2\dx\\
    &
    \le
    C(\alpha,C_1)\bigg[\int_{B_{2r}(y)}|Du|^2\dx
    +r^2\int_{B_{2r}(y)}|f|^2\dx\bigg]\\
    &
    \le
    C(\alpha,C_1)\big(\eps^2+R^2K^2\big).
\end{align*}
In particular, we know that for every $x\in B_r(y)$, there holds
\begin{equation}\label{Holder-Est}
    |u(x)-u(y)|
    \le
    Cr^{-\alpha}\big(\eps+RK\big)|x-y|^\alpha\le C\big(\eps+RK\big).
\end{equation}
Here, we may eliminate the dependence of the constant on $\alpha$ by
fixing $\alpha=\frac12$. 
Moreover, since $s\mapsto u(sx)$ is absolutely continuous for
a.e. $x\in B_R^+(x_o)$ and $u(\frac x{|x|})\in G$, we conclude
\begin{align*}
    \mint_{B_r(y)\cap\partial B_{|y|}}\dist(u(x),G)\,d\H^1
    &
    \le 
    \mint_{B_r(y)\cap \partial B_{|y|}}\big|u(x)-u\big(\tfrac
    x{|x|}\big)\big|\,d\H^1x\\
    &
    \le
    \frac Cr 
    \int_{B_r(y)\cap \partial B_{|y|}}\int_{|y|}^1
    \big|\tfrac{\partial u}{\partial r}(\rho
    \tfrac x{|x|})\big|\,d\rho\,d\H^1x\\
    &
    \le 
    C\bigg(\int_{B_R^+(x_o)}|Du|^2\dx\bigg)^{1/2}\le C\eps,
\end{align*}
where we used $1-|y|=\dist(y,\partial B)= 4r$. Combining this
with~(\ref{Holder-Est}), we arrive at
\begin{align*}
    \dist(u(y),G)
    \le
    \mint_{B_r(y)\cap\partial B_{|y|}(0)}\big[\dist(u(x),G)
    +|u(x)-u(y)|\big]\dx
    \le 
    C\big(\eps+RK\big),
\end{align*}
which is the first assertion (\ref{small-dist}).
If we assume moreover that $u|_{I_R(x_o)}$ is continuous
with modulus of continuity $\omega$, then $u(I_{R/2}(x_o))$ is contained in $G\cap B_{\omega(R)}(p)$ for $p=u(x_o)$.
Consequently, we infer the estimate (\ref{small-dist}) with
$G\cap B_{\omega(R)}(p)$ instead of $G$,
which implies that $u(B_{R/2}^+(x_o))$ is contained in a ball of
radius $C(\eps+RK)+\omega(R)$. This yields the second assertion of the
lemma.
\end{proof}

Now we are in a position to extend the $W^{2,2}$-estimates of
Lemma~\ref{lem-interior-W22} up to the boundary.

\begin{theorem}\label{boundary-W22}
Consider a solution $u\in\Ss\cap C^0(B,\R^3)$
of~(\ref{local-plateau}) on a half-disk centered in $x_o\in\partial
B$ with $R\in(0,\frac12)$ and $B_R^+(x_o)\cap\{P_1,P_2,P_3\}=\varnothing$. We suppose that the
assumptions~(\ref{F-growth}) and~(\ref{time-deriv-bound}) are in
force and that $u|_{I_R(x_o)}$ is continuous with modulus of
continuity $\omega:[0,\infty)\to[0,\infty)$. Then there is a
constant $\eps_4=\eps_4(C_1,\Gamma,\omega(\cdot))\in(0,1)$ and a radius $R_o=R_o(C_1,\Gamma,K,\omega(\cdot))\in(0,1)$ such that the
smallness conditions
\begin{equation}
    \label{smallness-conditions}
    \int_{B_R^+(x_o)}|Du|^2\dx\le\eps_4^2\quad\mbox{and}\quad
    R\le R_o
\end{equation}
imply $u\in W^{2,2}(B_{R/4}^+(x_o),\R^3)$. Moreover, with a universal
constant $C=C(C_1,\Gamma)$ we have the quantitative estimate
  \begin{equation}\label{W22-final}
    \int_{B_{R/4}^+(x_o)}|D^2u|^2\dx\le
    \frac C{R^2}\int_{B_R^+(x_o)}|Du|^2\dx
    +C\int_{B_R^+(x_o)}|f|^2\dx.
  \end{equation}
\end{theorem}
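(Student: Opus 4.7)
The proof chains together the three regularity results at our disposal: Lemma \ref{lem:small-osc} converts smallness of the local Dirichlet energy (combined with the continuity of the boundary trace) into a small-oscillation bound for $u$; this oscillation bound is precisely the hypothesis required by the Calder\'on-Zygmund estimate Theorem \ref{thm:Calderon-Zygmund}, which upgrades $u$ to class $W^{1,4}$ up to the Plateau boundary; finally, with both smallness of energy and improved integrability in hand, the a-priori estimate Theorem \ref{thm:apriori-W22-bdry}\,(ii) yields the desired $W^{2,2}$-bound (\ref{W22-final}). The constants are fixed in a specific order: letting $\eps_o, C_\ast = C_\ast(C_1)$ be the threshold and universal constant of Lemma \ref{lem:small-osc}, $\eps_1 = \eps_1(C_1, \Gamma)$ the threshold of Theorem \ref{thm:apriori-W22-bdry}\,(ii), and fixing $E_o := 1$ so that the threshold $\eps_3 = \eps_3(C_1, \Gamma, E_o)$ of Theorem \ref{thm:Calderon-Zygmund} becomes a function of $C_1$ and $\Gamma$ only, I would put
\[
  \eps_4 := \tfrac{1}{4}\min\{\eps_o, \eps_1, \eps_3, 1\}
\]
and choose $R_o = R_o(C_1, \Gamma, K, \omega(\cdot)) \in (0, \tfrac{1}{2})$ so small that $C_\ast R_o K + 2\omega(R_o) \le \tfrac{1}{4}\min\{\eps_1, \eps_3\}$.

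Under (\ref{smallness-conditions}), the bound $\int_{B_R^+(x_o)}|Du|^2 \dx \le \eps_4^2 \le 1 = E_o$ makes the total-energy hypothesis of Theorem \ref{thm:Calderon-Zygmund} automatic. Since $u(I_R(x_o)) \subset \Gamma$ and $u|_{I_R(x_o)}$ is continuous with modulus $\omega$, Lemma \ref{lem:small-osc} applies with $G = \Gamma$ and produces
\[
  \osc_{B_{R/2}^+(x_o)}\, u \le C_\ast \eps_4 + C_\ast R K + 2 \omega(R) \le \tfrac{1}{2}\min\{\eps_1, \eps_3\}.
\]
A finite covering of $\overline{B_{R/2}^+(x_o)}$ by interior disks (on which Lemma \ref{lem-interior-W22} applies, since its smallness threshold is automatic) and by boundary half-disks of reduced size centered on points of $I_{R/2}(x_o)$ (on which Theorem \ref{thm:Calderon-Zygmund} applies) then produces $u \in W^{1,4}(B_{R/2}^+(x_o), \R^3)$, with a quantitative local $W^{1,4}$-bound depending only on $\int_{B_R^+(x_o)}|Du|^2\dx$, $\int_{B_R^+(x_o)}|f|^2\dx$, $R$, $C_1$ and $\Gamma$. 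All hypotheses of Theorem \ref{thm:apriori-W22-bdry}\,(ii) are then in force on $B_R^+(x_o)$, and the asserted estimate (\ref{W22-final}) follows directly from its conclusion.

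The main technical step is the covering argument bridging Theorem \ref{thm:Calderon-Zygmund} --- whose direct conclusion lies on the inner quarter-radius half-disk --- with the half-disk $B_{R/2}^+(x_o)$ required by Theorem \ref{thm:apriori-W22-bdry}\,(ii). This is routine once one verifies that the smallness of both $\osc u$ and $\int|Du|^2\dx$ is inherited by every sub-(half-)disk, that boundary points $y \in I_{R/2}(x_o)$ admit half-disks $B_\rho^+(y) \subset B_R^+(x_o)$ of radius $\rho \sim R$, and that the avoidance condition for $\{P_1, P_2, P_3\}$ remains valid, which is automatic since $B_R^+(x_o)$ is assumed disjoint from these three points. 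In particular, the number of disks in the covering is absolute (independent of $R$, by scale invariance of the covering procedure), which is what allows the final constant $C$ to depend on $C_1$ and $\Gamma$ only.
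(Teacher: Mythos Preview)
Your proposal is correct and follows essentially the same three-step chain as the paper: Lemma~\ref{lem:small-osc} for small oscillation, then Theorem~\ref{thm:Calderon-Zygmund} (with $E_o=1$) for $W^{1,4}$-regularity, then Theorem~\ref{thm:apriori-W22-bdry}\,(ii) for the $W^{2,2}$-estimate. The only presentational difference is that you spell out a covering argument to obtain $u\in W^{1,4}(B_{R/2}^+(x_o))$, whereas the paper simply records $u\in W^{1,4}_{\rm loc}(B_{R/2}^+(x_o))$ and proceeds; since the proof of Theorem~\ref{thm:apriori-W22-bdry}\,(ii) only integrates $|Du|^4$ over compactly contained sub-half-disks, either formulation suffices.
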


\begin{proof}
We will later fix $\eps_4\in(0,1)$ so small that
$\eps_4\le\min\{\eps_o,\eps_1\}$ with the constants $\eps_o$ and $\eps_1$ determined in Lemma~\ref{lem:small-osc}, respectively in
Theorem~\ref{thm:apriori-W22-bdry}. Lemma~\ref{lem:small-osc} then implies
\begin{equation*}
  \osc_{B_{R/2}^+(x_o)}u\le C(\eps_4+RK)+2\omega(R_o).
\end{equation*}
Therefore we can achieve -- by choosing $0<\eps_4<\min\{\eps_o,\eps_1\}$  and $R_o\in(0,1)$ sufficiently small -- that
\begin{equation}\label{little-osc}
    \osc_{B_{R/2}^+(x_o)}u\le \min\{\eps_1,\eps_3\},
\end{equation}
where $\eps_3$ denotes the constant from Theorem~\ref{thm:Calderon-Zygmund} for the choice $E_o=1$. We note that the choice of
$\eps_4$ can be performed  in dependence on $C_1,\Gamma$ and
$\omega(\cdot)$, while $R_o$ may depend additionally on $K$. The
small oscillation property~(\ref{little-osc}) together with the fact
$\int_{B_R^+(x_o)}|Du|^2\dx\le1$ enables us to apply the
Calder\'on-Zygmund Theorem~\ref{thm:Calderon-Zygmund}, from which we
infer $u\in W^{1,4}_{\rm loc}(B_{R/2}^+(x_o),\R^3)$. Therefore and because of the smallness properties~(\ref{smallness-conditions})
and~(\ref{little-osc}), we may apply Theorem~\ref{thm:apriori-W22-bdry}\,(ii), which yields the desired estimate~(\ref{W22-final}).
This concludes the proof of the theorem. 
\end{proof}

\section{Concentration compactness principle}
In this section we consider sequences of maps $u_k\in \SGA$ satisfying the Euler-Lagrange system, the  weak Neumann type boundary condition
and the stationarity condition. To be precise, for 
$f\in L^2(B,\R^3)$ we consider solutions $u\in\SGA$ of the system 
\begin{align}\label{Euler-Lagrange}
	\int_B Du\cdot D\varphi + 2(H\circ u)
	D_1u\times D_2u\cdot \varphi-f\cdot \varphi\, dx=0
\end{align}
 for any $\varphi\in C^\infty_0(B,\R^3)$.
We note that by Rivi\`ere's result in the form of Theorem 
\ref{mod-Riviere}, such maps are of class $C^{0,\alpha}(B,\R^3)$
for some $\alpha\in (0,1)$, and by the result of Hildebrandt and Kaul
from \cite[Lemma 3]{HildebrandtKaul:1972} also continuous up to the boundary of $B$, i.e. $u\in C^0(\overline B,\R^3)$.
Further, we say that $u\in \SGA$ satisfies the Neumann type  boundary condition associated to the Plateau boundary condition
in the weak sense, if for any $w\in T_u\mathcal S^\ast$
there holds
\begin{equation}\label{Plateau-weak}
	0
	\le 
	\int_{B} 
	\big[Du\cdot D w+\Delta u\cdot  w\big]\,dx.
\end{equation}
Finally, we call $u\in \SGA$ stationary (with respect to inner variations), if for any vector field $\eta\in\mathcal C^\ast (B)$
there holds:
\begin{align}\label{stationary}
    \int_B{\rm Re} \big( \frak h[u] \,\overline\partial\eta\big) \, dx
     -
     \int_B f\cdot Du\,\eta\, dx=0.
\end{align}
We note that \eqref{Euler-Lagrange}, \eqref{Plateau-weak} and
\eqref{stationary} are satisfied for minimizers of the functionals
$\mathbf F$ defined in~(\ref{discrete-H-energy}) with
$f=\frac1h(u-z)\in L^2(B,\R^3)$, see Lemma~\ref{lem:EulerMeasure}.

\begin{lemma}\label{convergence-non-linearity}
Assume that $u_k\in \SGA\cap C^0(\overline B, \R^3)$ and $f_k\in L^2(B,\R^3)$
for $k\in\N$.
Moreover, suppose that
$u_k$ fulfills the Euler-Lagrange system
\eqref{Euler-Lagrange}, the weak Neumann condition
\eqref{Plateau-weak} and the stationarity condition \eqref{stationary}
with $(u_k, f_k)$ instead of $(u,f)$.
Finally, suppose that
$u_k\to u$ strongly in $L^2(B,\R^3)$ and
\begin{equation}\label{bound:uk-zk}
    \sup_{k\in\N}\int_B|Du_k|^2+|f_k| ^2 \, dx <\infty.
\end{equation}
Then the following holds:

\begin{enumerate}
\item If furthermore
  \begin{equation}\label{assump-uk-zk}
    f_k\wto f\quad\mbox{weakly in $L^2(B,\R^3)$,}
  \end{equation}
  then the limit map $u\in \SGA$ solves the
  Euler-Lagrange system~(\ref{Euler-Lagrange}), fulfills
  the boundary
  condition~(\ref{Plateau-weak}) and 
  the stationarity condition~(\ref{stationary}).

\item The non-linear $H$-term converges in the sense of distributions 
  (even without the assumption
  \eqref{assump-uk-zk}), that is for every $\varphi\in
  C^\infty_0(B,\R^3)$ we have
  \begin{align}\label{convergence-H-term}
    \int_B (H\circ u) D_1u\times D_2u\cdot\varphi\, dx &=
    \lim_{k\to\infty}\int_B (H\circ u_k) D_1u_{k}\times
    D_2u_{k}\cdot\varphi\, dx.
  \end{align}
\end{enumerate}

\end{lemma}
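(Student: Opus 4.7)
The plan is to combine weak compactness of the sequence $(u_k)$ in $W^{1,2}$ with the $\varepsilon$-regularity machinery (Lemma \ref{lem-interior-W22} and Theorem \ref{boundary-W22}) to extract strong local $W^{1,q}$-convergence away from a finite singular set $\Sigma^\ast$, and then to employ a $W^{1,2}$-capacity cutoff to absorb the contribution of $\Sigma^\ast$ in the critical quadratic expressions. From \eqref{bound:uk-zk} a subsequence satisfies $u_k\wto u$ weakly in $W^{1,2}(B,\R^3)$. Lemma \ref{lem:compact traces} gives equicontinuity of the boundary traces, hence uniform convergence on $\partial B$; this preserves weak monotonicity as well as the three-point condition $u(P_i)=Q_i$, so $u\in\SGA$.

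Fix a threshold $\eps_\star>0$ smaller than each of the constants $\eps_o,\eps_1,\eps_3,\eps_4$ appearing in the $\varepsilon$-regularity results, and set
\begin{equation*}
    \Sigma:=\Big\{x_o\in\overline B:\liminf_{r\downarrow 0}\liminf_{k\to\infty}\int_{B_r^+(x_o)}|Du_k|^2\dx\ge\eps_\star^2\Big\},
    \quad
    \Sigma^\ast:=\Sigma\cup\{P_1,P_2,P_3\}.
\end{equation*}
The bound \eqref{bound:uk-zk} forces $\Sigma$ to be finite. Around any $x_o\in\overline B\setminus\Sigma^\ast$ choose a half-disk $B_{r_o}^+(x_o)$ disjoint from $\Sigma^\ast$ on which $\int_{B_{r_o}^+(x_o)}|Du_k|^2\dx<\eps_\star^2$ eventually. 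In the interior case Lemma \ref{lem-interior-W22} supplies uniform local $W^{2,2}$-bounds; at boundary points we first invoke Lemma \ref{lem:small-osc} together with the equicontinuity of the boundary traces to verify the small-oscillation hypothesis, and then apply Theorem \ref{boundary-W22}. A diagonal extraction yields $u_k\to u$ strongly in $W^{1,q}_{\rm loc}(\overline B\setminus\Sigma^\ast,\R^3)$ for every $q<\infty$.

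For (ii) we introduce cutoffs $\zeta_\delta\in C^\infty(\R^2,[0,1])$ vanishing on a $\delta$-neighborhood of $\Sigma$, equal to $1$ outside its $2\delta$-neighborhood, with $\|D\zeta_\delta\|_{L^2}\to 0$ and $|\{\zeta_\delta<1\}|\to 0$ as $\delta\downarrow 0$; such cutoffs exist because finite sets in $\R^2$ have vanishing $W^{1,2}$-capacity. For $\varphi\in C_0^\infty(B,\R^3)$ the decomposition $\varphi=\zeta_\delta\varphi+(1-\zeta_\delta)\varphi$ reduces \eqref{convergence-H-term} to the strong local $W^{1,2}$-convergence on the support of $\zeta_\delta\varphi$ plus a remainder controlled uniformly in $k$ by $\|H\|_\infty\|\varphi\|_\infty\sup_k\|Du_k\|_{L^2}^2$ on an arbitrarily small set. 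No assumption on $f_k$ is needed here. For (i), the Euler-Lagrange equation \eqref{Euler-Lagrange} then passes to the limit by combining (ii) with weak $W^{1,2}$-convergence of $u_k$ and the hypothesis $f_k\wto f$ in $L^2$. The weak Neumann inequality \eqref{Plateau-weak} is treated by writing any $w\in T_u\mathcal{S}^\ast$ as $\widehat\gamma'(\varphi)(\psi-\varphi)$ with $\psi\in\mathcal{T}^\ast(\Gamma)$ and constructing the approximating test fields $w_k(e^{i\vartheta})=\widehat\gamma'(\varphi_k)(\psi-\varphi_k)$, where $\varphi_k\in\mathcal{T}^\ast(\Gamma)$ represents the boundary of $u_k$; extending $w_k$ into $B$ via the harmonic construction of Lemma \ref{lem:rand-variationen} and exploiting uniform convergence $\varphi_k\to\varphi$ on $\partial B$ together with local strong $W^{1,q}$-convergence off $\Sigma^\ast$ and a capacity cutoff lets us pass to the limit. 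Finally, in the stationarity condition \eqref{stationary} the Hopf-differential $\frak h[u_k]$ is uniformly in $L^2_{\rm loc}$ by Theorem \ref{thm:Calderon-Zygmund}, so the same capacity argument as for (ii) applies, while the term $\int_B f_k\cdot Du_k\,\eta\dx$ is handled by the strong $W^{1,2}_{\rm loc}$-convergence of $u_k$ combined with the weak $L^2$-convergence of $f_k$.

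The hard part will be the justification that, near each concentration point, the capacity cutoff truly annihilates the critical quadratic expressions $(H\circ u_k)D_1u_k\times D_2u_k$ and $\frak h[u_k]$ in the limit, uniformly in $k$. This requires the uniform $L^2_{\rm loc}$-bound on $\frak h[u_k]$ from Theorem \ref{thm:Calderon-Zygmund} (itself hinging on the continuity supplied by Rivi\`ere's regularity, Theorem \ref{mod-Riviere}, together with Lemma \ref{lem:small-osc}), so that smallness of $\|D\zeta_\delta\|_{L^2}$ and of $|\{\zeta_\delta<1\}|$ translates into smallness of the relevant integrals uniformly in $k$.
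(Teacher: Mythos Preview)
Your overall strategy --- concentration compactness, local $W^{2,2}$ bounds away from a finite set $\Sigma^\ast$, then capacity cutoffs --- matches the paper's. The gap is in how you handle the remainder near concentration points in (ii), and consequently in your derivation of (i), which you make depend on (ii).

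In your decomposition $\varphi=\zeta_\delta\varphi+(1-\zeta_\delta)\varphi$, the remainder
\[
  \int_B (H\circ u_k)\,D_1u_k\times D_2u_k\cdot(1-\zeta_\delta)\varphi\,dx
\]
is bounded by $\|H\|_\infty\|\varphi\|_\infty\int_{\{\zeta_\delta<1\}}|Du_k|^2\,dx$, and this does \emph{not} tend to zero uniformly in $k$ as $\delta\downarrow0$: at every point of $\Sigma$ one has, by definition, $\liminf_{k}\int_{B_\delta^+(x_o)}|Du_k|^2\,dx\ge\eps_\star^2$ for all $\delta>0$. So the capacity cutoff cannot annihilate this term uniformly along the sequence. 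Your attempted justification via Theorem~\ref{thm:Calderon-Zygmund} does not help: that result needs small oscillation, which you only control \emph{away} from $\Sigma^\ast$ uniformly in $k$; near concentration points the moduli of continuity of the $u_k$ may degenerate, so no uniform $L^2$ bound on $\frak h[u_k]$ or $|Du_k|^2$ is available there.

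The paper circumvents this by reversing the order: it proves (i) first, applying the capacity argument only to the \emph{limit} map $u$ (for which $\int_{\{\zeta_\delta<1\}}|Du|^2\to0$ by absolute continuity, and $\frak h[u]\in L^2_{\rm loc}(\overline B\setminus\{P_1,P_2,P_3\})$ via Theorem~\ref{mod-Riviere} and Theorem~\ref{thm:Calderon-Zygmund}). Once (i) is known, (ii) follows without any capacity argument: pass to a subsequence with $f_k\wto f$ (possible by \eqref{bound:uk-zk}), apply (i), and then use the equation for both $u_k$ and $u$ to rewrite
\[
  \int_B 2(H\circ u_k)D_1u_k\times D_2u_k\cdot\varphi\,dx
  =\int_B\big(-Du_k\cdot D\varphi+f_k\cdot\varphi\big)\,dx,
\]
which converges by weak convergence in $W^{1,2}$ and $L^2$. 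Since the right-hand limit is independent of the subsequence, the full sequence converges. This avoids entirely the need to control $|Du_k|^2$ uniformly near $\Sigma$.
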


\begin{proof} We first prove the claim (i) and therefore assume that
\eqref{assump-uk-zk} is valid. We start with the observation
that by \eqref{bound:uk-zk}, the maps $u_k\in \SGA$ admit 
$\sup_{k\in\N} \mathbf D(u_k)<\infty$. Moreover, by the definition of the class
$\SGA$ they also satisfy the three point condition,
that is $u_k(P_j)=Q_j$ for $j=1,2,3$.
As is well known from the theory of parametric minimal surfaces,
it then follows from the  Courant-Lebesgue Lemma and the Jordan curve property of $\Gamma$ that the sequence
of boundary traces $u_k\big|_{\partial B}$ is equicontinuous
(cf. Lemma~\ref{lem:compact traces}) and therefore all the maps $u_k$
admit the same modulus of continuity $\omega$ 
on $\partial B$. Therefore, we may assume that
$u\in \SGA$ and $u_k\to u$ uniformly on $\partial B$. 

We define a sequence of Radon measures $\mu_k$ on $\R^2$ by
\begin{equation*}
    \mu_k:=\mathcal L^2\edge |Du_k|^2\, .
\end{equation*}
Since $(u_k)_{k\in\N}$ is a bounded sequence in $W^{1,2}(B,\R^3)$ 
by \eqref{bound:uk-zk}, we have
$$
	\sup_{k\in\N} \mu_k(\R^2)
	=
	2\sup_{k\in\N} \mathbf D(u_k)
	<\infty.
$$
Therefore, passing to a not relabeled
subsequence we can assume that $\mu_k\wto\mu$ in the sense of Radon
measures, for a Radon measure $\mu$ on $\R^2$ with 
$\mu (\R^2)<\infty$. We note that $\mu\edge (\R^2\setminus \overline B)=0$ by construction. Next we define 
the {\bf singular set} $\Sigma$ of $\mu$ by
\begin{equation*}
    \Sigma :=\left\{ x_o\in \overline
    B: \mu (\{ x_o\})\ge \varepsilon\right\}\cup\{P_1,P_2,P_3\},
\end{equation*}
where $\epsilon:=\min\{\epsilon_o,\epsilon_4\}>0$ for the constants 
$\epsilon_o$ and $\epsilon_4$ from
Lemma \ref{lem-interior-W22}, respectively Theorem~\ref{boundary-W22}.
We mention that ${\rm card} (\Sigma )<\infty$, since $\mu (\overline
B)<\infty$. Now, for any $x_o\in \overline
B\setminus\Sigma$ there exists a radius $\varrho_{x_o}>0$ such that $B_{\varrho_{x_o}} (x_o)\cap \Sigma=\varnothing$
and $\mu (\overline{B_{\varrho_{x_o}} (x_o)})<\varepsilon$.
Since $P_1,P_2,P_3\in \Sigma$, we have in particular that
$B_{\varrho_{x_o}} (x_o)\cap\{ P_1,P_2,P_3\}
=\varnothing$. In the case of a center $x_o\in B$ we choose the disk
in such a way that $B_{\varrho_{x_o}} (x_o)\subseteq B$, while in 
the boundary case $x_o\in \partial B$ we choose $\rho_{x_o}\le
R_o$, where $R_o$ is the radius from Lemma \ref{boundary-W22}. We note
that the radius $R_0$ can be chosen only in dependence on
$\|H\|_{L^\infty},\Gamma,\omega$ and $K:=\sup_k\int_B|f_k|^2\dx$, and
in particular independent from $k\in\N$. The dependence
on $\|H\|_{L^\infty}$ results from the non-linear term
$2(H\circ u)D_1u\times D_2u$ which can be estimated by 
$\|H\|_{L^\infty}|Du|^2$. Since
\begin{equation*}
    \limsup_{k\to\infty} \mu_k (B_{\varrho_{x_o}} (x_o))\le \mu (\overline{B_{\varrho_{x_o}} (x_o)})<\varepsilon,
\end{equation*}
we can find $k_o\in\N$ such that
\begin{equation*}
	\int_{B_{\varrho_{x_o}}^+ (x_o)} |Du_k|^2\, dx
   =
   \mu_k (B_{\varrho_{x_o}} (x_o))
   <
   \varepsilon
   \quad
   \mbox{for any $k\ge k_o$.}
\end{equation*}
Therefore,  the smallness hypotheses
\eqref{small-energy} of Lemma~\ref{lem-interior-W22} 
respectively \eqref{smallness-conditions} of Theorem \ref{boundary-W22}
are fulfilled for $u_k$ with $k\ge k_o$. 
Finally, by assumption we have $u_k\in C^0(\overline
B,\R^3)$ and as stated above, the boundary traces $u_k\big|_{\partial
  B}$ are equicontinuous. Therefore, the application of 
Lemma~\ref{lem-interior-W22}, respectively of Theorem~\ref{boundary-W22},
yields the estimate 
\begin{align*}
    \int_{B_{{\varrho_{x_o}}/4}^+(x_o)}|D^2 u_k|^2\, dx
    &
    \le
    C\bigg[
    \varrho_{x_o}^{-2}\int_{B_{\varrho_{x_o}}^+(x_o)}|Du_k|^2\dx      
    +
    \int_{B} |f_k|^2\, dx
    \bigg]\nonumber\\
    &
    \le
     C\bigg[
     \varrho_{x_o}^{-2}\,\varepsilon
     +
     \sup_{k\in\N}\int_{B} |f_k|^2\, dx
     \bigg]
     =:C.
\end{align*}
for any $k\ge k_o$, with a constant $C$ independent from $k$. This implies the uniform bound
\begin{equation}\label{W22-uniform:klarga}
    \sup_{k\ge k_o}\| u_k\|_{W^{2,2}( B_{{\varrho_{x_o}}/4}^+(x_o),\R^3)}<\infty.
\end{equation}
Here we set $B_{{\varrho_{x_o}}/4}^+(x_o)=B_{{\varrho_{x_o}}/4}(x_o)$ for an interior point $x_o\in B$.
Hence, passing again to a not relabeled subsequence
we have $u_k\wto u$
weakly in  $W^{2,2}( B_{{\rho_{x_o}}/4}^+(x_o),\R^3)$ and strongly
in $W^{1,q}( B_{{\rho_{x_o}}/4}^+(x_o),\R^3)$
for any $q\ge 1$. Because of the Sobolev embedding
$W^{1,q}\hookrightarrow L^\infty$ that holds for $q>2$,
we  moreover have $u_k\to u$ uniformly on $B_{{\rho_{x_o}}/4}^+(x_o)$.
Therefore,  for any $\varphi\in C_0^\infty (B_{{\rho_{x_o}}/4}^+(x_o),\R^3)$ we have
\begin{align}\label{local-Euler}
    &\int_{B_{{\rho_{x_o}}/4}^+(x_o)} Du\cdot D\varphi +2(H\circ u)D_1u\times D_2u\cdot\varphi -f\cdot\varphi\, dx\nonumber\\
    &\phantom{m}=
    \lim_{k\to\infty}
        \int_{B_{{\rho_{x_o}}/4}^+(x_o)}
        Du_k\cdot D\varphi +2(H\circ u_k)D_1u_k\times D_2u_k\cdot\varphi -f_k\cdot\varphi\, dx
    =
    0.
\end{align}
Now we consider the case of a boundary point $x_o\in\partial B\setminus \Sigma$. We choose $w\in T_{u}^\ast\mathcal S$ and
$\zeta\in C^\infty_0(B_{{\rho_{x_o}}/4}(x_o),[0,1])$. By definition
we have $w(e^{i\vartheta})=\widehat \gamma'
(\varphi)(\psi -\varphi)$, where $\varphi$ is defined by
$u(e^{i\vartheta})=\widehat\gamma (\varphi (\vartheta))$
and $\psi\in \mathcal T^\ast (\Gamma)$. For the maps $u_k$ we have the corresponding representations
$u_k(e^{i\vartheta})=\widehat\gamma (\varphi_k (\vartheta))$
for some $\varphi_k\in \mathcal T^\ast (\Gamma)$.
Due to the uniform convergence $u_k\to u$ on $\partial B$
we know $\varphi_k\to\varphi$
uniformly. We then define $w_k$ on $\partial B$ by 
$w_k(e^{i\vartheta}):=\widehat \gamma'(\varphi_k)(\psi -\varphi_k)$.
Its harmonic extension, which we also denote by $w_k$,
clearly is in $W^{1,2}(B,\R^3)\cap L^\infty (B,\R^3)$, because its
boundary trace is contained in $W^{\frac12 ,2}\cap C^0$. Since
\begin{equation*}
	\zeta (e^{i\vartheta})w_k(e^{i\vartheta})
	=
	\widehat \gamma'(\varphi_k)
	\big( \zeta (e^{i\vartheta})\psi 
	+(1-\zeta (e^{i\vartheta}))\varphi_k
	-\varphi_k\big),
\end{equation*}
we see that $\zeta w_k\in T_{u_k}\mathcal{S}^\ast$ and $\spt (\zeta w_k)\subseteq B_{{\rho_{x_o}}/4}^+(x_o)$. Testing the weak Neumann boundary condition \eqref{Plateau-weak} for $u_k$
with $\zeta w_k$ we deduce 
\begin{equation*}
	0
	\le 
	\int_{B_{{\rho_{x_o}}/4}^+(x_o)} 
	\big[Du_k\cdot D(\zeta w_k)+\Delta u_k\cdot (\zeta w_k)\big]\,dx.
\end{equation*}
Since $u_k\in W^{2,2}(B_{{\rho_{x_o}}/4}^+(x_o),\R^3)$ the Gauss-Green theorem  leads us to
\begin{equation*}
	0
	\le 
	\int_{I_{{\rho_{x_o}}/4}(x_o)} 
	\frac{\partial u_k}{\partial r}\cdot (\zeta w_k)\, d\mathcal H^1.
\end{equation*}
In the boundary integral we can pass to the limit $k\to\infty$, since
we have $\frac{\partial u_k}{\partial r}\to \frac{\partial u}{\partial r}$
in $L^2(I_{{\rho_{x_o}}/4}(x_o),\R^3)$ and $w_k\to w$ uniformly on
$I_{{\rho_{x_o}}/4}(x_o)$. This yields
\begin{equation*}
	0
	\le 
	\int_{I_{{\rho_{x_o}}/4}(x_o)} 
	\frac{\partial u}{\partial r}\cdot (\zeta w)\, d\mathcal H^1.
\end{equation*}
Using again the Gauss-Green theorem we finally arrive at
\begin{equation}\label{local-Plateau}
	0
	\le 
	\int_{B_{{\rho_{x_o}}/4}^+(x_o)} 
	\big[Du\cdot D(\zeta w)+\Delta u\cdot (\zeta w)\big]\,dx,
\end{equation}
whenever $w\in T_u\mathcal{S}^\ast$ and $\zeta\in C^\infty_0(B_{{\rho_{x_o}}/4}(x_o),[0,1])$. By a partition of unity argument we conclude
from~(\ref{local-Euler}) and~(\ref{local-Plateau}) that $u$ solves
\begin{align}\label{limit-system-B-Sigma}
     -\Delta u+2 (H\circ u)D_1u\times D_2u=f\quad \mbox{weakly in $B\setminus \Sigma$}
\end{align}
and for any $w\in T_u\mathcal{S}^\ast$ with $\spt w\subseteq
\overline B\setminus\Sigma$ the Neumann type boundary condition
\begin{equation}\label{Plateau-1}
	0
	\le 
	\int_{B} 
	\big[Du\cdot D w+\Delta u\cdot  w\big]\,dx
\end{equation}
holds true.

Next, we wish to establish that \eqref{limit-system-B-Sigma}
holds on the whole of $B$. This can be shown by a capacity argument
along the lines of the proof of \cite[Lemma 7.5]{BoegDuzSchev:2011}. 
Once we know that \eqref{limit-system-B-Sigma} holds on $B$
we can apply the modification of Rivi\`ere's
result from Theorem \ref{mod-Riviere}
to conclude that $u\in C^{0,\alpha}_{\rm loc}(B,\R^3)$
for some $\alpha\in (0,1)$. Since $u\big|_{\partial B}$ is continuous, the same result
yields $u\in C^0(\overline B,\R^3)$.
From \eqref{limit-system-B-Sigma} we conclude that
$\Delta u\in L^1(B,\R^3)$. This allows us to apply again a capacity argument to conclude that \eqref{Plateau-1} holds for any $w\in T_u\mathcal{S}^\ast$, without any restriction on the support of $w$.
To summarize, we have shown that \eqref{limit-system-B-Sigma}
holds on $B$ and \eqref{Plateau-1} holds for any
$w\in T_u\mathcal{S}^\ast$. 

To conclude the proof of (i) we finally show that the limit $u$ also fulfills the stationarity condition \eqref{stationary}. This can be achieved as follows: By assumption
we have the stationarity of the maps $u_k$ in the sense that
for any $k\in\N$ and  every $\eta\in \mathcal C^\ast (B)$
there holds:
\begin{equation}\label{stationary-uk}
    \int_B
	 {\rm Re}
	 \big( \frak h[u_k]\,
	 \overline\partial\eta\big)\, dx
	 -
	 \int_B f_k\cdot Du_k\,\eta\, dx=0.
\end{equation}
Since $f_k\wto f$ weakly in $L^2(B,\R^3)$ and 
$u_k\to u$ strongly in $W^{1,q}(B_{\rho_{x_o}/4}^+(x_o),\R^3)$, we
easily see that for any $\eta\in\mathcal C^\ast (B)$ 
with support in $\overline{B_{\rho_{x_o}/4}^+(x_o)}$, the above
identity is preserved in the limit, that is
\begin{equation}\label{stationary-u}
    \int_B
	 {\rm Re}
	 \big( \frak h[u]\,
	 \overline\partial\eta\big)\, dx
	 -
	 \int_B f\cdot Du\,\eta\, dx=0,
\end{equation}
provided $\spt\eta\subset \overline{B_{\rho_{x_o}/4}^+(x_o)}$. 
A partition of unity argument then yields~(\ref{stationary-u})
for all vector fields $\eta\in \mathcal C^\ast (B)$
with support contained in $\overline B\setminus \Sigma$.
Since $u\in C^0(\overline B,\R^3)$ as noted above,
Theorem \ref{thm:Calderon-Zygmund} yields $u\in W^{1,4}(\Omega,\R^3)$
for any $\Omega\Subset B\setminus\{P_1,P_2,P_3\}$, which implies in
particular $\mathfrak{h}[u]\in L^2(\Omega)$ for any such $\Omega$. In this situation again a capacity argument
implies that $u$ is stationary in the sense of \eqref{stationary}
for any vector field $\eta$ with support compactly contained in
$\overline B\setminus\{P_1,P_2,P_3\}$.
The case of a general vector field $\eta\in\mathcal C^\ast (B)$ is
treated by the following approximation argument.
We choose a cut-off function
$0\le \xi\in C^1_0([0,1])$ with $\xi\equiv 1$ on $[0,\frac12]$
and $|\xi'|\le 3$. For $0<\delta\ll 1$ we consider
\begin{equation*}
	\eta_\delta :=\eta (1-\xi_\delta)
	\quad
	\mbox{where}
	\quad
	\xi_\delta(x):= \sum_{j=1}^3\xi \big(
	\tfrac{|x-P_j|}{\delta}\big).
\end{equation*}
Then, \eqref{stationary} holds true with
$\eta_\delta$. Since $\eta(P_j)=0$ for $j=1,2,3$ and $ \spt (\eta \otimes D\xi_\delta )\subset \bigcup_{j=1}^3
B_\delta (P_j)$, we calculate
$|\eta\otimes D\xi_\delta|\le C\|D\eta\|_{L^\infty}$ and consequently,
 $\|D\eta_\delta\|_{L^\infty}\le C\|D\eta\|_{L^\infty}$ for any $0<\delta\ll1$.
Combining this with $D\eta_\delta \to D\eta$ on
$\overline B \setminus \{ P_1,P_2,P_3\}$ as $\delta\downarrow 0$,
the dominated convergence theorem implies
that \eqref{stationary} holds for $\eta\in\mathcal{C}^\ast(B)$. This proves (i).

Finally, the claim (ii) can be obtained as
follows: Due to the bound
\eqref{bound:uk-zk}, by passing to a non-relabeled subsequence, we may assume that $u_k\wto u$ weakly in $W^{1,2}(B,\R^3)$
and $f_k\wto f$ weakly in $L^2(B,\R^3)$.
Therefore, we can apply the claim (i), which implies together with the Euler-Lagrange system \eqref{Euler-Lagrange} for
the maps $u_k$ that
\begin{align*}
    \int_B 2(H\circ u) D_1u\times D_2u\cdot \varphi\,
    dx
    &=\int_B -Du\cdot D\varphi +f\cdot\varphi\, dx\\
    &=\lim_{k\to\infty} \int_B -Du_k\cdot D\varphi +f_k\cdot\varphi\, dx\\
    &=\lim_{k\to\infty}\int_B 2(H\circ u_k) D_1u_k\times D_2u_k\cdot \varphi\, dx
\end{align*}
holds true for any $\varphi\in C^\infty_0(B,\R^3)$. Since the left-hand side is independent from the subsequence,
the last equality must hold for the whole sequence.
This proves   (ii).
\end{proof}

\section{The approximation scheme}
In this section we follow a method due to Moser \cite{Moser:2009} for the
construction of solutions to  the evolutionary Plateau problem for $H$-surfaces by a time discretization
approach. This method is also known as Rothe's method. This technique has been
applied in \cite{BoegDuzSchev:2011} for the construction of global
weak solutions to the heat flow for surfaces with prescribed mean
curvature with a Dirichlet boundary condition on the lateral boundary.
Since the arguments in this section are somewhat similar
to those in \cite{BoegDuzSchev:2011} we only sketch the proofs
and avoid reproductions. 
Throughout this section, we suppose that the general assumptions
listed in Section \ref{sec:intro} are in force. In particular, we
assume that the prescribed mean curvature function $H$ satisfies an
isoperimetric condition of type $(c,s)$. By 
$u_o\in\SGA$ we denoted a fixed reference surface for which the inequality $\mathbf D(u_o)\le \frac12s(1-c)$ holds true.
We recall that by $\SGAs$ we denoted
the class of all surfaces $w\in\SGA$ with
$\mathbf{D}(w)\le\sigma\mathbf{D}(u_o)$, for $\sigma=\frac{1+c}{1-c}$.
Now, consider $j\in \N_0$ and $h>0$.
We define sequences of energy functionals ${\bf F}_{j,h}$ and maps $u_{j,h}\in \SGAs$ according to the following
recursive iteration scheme: We set $u_{o,h}=u_o$. Once $u_{j-1,h}$ is constructed, the map $u_{j,h}\in\SGAs$
is chosen as a minimizer of the variational problem
\begin{equation}\label{min-j}
    {\bf F}_{j,h}(\tilde u)\longrightarrow\min\quad\mbox{in $\SGAs$,}
\end{equation}
for the energy functional
\begin{align*}
    {\bf F}_{j,h}(\tilde u)&:= {\bf D}(\tilde u)+2{\bf V}_H(\tilde u, u_o) +\tfrac{1}{2h}\int_B \big| \tilde u -  u_{j-1,h}\big|^2\, dx.
\end{align*}
Lemma \ref{lem:F-min} guarantees the existence of such a minimizer $u_{j,h}\in \SGAs$.
We have ${\bf D}(u_{j,h})\le \sigma {\bf D}(u_o)$. Actually, we have the strict inequality
${\bf D}(u_{j,h})< \sigma {\bf D}(u_o)$ for any $j\in\N$, and the same
proof also yields an estimate for the discrete time derivative. 
This follows exactly as in \cite[Lemma 8.1]{BoegDuzSchev:2011} and therefore we state only the result.

\begin{lemma}\label{Mass-max-principle}
Assume that the assumptions of Lemma \ref{lem:F-min} are in force and
$\sigma=\frac{1+c}{1-c}$. Then the minimizers $u_{j,h}\in\SGAs$ of
$\,\mathbf{F}_{j,h}$ satisfy the strict inequality 
\begin{align}\label{fundamental-2}
    {\bf D}(u_{j,h})
    <
     \frac{1+c}{1-c}\, {\bf D}(u_o)=\sigma {\bf D}(u_o).
\end{align}
and
\begin{equation}
  \label{fundamental}
  \sum_{\ell=1}^{j}\tfrac{1}{2h}\int_B \big| u_{\ell,h} -  u_{\ell-1,h}\big|^2\, dx\le 2{\bf D}(u_{o})
\end{equation}
for any $j\in\N$.
\end{lemma}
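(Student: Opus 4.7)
The plan is to derive both assertions simultaneously by exploiting the recursive minimality of the sequence $u_{j,h}$ together with the spherical isoperimetric condition of type $(c,s)$. The key observation is that $u_{\ell-1,h}\in\SGAs$ is an admissible competitor for $u_{\ell,h}$ in the variational problem~\eqref{min-j}, so that
\begin{equation*}
  \mathbf{F}_{\ell,h}(u_{\ell,h})\le\mathbf{F}_{\ell,h}(u_{\ell-1,h})
  \quad\mbox{for every $\ell\in\N$.}
\end{equation*}
Inserting the definition of $\mathbf{F}_{\ell,h}$, using the homotopy relation $\mathbf{V}_H(u_{\ell,h},u_o)-\mathbf{V}_H(u_{\ell-1,h},u_o)=\mathbf{V}_H(u_{\ell,h},u_{\ell-1,h})$ from Lemma~\ref{prop-h-vol}\,(i), and summing telescopically over $\ell=1,\dots,j$, I obtain
\begin{equation*}
  \mathbf{D}(u_{j,h})+2\mathbf{V}_H(u_{j,h},u_o)+\sum_{\ell=1}^j\tfrac{1}{2h}\int_B|u_{\ell,h}-u_{\ell-1,h}|^2\,dx\le\mathbf{D}(u_o),
\end{equation*}
since the remaining volume term at $u_o$ vanishes.

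Next I would invoke the spherical isoperimetric condition to bound the $H$-volume. Because $u_{j,h}\in\SGAs$ and $\mathbf{D}(u_o)\le\tfrac12 s(1-c)$, the choice $\sigma=\tfrac{1+c}{1-c}$ yields $\mathbf{M}(J_{u_{j,h}}-J_{u_o})\le\mathbf{D}(u_{j,h})+\mathbf{D}(u_o)\le(\sigma+1)\mathbf{D}(u_o)\le s$. Therefore, by~\eqref{isop-Ju-Jv},
\begin{equation*}
  2|\mathbf{V}_H(u_{j,h},u_o)|\le c\bigl(\mathbf{D}(u_{j,h})+\mathbf{D}(u_o)\bigr).
\end{equation*}
Substituting this estimate into the previous inequality and rearranging gives
\begin{equation*}
  (1-c)\mathbf{D}(u_{j,h})+\sum_{\ell=1}^j\tfrac{1}{2h}\int_B|u_{\ell,h}-u_{\ell-1,h}|^2\,dx\le(1+c)\mathbf{D}(u_o).
\end{equation*}
Dropping the nonnegative sum yields $\mathbf{D}(u_{j,h})\le\sigma\mathbf{D}(u_o)$, while dropping the Dirichlet term on the left (or using $1-c>0$) produces
\begin{equation*}
  \sum_{\ell=1}^j\tfrac{1}{2h}\int_B|u_{\ell,h}-u_{\ell-1,h}|^2\,dx\le(1+c)\mathbf{D}(u_o)\le 2\mathbf{D}(u_o),
\end{equation*}
which is~\eqref{fundamental}.

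The only remaining point is the \textbf{strict} inequality in~\eqref{fundamental-2}. If some $u_{\ell,h}\neq u_{\ell-1,h}$ for $\ell\le j$, then the summed $L^2$-terms contribute strictly positively and the above estimate immediately gives $(1-c)\mathbf{D}(u_{j,h})<(1+c)\mathbf{D}(u_o)$, hence the strict bound. In the opposite case $u_{j,h}=u_{j-1,h}=\dots=u_o$, we have $\mathbf{D}(u_{j,h})=\mathbf{D}(u_o)$, which is strictly smaller than $\sigma\mathbf{D}(u_o)$ since $\sigma=\tfrac{1+c}{1-c}>1$ (note $\mathbf{D}(u_o)>0$ because $u_o|_{\partial B}$ parametrizes the nondegenerate Jordan curve $\Gamma$). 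This case distinction, which mirrors~\cite[Lemma~8.1]{BoegDuzSchev:2011}, is the only mildly delicate point; the rest is bookkeeping.
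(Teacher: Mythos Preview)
Your proof is correct and follows precisely the argument the paper has in mind; the paper itself omits the proof and just refers to \cite[Lemma~8.1]{BoegDuzSchev:2011}, which proceeds by the same comparison--telescoping--isoperimetric scheme you wrote out. One minor remark: the additivity relation from Lemma~\ref{prop-h-vol}\,(i) is not actually needed for the telescoping, since the terms $\mathbf{D}(u_{\ell,h})+2\mathbf{V}_H(u_{\ell,h},u_o)$ already cancel directly when you sum $\mathbf{F}_{\ell,h}(u_{\ell,h})\le\mathbf{F}_{\ell,h}(u_{\ell-1,h})$ over $\ell$.
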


Since ${\bf D}(u_{j,h})<\sigma{\bf D}(u_{o})$, all
variations that were used in the proof of Lemma \ref{lem:EulerMeasure}
remain admissible also under the additional constraint ${\bf D}(v)\le \sigma{\bf D}(u_{o})$.
It follows that the minimizers $u_{j,h}$ are actually solutions of the
Euler-Lagrange system as stated below.

\begin{theorem}\label{lem:Euler-Lagrange}
Suppose that the assumptions of Lemma \ref{lem:F-min}
are in force and that $\partial A$ is of class $C^2$ with bounded principal
curvatures. Further, assume that 
\begin{equation}\label{curvature-condition}
    |H(a)|\le \mathcal H_{\partial A}(a)\quad\mbox{for $a\in\partial A$.}
\end{equation}
Then any minimizer $u_{j,h}\in\SGAs$ with  $j\in \N$ satisfies the time-discrete Euler-Lagrange system weakly on $B$, that is
\begin{align*}
    \int_B \Big[\frac{u_{j,h}-u_{j-1,h}}{h}\cdot\varphi
    +D u_{j,h}\cdot D\varphi &+2 (H\circ u_{j,h}) D_1u_{j,h}\times D_2u_{j,h}\cdot\varphi
    \Big]\, dx =0
\end{align*}
whenever $\varphi\in L^\infty (B,\R^3)\cap W^{1,2}_0(B,\R^3)$. Moreover, $u_{j,h}$ fulfills the weak form of the Neumann type boundary condition, i.e. we have
\begin{equation*}
	0
	\le 
	\int_{B} 
	\big[Du_{j,h}\cdot D w+\Delta u_{j,h}\cdot  w\big]\,dx
\end{equation*}
for any $w\in T_{u_{j,h}}\mathcal S^\ast$.
Finally, the maps $u_{j,h}$ are stationary
in the sense that there holds
\begin{align*}\label{stationary}
    \int_B&
	 {\rm Re} \big( \frak h [u_{j,h}] \,\overline\partial\eta\big)
	 \, dx+
	 \tfrac{1}{h}\int_B (u_{j,h}-u_{j-1,h})\cdot Du_{j,h}\,\eta\, dx=0
\end{align*}
whenever $\eta\in\mathcal C^\ast (B)$.
\end{theorem}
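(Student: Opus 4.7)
The plan is to reduce the theorem to the unconstrained result of Lemma~\ref{lem:EulerMeasure} by exploiting the strict Dirichlet bound provided by Lemma~\ref{Mass-max-principle}. Since $\mathbf{D}(u_{j,h}) < \sigma\mathbf{D}(u_o)$ holds with strict inequality, the side constraint $\mathbf D(v)\le\sigma\mathbf D(u_o)$ defining $\SGAs$ is \emph{inactive} at the minimizer. Consequently, for every one-parameter family $\{u_\tau\}\subset\SGA$ with $u_0=u_{j,h}$ along which the map $\tau\mapsto\mathbf D(u_\tau)$ is continuous at $\tau=0$, the variations $u_\tau$ remain in $\SGAs$ for all sufficiently small $|\tau|$, so $u_{j,h}$ is a local $\mathbf F_{j,h}$-minimizer among such variations. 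It then suffices to check this continuity for the three families of variations used in the proof of Lemma~\ref{lem:EulerMeasure}.

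First I would address the weak Neumann inequality. Given $w\in T_{u_{j,h}}\mathcal S^\ast$, Lemma~\ref{lem:rand-variationen}\,(ii) produces a one-sided variation $[0,\eps)\ni\tau\mapsto u_\tau\in\SGA$ with $u_0=u_{j,h}$ and $\tfrac{\partial}{\partial\tau}u_\tau\big|_{\tau=0}\in w+W^{1,2}_0(B,\R^3)$, satisfying the uniform bounds~\eqref{uniform-bounds}. In particular $u_\tau\to u_{j,h}$ strongly in $W^{1,2}$ so $\mathbf D(u_\tau)\to\mathbf D(u_{j,h})<\sigma\mathbf D(u_o)$; hence $u_\tau\in\SGAs$ for small $\tau$. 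Evaluating $\tfrac{d}{d\tau}\mathbf F_{j,h}(u_\tau)\big|_{\tau=0^+}\ge 0$ via the first variation formula~\eqref{first-var-formula}, and combining with the Euler-Lagrange equation (see below) tested against $\tfrac{\partial u_\tau}{\partial\tau}\big|_{\tau=0}-w\in L^\infty\cap W^{1,2}_0(B,\R^3)$ yields the stated inequality for $w$, exactly as in the proof of Lemma~\ref{lem:EulerMeasure}.

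Next I would derive the full Euler-Lagrange equation for $\varphi\in L^\infty\cap W^{1,2}_0(B,\R^3)$. Because such $\varphi$ vanish on $\partial B$, a standard construction (via the nearest-point retraction onto $A$) produces two-sided admissible variations $u_\tau:=\pi_A(u_{j,h}+\tau\varphi)\in\SGA$ for $|\tau|$ small; here the curvature assumption~\eqref{curvature-condition} is used exactly as in \cite{BoegDuzSchev:2011} to convert the obstacle-induced one-sided inequality into the equality~\eqref{time-discrete-evolution} (this is the step recalled in the proof of Lemma~\ref{lem:EulerMeasure}). Again $\mathbf D(u_\tau)\to\mathbf D(u_{j,h})$ as $\tau\to 0$, so $u_\tau\in\SGAs$ for small $|\tau|$, and the two-sided comparison $\mathbf F_{j,h}(u_\tau)\ge\mathbf F_{j,h}(u_{j,h})$ together with~\eqref{first-var-formula} gives the desired identity.

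Finally, for the stationarity condition I would use the inner variations: for $\eta\in\mathcal C^\ast(B)$ let $\phi_\tau$ denote its flow. Then $u_\tau:=u_{j,h}\circ\phi_\tau\in\SGA$ for $|\tau|<\eps$, and because $\phi_\tau\to\mathrm{id}$ in $C^1$ we have $\mathbf D(u_\tau)\to\mathbf D(u_{j,h})$, so $u_\tau\in\SGAs$ for small $|\tau|$. The identity $\tfrac{d}{d\tau}\mathbf F_{j,h}(u_\tau)\big|_{\tau=0}=0$ combined with the inner-variation formula~\eqref{first-inner-var-formula} produces the claimed stationarity relation. The main technical point throughout is the justification of the continuity of $\tau\mapsto\mathbf D(u_\tau)$ for each family, but this is straightforward from the bounds~\eqref{uniform-bounds} and the explicit construction of the variations in Section~2; all the analytic machinery has already been developed in Lemmas~\ref{lem:rand-variationen}, \ref{first-variation} and~\ref{lem:EulerMeasure}, so no new estimates are required.
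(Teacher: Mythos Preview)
Your proposal is correct and follows essentially the same route as the paper: the paper's entire argument for this theorem is the one-line observation that Lemma~\ref{Mass-max-principle} gives the strict inequality $\mathbf D(u_{j,h})<\sigma\mathbf D(u_o)$, so that ``all variations that were used in the proof of Lemma~\ref{lem:EulerMeasure} remain admissible also under the additional constraint $\mathbf D(v)\le\sigma\mathbf D(u_o)$.'' Your write-up simply spells out this admissibility check for each of the three families of variations, which the paper leaves implicit.
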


We now define the approximating sequence, which will lead to the desired global weak
solution  in the limit $h\downarrow 0$. We let
\begin{equation*}
    u_h(x,t):= u_{j,h}(x) \quad\mbox{for $(j-1)h< t\le jh$, $j\in\N$ and $x\in B$}
\end{equation*}
and $u_h(\cdot,t)=u_o$ for $t\le0$. 
Using the finite difference quotient operator in time, that is
\begin{equation*}
  \Delta_t^h v(x,t):=\frac{v(x,t)-v(x,t-h)}{h},
\end{equation*}
we can re-write the Euler-Lagrange system from above in the form
\begin{equation}\label{system-uh}
    \Delta_t^h u_h-\Delta u_{h}+2 (H\circ u_{h})D_1u_{h}\times D_2u_{h}=0\quad \mbox{in $B\times (0,\infty)$.}
\end{equation}
Moreover, we have the stationarity of $u_h$ in the form
\begin{align*}
	\int_{B\times \{t\}}
	{\rm Re} \big( \frak h[u_{h}] \,\overline\partial\eta\big)
	 \, dx+
	\int_{B\times\{t\}} \Delta_t^hu_h\cdot Du_{h}\eta\, dx=0
\end{align*}
whenever $t>0$ and $\eta\in\mathcal{C}^\ast(B)$. Here, $\frak h_{h}:=|D_1u_{h}|^2-|D_2u_{h}|^2 -2\mathbf i D_1u_{h}\cdot D_2u_{h}$. Finally, we have the weak Neumann type boundary condition for the map $u_h$ for any $t>0$, that is
\begin{equation*}
	0
	\le 
	\int_{B\times \{t\}} 
	\big[Du_{h}\cdot D w+\Delta u_{h}\cdot  w\big]\,dx
\end{equation*}
for any $w\in T_{u_{h}(\cdot ,t)} \mathcal S^\ast$.
We mention that $u_h(\cdot ,t)\in\SGA$ for any $t\ge 0$. 
The bounds (\ref{fundamental-2}) and \eqref{fundamental} imply
the energy estimate
\begin{equation}\label{fundamental-bound}
    \sup_{h>0}\sup_{T>0}\bigg[ {\bf D}(u_h(\cdot,T))
     +\tfrac12 \int_0^T\int_B \big|\Delta_t^h u_h\big|^2\, dxdt\bigg]
    \le
    C\,{\bf D}(u_o).
\end{equation}
A version of Poincar\'e's inequality moreover implies
\begin{align*}
    \|u_h(\cdot,T )\|_{L^2 (B)}
    &\le C\|Du_h(\cdot,T )\|_{L^2 (B)} +C
    \|u_h (\cdot, T)\|_{L^2 (\partial B)}\\
    &\le C\sqrt{{\bf D}(u_h(\cdot,T ))} +C(\Gamma)
\end{align*}
for any $h,T>0$, 
which combined with the uniform energy bound \eqref{fundamental-bound} yields 
\begin{equation}\label{L-unendlich-W12}
    \sup_{h>0} \| u_h \|_{L^\infty ((0,\infty),W^{1,2}(B,\R^3))}\le C\| Du_o \|_{L^2(B,\R^3)}+C(\Gamma).
\end{equation}
Next, arguing exactly as in \cite[Chapter 8]{BoegDuzSchev:2011}
we deduce
the following continuity property of $u_h$ with respect to the time direction:
\begin{equation*}
	\| u_h(\cdot ,t)-u_h(\cdot ,s)\|_{L^2(B)}
	\le 4\sqrt{{\bf D}(u_o)}\big[\sqrt{t-s}+\sqrt{h}\big]
	\qquad\forall\, h>0,\ t>s\ge0. 
\end{equation*}
As in \cite[Lemma 4.1]{Duzaar-Mingione:2005} we can conclude from 
\cite[Theorem 3]{JacquesSimon} that there exists a sequence $h_i\downarrow 0$ and a map
$
    u\in C^{0,\frac12} ([0,\infty); L^2(B,\R^3))\cap L^\infty ([0,\infty); W^{1,2}(B,\R^3))
$
such that
$$
    u_{h_i}\to u\quad \mbox{in $C^0([0,T]; L^2(B,\R^3))$ as
      $i\to\infty$, for all $T>0$.}
$$
Further, we can also achieve
$Du_{h_i}\wto Du$ in $L^2(B\times (0,T),\R^{3\cdot2})$ for every $T>0$,
as $i\to\infty$. Moreover, Lemma~\ref{lem:compact traces} implies
$u_{h_i}(\cdot,t)\to u(\cdot,t)$ uniformly on $\partial B$, from which
we infer $u(\cdot,t)\in\SGA$ for a.e. $t>0$.

Now, for  $\varphi\in C^\infty_0(B\times (0,\infty),\R^3)$
we have
\begin{align*}
    \int_0^\infty\int_B u\cdot\partial_t\varphi\, dxdt
     =-\lim_{i\to\infty} \int_0^\infty\int_B \Delta_t^{h_i}u_{h_i}\cdot \varphi \, dxdt
    &\le \sqrt{C\,{\bf D}(u_o)}\, \| \varphi\|_{L^2(B\times (0,\infty))}.
\end{align*}
Here we performed a partial integration with respect to difference quotients in time, applied
the Cauchy-Schwarz inequality and finally used the uniform bound
\eqref{fundamental-bound}. This implies the existence of the weak time
derivative $\partial_tu\in L^2 (B\times (0,\infty),\R^3)$ with 
\begin{equation}\label{bound-ut}
    \int_0^\infty\int_B |\partial_t u| ^2\, dxdt\le C\,{\bf D}(u_o).
\end{equation}
Moreover, we have
\begin{equation}\label{convergence-dt-uhi}
    \Delta_t^{h_i}u_{h_i}\wto \partial_tu\quad\mbox{weakly in $L^2(B\times (0,\infty ),\R^3)$.}
\end{equation}
Next, from  \eqref{fundamental-bound} we conclude that for $0\le t_1<t_2$ there holds
\begin{equation}\label{sup-D(uhi)}
    \sup_{i\in\N}\,\int_{t_1}^{t_2} \int_B |Du_{h_i}|^2\, dxdt\le C\,{\bf D}(u_o)(t_2-t_1)
\end{equation}
and
\begin{equation}\label{sup-delta-uhi}
    \sup_{i\in\N}\,\int_{t_1}^{t_2} \int_B |\Delta_t^{h_i}u_{h_i}|^2\, dxdt\le 2C\,{\bf D}(u_o).
\end{equation}
By Fatou's Lemma we therefore have
\begin{align*}
    \int_{t_1}^{t_2}\liminf_{i\to\infty}& \int_B |Du_{h_i}|^2+|\Delta_t^{h_i}u_{h_i}|^2\, dx \, dt
    \le C\,{\bf D}(u_o)\big(2+ t_2-t_1\big)<\infty
\end{align*}
and for almost every $t\in(t_1,t_2)$  we conclude 
\begin{equation}\label{estimate-slice}
    \liminf_{i\to\infty}\int_B |Du_{h_i}(\cdot, t)|^2+|\Delta_t^{h_i}u_{h_i}(\cdot,t)|^2\, dx <\infty.
\end{equation}
Hence, for fixed $t\in(t_1,t_2)$ satisfying \eqref{estimate-slice} and a non-relabeled subsequence (possibly depending on $t$)
we have
\begin{equation*}
    \sup_{i\in\N}\int_B |Du_{h_i}(\cdot, t)|^2+|\Delta_t^{h_i}u_{h_i}(\cdot,t)|^2\, dx <\infty.
\end{equation*}
Next, we consider $k_i\in \N$ such that $(k_i-1)h_i< t \le k_ih_i$.
Then $u_{h_i}(x,t)= u_{k_i,h_i}(x)$ is a minimizer of the functional
\begin{align*}
    {\bf F}_{k_i,h_i}(\tilde u)= {\bf D}(\tilde u)+2{\bf V}_H(\tilde u, u_o) +\tfrac{1}{2h_i}\int_B \big| \tilde u -  u_{k_i-1,h_i}\big|^2\, dx
\end{align*}
in the class $\SGAs$. From Theorem~\ref{lem:Euler-Lagrange} we thereby
infer that $u_{k_i,h_i}$ solves the Euler-Lagrange
system~(\ref{Euler-Lagrange}) with
\begin{equation*}
    f_k:=-\frac{u_{k_i,h_i}-u_{k_i-1,h_i}}{h_i} =-\Delta_t^{h_i}u_{h_i}(\cdot,t),
\end{equation*}
and moreover, the weak form of the Neumann boundary
condition~(\ref{Plateau-weak}) and the stationarity
condition~(\ref{stationary}), again with $f_k$ defined as above. 
Finally, for a fixed time $t\in(t_1,t_2)$ we can pass once more to a subsequence -- which may depend on $t$ -- 
such that $f_{k_i}\wto:f(\cdot,t)$ weakly in $L^2(B,\R^3)$ as $i\to\infty$.
Therefore, all assumptions of Lemma \ref{convergence-non-linearity}\,(i) are fulfilled and we conclude that
the limit $u(\cdot, t)$ satisfies the limit system
\begin{equation}\label{EL-slicewise-f}
    -\Delta u(\cdot, t)+2(H\circ u(\cdot, t))D_1u(\cdot, t)\times D_2u(\cdot, t)=f(\cdot ,t)
\end{equation}
weakly on $B$. Moreover, $u(\cdot ,t)$ fulfills the weak Neumann type boundary condition
\begin{equation}\label{Neumann-slicewise}
	0
	\le 
	\int_{B} 
	\big[Du(\cdot ,t)\cdot D w+\Delta u(\cdot ,t)\cdot  w\big]\,dx
\end{equation}
for any $w\in T_{u(\cdot ,t)}\mathcal S^\ast$, and $u(\cdot ,t)$
is stationary in the sense that
\begin{equation}\label{stationary-f}
    \int_B
	 {\rm Re} \big( \frak h [u(\cdot ,t)] \,\overline\partial\eta\big)
	 \, dx
	 -
	 \int_B f(\cdot,t) \cdot Du(\cdot ,t)\eta\, dx=0
\end{equation}
holds true whenever $\eta\in\mathcal C^\ast (B)$.
We note that this holds whenever $t>0$ is chosen such that \eqref{estimate-slice} holds.
However, since the subsequence chosen above may depend on $t$ this is not enough to identify $-f(\cdot ,t)$ as $\partial_t u(\cdot ,t)$ and
to guarantee that $u$ is the desired global weak solution.
Therefore, for given $a>0$ and $i\in\N$ we define the {\bf set of bad time slices} by
\begin{equation*}
    \Lambda_{i,a}:=\left\{ t\in(t_1,t_2)\,:\, \int_B |Du_{h_i}(\cdot, t)|^2+|\Delta_t^{h_i}u_{h_i}(\cdot,t)|^2\, dx>a\right\}.
\end{equation*}
By \eqref{sup-D(uhi)} and \eqref{sup-delta-uhi}, the measure
$|\Lambda_{i,a}|$ is bounded by 
\begin{align}\label{measure-estimate}
   |\Lambda_{i,a}|
    &\le\frac{ C\,{\bf D}(u_o)\big(2+ t_2-t_1\big)}{a}.
\end{align}
We now define modified sequences $(\tilde u_{h_i})_{i\in\N}$ and $(\tilde f_{h_i})_{i\in\N}$ according to
\begin{equation*}
    \tilde u_{h_i}(x,t)
    :=
    \left\{
    \begin{array}{cl}
        u(x,t) &\mbox{if $t\in \Lambda_{i,a}$,}\\[5pt]
        u_{h_i}(x,t)&\mbox{if $t\not\in \Lambda_{i,a}$,}
    \end{array}
    \right.
\end{equation*}
and
\begin{equation*}
    \tilde f_{h_i}(x,t)
    :=
    \left\{
    \begin{array}{cl}
        f(x,t) &\mbox{if $t\in \Lambda_{i,a}$,}\\[5pt]
        -\Delta_t^{h_i}u_{h_i}(x,t)&\mbox{if $t\not\in \Lambda_{i,a}$.}
    \end{array}
    \right.
\end{equation*}
We observe that for each fixed
$a>0$ we still have $\tilde u_{h_i}\to u$ in $L^\infty
([t_1,t_2];L^2(B,\R^3))$. Furthermore, for a.e. $t\in (t_1,t_2)$ we have that $\tilde u_{h_i}(\cdot ,t)$ solves
\begin{equation*}
    -\Delta \tilde u_{h_i}(\cdot ,t) +2(H\circ \tilde u_{h_i}(\cdot,t))D_1\tilde u_{h_i}(\cdot,t)\times D_2\tilde u_{h_i}(\cdot,t)=\tilde f_{h_i}(\cdot,t)
   ,
\end{equation*}
weakly on $B$. From Theorem~\ref{mod-Riviere}, we therefore infer $\tilde
u_{h_i}(\cdot,t)\in C^0(\overline B,\R^3)$ for a.e. $t\in(t_1,t_2)$.
Moreover, the maps $\tilde u_{h_i}(\cdot ,t)$ fulfill the Neumann type boundary condition and are stationary in the sense
\begin{align*}\label{stationary}
    \int_B&
	 {\rm Re} \big[ \frak h [\tilde u_{h_i} (\cdot ,t)] \,\overline\partial\eta\big]
	 \, dx
	 -
	 \int_B \tilde f_{h_i}(\cdot,t) \cdot D\tilde u_{h_i}
	 (\cdot ,t)\eta\, dx=0
\end{align*}
for any $\eta\in \mathcal C^\ast (B)$. 
From the definition of $\tilde u_{h_i}$ and $\tilde f_{h_i}$ it is
clear that we have 
\begin{align*}
    \sup_{i\in\N} \int_B &|D\tilde u_{h_i}(\cdot, t)|^2+|\tilde f_{h_i}(\cdot,t)|^2\, dx\nonumber
    \le\max\left\{ a, \int_B |Du(\cdot, t)|^2+|f(\cdot,t)|^2\, dx\right\}<\infty
\end{align*}
for a.e. $t\in(t_1,t_2)$. 
Therefore, we may apply Lemma \ref{convergence-non-linearity}\,(ii) to the
sequences $\tilde u_{h_i}\in\SGA\cap C^0(\overline B,\R^3)$ and
$\tilde f_{h_i}\in L^2(B,\R^3)$ for $i\in\N$, with the result
\begin{equation*}
    \int_{B\times\{t\} } 2(H\circ u)D_1u\times D_2u\cdot\varphi\, dx  
    =
   \lim_{i\to\infty}\int_{B\times \{t\}} 2(H\circ \tilde u_{h_i})D_1\tilde u_{h_i}\times D_2\tilde u_{h_i}\cdot\varphi\, dx
\end{equation*}
for all $\varphi\in C^\infty_0(B,\R^3)$. 
Furthermore, we have
\begin{align*}
    &\int_{B\times \{t\}} 2(H\circ \tilde u_{h_i})D_1\tilde u_{h_i}\times D_2\tilde u_{h_i}\cdot\varphi\, dx\\
    &\qquad\le \| H\|_{L^\infty} \|\varphi\|_{L^\infty} \int_{B} \big| D\tilde u_{h_i}(\cdot ,t)\big|^2\, dx 
     \le  C(H,\varphi)\max\left\{ a, \int_B |Du(\cdot, t)|^2\, dx\right\}
\end{align*}
for a.e. $t\in(t_1,t_2)$. 
Since the right-hand side is in $L^1([t_1,t_2],\R)$, the last two
formulae imply by the dominated convergence theorem that we have the convergence
\begin{align}\label{limit-non-linear-tilde-uhi}
    \int_{t_1}^{t_2}\int_{B }& 2(H\circ u)D_1u\times D_2u\cdot\varphi\, dx\,dt\nonumber\\
    &=\lim_{i\to\infty}\int_{t_1}^{t_2}\int_{B} 2(H\circ \tilde
    u_{h_i})D_1\tilde u_{h_i}\times D_2\tilde u_{h_i}\cdot\varphi\,
    dx\,dt
\end{align}
whenever $\varphi\in C^\infty_0(B\times (t_1,t_2),\R^3)$.
It remains to replace the functions $\tilde u_{h_i}$ on the right-hand
side by the original sequence $u_{h_i}$. To this end, we recall 
the uniform bound \eqref{fundamental-bound} in order to estimate
\begin{align*}
   \int_{B\times\{t\} }& \big| 2(H\circ  u_{h_i})D_1 u_{h_i}\times D_2 u_{h_i}\cdot\varphi\big|\, dx\\
   &\le 2\| H\|_{L^\infty} \| \varphi\|_{L^\infty} \int_{B }| Du_{h_i}(\cdot ,t)|^2\, dx
   \le C\,\| H\|_{L^\infty} \| \varphi\|_{L^\infty}{\bf D}(u_o).
\end{align*}
We integrate this with respect to $t$ over $\Lambda_{i,a}$ and use the
measure estimate \eqref{measure-estimate} to get
\begin{align*}
   &\int_{\Lambda_{i,a}}\int_{B } \big| 2(H\circ  u_{h_i})D_1 u_{h_i}\times D_2 u_{h_i}\cdot\varphi\big|\, dxdt\\
   &\qquad\le C\,\| H\|_{L^\infty} \| \varphi\|_{L^\infty}{\bf D}(u_o)|\Lambda_{i,a}|
     \le \Tilde C\, \frac{2+ t_2-t_1}{a}
\end{align*}
with a constant $\Tilde C$ independent from $i$ and $a$.
Similarly, since $\tilde u_{h_i}(\cdot, t) \equiv u(\cdot, t)$ for $t\in \Lambda_{i,a}$, we have
\begin{align*}
   \int_{\Lambda_{i,a}}\int_{B } \big|& 2(H\circ \tilde u_{h_i})D_1\tilde u_{h_i}\times D_2\tilde u_{h_i}\cdot\varphi\big|\, dx\,dt
   \le \Tilde C \int_{\Lambda_{i,a}}  \int_{B }| D u(\cdot, t)|^2\, dx\,dt.
\end{align*}
Joining the last two estimates we obtain
\begin{align*}
    \bigg|\int_{t_1}^{t_2}&\int_{B } \big[
    2(H\circ \tilde u_{h_i})D_1\tilde u_{h_i}\times D_2\tilde u_{h_i}
    -
    2(H\circ  u_{h_i})D_1 u_{h_i}\times D_2 u_{h_i}\big]\cdot\varphi\, dx\,dt\bigg|\\
    &=\bigg| \int_{\Lambda_{i,a}}\int_{B } \big[\dots\big]\, dx\,dt\bigg|
    \le \widetilde C\bigg[ \frac{1}{a} + \int_{\Lambda_{i,a}}   \int_{B }| D u(\cdot, t)|^2\, dx\,dt\bigg]
\end{align*}
for a constant $\widetilde C$ independent of $i$ and $a$.
At this stage we let $a\to \infty$. In view of \eqref{limit-non-linear-tilde-uhi}, we infer
\begin{align*}
    \int_{t_1}^{t_2}\int_{B } 2&(H\circ u)D_1u\times D_2u\cdot\varphi\, dxdt\nonumber\\
    &=\lim_{i\to\infty}\int_{t_1}^{t_2}\int_{B} 2(H\circ  u_{h_i})D_1 u_{h_i}\times D_2 u_{h_i}\cdot\varphi\, dxdt
\end{align*}
whenever $\varphi\in C^\infty_0(B\times (t_1,t_2),\R^3)$. The last
identity, the weak convergence $Du_{h_i}\wto Du$ in $L^2(B\times
(t_1,t_2),\R^{3\cdot2})$ and the weak 
convergence  $\Delta_t^{h_i}u_{h_i}\wto \partial_tu$ in $L^2(B\times (0,\infty), \R^3)$ from \eqref{convergence-dt-uhi} allow us to pass to the limit
$i\to\infty$ in \eqref{system-uh}, and this proves that $u$ solves the limit system
\begin{equation}\label{limit-system}
  -\Delta u+2 (H\circ u)D_1u\times D_2u=-\d_t u\qquad\mbox{weakly on } B\times(0,\infty).
\end{equation}
The above construction yields
$u\in C^{0} ([0,\infty); L^2(B,\R^3))\cap L^\infty ([0,\infty); W^{1,2}(B,\R^3))$ and $\partial_tu\in L^2(B\times (0,\infty),\R^3)$.
Since the weak limit $u$ satisfies the weak Neumann type boundary
condition~(\ref{Neumann-slicewise}), we only have to show the
stationarity condition
\begin{equation}\label{stationary-dtu}
    \int_B
	 {\rm Re} \big( \frak h [u(\cdot ,t)] \,\overline\partial\eta\big)
	 \, dx
	 +
	 \int_B \d_tu(\cdot,t) \cdot Du(\cdot ,t)\eta\, dx=0
\end{equation}
for any $\eta\in\mathcal{C}^\ast(B)$ and a.e. $t>0$.
In view of~(\ref{stationary-f}), it suffices
to show $f(\cdot,t)=-\d_tu(\cdot,t)$ for a.e. $t>0$. But this easily
follows by joining the equations~(\ref{EL-slicewise-f}) and
(\ref{limit-system}).
The proof of Theorem~\ref{main} is thus completed.

\section{Convergence to a stationary solution; proof of Theorem \ref{main2}} 
Here we study the asymptotics of the flow as $t\to\infty$, more
precisely, for a suitable sequence of times $t_k\to\infty$ we wish to show
convergence of the maps $(u(\cdot ,t_k))_{k\in \N}$ to a conformal
$H$-surface $u_*$ satisfying the Plateau boundary condition, i.e. a solution to~(\ref{Plateau-probl-infty}).
Since $\d_tu\in L^2(B\times(0,\infty),\R^3)$ we can find a sequence of
times $t_k\to\infty$ with
\begin{equation}\label{dtu_vanish}
  \int_B|\d_t u(\cdot,t_k)|^2\dx\to 0\quad\mbox{as }k\to\infty.
\end{equation}
Further, we can choose the times $t_k$ in such a way that the partial
maps $u(\cdot,t_k)$ satisfy the Euler-Lagrange
system~(\ref{Euler-Lagrange}), the weak Neumann-type boundary
condition~(\ref{Plateau-weak}) and the stationarity
condition~(\ref{stationary}) with $u$ replaced by $u(\cdot,t_k)$ and
$f$ replaced by $f_k:=-\d_tu(\cdot,t_k)$. 
Since $u\in L^\infty ([0,\infty), W^{1,2}(B,\R^3))$ we have
\begin{equation}\label{Linfty-W12}
    \sup_{k\in\N}\int_B |Du(\cdot ,t_k)|^2+|u(\cdot ,t_k)|^2\, dx<\infty,
\end{equation}
so that we can achieve strong convergence
$u(\cdot,t_k)\to u_*$ with respect to the $L^2$-norm 
and almost everywhere on $B$ for some limit
map $u_*\in W^{1,2}(B,A)$. Lemma~\ref{lem:compact traces} implies
the  uniform convergence $u(\cdot,t_k)|_{\partial B}\to
u_*|_{\partial B}$ of the boundary traces, from which we conclude $u_*\in\SGA$. 
The property~(\ref{Plateau-probl-infty})$_1$ now follows from an application of 
Lemma~\ref{convergence-non-linearity}\,(i) with $f_k=-\d_tu(\cdot,t_k)\to0$ strongly in
$L^2(B,\R^3)$ by~(\ref{dtu_vanish}). Furthermore, the same lemma
yields the stationarity condition
\begin{equation}\label{stationary-limit}
    0=\int_B
	 {\rm Re} \big( \frak h [u_*] \,\overline\partial\eta\big)
	 \, dx
	 =\partial \mathbf{D}(u_*;\eta)
\end{equation}
for any $\eta\in\mathcal{C}^\ast(B)$. Next, we claim that the
conformal invariance of $\mathbf{D}$ yields this
equation in fact for every $\eta\in\mathcal{C}(B)$. To this end,
we define $\varphi_\tau$ as the flow generated by a general vector field
$\eta\in\mathcal{C}(B)$ with $\varphi_0=\mathrm{id}$. For every
$\tau\in(-\eps,\eps)$ we choose the conformal diffeomorphism
$g_\tau\colon \overline B\to \overline B$ defined by $g_\tau(P_j)=\varphi_\tau^{-1}(P_j)$ for
$j=1,2,3$ and define a new variational vector field
$\tilde\eta:=\frac\partial{\partial\tau}\big|_{\tau=0}(\varphi_\tau\circ
g_\tau)$. 
We note that this definition implies $g_0=\mathrm{id}$ and 
$\tilde\eta(P_j)=0$ for $j=1,2,3$, so that
$\tilde\eta\in\mathcal{C}^*(B)$ is admissible
in~(\ref{stationary-limit}). Combining this fact with the conformal
invariance of $\mathbf{D}$, we calculate 
\begin{equation*}
  \partial\mathbf{D}(u_*;\eta)
  =
  \frac d{d\tau}\Big|_{\tau=0}\mathbf{D}(u_*\circ\varphi_\tau)
  =
  \frac d{d\tau}\Big|_{\tau=0}\mathbf{D}(u_*\circ\varphi_\tau\circ
  g_\tau)
  =
  \partial\mathbf{D}(u_*;\tilde\eta)=0.
\end{equation*}
It is well known that the validity of this equation for every
$\eta\in\mathcal{C}(B)$ implies the claimed
conformality~(\ref{Plateau-probl-infty})$_3$ of the limit map $u_*$,
cf. \cite[Sect. 4.5]{Hildebrandt.et.al} or
\cite[Cor. 2.2]{DuzaarSteffen:1999}. 
This completes the proof of~(\ref{Plateau-probl-infty}).

Concerning the regularity of $u_*$, we first infer from the result of Rivi\`ere \cite[Theorem I.2]{Riviere:2007} 
that the limit map $u_*$ is continuous in $B$. 
From classical elliptic bootstrap arguments one then concludes $u\in C^{1,\alpha}_{\rm
  loc}(B,\R^3)$ for every $\alpha\in(0,1)$, cf. e.g. \cite[Lemma 7.2]{BoegDuzSchev:2011},
and an argument by Hildebrandt and Kaul
\cite{HildebrandtKaul:1972} implies even continuity up to the
boundary, i.e. $u_*\in C^0(\overline B,\R^3)$. 

Assuming the prescribed mean curvature  function $H$  to be H\"older, the classical Schauder theory yields $u_*\in C^{2,\beta}_{\rm
  loc}(B,A)$ for some $\beta\in(0,1)$, and $u_*$ is a surface with
mean curvature given by $H$.
The boundary regularity can then be retrieved from \cite[Sect. 7.3, Thm. 2]{Hildebrandt.et.al}, with the result $u_*\in
C^{2,\beta}(\overline B,A)$. For classical solutions $u_*$ to the
$H$-surface equation it is moreover well known that $u_*\in\Ss$
implies that $u\big|_{\partial B}:\partial B\to \Gamma$ is a
homeomorphism, i.e. (\ref{Plateau-probl})$_2$,
cf. \cite[Proof of Satz 3]{Hildebrandt:1969} or 
\cite[Prop. 2.7]{SteffenWente:1978}. 
 We  refer to \cite[Theorem
5.3]{Steffen:1976} for a brief summary of regularity results on $H$-surfaces, 
as well as to \cite{Morrey:1966, Heinz-Tomi:1969, Tomi:1969}.

\end{document}